\journalname{Geometriae Dedicata}
\title{Dihedral manifold approximate fibrations\\ over the circle}
\titlerunning{Dihedral manifold approximate fibrations over the circle}
\author{Bruce Hughes \and Qayum Khan}
\authorrunning{B.~Hughes \and Q.~Khan}
\institute{B.~Hughes \and Q.~Khan\\ Department of Mathematics, Vanderbilt University, Nashville, TN 37240 U.S.A.
\\ \email{bruce.hughes@vanderbilt.edu \and qayum.khan@vanderbilt.edu}}
\spnewtheorem*{untheorem}{Theorem}{\bf}{\it}
\spnewtheorem*{notation}{Notation}{\bf}{\rm}
\spnewtheorem{hypothesis}[theorem]{Hypothesis}{\bf}{\rm}
\newcommand{\br}{\ensuremath{\mathbb{R}}} 
\newcommand{\bz}{\ensuremath{\mathbb{Z}}} 
\newcommand{\bn}{\ensuremath{\mathbb{N}}} 
\newcommand{\id}{\ensuremath{\mathrm{id}}} 
\newcommand{\diam}{\ensuremath{\mathrm{diam}}} 
\newcommand{\co}{\ensuremath{\colon}} 
\newcommand{\incl}{\ensuremath{\mathrm{incl}}} 
\newcommand{\Shapes}{\ensuremath{\mathrm{Shapes}}} 
\newcommand{\cone}{\ensuremath{{\mathrm{c}}}} 
\newcommand{\oc}{\ensuremath{{\overset{\circ}{\mathrm{c}}}}} 
\newcommand{\ob}{\ensuremath{{\overset{\circ}{\mathrm{B}}}}} 
\newcommand{\proj}{\ensuremath{\mathrm{proj}}} 
\newcommand{\Graph}{\ensuremath{\mathrm{Graph}}} 
\newcommand{\ol}[1]{\overline{#1}} 
\newcommand{\norm}[1]{\| #1 \|}
\newcommand{\intr}{\mathrm{int}}
\newcommand{\cI}{\mathcal{I}}
\newcommand{\cJ}{\mathcal{J}}
\newcommand{\cP}{\mathcal{P}}
\newcommand{\cS}{\mathcal{S}}
\newcommand{\cT}{\mathcal{T}}
\newcommand{\cU}{\mathcal{U}}
\newcommand{\gens}[2]{\langle #1, #2 \rangle}
\begin{document}

\maketitle

\begin{abstract}
Consider the cyclic group $C_2$ of order two acting by
complex-conjugation on the unit circle $S^1$. The main result is
that a finitely dominated manifold $W$ of dimension  $>4$ admits a
cocompact, free, discontinuous action by the infinite dihedral group
$D_\infty$ if and only if $W$ is the infinite cyclic cover of a free
$C_2$-manifold $M$ such that $M$ admits a $C_2$-equivariant manifold
approximate fibration to $S^1$. The novelty in this setting is the
existence of codimension-one, invariant submanifolds of $M$ and $W$.
Along the way, we develop an equivariant sucking principle for
orthogonal actions of finite groups on Euclidean space.
\keywords{Manifold approximate fibration \and Equivariant sucking \and Wrapping up}
\subclass{57N15 \and 57S30}
\end{abstract}

\setcounter{tocdepth}{1} \tableofcontents

\section{Introduction}

We begin with a classical theorem, attributed to T.A.~Chapman
\cite{Chapman}, on sucking and wrapping up manifolds over the real
line.  Recall that the infinite cyclic group $C_\infty$ acts on the
real line $\br$ by integer translations.

\begin{untheorem}[Chapman]\label{thm:Chapman_equivalences}
Let $W$ be a connected topological manifold of dimension $>4$. The
following statements are equivalent:
\begin{enumerate}
\item
The space $W$ is finitely dominated, and there exists a
cocompact, free, discontinuous $C_\infty$-action on $W$.

\item
There exists a proper bounded fibration $W \to \br$.

\item
There exists a manifold approximate fibration $W \to \br$.

\item
There exist a $C_\infty$-action on $W$ and $C_\infty$-manifold
approximate fibration $W \to \br$.

\item
There exists a manifold approximate fibration $M \to S^1$ with
$\ol{M}$ homeomorphic to $W$.
\end{enumerate}
\end{untheorem}

This theorem can be viewed as an answer to the question:
\begin{quote}
\emph{When does a finitely dominated manifold $W$ admit a cocompact,
free, discontinuous action of the infinite cyclic group $C_\infty$?}
\end{quote}
There are two essentially different answers. The first, spelled out
by conditions (2) and (3), is that $W$  admits a proper map to $\br$
with bounded or controlled  versions of the homotopy lifting
property (called a bounded or approximate fibration). The
equivalence of the bounded and controlled versions is the main
advance of Chapman's paper \cite{Chapman} (which was preceded by the
Hilbert cube manifold case in \cite{ChapmanTAMS}), and it is part of
the phenomenon called \emph{sucking}.

The second answer, formulated in conditions (4) and (5), is that the
approximate fibration $W\to\br$ can be made equivariant with respect
to some $C_\infty$-action on $W$, or that $W\to\br$ can be
\emph{wrapped-up} to an approximate fibration $M\to S^1$. Chapman's
wrapping-up construction is a variation of Siebenmann's twist-gluing
construction (where the twist is the identity) used by him
\cite{Siebenmann} in his formulation of Farrell's fibering theorem
\cite{Farrell_thesis} (for more details see \cite{HughesRanicki}).
From this point of view, the question that is being raised is:
\begin{quote} \emph{Given a discrete subgroup $\Gamma$ of isometries on $\br$ and a manifold approximate fibration $W\to\br$, can the $\Gamma$-action on $\br$ be ``approximately lifted'' to a free, discontinuous $\Gamma$-action on $W$, so that there is a
$\Gamma$-manifold approximate fibration $W\to\br$?}
\end{quote}
Chapman proves that this can always be done when $\Gamma=C_\infty$
and $\dim W > 4$.

Since our formulation of Chapman's theorem does not appear
explicitly in \cite{Chapman}, we include a proof, which is actually
a guide to finding the proof in the literature. Most aspects of this
theorem were discovered independently by Ferry \cite{Ferry_notes}.
An analysis of many of the details appears in  \cite{HughesRanicki}.

\begin{proof}
The implication $(1) \Longrightarrow (2)$ follows from
Proposition~\ref{prop:existGmap} and then \cite[Proposition
17.14]{HughesRanicki}. The implication $(2) \Longrightarrow (3)$ is
the case $n=1$ in Corollary~\ref{cor:euclidean_delta_sucking}, which
is the $\epsilon$-$\delta$ version of Chapman's sucking principle.
The implication $(3) \Longrightarrow (2)$ is clear from the
definitions. The implication $(3) \Longrightarrow (4)$ is
\cite[Lemma 17.8]{HughesRanicki}, which uses Hughes's approximate
isotopy covering principle instead of an argument of Chapman
(cf.~\cite[Rmk.~17.9]{HughesRanicki}). The implication $(4)
\Longrightarrow (5)$ follows by taking the quotient of $W \to \br$
by the free $C_\infty$-action. The implication $(5) \Longrightarrow
(3)$ follows from taking the infinite cyclic cover of $M \to S^1$.
Finally, since $W$ has the proper homotopy type of a locally finite
simplicial complex \cite[Essay III, Thm.~4.1.3]{KirbySiebenmann},
the implication $(4) \Longrightarrow (1)$ follows from the
elementary argument of \cite[Prop.~17.12]{HughesRanicki}.\qed
\end{proof}

The theme of this paper is to extend Chapman's results from the
smallest infinite discrete group of isometries on $\br$, namely the
infinite cyclic group $C_\infty$, to the largest discrete group of
isometries on $\br$, namely the infinite dihedral group $D_\infty$.
In particular, our main result can be viewed as an answer to the
question:
\begin{quote}
\emph{When does a finitely dominated manifold $W$ admit a cocompact,
free, discontinuous action of the infinite dihedral group
$D_\infty$?}
\end{quote}

The main technical issues that arise involve the fact that
$D_\infty$ has torsion; it has a non-trivial finite subgroup, namely
the cyclic group $C_2$ of order two, which acts by reflection on
$\br$ fixing the origin. The lifted $D_\infty$-action on $W$
contains a $C_2$-action on $W$ with a nonempty invariant subset. For notation, for any $N \geq 1$, consider the dihedral groups
\begin{eqnarray*}
D_\infty &:=& C_\infty \rtimes_{-1} C_2\\
D_N &:=& C_N \rtimes_{-1} C_2.
\end{eqnarray*}
In particular, $D_1 = C_2$. There are short exact
sequences
\begin{gather*}
0 \longrightarrow C_\infty \longrightarrow D_\infty \longrightarrow
C_2 \longrightarrow 0\\
0 \longrightarrow N \cdot C_\infty \longrightarrow D_\infty
\longrightarrow D_N \longrightarrow 0.
\end{gather*}
 Fix presentations:
\begin{eqnarray*}
C_\infty &=& \langle T ~|~ \rangle \\
C_2 &=& \langle R ~|~ R^2= 1 \rangle \\
D_\infty &=& \langle R, T ~|~ R^2= 1, RT = T^{-1}R \rangle.
\end{eqnarray*}
The $D_\infty$-action on $\br$ by isometries is given by $R(x)=-x$
and $T(x)=x+1$.

Our main theorem, which we now state, contains analogues of all
parts of Chapman's theorem.

\begin{theorem}[Main Theorem]
\label{intro thm:main} Let $W$ be a connected topological manifold
of dimension $>4$. The following statements are equivalent:
\begin{enumerate}
\item
The space $W$ is finitely dominated, and there exists a
cocompact, free, discontinuous $D_\infty$-action on $W$.

\item
There exist a free $C_2$-action on $W$ and proper $C_2$-bounded
fibration $W \to \br$.

\item
There exist a free $C_2$-action on $W$ and $C_2$-manifold
approximate fibration $W \to \br$.

\item
There exist a free $D_\infty$-action on $W$ and
$D_\infty$-manifold approximate fibration $W\to \br$.

\item
For every $N \geq 1$, there exist a free $D_N$-action on a
manifold $M$ and $D_N$-manifold approximate fibration $M \to
S^1$ with induced infinite cyclic cover $\ol{M}$ 
homeomorphic to $W$.
\end{enumerate}
\end{theorem}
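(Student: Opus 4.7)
The plan is to establish the cycle $(1)\Longrightarrow(2)\Longrightarrow(3)\Longrightarrow(4)\Longrightarrow(5)\Longrightarrow(3)$ together with $(4)\Longrightarrow(1)$, upgrading each corresponding step of Chapman's theorem by the appropriate equivariance. Two new technical ingredients are needed: a $C_2$-equivariant sucking principle (the main technical contribution advertised in the abstract, applied here in dimension one with the orthogonal $C_2$-action $x\mapsto -x$), and a $C_2$-equivariant wrapping-up construction which produces the full $D_\infty$-action from a $C_2$-equivariant manifold approximate fibration.

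For $(1)\Longrightarrow(2)$, I would restrict the free $D_\infty$-action to its normal subgroup $C_\infty$, so as to land in hypothesis~(1) of Chapman's theorem; an equivariant enhancement of \cite[Prop.~17.14]{HughesRanicki} then yields a proper bounded fibration $W\to\br$ compatible with the $C_\infty$-translations, and a lift of the nontrivial element of $D_\infty/C_\infty$ gives the free $C_2$-action on $W$ covering the reflection of $\br$, with control data symmetrized to be $C_2$-bounded. Step $(2)\Longrightarrow(3)$ is precisely the $C_2$-case of the new equivariant sucking principle. The reverse $(3)\Longrightarrow(2)$ is immediate from the definitions. For $(3)\Longrightarrow(4)$, I would prove a $C_2$-equivariant analogue of \cite[Lemma~17.8]{HughesRanicki} using an equivariant version of Hughes's approximate isotopy covering principle; the $C_\infty$-action produced in this way automatically satisfies the semidirect relation $RTR=T^{-1}$ because it covers the $D_\infty$-action on $\br$ which does. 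Steps $(4)\Longrightarrow(5)$ and $(5)\Longrightarrow(3)$ are formal: given a free $D_\infty$-MAF $W\to\br$ and $N\geq 1$, the quotient $M:=W/(N\cdot C_\infty)$ carries a free $D_N$-action and a $D_N$-MAF $M\to\br/(N\bz)=S^1$ whose induced infinite cyclic cover is canonically $W\to\br$; conversely, the infinite cyclic cover of any such $M\to S^1$ inherits the $D_N$-action together with the $C_\infty$-action by deck transformations, assembling into a free $D_\infty$-action lifting the isometric $D_\infty$-action on $\br$. Finally $(4)\Longrightarrow(1)$ reduces to the non-equivariant case, i.e.\ \cite[Prop.~17.12]{HughesRanicki} applied to the underlying $C_\infty$-action, since finite domination is a property of the underlying topological manifold.

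The main obstacle is expected to be the equivariant sucking step $(2)\Longrightarrow(3)$, together with the closely related $C_2$-wrapping-up implicit in $(3)\Longrightarrow(4)$. The difficulty is that the $C_2$-action on $\br$ fixes the origin, so its preimage in $W$ is a $C_2$-invariant codimension-one submanifold $V$ on which $C_2$ acts freely; in Chapman's setting there is no distinguished submanifold to respect, whereas here any sucking estimate must simultaneously sharpen control on the two half-lines and across $V$, in a manner compatible with the free involution that swaps the halves. Executing this requires equivariant pasting estimates along $V$ that are not present in \cite{Chapman}, and is where the bulk of the new work will reside. A convenient strategy is to first deform the data so that $V$ is a clean $C_2$-invariant collar, then run Chapman's sucking argument on a $C_2$-equivariant neighbourhood of $V$ and on the complementary halves separately, and finally paste the estimates using an equivariant straightening of the collar.
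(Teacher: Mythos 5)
Your cycle of implications matches the paper's, and several steps (the formal $(3)\Rightarrow(2)$, $(4)\Rightarrow(5)\Rightarrow(3)$, and $(4)\Rightarrow(1)$) are handled exactly as the paper does. The substantive disagreements are at $(1)\Rightarrow(2)$ and, more seriously, at $(3)\Rightarrow(4)$.

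For $(3)\Rightarrow(4)$ there is a genuine gap. You assert that the $C_\infty$-action produced by equivariant approximate isotopy covering ``automatically satisfies the semidirect relation $RTR=T^{-1}$ because it covers the $D_\infty$-action on $\br$ which does.'' This is false: approximate isotopy covering only gives a homeomorphism $G_1$ of $W$ with $pG_1$ $\mu$-close to $g_1p$; the composite $RG_1^{-1}R$ is likewise only an approximate covering of $g_1$, so $RG_1R$ and $G_1^{-1}$ agree only up to control and in general $RG_1R\neq G_1^{-1}$ exactly. No relation in $D_\infty$ is inherited automatically. The paper's Theorem~\ref{thm:WrappingUp}(1) devotes a four-step cut--fill--paste construction precisely to repair this: one builds an isotopy $J^+_s$ on a half $K\cup B\cup C_+$ from $RG_s^{-1}R$ and $G_s$ via the Edwards--Kirby isotopy extension theorem, reflects it to define $J^-_s:=R(J^+_s)^{-1}R$ on $C_-$, checks these agree on the overlap $K$, and then verifies $J_1^{-1}R=RJ_1$ by construction. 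Without an analogous correction, your $T$ together with the given $R$ generate a group with no prescribed presentation, and you cannot conclude a free $D_\infty$-action exists.

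For $(1)\Rightarrow(2)$ you take a different and less detailed route. The paper first produces a genuine $D_\infty$-map $W\to\br$ directly from Proposition~\ref{prop:existGmap} (via $E_{\mathrm{fin}}D_\infty=\br$ on a $\Gamma$-CW replacement), so the equivariance is built in from the start, and then proves the bounded-fibration estimate by constructing a $C_2$-\emph{sliding domination} via a folding trick (Lemma~\ref{Lem_SlidingDomination}) before applying Theorem~\ref{thm:unwrapping}. Your suggestion to get a non-equivariant bounded fibration from the $C_\infty$-restriction and then ``symmetrize the control data'' is plausible in spirit, but it is not a proof; making the map both $C_2$-equivariant and a $C_2$-bounded fibration is exactly what the folding trick accomplishes, and it is the step you would have to execute. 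Finally, your proposed sucking strategy (straighten a $C_2$-invariant collar of $p^{-1}(0)$, suck on the two halves and a collar neighborhood separately, then paste) is genuinely different from the paper's: the paper passes to the quotient $N\to\oc(X)=\br^n/G$, applies a metric version of Chapman sucking (requiring the finite-isometry-type machinery of Section~3 and Proposition~\ref{prop:HenriquesLeary}) on the complement of the cone point, lifts equivariantly, pieces estimates with Theorem~\ref{thm:piecing}, and then post-composes with a radial crush (Lemma~\ref{lem:crush MAF}) to kill the control problem at the origin. The crush map removes the need to handle a neighborhood of the fixed point directly, which is where the collar-pasting approach would face its hardest estimates; absent a concrete version of that pasting, the paper's crush-based argument is the one that actually closes the step.
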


The prototypical example of a manifold $W$ satisfying the conditions
of Theorem~\ref{intro thm:main} is the product $W=S^n\times\br$,
where the $D_\infty$-action on $W$ is given by $T(x,t)=(x,t+1)$ and
$R(x,t) = (-x,-t)$.

Here is a guide to finding the proof of the theorem in the rest of
the paper.

\begin{proof}
The implication $(1) \Longrightarrow (2)$ follows from
Proposition~\ref{prop:existGmap} and then Theorem
\ref{thm:unwrapping}. The implication $(2) \Longrightarrow (3)$ is
the special case of $G=C_2$ and $n=1$ in Corollary
\ref{cor:equivariant_bounded_sucking}, which requires freeness. The
implication $(3) \Longrightarrow (2)$ is clear from the definitions.
The implication $(3) \Longrightarrow (4)$ is
Theorem~\ref{thm:WrappingUp}. The implication $(4) \Longrightarrow
(5)$ follows by taking the quotient by the action of the subgroup $N
\cdot C_\infty$, where $N \geq 1$. The implication $(5)
\Longrightarrow (3)$ follows from taking the infinite cyclic cover
of $M \to S^1$ for the special case $D_1 = C_2$.

Finally, consider the implication $(4) \Longrightarrow (1)$.
Restrict the $D_\infty$-action on $W$ and the $D_\infty$-manifold
approximate fibration $p: W \to \br$ to the finite-index subgroup
$C_\infty$. Therefore, by the implication $(4) \Longrightarrow (1)$
in Theorem \ref{thm:Chapman_equivalences}, we obtain that $W$ is
finitely dominated and the $C_\infty$-action (hence the
$D_\infty$-action) on $W$ is cocompact, free, and discontinuous.\qed
\end{proof}

Most of the work in this paper applies to settings more general than
the real line in the main theorem. We now discuss some of the
highlights.

The first is a variation on the well-known property that close maps
into an ANR $B$ are closely homotopic. In the compact case,
closeness is measured uniformly by using a metric on $B$. In the
non-compact case, open covers of $B$ are used (see
Section~\ref{subsec:Metrically close maps into ANRs}). In this paper
we are confronted with metrically close maps into a non-compact ANR
$(B,d)$ which we want to conclude are metrically closely homotopic.
We isolate a condition, called \emph{finite isometry type}, that
allows us to do this. Here is the result.

\begin{theorem}
Suppose $(B,d)$ is a triangulated metric space with finite isometry
type. For every $\epsilon >0$ there exists $\delta > 0$ such that:
if $X$ is any space and $f,g\co X\to B$ are two close $\delta$-close
maps, then $f$ and $g$ are $\epsilon$-homotopic rel
\[
X_{(f=g)} := \{ x\in X ~|~ f(x) =g(x)\}.
\]
\end{theorem}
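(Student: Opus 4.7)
The plan is to reduce the noncompact statement to finitely many applications of the standard \emph{compact-ANR} fact: for a compact ANR $A$ embedded in some Euclidean space with open neighborhood retraction $r\co U \to A$, for every $\eta > 0$ there is $\zeta = \zeta(A,\eta) > 0$ such that any two $\zeta$-close maps $f,g\co X \to A$ are $\eta$-close via the straight-line homotopy $H(x,t) := r\bigl((1-t)f(x) + t g(x)\bigr)$. This $H$ is automatically constant on $X_{(f=g)}$, which gives the \emph{rel} condition for free.

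The finite isometry type hypothesis is what upgrades this into a uniform statement over the noncompact $B$. I would cover $B$ by the closed stars $\mathrm{St}(v)$ of vertices of the triangulation; by hypothesis these stars belong to only finitely many isometry classes $\sigma_1, \dots, \sigma_k$, each of which is a compact ANR (being a finite simplicial complex with PL metric). Since closed simplex diameters also take only finitely many values, the cover $\{\mathrm{St}(v)\}$ admits a uniform positive Lebesgue number $\rho$. Set $\delta := \min\bigl(\rho/2,\, \zeta(\sigma_1,\epsilon),\dots,\zeta(\sigma_k,\epsilon)\bigr)$.

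Given $\delta$-close maps $f,g\co X \to B$, for every $x \in X$ the pair $(f(x),g(x))$ lies in a common star $\mathrm{St}(v)$, whose isometric model $\sigma_{i(x)}$ supplies a straight-line-plus-retraction path from $f(x)$ to $g(x)$ of diameter $<\epsilon$. To assemble these local paths into a well-defined global homotopy $H\co X \times I \to B$, I would carry out the construction intrinsically: use the straight segment in the unique minimal simplex of $B$ spanned by $f(x)$ and $g(x)$, then post-compose with the neighborhood retraction furnished by the model of whichever star contains that simplex. This path depends only on the pair $(f(x),g(x))$, is constant when $f(x)=g(x)$, and has image of diameter $<\epsilon$.

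The principal obstacle I foresee is well-definedness and continuity of the globally-assembled $H$, since the enclosing star $\mathrm{St}(v)$ is not canonical. The resolution is exactly the intrinsic construction above: any two stars containing both $f(x)$ and $g(x)$ also contain their minimal carrier simplex, so the intrinsic straight segment, and hence $H(x,\cdot)$, is unambiguous. Continuity in $x$ then reduces to continuity of barycentric coordinates across changes of carrier simplex, together with the uniform retraction estimates on the finitely many models $\sigma_i$. The finite isometry type hypothesis is used precisely at one point: to convert the pointwise $\eta$-$\zeta$ correspondence into a single $\delta$ valid across all of $B$.
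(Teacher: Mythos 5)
Your proposal has a genuine gap at exactly the point you flag as the ``principal obstacle.'' You claim the local paths are unambiguous because ``any two stars containing both $f(x)$ and $g(x)$ also contain their minimal carrier simplex,'' but there is in general no simplex of $B$ containing both $f(x)$ and $g(x)$. The carrier of a \emph{single} point is well-defined, but once $f(x)$ and $g(x)$ lie in the interiors of distinct top-dimensional simplices (for example, two adjacent triangles of a triangulated surface, with both points near the shared edge but on opposite sides), there is no ``unique minimal simplex of $B$ spanned by $f(x)$ and $g(x)$,'' so the intrinsic straight segment you want to draw does not exist. Consequently the retraction-based path does depend on the choice of enclosing star and on the chosen embedding/retraction for each model $\sigma_i$, and the global well-definedness of $H$ is not established. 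Continuity in $x$ is the correlated problem: as $x$ moves, the pair $(f(x),g(x))$ can slide from being carried by one star to another discontinuously, so the pointwise paths do not automatically glue to a continuous $H\co X\times I\to B$.

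This is exactly the difficulty that the paper's proof of Proposition~\ref{prop:metric ANR} is designed to surmount, and its mechanism is essentially different from yours. Rather than assembling canonical local segments, the paper argues by induction on $\dim B$: it builds a controlled strong deformation retraction $r$ of a punctured neighborhood of the $(n-1)$-skeleton onto $(B\setminus A)\cup A^{n-1}$, chosen compatibly with the finitely many shapes of $n$-simplices (this is the crucial Finiteness condition (5) in that proof); it splits $X$ into the set $Y$ where $f$ and $g$ share a top-dimensional carrier (handled by the compact-ANR Lemma~\ref{lem:compact ANR} applied simplex-by-simplex) and the set $Z$ where they do not; on $Z$ it pushes both maps down to the $(n-1)$-skeleton via $r_1$ (made possible by Definition~\ref{def:FIT}(3), which guarantees both image points are uniformly near the skeleton), then invokes the inductive hypothesis in one lower dimension; finally it concatenates the three small homotopies. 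The inductive retraction to lower skeleta is precisely the replacement for the nonexistent ``minimal carrier of a pair'' that your argument needs, so to repair your proof you would essentially have to reproduce the paper's induction.

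A smaller inaccuracy: the existence of a uniform Lebesgue number for the closed-star cover does not follow merely from ``closed simplex diameters take only finitely many values''; you also need the vertex-separation and near-skeleton conditions (Definition~\ref{def:FIT}(2)--(3)), which control how the stars overlap, not just how big they are. The paper's hypotheses supply exactly this, so the claim is true, but the reason you give is incomplete.
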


See Section~\ref{subsec:Finite_isometry_type} for the definition of
finite isometry type and Section~\ref{subsec:Metrically close maps
into ANRs} for a proof of the theorem (Proposition~\ref{prop:metric
ANR}).

Likewise, we need a metric version of Chapman's manifold approximate
fibration sucking theorem, which has an open cover formulation in
Chapman's work. We again find the finite isometry type condition
suitable for our purposes.

\begin{theorem}[Metric MAF Sucking]
\label{intro thm:metric maf sucking} Suppose $(B,d)$ is a
triangulated metric space with finite isometry type, and let $m >
4$. For every $\epsilon >0$ there exists $\delta > 0$ such that: if
$M$ is an $m$-dimensional manifold and $p\co M\to B$ is a proper
$\delta$-fibration, then $p$ is $\epsilon$-homotopic to a manifold
approximate fibration $p'\co M\to B$.
\end{theorem}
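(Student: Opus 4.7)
The plan is to derive this uniform metric statement from Chapman's original open-cover formulation of manifold approximate fibration sucking, with finite isometry type of $(B,d)$ providing the uniform translation between metric and cover-theoretic control.

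First, given $\epsilon > 0$, I would construct an open cover $\alpha$ of $B$ of uniformly bounded mesh such that every pair of $\alpha$-close maps into $B$ is $\epsilon$-homotopic (an application of the previously-stated Proposition~\ref{prop:metric ANR}). The finite isometry type hypothesis is what makes this feasible in the non-compact setting: $B$ admits a subdivision of its triangulation in which only finitely many isometry classes of simplices appear, so taking $\alpha$ to be the open stars of vertices in a sufficiently fine uniform subdivision produces such a cover whose relevant parameters are controlled globally rather than just locally.

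Next, I would apply Chapman's MAF sucking theorem in its open-cover form to $\alpha$. Since $m > 4$, this yields an open cover $\beta$ of $B$ with the property that every proper $\beta$-fibration $p\co M\to B$ from an $m$-manifold $M$ is $\alpha$-homotopic to a manifold approximate fibration $p'\co M\to B$. Invoking finite isometry type a second time, I would arrange that $\beta$ is the open stars of vertices of a still finer uniform subdivision, so that $\beta$ admits a positive uniform Lebesgue number $\delta > 0$: every open $d$-ball of radius $\delta$ in $B$ sits inside some element of $\beta$. For this $\delta$, any proper $\delta$-fibration is automatically a $\beta$-fibration, hence $\alpha$-homotopic to a manifold approximate fibration $p'$, and the $\alpha$-homotopy is $\epsilon$-small by the choice of $\alpha$, completing the argument.

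The main obstacle is producing the uniform positive Lebesgue number for $\beta$ in the non-compact setting: for an arbitrary non-compact metric ANR the open cover furnished by Chapman's theorem could easily degenerate at infinity, forcing the Lebesgue number to vanish and causing the whole strategy to collapse. Finite isometry type is precisely the hypothesis that rules out this pathology, because it reduces the relevant local geometric combinatorics to finitely many isometry models, from which a uniform $\delta > 0$ can be extracted by taking minima over the finite class of models. I expect the technical heart of the proof to consist in making the choices of subdivisions in steps one and two sufficiently compatible that the Chapman open-cover output can actually be dominated by the geometrically uniform stars supplied by finite isometry type, rather than being only combinatorially refined by them.
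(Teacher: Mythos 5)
Your first step is fine and matches the paper's strategy: Proposition~\ref{prop:metric ANR} (equivalently Corollary~\ref{cor:FIT metric ANR}) is indeed used to convert ``$\epsilon$-close'' into ``$\epsilon$-homotopic,'' and finite isometry type is exactly what powers that conversion in the non-compact setting. The gap is in the second step. Chapman's MAF Sucking Theorem (Theorem~\ref{Chapman's MAF Sucking Theorem}) is an \emph{existence} statement: given $\alpha$, there exists \emph{some} open cover $\beta$ with the lifting property. You do not get to choose $\beta$, and nothing about the finite isometry type of $B$ constrains what $\beta$ Chapman's theorem hands back. Your phrase ``I would arrange that $\beta$ is the open stars of vertices of a still finer uniform subdivision'' is precisely where the argument collapses: to show that $\beta$ can be taken uniform, you would have to re-enter Chapman's proof and check that all the constants produced along the way can be chosen simultaneously over the infinitely many local handle charts. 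A black-box invocation of the theorem gives no leverage on this. The worry is not hypothetical --- the paper's counterexample ($B=\bigsqcup_{i} S^1_i$ with $\diam S^1_i \to 0$) shows that without a uniformity hypothesis on the metric, Chapman's $\beta$ genuinely must degenerate at infinity and no positive $\delta$ exists.

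The paper therefore does not use Chapman's theorem as a black box. It instead quotes Chapman's two Handle Lemmas (Lemmas~\ref{lem:chapman handle 1} and \ref{lem:chapman handle 2}), which operate on a single compact handle and hence are uniform by compactness, and then re-runs the inductive assembly over the skeleta of $B$ (Proposition~\ref{prop:Chapman main}, Theorem~\ref{thm:metric sucking}). The role of finite isometry type --- packaged as the Finiteness condition (13) in Hypothesis~\ref{hyp:finite} --- is to guarantee that the infinitely many handle charts fall into finitely many isometry classes, so that the $\delta$'s and $\nu$'s supplied by the handle lemmas can be chosen once per class and hence have a uniform positive minimum. A Limit Lemma (Lemma~\ref{lem:limit}) and a completeness hypothesis then let one pass from iterated $\delta_i$-fibrations to an honest approximate fibration. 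That reopening of Chapman's proof is the technical heart you anticipated, but it cannot be deferred to ``compatible choices of subdivisions'' applied to an uncontrolled output cover; it has to be done from the Handle Lemmas up.
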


The proof of Theorem~\ref{intro thm:metric maf sucking} is found in
Section~\ref{subsec:Metric version of Chapman's MAF sucking theorem}
(use Corollary ~\ref{cor:FIT-nonrel},
Proposition~\ref{prop:metric ANR}).

The following result is an equivariant version of Theorem~\ref{intro
thm:metric maf sucking}, valid for many finite subgroups of $O(n)$.
We will need it only for the simplest non-trivial case of $C_2\leq
O(1)$ when we perform dihedral wrapping up. However, we expect
future applications in the theory of locally linear actions of
finite groups on manifolds.

\begin{theorem}[Orthogonal Sucking]
\label{intro thm: ortho sucking} Suppose $G$ is a finite subgroup of
$O(n)$ acting freely on $S^{n-1}$, and let $m > 4$. For every $\epsilon > 0$ there
exists $\delta > 0$ such that: if $M$ is a free $G$-manifold of
dimension $m$ and $p\co M\to\br^n$ is a proper
$G$-$\delta$-fibration, then $p$ is $G$-$\epsilon$-homotopic to a
$G$-manifold approximate fibration $p': M \to \br^n$.
\end{theorem}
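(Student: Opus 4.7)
The plan is to reduce to the non-equivariant Metric MAF Sucking Theorem (Theorem~\ref{intro thm:metric maf sucking}) by passing to orbit spaces. Since $G$ acts freely on $M$, the quotient $\bar{M} := M/G$ is an $m$-manifold; since $G$ acts freely on $S^{n-1}$, the quotient $\bar{B} := \br^n/G$ is the open cone on the spherical space form $S^{n-1}/G$, with unique singular point the image $\bar{0}$ of the origin. The $G$-equivariant $\delta$-fibration $p$ descends to a proper $\delta$-fibration $\bar{p}\co\bar{M}\to\bar{B}$, with the $\delta$-constant preserved because $G$ acts by isometries.

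To invoke Theorem~\ref{intro thm:metric maf sucking} on $\bar{p}$, I must equip $\bar{B}$ with a triangulation of finite isometry type. Starting from a $G$-equivariant triangulation of $\br^n$ whose simplices fall into only finitely many isometry classes---for instance, a $G$-invariant subdivision of the standard integer cubulation of $\br^n$ adapted to the finite subgroup $G \leq O(n)$---I push forward to $\bar{B}$. Because $G$ has only finitely many subgroups and each simplex of the quotient triangulation takes the form $\sigma/G_\sigma$ for some stabilizer $G_\sigma \leq G$, the induced triangulation of $\bar{B}$ has only finitely many isometry types. Applying Theorem~\ref{intro thm:metric maf sucking} with tolerance $\bar{\epsilon}$ (to be chosen depending on $\epsilon$ and $|G|$) then produces a manifold approximate fibration $\bar{p}'\co\bar{M}\to\bar{B}$ that is $\bar{\epsilon}$-homotopic to $\bar{p}$.

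Finally, lift $\bar{p}'$ to a $G$-equivariant map $p'\co M\to\br^n$ by setting $p'(x)$ to be the unique point of $\pi^{-1}_{\br^n}(\bar{p}'(\pi_M(x)))$ closest to $p(x)$; this is well-defined, continuous, and $G$-equivariant provided $\bar{\epsilon}$ is sufficiently small, since $\br^n\to\bar{B}$ is a free $G$-cover away from $\bar{0}$ and the fiber over $\bar{0}$ is the singleton $\{0\}$. The $\bar{\epsilon}$-homotopy from $\bar{p}$ to $\bar{p}'$ lifts analogously to a $G$-$\epsilon$-homotopy from $p$ to $p'$. The hard part will be verifying that $p'$ itself is a manifold approximate fibration (not merely an equivariant cover of one): because the branched covering $\br^n\to\bar{B}$ has ramification at the cone point, the approximate homotopy lifting property must be transferred carefully from $\bar{p}'$ to $p'$ in neighborhoods of $0$, and one must check that monodromy of the $G$-cover does not prevent homotopy lifts from tracking the closest-branch choice. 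Both freeness hypotheses are essential here, ensuring $\bar{M}$ is a manifold and that the singular locus of $\bar{B}$ is confined to the single point $\bar{0}$.
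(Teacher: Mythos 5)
Your overall strategy (pass to the orbit space, use finite isometry type, apply metric sucking, lift back) is the right skeleton, but the proposal has a genuine gap at two places. First, the closest-branch lifting rule fails near the cone point. If $p(x)=0$ but $\bar{p}'(\pi_M(x))\neq\bar{0}$, then every one of the $|G|$ points in $\pi^{-1}_{\br^n}(\bar{p}'(\pi_M(x)))$ is equidistant from $p(x)=0$, so there is no unique closest branch, and more seriously the covering $\br^n\setminus\{0\}\to\bar{B}\setminus\{\bar{0}\}$ has nontrivial monodromy, so a continuous equivariant lift of an arbitrary $\bar{p}'$ need not exist at all. You cannot make $\bar\epsilon$ small enough to fix this, because the obstruction is local at the singular fiber, not metric. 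The paper avoids the problem by applying only the \emph{relative} metric sucking result (Corollary~\ref{cor:chapman}) to the restriction of $p/G$ over the frustum $X\times(0,\infty)$, producing a homotopy of $\bar p$ that is stationary on $g^{-1}(\cone_{\epsilon-\epsilon_0}(X))$; since the homotopy does not move anything near the singular fiber, Lemma~\ref{lem:G-homotopy} (really the stratified covering lemma, Lemma~\ref{lemma:basic quotient}) lifts it unambiguously to a $G$-homotopy of $p$.

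Second, even after this fix, the resulting $p'$ is a $G$-MAF only over $\br^n\setminus\mathrm{B}^n_\epsilon$ (this is exactly Lemma~\ref{lem:equivariant epsilon,MAF}); it gives no control over the small ball where the map was left alone. Promoting this to a genuine $G$-MAF is the real content of the theorem and requires two ingredients absent from your proposal: the piecing result (Theorem~\ref{thm:piecing} and Lemma~\ref{lem:cor_piecing}), which shows that a map which is a $G$-$\delta$-fibration everywhere and a $G$-MAF outside $\mathrm{B}^n_{r_1}$ is automatically a $G$-$(\epsilon,\nu)$-fibration over the pair $(\br^n,\br^n\setminus\mathrm{B}^n_{r_2})$ for every $\nu>0$; and the radial crush trick (Lemma~\ref{lem:crush MAF}), post-composing with the radial crush $\rho_{a+\epsilon}$ (itself a $G$-MAF by Lemma~\ref{lem:crush_prop}) to absorb the uncontrolled error over the small ball. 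Without these steps the argument stalls with a map that is a $G$-MAF only away from the origin, which is strictly weaker than the stated conclusion.
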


The proof of Theorem~\ref{intro thm: ortho sucking} is located in
Section~\ref{section:sucking}.

Let us consider an example that illustrates how one may enter into
the situation of Theorem~\ref{intro thm:main}. Consider an
equivariant version of Farrell's celebrated fibering theorem
\cite{Farrell_thesis} \cite{Farrell_ICM}, which solves the problem of when
a map $f: M\to S^1$ from a high-dimensional manifold $M$ to the
circle $S^1$ is homotopic to a fiber bundle projection. One usually
inputs the necessary condition that $\overline{M}$ be finitely
dominated. The first non-trivial equivariant version of this problem
concerns the action of $C_2$ on $S^1$ by complex conjugation and a
free action of $C_2$ on $M$. The prototypical example of such a
$C_2$-fiber bundle is the projection $p: S^n\times S^1\to S^1$,
where the free $C_2$-action on $S^n\times S^1$ is given by
$R(x,z)=(-x,\bar z)$. Observe that the orbit space is a connected
sum of projective spaces, $(S^n\times S^1)/C_2 = \br P^{n+1}\#\br
P^{n+1}$. With this action of $C_2$ on $M=S^n\times S^1$, the
infinite cyclic cover is $\overline{p}:
\overline{M}=S^n\times\br\to\br$ with the $D_\infty$-action
described above. In the language of
Section~\ref{sec:bounded_fibrations}, $(M,f,R)$ is a $C_2$-manifold
band. It is shown in Theorem~\ref{thm:unwrapping} that
$\overline{f}: \overline{M} \to \br$ is a $C_2$-bounded fibration.
Thus, Theorem~\ref{intro thm:main} is a preliminary step in
constructing a $C_2$-relaxation of a $C_2$-manifold band, following
Siebenmann's \cite{Siebenmann} approach in the non-equivariant case.
We expect to return to the $C_2$-fibering problem in a future paper.

\section{Preliminary notions}

This section contains most of the standard terminology that we will
use throughout the paper. In addition, there are statements of two
known principles that we will need: relative sucking and approximate
isotopy covering.

By \emph{manifold} we mean a metrizable topological manifold without
boundary. An action of a discrete group $\Gamma$ on a topological
space $X$ is \emph{discontinuous} if, for all $x \in X$, there
exists a neighborhood $U$ of $x$ in $X$ such that the set $\{g \in
\Gamma ~|~ U \cap g(U) = \varnothing \}$ is finite. The action is
\emph{cocompact} if $X/\Gamma$ is compact. In this paper, every
\emph{ANR} (absolute neighborhood retract) is understood to be
locally compact, separable, and metrizable.

We denote the \emph{natural numbers} by $\bn := \bz_{\geq 0} = \{
0,1,2,3,\ldots\}$, and the \emph{unit interval} by $I := [0,1]$. For
each $k\in\bn$ and $r>0$, denote the \emph{open} and \emph{closed
cubes}:
\[
\ob{}_r^k := (-r,r)^k \subseteq \mathrm{B}_r^k :=
[-r,r]^k\subseteq\br^k.
\]
We denote the various \emph{open cones} on a topological space $X$
as follows:
\begin{eqnarray*}
\oc(X) &:=& (X\times [0,\infty))/ (X \times \{0\}) \\
\cone_r(X) &:=& (X\times [0,r])/ (X \times \{0\}) \\
\oc_r(X) &:=& (X\times [0,r))/ (X \times \{0\}) ~.
\end{eqnarray*}

If $(Y,d)$ is a metric space, $\epsilon > 0$, and $H\co X\times I\to
Y$ is a homotopy, then $H$ is an \emph{$\epsilon$-homotopy} if the
diameter of $H(\{x\}\times I)$ is less than $\epsilon$ for all $x\in
X$. The homotopy $H$ is a \emph{bounded homotopy} if it is an
$\epsilon$-homotopy for some $\epsilon >0$. Given two maps $u,v: W
\to Y$, we say that \emph{$u$ is $\epsilon$-close to $v$} if
$d(u(w),v(w)) < \epsilon$ for all $w \in W$. Furthermore, given
$\mu>0$ and $A \subseteq Y$, we say that \emph{$u$ is
$(\epsilon,\mu)$-close to $v$ with respect to $A$} if $u$ is
$\epsilon$-close to $v$ and $u|v^{-1}(A)$ is $\mu$-close to
$v|v^{-1}(A)$.

\subsection{Lifting problems}
The notion of approximate fibration is due to Coram--Duvall
\cite{CoramDuvall}. A convenient source for the following
definitions is \cite[\S16]{HughesRanicki}.

\begin{definition} Let $(Y,d)$ be a metric space, and let $p\co X\to Y$ be a map.
\begin{enumerate}
\item Let $\epsilon > 0$. The map
$p$ is an \textbf{$\epsilon$-fibration} if, for every space $Z$
and commutative diagram
$$
\begin{diagram}
\node{Z} \arrow{s,tb}{\times 0}{} \arrow{e,t}{f} \node{X} \arrow{s,b}{p} \\
\node{Z\times I}\arrow{ne,t,..}{\tilde F} \arrow{e,t}{F} \node{Y}
\end{diagram}
$$
there exists an \textbf{$\epsilon$-solution}. That is, there
exists a homotopy $\tilde F : Z\times I\to X$ such that $\tilde
F(-,0) = f$ and $p\tilde F$ is $\epsilon$-close to $F$. The
above diagram is a \textbf{homotopy lifting problem}.

\item Let $\epsilon >0$ and $A \subseteq Y$. The map $p$ is an \textbf{$\epsilon$-fibration over $A$}
if every homotopy lifting problem with $F(Z \times I) \subseteq
A$ has an \textbf{$\epsilon$-solution}.  In particular, the map
$p$ is a \textbf{$\epsilon$-fibration} if $p$ is a
$\epsilon$-fibration over $Y$.

\item The map $p$ is a
\textbf{bounded fibration} if $p$ is an $\epsilon$-fibration for
some $\epsilon >0$. The map $p$ is a \textbf{manifold
approximate fibration (MAF)} if $X$ and $Y$ are topological
manifolds and $p$ is a proper $\epsilon$-fibration for every
$\epsilon>0$.
\end{enumerate}
\end{definition}

\begin{remark}
By an {\it $\epsilon$-fibration}, Chapman \cite{Chapman} means a
proper map between ANRs, that is, an $\epsilon$-fibration for the
class of locally compact, separable, metric spaces. This restriction
on the class of spaces is of no consequence in the present paper as
can be seen in either of two ways. First, the reader will note that
the only property we use of $\epsilon$-fibrations is the
$\epsilon$-homotopy lifting property for compact, metric spaces.
Second, the techniques of Coram--Duvall \cite{CoramDuvall2} can be
adapted to show: for a proper map $E\to B$ between ANRs, the
$\delta$-homotopy lifting property for compact, metric spaces
implies the $\epsilon$-homotopy lifting property for all spaces
(where $\delta >0$ depends only on $\epsilon
>0$ and the metric on $B$).
\end{remark}

\subsection{Equivariant lifting problems}
The notion of $(\epsilon,\nu)$-fibration is due to Hughes
\cite{Hughes1985Q}.  See Prassidis \cite{Prassidis} for a more
complete treatment.

Let $G$ be a group. A \emph{$G$-space} is a topological space with a
left $G$-action, and a \emph{$G$-map} between $G$-spaces is a
continuous, equivariant map. Throughout the paper, we assume that
the unit interval $I$ is a trivial $G$-space.

\begin{definition}
Let $(Y,d)$ be a metric space, and let $p\co X\to Y$ be a $G$-map.
\begin{enumerate}
\item
Let $\epsilon > 0$. The map $p$ is a
\textbf{$G$-$\epsilon$-fibration} if, for every $G$-space $Z$
and commutative diagram of $G$-maps:
$$
\begin{diagram}
\node{Z} \arrow{s,tb}{\times 0}{} \arrow{e,t}{f} \node{X} \arrow{s,b}{p} \\
\node{Z\times I}\arrow{ne,t,..}{\tilde F} \arrow{e,t}{F} \node{Y}
\end{diagram}
$$
there exists a \textbf{$G$-$\epsilon$-solution}. That is, there
exists a $G$-homotopy $\tilde F : Z\times I\to X$ such that
$\tilde F (-,0) = f$ and $p\tilde F$ is $\epsilon$-close to $F$.
The diagram above is called a \textbf{$G$-homotopy lifting
problem}.

\item Let $\epsilon >0$ and $A \subseteq Y$. The map $p$ is a \textbf{$G$-$\epsilon$-fibration over $A$}
if every $G$-homotopy lifting problem with $F(Z \times I)
\subseteq A$ has a \textbf{$G$-$\epsilon$-solution}. In
particular, the map $p$ is a \textbf{$G$-$\epsilon$-fibration}
if $p$ is a $G$-$\epsilon$-fibration over $Y$.

\item The map $p$ is a \textbf{$G$-bounded fibration} if $p$ is a $G$-$\epsilon$-fibration
for some $\epsilon > 0$. The map $p$ is a \textbf{$G$-manifold
approximate fibration ($G$-MAF)} if $X$ and $Y$ are topological
manifolds and $p$ is a proper $G$-$\epsilon$-fibration for every
$\epsilon > 0$.

\item Let $\epsilon, \mu >0$ and $A \subseteq B \subseteq Y$.  The map $p$ is a \textbf{$G$-$(\epsilon, \mu)$-fibration
over $(B,A)$} if every $G$-homotopy lifting problem with $F(Z
\times I) \subseteq B$ has a
\textbf{$G$-$(\epsilon,\mu)$-solution over $(B,A)$}. That is,
there exists a $G$-homotopy $\tilde{F} : Z\times I\to X$ such
that $\tilde{F}(-,0) = f$ and $p\tilde{F}$ is
$(\epsilon,\mu)$-close to $F$ with respect to $A$.
\end{enumerate}
\end{definition}

\subsection{Stratified lifting problems}
See Hughes \cite{Hughes2002} for the definitions here.

\begin{definition}
Let $X$ be a topological space.
\begin{enumerate}
\item
A \textbf{stratification of $X$} is a locally finite partition
$\{ X_i\}_{i\in \cI}$ for some set $\cI$ such that each $X_i$,
called the \textbf{$i$-stratum}, is a locally closed subspace of
$X$. We call $X$ a \textbf{stratified space}.
\item
If $X$ is a stratified space, then a map $f:Z\times A\to X$ is
\textbf{stratum-preserving along $A$} if, for each $z\in Z$, the
image $f(\{ z\}\times A)$ lies in a single stratum of $X$. In
particular, a map $f:Z\times I\to X$ is a
\textbf{stratum-preserving homotopy} if $f$ is
stratum-preserving along $I$.
\end{enumerate}
\end{definition}

\begin{definition}
Let $(Y,d)$ be a metric space, and let $p:X\to Y$ be a map. Suppose
$X$ and $Y$ are spaces with stratifications $\{ X_i\}_{i\in \cI}$
and $\{ Y_j\}_{j\in \cJ}$.
\begin{enumerate}
\item
The map $p$ is a \textbf{stratified fibration} if, for every
commutative diagram
$$
\begin{diagram}
\node{Z} \arrow{s,tb}{\times 0}{} \arrow{e,t}{f} \node{X} \arrow{s,b}{p} \\
\node{Z\times I}\arrow{ne,t,..}{\tilde F} \arrow{e,t}{F} \node{Y}
\end{diagram}
$$
with $F$ a stratum-preserving homotopy, there exists a
\textbf{stratified solution}. That is, there exists a
stratum-preserving homotopy $\tilde F : Z\times I\to X$ such
that $\tilde F (-,0) = f$ and $p\tilde F= F$. The diagram above
is called a \textbf{stratified homotopy lifting problem}.

\item Let $\epsilon >0$. The map $p$ is a \textbf{stratified
$\epsilon$-fibration} if, for every stratified homotopy lifting
problem, there exists a \textbf{stratified $\epsilon$-solution}.
That is, there exists a stratum-preserving homotopy $\tilde F :
Z\times I\to X$ such that $\tilde F(-,0) = f$ and $p\tilde F$ is
$\epsilon$-close to $F$.
\end{enumerate}
\end{definition}

\subsection{Relative sucking}

In Section~\ref{sec:wrappingup} we will need the following relative
version of Chapman's MAF Sucking Theorem \cite[Theorem 1]{Chapman}.
We include a proof that shows it is a formal consequence of
Chapman's work.

\begin{proposition}\label{prop:rel_sucking}
Suppose $a_1 < a_3 <b_3 <b_1$  are real numbers. For
every $m > 4$ there exists $\epsilon > 0$ such that: if $W$
is an $m$-dimensional manifold, $p\co W\to\br$ is a proper
$\epsilon$-fibration, and $p$ is an approximate fibration over
$(-\infty, a_3)\cup (b_3,\infty)$, then there exist a manifold
approximate fibration $p'\co W\to \br$ and a homotopy $p\simeq p'$
rel $p^{-1}((-\infty,a_1]\cup [b_1,\infty))$.
\end{proposition}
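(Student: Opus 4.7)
The plan is to invoke Chapman's non-relative MAF Sucking Theorem \cite[Theorem 1]{Chapman} to globally deform $p$ to a MAF, then use a cutoff function on the base to localize the resulting homotopy to a neighborhood of $p^{-1}([a_1,b_1])$. Fix auxiliary reals
\[
a_1 < a_1' < a_2 < a_3 < b_3 < b_2 < b_1' < b_1.
\]
For the given $m > 4$, choose a small $\eta > 0$ (the precise threshold to be specified in the last paragraph). Chapman's theorem then provides an $\epsilon > 0$ such that every proper $\epsilon$-fibration $p \co W \to \br$ from an $m$-manifold $W$ admits an $\eta$-homotopy $H \co W \times I \to \br$ with $H(\cdot,0) = p$ and $q := H(\cdot,1)$ a MAF. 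This $\epsilon$ is the constant asserted by the proposition.

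Pick a continuous cutoff $\lambda \co \br \to I$ with $\lambda \equiv 1$ on $[a_2,b_2]$ and $\lambda \equiv 0$ outside $[a_1',b_1']$, and define the localized homotopy
\[
H'(w,t) \;:=\; H\bigl(w,\; t\,\lambda(p(w))\bigr).
\]
Then $H'$ is an $\eta$-homotopy from $p$ to $p' := H'(\cdot,1)$, and it is the constant homotopy at $p$ whenever $p(w) \notin [a_1',b_1']$; in particular, $H'$ is rel $p^{-1}((-\infty,a_1] \cup [b_1,\infty))$. On $p^{-1}([a_2,b_2])$ one has $p' = q$, which is a MAF, while on $p^{-1}(\br \setminus [a_1',b_1'])$ one has $p' = p$, which is a MAF by hypothesis since $\br \setminus [a_1',b_1'] \subseteq (-\infty,a_3) \cup (b_3,\infty)$.

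The main obstacle is the two compact transition bands $p^{-1}([a_1',a_2])$ and $p^{-1}([b_2,b_1'])$: here $p'$ is an $\eta$-interpolation between the two MAFs $p$ and $q$, and so is \emph{a priori} only an $\eta'$-fibration with $\eta' = \eta'(\eta)$, not automatically a MAF. To finish, I would apply Chapman's sucking a second time, localized to each transition band rel its boundary---over which $p'$ already coincides with the MAF $p$ on one side and the MAF $q$ on the other---and absorb the resulting small further homotopy into $H'$. Since each band is compact over a bounded interval in $\br$, this boundary-rel sucking can be extracted from Chapman's $\epsilon$-$\delta$ version together with the approximate isotopy covering principle recalled in Section~2, provided $\eta$ was chosen small enough relative to the sucking constants for these compact pieces. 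Patching the three regions then yields the MAF $p'$ homotopic to $p$ rel $p^{-1}((-\infty,a_1] \cup [b_1,\infty))$.
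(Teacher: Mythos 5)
Your final step---applying ``Chapman's sucking a second time, localized to each transition band rel its boundary''---is precisely the relative sucking statement being proved, so the argument is circular at the one point where relative control has to be manufactured. Chapman's sucking theorem, in both the open-cover and the $\epsilon$-$\delta$ forms, produces a globally defined replacement of the map by a MAF together with a small homotopy; it gives no control on where that homotopy is stationary, and there is no off-the-shelf ``rel boundary'' version over a compact band to quote. The approximate isotopy covering principle of Corollary~\ref{cor:AIC} does not supply one either: that result lifts an isotopy of the base along a map that is \emph{already} a MAF, whereas over your transition bands $p'$ is merely a small perturbation of one. (A secondary issue: even granting a rel-boundary fix over each band, you would still need a local-global principle to promote ``MAF over each of five closed pieces'' to ``MAF over $\br$,'' and the sets you name are defined in $p$-coordinates rather than $p'$-coordinates, so asserting that $p'$ is an approximate fibration \emph{over} $[a_2,b_2]$ requires more care than is written.)

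The paper avoids both problems by never attempting to glue along a compact frontier. It applies Chapman's open-cover sucking theorem \emph{over the open interval} $(a_1,b_1)$ with the explicit cover $\mathcal U=\{(x-r_x,x+r_x)\}$, $r_x=\tfrac12\min\{x-a_1,b_1-x\}$, whose members shrink to zero diameter at the endpoints. Any $\mathcal U$-controlled homotopy over $(a_1,b_1)$ then extends by the constant homotopy over $p^{-1}((-\infty,a_1]\cup[b_1,\infty))$, so the ``rel'' conclusion is built into the choice of cover rather than imposed afterward by a cutoff. Chapman's local-global result \cite[Prop.~2.2]{Chapman}, applied to a cover of $(a_1,b_1)$ whose outer pieces are handled by the approximate-fibration hypothesis over $(-\infty,a_3)\cup(b_3,\infty)$ and whose middle piece is handled by the global $\epsilon$-fibration hypothesis, then verifies the $\mathcal V$-fibration input needed for Chapman's theorem over $(a_1,b_1)$. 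Notice that this is where $a_3$ and $b_3$ do real work, whereas they never actually enter your argument.
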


\begin{proof}
Let $m > 4$ be given. Let $\mathcal U$ be an open cover of
$(a_1,b_1)$ such that: if $p\co X\to \br$ is a map and
$p|p^{-1}(a_1,b_1)$ is $\mathcal U$-homotopic to a map $q:
p^{-1}(a_1,b_1)\to(a_1,b_1)$ via a homotopy $H$, then $H$ extends to
a homotopy $\widetilde{H}\co p\simeq\widetilde{q}$ rel
$p^{-1}((-\infty,a_1]\cup [b_1,\infty))$. For example, this property
is satisfied by the open cover
\[
\mathcal U := \{(x-r_x,x+r_x) | x \in (a_1,b_1) \} \text{ where } r_x :=
{\textstyle \frac{1}{2}}\min\{x-a_1, b_1-x\}.
\]
By Chapman's MAF Sucking Theorem \cite[Thm.~1]{Chapman} (see
Theorem~\ref{Chapman's MAF Sucking Theorem} below), there exists an
open cover $\mathcal V$ of $(a_1,b_1)$ such that: if $M$ is an
$m$-dimensional manifold and $p\co M\to (a_1,b_1)$ is a proper
$\mathcal V$-fibration, then $p$ is $\mathcal U$-homotopic to a MAF.

Next, select $a_2 \in (a_1, a_3)$ and $b_2 \in (b_3, b_1)$. Consider an open cover $\mathcal{O}$ of $(a_1,b_1)$:
\[
\mathcal O :=\left\{ (a_1, (a_2+a_3)/2), (a_2,b_2), ((b_3+b_2)/2,
b_1)\right\}.
\]
By a local-global result of Chapman \cite[Prop.~2.2]{ChapmanTAMS}
\cite[Prop.~2.2]{Chapman}, there exists an open cover $\mathcal W$
of $(a_1,b_1)$ such that any proper map of an ANR to $(a_1,b_1)$
restricting to a $\mathcal W$-fibration over the closure of each
member of $\mathcal O$ is a $\mathcal V$-fibration. Finally, choose
$\epsilon
>0$ such that the $\epsilon$-ball about any point of $[a_2,b_2]$ is
contained in some member of $\mathcal W$. One checks that $\epsilon$
satisfies the desired conditions.\qed
\end{proof}

\subsection{Approximate isotopy covering}

We begin with the 1-simplex version of the Approximation Theorem
\cite[Thm.~14.1]{HTW1990}.

\begin{theorem}[Hughes]\label{thm:Approximation}
Let $p: M \times I \to B \times I$ be a fiber-preserving MAF such
that $\dim\,M>4$. For every open cover $\delta$ of $B$, there exists
a fiber-preserving homeomorphism $H: M \times I \to M \times I$ such
that $H_0 = \id_M$ and $pH$ is $\delta$-close to $p_0 \times \id_I$.
\qed
\end{theorem}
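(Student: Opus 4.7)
The plan is to build the fiber-preserving homeomorphism $H$ slab-by-slab along $I$, by patching together local slicewise ``straightenings''. Writing $p(m,t)=(p_t(m),t)$, we are looking for $H(m,t)=(h_t(m),t)$ with $h_0=\id$ such that each $p_t\circ h_t$ is $\delta$-close to $p_0$; that is, we want to realize the path $t\mapsto p_t$ in the space of MAFs $M\to B$ by an isotopy of self-homeomorphisms of $M$ starting at the identity.

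First I would refine $\delta$ to an open cover $\delta_0$ of $B$ whose $n$-fold star refines $\delta$, and then subdivide $I$ as $0=t_0<t_1<\cdots<t_n=1$ finely enough that on each slab $M\times[t_{i-1},t_i]$ the restriction of $p$ is $\delta_0$-close to the product $p_{t_{i-1}}\times\id$; such a subdivision exists because $p$ is a proper MAF and $I$ is compact. On each slab I would then invoke a local sliced straightening of MAFs over a short interval (the single-slab basis of the Hughes--Taylor--Williams parametrized approximation theorem, proved via the torus trick together with $\dim M>4$ and Chapman's MAF sucking) to obtain a fiber-preserving homeomorphism $H_i$ of the slab with $H_i|_{M\times\{t_{i-1}\}}=\id$ and $pH_i$ $\delta_0$-close to $p_{t_{i-1}}\times\id$.

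Finally I would inductively assemble $H$: having built $H$ over $[0,t_{i-1}]$, extend across the $i$th slab by transporting $H_i$ via the current homeomorphism $H(-,t_{i-1})\co M\to M$, which preserves the identity condition at $t=0$. Each slab contributes at most a $\delta_0$ error, so the concatenated $pH$ is close to $p_0\times\id_I$ within the $n$-fold star of $\delta_0$, which refines $\delta$ by choice of $\delta_0$. The main obstacle is the second step: the slicewise existence of $H_i$ on a single short slab. This is the genuine geometric content, resting on $\dim M>4$ and on a controlled/parametrized Edwards--Kirby isotopy extension combined with Chapman's MAF sucking principle; the subdivision and the inductive patching are then essentially open-cover bookkeeping.
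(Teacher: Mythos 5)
Your outline does not match the route the paper actually takes, and more importantly the central step in your plan is circular. The paper does not prove this theorem in-text: it cites \cite[Thm.~14.1]{HTW1990} and \cite[Thm.~7.1]{Hughes1985T}, and explains explicitly that the cited proof proceeds \emph{by induction on the index of handles of the base $B$}. The inductive axis there is $B$, not the parameter interval $I$; for each handle of $B$ the whole $I$-family is straightened at once via engulfing and torus geometry. Your proposal instead subdivides $I$ into slabs and claims the problem reduces to a ``single-slab'' straightening, which you then cite as a black box.

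The difficulty is that the single-slab result you invoke is not a lemma simpler than the theorem -- it \emph{is} the theorem. A subinterval $[t_{i-1},t_i]$ is homeomorphic to $I$, and asking for a fiber-preserving homeomorphism $H_i$ with $H_i(\,\cdot\,,t_{i-1})=\id$ and $pH_i$ close to $p_{t_{i-1}}\times\id$ over the slab is word-for-word the conclusion of Theorem~\ref{thm:Approximation} with $p_0$ replaced by $p_{t_{i-1}}$. The extra hypothesis you impose -- that $p$ is $\delta_0$-close to the product on the slab -- is automatically arranged by continuity and compactness for any sufficiently fine subdivision, so it adds no leverage; and closeness of \emph{maps} does not by itself yield a straightening \emph{homeomorphism}. (Chapman's sucking principle, which you invoke, produces a nearby approximate fibration, not a homeomorphism making the family constant; producing the homeomorphism $H$ is exactly the hard content of this theorem and is what the handle-by-handle torus-trick argument achieves.) Likewise, Edwards--Kirby isotopy extension lets you extend a given ambient isotopy from a compactum, but it does not create the slab straightening in the first place. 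So the subdivision and concatenation bookkeeping -- which are carried out correctly -- are wrapped around a step that is the theorem itself restated, and no progress has been made on the genuine geometric content.
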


In \cite[Theorem~7.1]{Hughes1985T}, the proof of the above theorem
is given only for $\epsilon>0$ and closed manifolds $B$ that admit a
handlebody decomposition (e.g. a compact, piecewise linear manifold
without boundary). However, as observed in \cite[\S14]{HTW1990},
that proof (by induction on the index of handles) adapts to give a
proof for any connected manifold $B$ that admits a handlebody
decomposition. Moreover, the arguments in \cite[\S13]{HTW1990} can
be adapted to prove the general case of $B$ any topological
manifold. In this paper, we only require the case of $B=\br$.

A well-known consequence of Hughes's Approximation Theorem is the
Approximate Isotopy Covering Theorem for MAFs (see
\cite[\S6]{HTW1993} \cite[Thm.~17.4]{HughesRanicki}). The following
metric version is an immediate consequence of
Theorem~\ref{thm:Approximation}.

\begin{corollary}[Approximate Isotopy Covering]\label{cor:AIC}
Let $p: M \to B$ be a MAF with $\dim\,M>4$. Suppose $g: B
\times I \to B$ is a isotopy from the identity $\id_B$. For every
$\epsilon > 0$, there exists an isotopy $G: M \times I \to M$ from
the identity $\id_M$ such that $pG_s$ is $\epsilon$-close to $g_s p$
for all $s \in I$.
\end{corollary}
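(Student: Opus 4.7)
The plan is to reduce the problem to a direct application of Theorem~\ref{thm:Approximation} by packaging the isotopy $g$ into a fiber-preserving MAF $M\times I\to B\times I$ and then unpackaging the conclusion. First, I would observe that the formula $\psi(b,s):=(g_s^{-1}(b),s)$ defines a level-preserving self-homeomorphism of $B\times I$, since $g$ and its fiberwise inverse $(b,s)\mapsto g_s^{-1}(b)$ are both continuous. Post-composition with $p\times\id_I$ then produces a fiber-preserving MAF
\[
q \co M\times I \longrightarrow B\times I, \qquad q(x,s) := (g_s^{-1}p(x),\, s),
\]
which restricts to $p$ over $B\times\{0\}$. The intuition is that $q$ encodes the data we want: a homeomorphism $H$ of $M\times I$ with $qH$ close to $p\times\id_I$ will force $pG_s$ to be close to $g_sp$ once $g_s^{-1}$ is unwound.

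Second, I would translate the metric bound $\epsilon$ into an open cover $\delta$ of $B$ tailored to the isotopy. For each fixed $b\in B$ and each $s\in I$, continuity of $g\co B\times I\to B$ yields open sets $b\in U_{b,s}\subseteq B$ and $s\in J_{b,s}\subseteq I$ such that $\diam g(U_{b,s}\times J_{b,s})<\epsilon$. Compactness of $I$ extracts finitely many $J_{b,s_1},\dots,J_{b,s_k}$ covering $I$; setting $U_b:=\bigcap_i U_{b,s_i}$ gives an open neighborhood of $b$ with the key property $\diam g_s(U_b)<\epsilon$ for every $s\in I$. Take $\delta:=\{U_b\}_{b\in B}$.

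Finally, I would invoke Theorem~\ref{thm:Approximation} on $q$ with cover $\delta$, obtaining a fiber-preserving homeomorphism $H\co M\times I\to M\times I$ with $H_0=\id_M$ and $qH$ $\delta$-close to $q_0\times\id_I = p\times\id_I$. Writing $H(x,s)=(G_s(x),s)$ exhibits $G\co M\times I\to M$ as an isotopy from $\id_M$; to check the estimate, note that for each $(x,s)$ the points $g_s^{-1}pG_s(x)$ and $p(x)$ both lie in a common $U_b\in\delta$, so applying $g_s$ places both $pG_s(x)$ and $g_sp(x)$ inside $g_s(U_b)$, whence $d(pG_s(x),g_sp(x))<\epsilon$. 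The main point requiring any care is the second step: because $B$ is not assumed compact, the uniformity of $g$ in $s$ must be extracted locally on $B$, relying entirely on compactness of $I$. The remainder is essentially a change of variables that converts the isotopy-covering problem into the fiber-preserving approximation problem solved by Theorem~\ref{thm:Approximation}.
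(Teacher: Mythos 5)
Your proposal is correct and follows the same broad strategy as the paper's proof: package the isotopy into a fiber-preserving MAF on $M\times I$ and invoke Theorem~\ref{thm:Approximation}. But there is a small and instructive difference in the packaging. The paper composes with $g'$ itself, setting $q(x,s) := (g_s p(x),s)$, and then defines $G_s := H_s^{-1}$. The theorem gives $g_s p H_s$ $\epsilon$-close to $p$ (using the cover by $\epsilon$-balls directly), and the substitution $x \mapsto H_s^{-1}(y)$ converts this immediately into $p G_s$ being $\epsilon$-close to $g_s p$: since the bound holds for all $x$, plugging in $x = H_s^{-1}(y)$ costs nothing, and no further control is needed. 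You instead compose with the inverse level-homeomorphism, $q(x,s) := (g_s^{-1}p(x),s)$, and set $G_s := H_s$. This forces you to undo a $g_s^{-1}$ after the control is obtained, and since $g_s$ can distort distances you must first build a cover $\{U_b\}$ tailored to $g$ so that $\diam g_s(U_b) < \epsilon$ uniformly in $s$; your construction of that cover (local uniformity in $B$, compactness in $I$) is correct. Both routes work and both do pass through Theorem~\ref{thm:Approximation}; the paper's choice of where to place the inverse (on $H_s$ in $M$ rather than on $g_s$ in $B$) saves precisely that extra cover-construction step. This is worth noticing as a general pattern: when converting an estimate $p H_s \approx g_s p$ into $p G_s \approx g_s p$, put the inverse on the source side where a change of variable is free, not on the target side where it costs a modulus of continuity.
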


\begin{proof}
Define $g'\co B\times I\to B\times I$ by $g'(x,s) = (g_s(x),s)$.
Then $g'(p\times\id_I)\co M\times I\to B\times I$ is a
fiber-preserving MAF. Given $\epsilon >0$, by
Theorem~\ref{thm:Approximation}, there exists a fiber-preserving
homeomorphism $H\co M\times I\to M\times I$ such that $H_0=\id_M$
and $g'(p\times\id_I)H$ is $\epsilon$-close to
$g'(p\times\id_I)_0\times\id_I= p\times\id_I$. It follows that $G\co
M\times I\to M$ defined by $G(x,s) = H_s^{-1}(x)$ is the desired
isotopy.\qed
\end{proof}

\section{Finite isometry type and metric sucking}

In this section we introduce a condition on a triangulated metric
space, \emph{finite isometry type}, which essentially says that the
local geometry of the space has finite variation. This special
condition allows us to phrase two controlled-topological results
about non-compact triangulated spaces in terms of the metric rather
than open covers. The two results concern close maps into an ANR and
Chapman's MAF Sucking Theorem.

\subsection{Finite isometry type}\label{subsec:Finite_isometry_type}

We begin by introducing terminology for the metric conditions to
appear in the results of Sections~\ref{subsec:Metrically close maps
into ANRs} and \ref{subsec:Metric version of Chapman's MAF sucking
theorem}. This will allow for cleaner statements of our results on
equivariant sucking in Section~\ref{section:sucking}.

The following notion of shapes is used by Bridson--Haefliger
\cite[Chapter I.7]{BridsonHaefliger} in a slightly more restrictive
context.

\begin{definition}
A \textbf{simplicial isometry} between two metrized simplices is an
isometry that takes each face onto a face. If $(B,d)$ is a
triangulated metric space, the {\bf shapes of $B$}, denoted
$\Shapes^\triangle(B)$, is the set of simplicial isometry classes of
simplices of $B$.
\end{definition}

\begin{definition}
If $v$ is a vertex of a simplicial complex $K$, then $K_v$ denotes
the \textbf{closed star of $v$ in $K$}. That is, $K_v$ is the union of
all simplices of $K$ that contain $v$, triangulated by the simplicial complex
consisting of all simplices of $K$ that are faces of simplices having $v$ as
a vertex.
\end{definition}

We often abuse notation by identifying a simplicial complex with its underlying
polyhedron.

\begin{definition}
Suppose $(B,d)$ is a metric space with a triangulation $\phi\co K\to
B$ (i.e., $K$ is a simplicial complex and $\phi$ is a
homeomorphism). For vertices $v, w\in K$, the closed stars
$\phi(K_v)$ and $\phi(K_w)$ in $B$ are {\bf  simplicially isometric}
if there exists an isometry $h\co \phi(K_v)\to \phi(K_w)$ such that
$\phi^{-1}h\phi\co K_v\to K_w$ is a simplicial isomorphism. The {\bf
shapes of closed stars of $B$}, denoted $\Shapes^\boxtimes(B)$, is
the set of simplicial isometry classes of closed stars of $B$. More
generally, if $A$ is a closed sub-polyhedron of $B$, then
$\Shapes^\boxtimes(B, A)$ denotes the set of simplicial isometry
classes of closed stars $\phi(K_v)$ of $B$ such that $\phi(v)\in A$.
\end{definition}

\begin{remark}\label{rem:finite}
If  $\phi\co K\to B$ is a triangulation of $(B,d)$ with respect to
which $\Shapes^\boxtimes(B)$ is finite, then $\Shapes^\triangle(B)$
is also finite. Moreover, if $K'$ is the first barycentric
subdivision of $K$, then $\Shapes^\boxtimes(B)$ is finite with
respect to the induced triangulation $\phi\co K'\to B$.
\end{remark}

\begin{definition}
\label{def:FIT} A complete metric space $(B,d)$ with a triangulation
$\phi\co K\to B$ has \textbf{finite isometry type} if
\begin{enumerate}
    \item $K$ is a locally finite, simplicial complex,
    \item there exists $d_0 >0$ such that: if $v \neq w$ are vertices of $B$ then $d(v,w)\geq d_0$,
    \item for every $\alpha > 0$ and for every $n\geq 0$, there exists $\beta >0$ such that: if $x,y$ lie in distinct
    $n$-simplices of $B$ and $d(x,y)<\beta$, then $x$ and $y$
    are in the $\alpha$-neighborhood of the $(n-1)$-skeleton of
    $B$, and
    \item $\Shapes^\boxtimes(B)$ is finite.
\end{enumerate}
\end{definition}

\begin{remark}\label{rem:FIT1}
A finite triangulation has finite isometry type.
\end{remark}

\begin{remark}\label{rem:FIT2}
Let $(R,e)$ be a metric space, and let $p: R\to M$ be a regular cover with $M$ compact. Suppose the group $D$ of covering transformations acts by isometries on $(R,e)$. Observe that $p: (R,e) \to (M,e/D)$ is distance non-increasing and a local isometry. Also, $p$ is regular implies that $D$ acts transitively on the fibers.
Therefore, if $\cS$ is a finite triangulation of $M$, then the induced $D$-equivariant triangulation $p^*(\cS)$ of $R$ has finite isometry type.
\end{remark}

This finiteness condition arises in the following rigid situations,  not pursued here.

\begin{example}
\label{ex:FIT} Let $(B,d)$ be a triangulated, geodesic metric space
such that each simplex of $B$ is a geodesic subspace. It follows
that the metric on each closed star in $B$ is completely determined
by the metric on the simplices in the star. Thus, if
$\Shapes^\triangle(B)$ is finite, then so is $\Shapes^\boxtimes(B)$.
It is also clear that for such $B$, the condition
$\Shapes^\triangle(B)$ is finite implies conditions (2) and (3) of
Definition~\ref{def:FIT}. In summary, if $(B,d)$ is a complete,
geodesic metric space triangulated by a locally finite simplicial
complex, each simplex is a geodesic subspace, and
$\Shapes^\triangle(B)$ is finite, then $(B,d)$ has finite isometry type.
\end{example}

\begin{example}
Let $K$ be a connected $M_\kappa$-simplicial complex in the sense of
Bridson--Haefliger \cite{BridsonHaefliger}. If
$\Shapes^\triangle(K)$ is finite, then it follows from \cite[Theorem
7.19, page 105]{BridsonHaefliger} that $(K,d)$ is a complete
geodesic space, where $d$ is the intrinsic metric. Thus, by applying
Example~\ref{ex:FIT}, it follows that $(K,d)$ has finite isometry
type.
\end{example}

We show that
split crystallographic groups induce triangulations of Euclidean space
$(\br^n, e)$ with the standard metric with respect to which $(\br^n,
e)$ has finite isometry type. 

\begin{definition}
An action of a group $\Gamma$ on a topological space $R$ is
\textbf{virtually free} if there is a finite-index subgroup $\Delta
\leq \Gamma$ whose restricted action on $R$ is free.
\end{definition}

\begin{proposition}\label{prop:triangulation}
Let $(R,e)$ be a metric space such that $R$ is a smooth manifold.
Let $\Gamma$ be a discrete, cocompact subgroup of smooth isometries
of $(R,e)$. If the action of $\Gamma$ on $R$ is virtually free, then
$R$ admits a smooth, $\Gamma$-equivariant triangulation $\cT$ of
finite isometry type, which is unique up to $\Gamma$-combinatorial equivalence.
\end{proposition}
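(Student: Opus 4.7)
The plan is to reduce to the setting of a finite group acting on a compact smooth manifold, apply an equivariant smooth triangulation theorem there, and then pull the resulting triangulation back through the covering map, using Remark~\ref{rem:FIT2} to obtain finite isometry type.

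First, since the $\Gamma$-action on $R$ is virtually free, I pick a finite-index subgroup acting freely and replace it by the intersection of its finitely many $\Gamma$-conjugates, so that $\Delta \leq \Gamma$ becomes a normal, finite-index subgroup that still acts freely on $R$. Because $\Delta$ acts freely, properly discontinuously, smoothly, and isometrically on $R$, the orbit space $M := R/\Delta$ is a smooth manifold equipped with the quotient metric $e/\Delta$; it is compact since $\Gamma$ acts cocompactly and $[\Gamma:\Delta]$ is finite. The finite quotient group $Q := \Gamma/\Delta$ inherits a smooth, isometric action on $M$.

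Next, I invoke Illman's equivariant smooth triangulation theorem for smooth actions of compact Lie groups on smooth manifolds, specialized to the finite group $Q$, to produce a $Q$-equivariant smooth triangulation $\cS$ of $M$; this $\cS$ is finite because $M$ is compact, and it is unique up to $Q$-equivariant combinatorial equivalence. The covering projection $p\co R \to M$ is a regular cover with deck group $\Delta$ and is both a local diffeomorphism and a local isometry, so pulling $\cS$ back through $p$ yields a smooth triangulation $\cT := p^{*}(\cS)$ of $R$. Since $p$ intertwines the $\Gamma$-action on $R$ with the $Q$-action on $M$ and $\cS$ is $Q$-equivariant, $\cT$ is $\Gamma$-equivariant; uniqueness of $\cT$ up to $\Gamma$-combinatorial equivalence transports along $p$ from the uniqueness of $\cS$. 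Finally, Remark~\ref{rem:FIT2}, applied with $D = \Delta$ and the finite triangulation $\cS$, gives that $\cT$ has finite isometry type.

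The main obstacle is the appeal to Illman's equivariant smooth triangulation theorem, which is the nontrivial black box here; once this is granted, the remainder is bookkeeping. A secondary subtlety is that the pullback $p^{*}(\cS)$ must be genuinely $\Gamma$-equivariant rather than merely $\Delta$-equivariant, but this is automatic because the $\Gamma$-action on $R$ descends through $p$ to the $Q$-action on $M$ under which $\cS$ is invariant, so each $\gamma \in \Gamma$ permutes the lifts of simplices of $\cS$ simplicially.
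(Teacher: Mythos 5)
Your argument matches the paper's proof essentially line for line: pass to the normal core of a free finite-index subgroup, quotient to a compact smooth manifold with a finite group action, apply Illman's equivariant triangulation theorem, pull back along the regular cover, and finish via Remark~\ref{rem:FIT2}. The only differences are notational ($\Delta$, $Q$ for the paper's $\Delta_0$, $G$).
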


\begin{proof}
Since $\Gamma$ has a virtually free action on $R$, there exists a
finite-index subgroup $\Delta$ of $\Gamma$ with a free action on
$R$. Define a finite-index, normal subgroup $\Delta_0$ of $\Gamma$
by
\[
\Delta_0 := \bigcap_{g \in \Gamma} g \Delta g^{-1}.
\]
Then there is an exact sequence $1 \to \Delta_0 \to \Gamma \to G \to
1$, where $G := \Gamma/\Delta_0$ is a finite group. Since $\Delta_0$
has a cocompact, free action on $R$, the quotient $M := R/\Delta_0$
is a compact smooth $G$-manifold. So, by a theorem of S.~Illman
\cite{Illman_Finite}, there exists a smooth, $G$-equivariant
triangulation $\cS$ of $M$ which is unique up to $G$-combinatorial equivalence. Since $\Delta_0 \to R \to M$ is the sequence of a
regular covering map, there is a unique, smooth,
$\Gamma$-equivariant triangulation $\cT$ of $R$ covering $\cS$.
Therefore, by Remark~\ref{rem:FIT2}, since $\cS$ is finite, we conclude that the pullback $\cT$ has finite isometry
type.\qed
\end{proof}

\begin{example}
For any crystallographic group $\Gamma$ of rank $n$, the Euclidean
space $(\br^n,e)$, where $e$ is the standard metric, admits a
$\Gamma$-equivariant triangulation $\cT$ of finite isometry type.
Indeed, the action is virtually free, since there is an exact
sequence $1 \to \bz^n \to \Gamma \to G \to 1$, where the subgroup
$\bz^n$ acts freely on $\br^n$ by translations, and the finite group
$G$ is called the \emph{point group} (see \cite{Farkas}). Suppose
this short exact sequence splits, in which case we call $\Gamma$ a
\textbf{split} crystallographic group. Then $G$ is a subgroup of the
orthogonal group $O(n)$ and the triangulation $\cT/G$ of
$(\br^n/G,e/G)$ has finite isometry type.
\end{example}

More generally, we have an important geometric observation used in Section~\ref{section:sucking}.

\begin{proposition}\label{prop:HenriquesLeary}
Let $G$ be any finite subgroup of $O(n)$. Then $\br^n/G$ has finite isometry type.
\end{proposition}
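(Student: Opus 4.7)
The plan is to construct a triangulation of $\br^n/G$ satisfying the four conditions of Definition~\ref{def:FIT}. Since the finite subgroup $G\subset O(n)$ fixes the origin, the $G$-action on $\br^n$ is not virtually free, so Proposition~\ref{prop:triangulation} does not apply. Instead, I would combine Illman's equivariant triangulation theorem on the sphere with the scaling self-similarity of $\br^n$, which is the geometric mechanism underlying the Henriques--Leary observation.

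First, apply Illman's theorem \cite{Illman_Finite} to the compact smooth $G$-manifold $S^{n-1}$ to obtain a finite, smooth, $G$-equivariant triangulation $\cS$. Decompose $\br^n$ into the origin together with concentric $G$-invariant annular shells $A_k := \{x\in\br^n : 2^k \leq \|x\| \leq 2^{k+1}\}$ for $k\in\bz$. Triangulate $A_0$ as a $G$-equivariant product of $\cS$ with a finite radial subdivision of $[1,2]$, yielding simplices of bounded diameter, and triangulate the central ball $\{\|x\|\leq 1\}$ by coning $\cS$ to the origin. Extend to each shell $A_k$ by composing the homothety $x \mapsto 2^k x$ with a combinatorial subdivision scheme, iterated $|k|$ times, chosen to be $G$-equivariant and self-similar in the metric sense, so that the simplices on $A_k$ have the same bounded shapes as those on $A_0$. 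Across the shared spheres $\{\|x\|=2^{k+1}\}$ one bridges the coarser and finer triangulations by frustum-shaped transition simplices. The resulting $G$-equivariant triangulation $\cT$ of $\br^n$ descends to a triangulation of $\br^n/G$. Conditions (1)--(3) of Definition~\ref{def:FIT} are immediate from the construction; condition (4), the finiteness of $\Shapes^\boxtimes(\br^n/G)$, holds because every closed star is isometric to one of finitely many stars appearing in a single model shell together with its transition patterns.

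The main obstacle is arranging the middle step so that the metric scaling by $2$ (which multiplies distances) and the combinatorial subdivision (which halves edge lengths in the subdivided copy) exactly cancel, producing only finitely many isometry classes of simplex shapes across all shells. For a general $G$-equivariant triangulation $\cS$ of $S^{n-1}$, iterated barycentric subdivision does not produce similar simplices, so a more careful geometric subdivision---adapted to the shapes appearing in $\cS$ and compatible with the $G$-action---is required. Constructing such a self-similar refinement for arbitrary finite $G\subset O(n)$, without assuming that $G$ is a reflection group or preserves a lattice, is the nontrivial geometric content of the Henriques--Leary observation and the heart of the proof.
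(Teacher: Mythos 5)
Your proposal identifies the central difficulty correctly but does not resolve it, which leaves a genuine gap. The shell-by-shell construction you sketch requires a $G$-equivariant subdivision scheme on $S^{n-1}\times[1,2]$ whose combinatorial refinement exactly cancels the metric dilation by $2$, producing only finitely many isometry classes of stars across all shells $A_k$. You acknowledge that iterated barycentric subdivision does not work and that what is needed is ``a more careful geometric subdivision\ldots adapted to the shapes appearing in $\cS$ and compatible with the $G$-action,'' but you do not construct it. This is not a detail to be filled in later: it is the entire content of the proposition, and it is not clear that a self-similar refinement of this kind exists for an arbitrary geodesic triangulation of $S^{n-1}$. As written, the argument establishes conditions (1)--(3) of Definition~\ref{def:FIT} but not condition (4).

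The paper avoids your obstacle entirely by replacing the radial (shell) decomposition with an angular (simplicial cone) decomposition. Starting from a geodesic $G$-triangulation $K$ of $S^{n-1}$ (built from a Dirichlet fundamental domain rather than Illman's theorem, though either suffices), one straightens each geodesic simplex to the affine simplex $\psi(\sigma)$ with the same vertices, so that $\br^n$ is a union of linear simplicial cones $\Psi(\oc(\sigma))$ over the top-dimensional $\sigma$. Each such cone is the image of the positive orthant $[0,\infty)^n$ under a linear map $A_\sigma$, and one triangulates it by $A_\sigma(\cT)$, where $\cT$ is the standard $\bz^n$-translation-invariant lattice triangulation of $\br^n$. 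Because $\cT$ has only finitely many closed-star shapes (by lattice invariance) and there are finitely many cones, so finitely many maps $A_\sigma$, the resulting triangulation $\cT_\Psi$ has finite isometry type; the pieces agree on shared faces because $\cT$ is also symmetric under coordinate permutations. The moral is that one should not try to manufacture metric self-similarity near infinity by subdividing an annular product; instead, one should pull back a translation-invariant (hence automatically finite-shape) triangulation along the linear cone structure of $\br^n$. No self-similar subdivision scheme is needed.

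A minor secondary point: you say Proposition~\ref{prop:triangulation} does not apply because $G$ fixes the origin and so the action is not virtually free. In fact a finite group action is trivially virtually free (the trivial subgroup has finite index and acts freely). The reason Proposition~\ref{prop:triangulation} does not apply is that a finite $G \leq O(n)$ is not cocompact on $\br^n$, so there is no compact quotient from which to pull back a finite triangulation.
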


\begin{proof}
Consider the unit sphere $S^{n-1} \subset \br^n$ with the spherical metric $s$ and induced isometric $G$-action.  For any point $a \in S^{n-1}$, recall its \emph{Dirichlet domain} $D(a)$ is the open neighborhood
\[
D(a) := \{ x \in S^{n-1} ~|~ \forall g \in G ~:~ a \neq ga \Longrightarrow d(x,a) < d(x,ga) \}.
\]
Then, by \cite[Theorem 6.7.1]{RatcliffeBook}, the closure $\overline{D(a)}$ is a convex polyhedron and a fundamental domain for the $G$-action on $(S^{n-1},s)$. Select a geodesic triangulation of $\overline{D(a)}$. This extends to a geodesic $G$-triangulation of $S^{n-1}$. That is, we obtain a $G$-homeomorphism $\phi: K \to S^{n-1}$, from a $G$-simplicial complex $K$, such that the image $\phi(\sigma)$ of each simplex $\sigma$ of $K$ is totally geodesic in $(S^{n-1},s)$.  Upon replacing $K$ with a (necessarily $G$-equivariant) barycentric subdivision, we may assume that $s(\phi v,\phi v') < 1$ for all vertices $v,v'$ that share a simplex $\sigma$ of $K$. Now define $\psi: K \to \br^n$ as the unique continuous $G$-map such that $\psi(v):=\phi(v)$ for each vertex $v$ of $K$ and that the image $\psi(\sigma)$ of each simplex $\sigma$ of $K$ is the convex hull in $(\br^n,e)$. It is clear that $0 \notin \psi(K)$ and $\psi(\sigma) \cap \psi(\sigma') \subseteq \psi(\partial\sigma) \cap \psi(\partial \sigma')$ for all simplices $\sigma,\sigma'$ of $K$. Hence $\psi$ is injective and extends to a $G$-homeomorphism
\[
\Psi := \oc(\psi): \oc(K) \longrightarrow \br^n ~;~ (x,t) \longmapsto t \psi(x).
\]

Let $\cT$ be (the set of simplices of) a linear triangulation of $\br^n$ such that $\cT$ is invariant under permutation of coordinates
of $\br^n$ and under the standard action of $\bz^n$ on $\br^n$.
For example, $\cT$ can be taken to be the standard triangulation defined as follows. The set of vertices of $\cT$ is $\bz^n$.
There is a directed edge from a vertex $x=(x_1, x_2, \dots x_n)$ to a vertex $y= (y_1, y_2, \dots, y_n)$ if and only if $x \neq y$ and
$x_i\leq y_i\leq x_i+1$ for each $i=1, 2, \dots, n$.  
A finite set $\sigma$ of vertices spans a simplex if and only if for any two vertices in $\sigma$, there is a directed edge from one to the other.
It follows from the invariance properties that $\cT$ is a triangulation of finite isometry type. 
Moreover, if  $A\co\br^n\to\br^n$ is any non-singular linear transformation, then $A(\cT) = \{ A(\sigma) ~|~ \sigma\in\cT\}$ is also
a triangulation of finite isometry type.

Now, for each $(n-1)$-simplex $\sigma$ of $K$, define a non-singular linear transformation
$A_\sigma\co\br^n\to\br^n$ as follows. Order the vertices $v_1, v_2,\dots, v_n$ of $\Psi(\sigma)$, and define
$A_\sigma(e_i) := v_i$ for each $i=1,2,\dots, n$, where $e_1, e_2, \dots, e_n$ are the standard basis vectors of $\br^n$.
The triangulation $A_\sigma(\cT)$ of $\br^n$ restricts to a triangulation $\cT_\sigma$ of 
$\Psi(\oc(\sigma))$ that is independent of the ordering of  $v_1, v_2,\dots, v_n$.
Moreover, if $\sigma$ and $\tau$ are two $(n-1)$-simplices of $K$, then $\cT_\sigma$ and $\cT_\tau$ agree on 
$\Psi(\oc(\sigma\cap\tau))$.
It follow that $\cT_\Psi := \bigcup_\sigma\cT_\sigma$ is a $G$-equivariant triangulation of $\br^n$ of finite isometry type 
and induces a triangulation  of $\br^n/G$ of finite isometry type.\qed
\end{proof}

\subsection{Metrically close maps into ANRs}\label{subsec:Metrically close
maps into ANRs}

For notation if $f,g\co X\to Y$ are maps, then $X_{(f=g)}=\{ x\in X
~|~ f(x) = g(x)\}$. Recall the following classic property of ANRs
(cf.~\cite[p.~39]{MardesicSegal}).

\begin{proposition}
\label{prop:classic ANR} Let $B$ be an ANR. For every open cover
$\mathcal U$ of $B$ there exists an open cover $\mathcal V$ of $B$
such tha:t if $X$ is a space and $f, g\co X\to B$ are $\mathcal
V$-close maps, then $f$ is $\mathcal U$-homotopic to $g$ rel
$X_{(f=g)}$.
\end{proposition}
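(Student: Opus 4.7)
The plan is to construct, once and for all, a ``universal small homotopy'' defined on a neighborhood $W$ of the diagonal $\Delta_B\subseteq B\times B$, and then produce the desired homotopy between $f$ and $g$ by composing with $(f,g)\co X\to W$. The entire argument rests on the ANR extension property of $B$.

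First I would construct an open neighborhood $W_0$ of $\Delta_B$ in $B\times B$ and a continuous map $H\co W\times I\to B$, for some possibly smaller open neighborhood $W\subseteq W_0$ of $\Delta_B$, satisfying
\[
H(b_0,b_1,0) = b_0, \qquad H(b_0,b_1,1) = b_1, \qquad H(b,b,t) = b \text{ for all } t\in I.
\]
To do this, observe that the three formulas agree on overlaps, so together they define a continuous map $h_0\co A\to B$ on the closed subset
\[
A := (W_0\times\{0,1\})\ \cup\ (\Delta_B \times I)
\]
of the metrizable space $W_0\times I$. Since $B$ is an ANR, $h_0$ extends to an open neighborhood $N$ of $A$ in $W_0\times I$. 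By compactness of $I$, for each $b\in B$ the tube $\{(b,b)\}\times I\subseteq A\subseteq N$ has an open product neighborhood $W_b\times I\subseteq N$ with $W_b$ an open neighborhood of $(b,b)$ in $W_0$. Taking $W:=\bigcup_b W_b$ and $H := h_0|_{W\times I}$ gives the desired map.

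Next, I would refine $W$ so that the homotopy tracks respect $\mathcal{U}$. For each $b\in B$, pick $U_b\in\mathcal{U}$ with $b\in U_b$. Since $H(b,b,t) = b\in U_b$ for all $t$, by continuity of $H$ and compactness of $I$ there is an open neighborhood $W_b'\subseteq W$ of $(b,b)$ with $H(W_b'\times I)\subseteq U_b$. Shrink each $W_b'$ to a product $V_b\times V_b$ where $V_b$ is an open neighborhood of $b$ in $B$ (possible since products form a basis at diagonal points). Then $\mathcal{V}:=\{V_b\mid b\in B\}$ is the desired open cover of $B$.

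Finally, suppose $f,g\co X\to B$ are $\mathcal{V}$-close. For each $x\in X$, there exists $b\in B$ with $f(x),g(x)\in V_b$, so $(f(x),g(x))\in V_b\times V_b\subseteq W$. The map $F\co X\times I\to B$ given by $F(x,t):=H(f(x),g(x),t)$ is then a well-defined continuous homotopy from $f$ to $g$ with $F(\{x\}\times I)\subseteq U_b\in\mathcal{U}$; and if $f(x)=g(x)=b$ then $F(x,t)=H(b,b,t)=b$ for all $t$, so the homotopy is rel $X_{(f=g)}$. The principal obstacle is the construction of $H$: one must extend three pre-specified partial definitions compatibly and then argue, via compactness of $I$, that the ANR extension (a priori defined only on some unspecified neighborhood of $A$) contains a full tubular neighborhood $W\times I$ of $\Delta_B\times I$. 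Once $H$ with the property $H(b,b,t)=b$ is in hand, the ``rel'' conclusion is immediate.
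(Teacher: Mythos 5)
The paper states this proposition without proof, citing it as classical (Marde\v{s}i\'c--Segal), so there is no in-paper argument to compare against; the only question is whether your proof is correct, and it is. Your argument is the standard one showing that an ANR is locally equiconnected: you build a ``universal'' homotopy $H$ on a neighborhood $W$ of the diagonal with $H(b_0,b_1,0)=b_0$, $H(b_0,b_1,1)=b_1$, and $H(b,b,t)=b$, and then for $\mathcal V$-close maps $f,g$ set $F(x,t)=H(f(x),g(x),t)$. The ANR extension step on the closed set $A=(W_0\times\{0,1\})\cup(\Delta_B\times I)$ in the metrizable space $W_0\times I$, the tube-lemma step producing a product neighborhood $W\times I$ inside the domain of the extension, and the subsequent shrinking of $W$ so that each track lands in a member of $\mathcal U$ are all handled correctly; and the condition $H(b,b,t)=b$ is exactly what makes $F$ stationary on $X_{(f=g)}$, giving the relative conclusion. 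One cosmetic slip: ``$H := h_0|_{W\times I}$'' should be the restriction of the ANR-\emph{extension} of $h_0$ to $W\times I$, not of $h_0$ itself (which is defined only on $A$); this is clearly what you intended.
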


We will need a metric version of Proposition~\ref{prop:classic ANR}
that is only valid for certain triangulated metric spaces $B$. For
the proof of the metric version, we will need the following relative
version of Proposition~\ref{prop:classic ANR} in the compact case.
It is a fairly routine application of Proposition~\ref{prop:classic
ANR} and the Estimated Homotopy Extension Theorem of Chapman--Ferry
\cite{ChapmanFerry}, but we include a proof for completeness.

\begin{lemma}
\label{lem:compact ANR} Let $(Y,d)$ be a compact, metric ANR. For
every $\epsilon >0$ there exists $\delta > 0$ such that: if $X$ is a
space with a closed subspace $X_0$ and $f,g\co X\to Y$ are
$\delta$-close maps for which there is a $\delta$-homotopy $H\co
f|X_0\simeq g|X_0$, then there exists an $\epsilon$-homotopy
$\widetilde{H}\co f\simeq g$ such that $\widetilde{H}|X_0\times I =
H$.
\end{lemma}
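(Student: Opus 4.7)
The plan is to construct $\widetilde{H}\co X\times I \to Y$ directly as an extension of a map from the closed subspace $B := X\times\{0,1\} \cup X_0 \times I$ of $X\times I$, rather than by concatenation of two homotopies. The naive concatenation approach---first using the Chapman--Ferry estimated homotopy extension theorem (EHET) to produce a small homotopy from $f$ to some $g'\co X\to Y$ restricting to $H$ on $X_0\times I$, then using Proposition~\ref{prop:classic ANR} to deform $g'$ to $g$ rel $X_0$---yields a homotopy whose restriction to $X_0\times I$ is only a reparametrization of $H$ followed by a constant segment, not $H$ itself, so the strict requirement $\widetilde{H}|X_0\times I = H$ rules it out.

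To set up the direct approach, first define $\Phi\co B \to Y$ by $\Phi(x,0):=f(x)$, $\Phi(x,1):=g(x)$, and $\Phi(x_0,t):=H(x_0,t)$; this is continuous because $H(-,0)=f|X_0$ and $H(-,1)=g|X_0$. Second, let $F\co X\times I \to Y$ denote the constant homotopy $F(x,t):=f(x)$. At each point of $B$, the distance between $F|B$ and $\Phi$ is at most $\delta$: at $(x,1)$ this is $d(f(x),g(x))<\delta$, and at $(x_0,t)$ it is $d(H(x_0,0),H(x_0,t))<\delta$ because $H$ is a $\delta$-homotopy starting from $f|X_0$.

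Third, I would invoke the metric form of Proposition~\ref{prop:classic ANR}, valid because $Y$ is a compact metric ANR (open covers translate to uniform $\epsilon$-covers via a Lebesgue-number argument), to obtain, for sufficiently small $\delta$, a homotopy $K\co B\times I \to Y$ from $F|B$ to $\Phi$ whose tracks have diameter less than an intermediate constant. Fourth, I would apply EHET to the pair $(X\times I, B)$: the map $F$ together with the small homotopy $K$ extends to a homotopy $\widetilde{K}\co (X\times I)\times I \to Y$ with $\widetilde{K}(-,-,0)=F$, $\widetilde{K}|B\times I = K$, and tracks of diameter less than $\epsilon/3$, provided $\delta$ is small enough.

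Finally, set $\widetilde{H}(x,t):= \widetilde{K}(x,t,1)$. The required identities $\widetilde{H}(-,0)=f$, $\widetilde{H}(-,1)=g$, and $\widetilde{H}|X_0\times I = H$ follow from $\widetilde{K}(-,-,1)|B = K(-,1)=\Phi$. Moreover, for every $(x,t)$ one has
\[
d\bigl(\widetilde{H}(x,t),\,f(x)\bigr) \;=\; d\bigl(\widetilde{K}(x,t,1),\,\widetilde{K}(x,t,0)\bigr) \;<\; \epsilon/3,
\]
so each track $\widetilde{H}(\{x\}\times I)$ has diameter at most $2\epsilon/3 < \epsilon$. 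The main obstacle is the metric bookkeeping: both Proposition~\ref{prop:classic ANR} and EHET are typically stated in open-cover form, so the passage to the metric statements used above relies on compactness of $Y$, and one must choose $\delta$ so that the successive applications compose to yield an $\epsilon$-homotopy.
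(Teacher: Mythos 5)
Your proposal is correct, and it takes a genuinely different route from the paper's proof, though both rest on the same two ingredients: Proposition~\ref{prop:classic ANR} in its metric (compact) form and the Chapman--Ferry Estimated Homotopy Extension Theorem. The paper starts with $f$, extends $H$ by EHET to a small homotopy $\hat H\co f\simeq\hat g$ with $\hat H|X_0\times I=H$, then uses Proposition~\ref{prop:classic ANR} to deform $\hat g$ to $g$ rel $X_{(\hat g=g)}\supseteq X_0$, and finally concatenates and reparametrizes via a Urysohn function $u\co X\to[0,1]$ with $u^{-1}(0)=X_0$ to collapse the spurious constant segment on $X_0\times[1,2]$ down to the endpoint $\{1\}$. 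Your route works on the cylinder $X\times I$ from the outset: you assemble the required boundary data $\Phi$ on the closed set $B=X\times\{0,1\}\cup X_0\times I$, observe $\Phi$ is $\delta$-close to the constant $F|B$, homotope $F|B$ to $\Phi$ by Proposition~\ref{prop:classic ANR}, and then push that small homotopy across all of $X\times I$ by EHET. The endpoint $\widetilde K(-,-,1)$ then restricts to $\Phi$ on $B$ by construction, so the three identities ($\widetilde H(-,0)=f$, $\widetilde H(-,1)=g$, $\widetilde H|X_0\times I=H$) come for free, and the diameter bound follows from the triangle inequality through $f(x)$. What your approach buys is that it sidesteps the reparametrization step entirely --- the trickiest part of the paper's argument, which requires verifying that $(\hat H\ast G)\circ q^{-1}$ is well-defined on the non-degenerate point inverses of $q$. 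The cost is that you apply EHET to the larger space $X\times I$ rather than $X$, but EHET as stated by Chapman--Ferry holds for arbitrary metric spaces, so this causes no difficulty. The only minor caveat you should make explicit is that your extension $\widetilde K$ need not agree with $K$ on all of $B\times I$ as stated, but only its final slice $\widetilde K(-,-,1)$ matters, and that does restrict correctly to $\Phi$ on $B$; the argument is sound.
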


\begin{proof}
Let $\epsilon >0$ be given. By Proposition~\ref{prop:classic ANR}
there exists $\mu >0$ such that $\mu <\epsilon$ and if $X$ is any
space, $f,g\co X\to Y$ are $\mu$-close maps, then there exists an
$\epsilon/2$-homotopy $f\simeq g$ rel $X_{(f=g)}$. Choose $\delta
>0$ such that $\delta<\mu/2$. Now suppose $X$ is a space with a
closed subspace $X_0$ and $f,g\co X\to Y$ are $\delta$-close maps
for which there is a $\delta$-homotopy $H\co f|X_0\simeq g|X_0$. By
the Estimated Homotopy Extension Theorem \cite[Proposition
2.1]{ChapmanFerry} there exists a map $\hat g\co X\to Y$ and a
$\delta$-homotopy $\hat H\co f\simeq\hat g$ such that $\hat
H|(X_0\times I)= H$. Thus, $\hat g=\hat H_1$ is $\delta$-close to
$f$, which in turn is $\delta$-close to $g$. Thus, $\hat g$ is
$\mu$-close to $g$. By the choice of $\mu$ there is an
$\epsilon/2$-homotopy $G\co \hat g\simeq g$ rel $X_{(\hat g =g)}$.
Note $X_0\subseteq X_{(\hat g=g)}$.

The concatenation
\[
\hat H\ast G: X\times [0,2] \longrightarrow Y; \quad
(x,t) \longmapsto \begin{cases} \hat H(x,t) & \text{if $0\leq t \leq 1$}\\
                                        G(x,t-1)    & \text{if $1\leq t \leq 2$}
                                        \end{cases}
\]
is an $\epsilon$-homotopy, which can be re-parameterized to give the
homotopy $\widetilde{H}$ as follows. Let $u\co X\to [0,1]$ be a map
such that $u^{-1}(0)= X_0$. Define
\[
q\co X\times [0,2] \longrightarrow X\times [0,1]; \quad
(x,t) \longmapsto \begin{cases} \left(x,t\left( 1-\frac{1}{2}u(x)\right)\right) & \text{if } 0\leq t \leq 1\\
                         \left(x, \frac{1}{2}u(x)t+1-u(x)\right)   & \text{if } 1\leq t \leq 2.
                                        \end{cases}
\]
Then $q$ is a quotient map with the property that each interval $\{
x\}\times [0,1]$ is taken linearly onto $\{ x\} \times [0,
1-\frac{1}{2}u(x)]$  and each interval  $\{ x\}\times [1,2]$ is
taken linearly onto $\{ x\} \times [1-\frac{1}{2}u(x),1]$. The only
non-degenerate point inverses of $q$ are for $(x,1)$ with $x\in
X_0$, in which case $q^{-1}(x,1) = \{x\}\times [1,2]$.  It follows
that $\widetilde{H} := (\hat H\ast G)\circ q^{-1}$ is the desired
homotopy.\qed
\end{proof}

The following result is our metric version of
Proposition~\ref{prop:classic ANR}. It is used in the proof of
Lemma~\ref{lem:equivariant epsilon,MAF}.

\begin{proposition}
\label{prop:metric ANR} Suppose $(B,d)$ is a   metric space
triangulated by a locally finite, finite dimensional simplicial
complex and $A$ is a closed sub-polyhedron of $B$ such that:
\begin{enumerate}
    \item There exists $d_0 >0$ such that: if $v$ and $w$ are distinct vertices of $A$, then $d(v,w)\geq d_0$.
    \item For every $\alpha > 0$  and for every $n\geq 0$,
    there exists $\beta > 0$ such that: if $x$ and $y$ are in
    distinct $n$-simplices of $A$ and $d(x,y)< \beta$, then $x$
    and $y$ are in the $\alpha$-neighborhood of the
    $(n-1)$-skeleton of $A$.
    \item $\Shapes^\triangle(A)$ is finite.
\end{enumerate}
For every $\epsilon > 0$ there exists $\delta >0$ such that: if $X$
is a space and $f, g\co X\to B$ are $\delta$-close maps such that
$f=g$ over $B\setminus A$, then $f$ is $\epsilon$-homotopic to $g$
rel $X_{(f=g)}$.
\end{proposition}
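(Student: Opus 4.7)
The plan is induction on $n := \dim A$. For the base case $n = 0$, condition (1) presents $A$ as a discrete $d_0$-separated set of vertices; for any $\delta < d_0$, the hypothesis $f = g$ over $B \setminus A$ combined with $\delta$-closeness forces $f \equiv g$, since if $f(x) \neq g(x)$ then both values lie in $A$ and are thus distinct vertices at distance $\geq d_0 > \delta$, a contradiction. The constant homotopy then suffices.

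For the inductive step, let $\dim A = n \geq 1$ and set $A' := A^{(n-1)}$. The sub-polyhedron $A'$ inherits (1)--(3) from $A$: vertices of $A'$ are vertices of $A$, condition (2) for $A'$ is a special case of (2) for $A$ in lower dimensions, and $\Shapes^\triangle(A') \subseteq \Shapes^\triangle(A)$ remains finite. Given $\epsilon > 0$, apply the inductive hypothesis to $A'$ with parameter $\epsilon/6$ to obtain $\delta_1 > 0$. By (3), choose simplicial-isometry representatives $\sigma_1, \ldots, \sigma_k$ of the $n$-simplices of $A$; each is a compact metric ANR in its inherited metric. Apply Lemma \ref{lem:compact ANR} to each $\sigma_j$ (with $X_0 = \partial \sigma_j$ and constant input homotopy) to extract a uniform $\delta_2 > 0$ for parameter $\epsilon/6$. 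Pick $\alpha \in (0, \min(\delta_1,\epsilon)/6)$ and use (2) at dimension $n$ to extract $\beta > 0$. Set $\delta := \frac{1}{3}\min(\delta_2,\beta,\alpha)$.

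Given $f, g : X \to B$ satisfying the hypotheses, build the homotopy in two stages. Let $Y := \{x \in X : f(x) \text{ and } g(x) \text{ lie in a common closed } n\text{-simplex of } A\}$, a closed subset. Over $Y$, apply Lemma \ref{lem:compact ANR} simplex-wise (uniformly in the finite list of shapes), using the relative subset $\{x \in Y : f(x) \in A' \text{ or } g(x) \in A'\} \cup (Y \cap X_{(f=g)})$ to get an $\epsilon/6$-homotopy from $f|_Y$ to $g|_Y$; extend trivially outside $Y$ to a global $\epsilon/6$-homotopy $f \simeq h$ rel $X_{(f=g)}$ with $h = g$ on $Y$. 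Off $Y$, one has $h = f$, and condition (2) with $\delta < \beta$ forces both $h(x)$ and $g(x)$ (when unequal) to lie in the $\alpha$-neighborhood $U$ of $A'$ in $A$. The neighborhood $U$ deformation-retracts onto $A'$ via a simplex-wise radial retraction $\rho$ (uniform across the finitely many $n$-simplex shapes) with tracks of length $< \alpha$. Push $h$ and $g$ off $Y$ into $A'$ via $\rho$ (two $\alpha$-homotopies); the two resulting maps into $A'$ agree off $A'$ and are within $\delta + 2\alpha < \delta_1$ of each other, so the inductive hypothesis on $A'$ joins them by an $\epsilon/6$-homotopy. Reverse $\rho$ to return to $g$, concatenate with the stage-one homotopy, and reparameterize as in the proof of Lemma \ref{lem:compact ANR} to obtain the required $\epsilon$-homotopy $f \simeq g$ rel $X_{(f=g)}$.

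The main obstacle is the continuous gluing of these piecewise pieces with uniform metric control. Finiteness of $\Shapes^\triangle(A)$ is what extracts a single $\delta_2$ valid across all $n$-simplices at once; condition (2) is precisely the mechanism confining the off-diagonal region into an $A'$-neighborhood where induction applies; and the relative form of Lemma \ref{lem:compact ANR} (with $X_0 = \partial\sigma_j$) is what ensures the stage-one homotopies on adjacent $n$-simplices are compatible along shared faces while remaining constant over $X_{(f=g)}$. Verifying these compatibilities — in particular, that the $X_0$-constraint in Lemma \ref{lem:compact ANR} can be met using the inductive hypothesis on the face-closure structure — is where most of the technical work sits.
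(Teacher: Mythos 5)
Your proposal assembles the right components --- induction on dimension, simplex-wise use of Lemma~\ref{lem:compact ANR}, a controlled retraction onto the lower skeleton, and finiteness of shapes for uniform bounds --- but deploys them in the wrong order, and this leaves a genuine gap rather than a postponed technicality. In stage one you apply Lemma~\ref{lem:compact ANR} over $Y$ with relative subset $X_0 := \{x\in Y: f(x)\in A' \text{ or } g(x)\in A'\}\cup(Y\cap X_{(f=g)})$; this $X_0$ is exactly where adjacent simplex-wise pieces overlap and must be matched, so the lemma demands a \emph{prescribed} input homotopy $f|X_0\simeq g|X_0$. The constant homotopy suffices on $Y\cap X_{(f=g)}$, but $f\neq g$ can occur on the rest of $X_0$, and you have not constructed the required non-constant input there; producing it amounts to building the homotopy over the preimage of $A'$, which you propose to do only in stage two. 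The dependence is circular. Relatedly, the ``trivial extension'' with $h=g$ on $Y$ and $h=f$ off $Y$ is discontinuous at points $x\in\partial Y$ with $f(x)\neq g(x)$: since $Y$ is closed, $h(x)=g(x)$, while limits from outside $Y$ force the value $f(x)$; and $\partial Y$ sits precisely inside the problematic part of $X_0$ (if both $f(x),g(x)$ lay in the interior of one $n$-cell, $x$ would be interior to $Y$).

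The paper's proof resolves this by treating the skeleton first. It pushes the region $Z$ (where $f,g$ land in \emph{distinct} $n$-simplices of $A$) onto $A^{n-1}$ via a retraction $r_1$ whose control is extracted from $\Shapes^\triangle(A)$, invokes the inductive hypothesis (there, on $\dim B$, reducing $(B,A)$ to $(B^{n-1},A^{n-1})$) to obtain a homotopy $H^{n-1}$ at the skeleton level, and \emph{only then} applies Lemma~\ref{lem:compact ANR} simplex-wise over $Y$ with $H^{n-1}$ serving as the prescribed input on the overlaps, so that continuity and compatibility of the fill-ins come for free. Your inside-out versus outside-in distinction is the crux: the inside-out (skeleton-first) order supplies the boundary data before it is needed, while the outside-in order cannot. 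The choice to induct on $\dim A$ rather than $\dim B$ is a harmless variation; the inverted order of stages is not.
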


\begin{proof}
The proof is by induction on $\dim B$. If $\dim B= 0$, then choose
$0<\delta<d_0$ (which is independent of $\epsilon$ in this case). It
follows that if $f, g\co X\to B$ are $\delta$-close and $f=g$ over
$B\setminus A$, then $f=g$.

Next assume that $\dim B=n>0$ and the proposition is true in lower
dimensions.

We will now define a particular strong deformation retraction of a
neighborhood of the $(n-1)$-skeleton of $A$ to the $(n-1)$-skeleton.
Let $A^{n-1}$ denote the $(n-1)$-skeleton of $A$ and let $S_n$
denote the set of $n$-simplices of $A$. Since $\Shapes^\triangle(A)$
is finite we can choose a finite subset $T_n$ of $S_n$ such that
each member of $S_n$ is simplicially isometric to a member of $T_n$.
For each $\tau\in T_n$, choose $b_{\tau}\in
\tau\setminus\partial\tau$ and let $r_\tau\co
\tau\setminus\{b_\tau\}\times I\to \tau\setminus\{ b_\tau\}$ be a
strong deformation retraction onto $\partial\tau$. Using the
finiteness of $\Shapes^\triangle(A)$, we can extend the selection of
the points $b_\tau$ to a selection of points
$b_{\sigma}\in\sigma\setminus\partial\sigma$ for every $\sigma\in
S_n$, and we can extend the strong deformation retractions $r_\tau$
to a strong deformation retraction of $B\setminus\{ b_{\sigma} ~|~
\sigma\in S_n\}$ onto $(B\setminus A) \cup A^{n-1}$ so that the
following is true. There is a homotopy
$$r\co B\setminus\{ b_{\sigma} ~|~ \sigma\in S_n\} \times I \to
B\setminus\{ b_{\sigma} ~|~ \sigma\in S_n\}$$
such that:
\begin{enumerate}
    \item $r_0=\id$
    \item $r_t|(B\setminus A)\cup A^{n-1} =\incl$ for all $t\in I$
    \item The image of $r_1$ is $(B\setminus A)\cup A^{n-1}$
    \item $r_t(\sigma\setminus\{ b_\sigma\}) \subseteq \sigma\setminus\{ b_\sigma\}$ for all $t\in I$ and $\sigma\in S_n$
    \item {\bf (Finiteness)} For each $\sigma\in S_n$ there exists $\tau\in T_n$ and a simplicial isometry
    $h\co \sigma\to\tau$ such  that $h(b_\sigma) = b_\tau$ and
    the following diagram commutes for all $t\in I$:
    $$
\begin{diagram}
\node{\sigma\setminus\{ b_\sigma\}} \arrow{s,tb}{r_t|}{} \arrow{e,t}{h|} \node{\tau\setminus\{ b_\tau\}} \arrow{s,b}{(r_\tau)_t} \\
\node{\sigma\setminus\{ b_\sigma\}} \arrow{e,t}{h|} \node{\tau\setminus\{ b_\tau\}}
\end{diagram}
$$
    \end{enumerate}

It follows that $(\ast)$ for every $\gamma >0$ there exists $\rho
>0$ such that:
\begin{enumerate}
    \item If $\sigma\in S_n$, then $b_\sigma\notin \bar{N}_\rho(A^{n-1})$, the closed $\rho$-neighborhood about $A^{n-1}$ in $B$.
    \item If $x,y\in \bar{N}_\rho(A^{n-1})$ and $d(x, y) <\rho$, then $d(r_1(x), r_1(y)) <\gamma$.
    \item For every $x\in \bar{N}_\rho(A^{n-1})$, the track $r(\{x\}\times I)$ has diameter $<\gamma$.
\end{enumerate}

Let $\epsilon >0$ be given. Use Lemma~\ref{lem:compact ANR} and the
assumption that $\Shapes^\triangle(A)$ is finite to choose $\mu>0$
with the following property: if $\Delta$ is any simplex of $A$, $X$
is any space with a closed subspace $X_0$ and $f,g\co X\to\Delta$
are $\mu$-close maps for which there is a $\mu$-homotopy $H\co
f|X_0\simeq g|X_0$, then there exists an $\epsilon/3$-homotopy
$\widetilde{H}\co f\simeq g$ such that $\widetilde{H}|X_0\times I =
H$.

Let $B^{n-1}$ denote the $(n-1)$-skeleton of $B$ and use the
inductive hypothesis to choose $\delta_1>0$ with the following
property: if $X$ is a space and $f, g\co X\to B^{n-1}$ are
$\delta_1$-close maps such that $f=g$ over $B^{n-1}\setminus A$,
then $f$ is $\mu$-homotopic to $g$ rel $X_{(f=g)}$. It follows that
if $X$ is a space and $f,g\co X\to B$ are $\delta_1$-close maps such
that $f^{-1}(B\setminus B^{n-1}) \cup g^{-1}(B\setminus
B^{n-1})\subseteq X_{(f=g)}$, and $f=g$ over $B\setminus A$, then
$f$ is $\mu$-homotopic to $g$ rel $X_{(f=g)}$.

Let $\rho=\rho(\gamma)$ be given by $(\ast)$ above where
$\gamma=\min\{ \delta_1, \epsilon/3\}$. Let $\beta
> 0$ be given by hypothesis (2) in the proposition for
$\alpha=\rho$. Choose $\delta > 0$ such that $\delta<\min\{\delta_1,
\rho, \beta, \mu \}$.

Now suppose given a space $X$ and $\delta$-close maps $f, g\co X\to
B$ such that $f=g$ over $B\setminus A$. We must show that $f$ is
$\epsilon$-homotopic to $g$ rel $X_{(f=g)}$. For each $\sigma\in S_n$, define subspaces
\[\begin{array}{ccccccc}
X^{n-1} &=& X_{(f=g)} \cup \left(f^{-1}(A^{n-1})\cap g^{-1}(A^{n-1})\right)
& \qquad & Y &=& \bigcup_{\sigma\in S_n} X_\sigma \vspace{2mm} \\
X_\sigma &=& f^{-1}(\sigma)\cap g^{-1}(\sigma)
& \qquad & Z &=& X\setminus (Y\cup X^{n-1}).
\end{array}\]
If $x\in Z$, then $f(x), g(x)$ are in distinct $n$-simplices of $A$ and $d(f(x), g(x))
<\delta$. The choice of $\delta <\beta$ implies that $f(x), g(x) \in
\bar{N}_\rho(A^{n-1})$. Define maps
\[
f^{n-1}: X^{n-1}\cup Z \longrightarrow B ;\quad
f^{n-1} = \begin{cases} f & \text{on $X^{n-1}$}\\
                        r_1f & \text{on $Z$}
            \end{cases}
\]
and
\[
g^{n-1}: X^{n-1}\cup Z \longrightarrow B; \quad
            g^{n-1} = \begin{cases} g & \text{on $X^{n-1}$}\\
                        r_1g & \text{on $Z$.}
            \end{cases}
\]
The choice of $\rho$ implies that there are $\epsilon/3$-homotopies
$E\co f|\simeq f^{n-1}$ rel $X^{n-1}$ and $F\co g| \simeq g^{n-1}$
rel $X^{n-1}$. Moreover, the choice of $\rho$ implies that $f^{n-1}$
and $g^{n-1}$ are $\delta_1$-close.
Define
\[
f'\co X \longrightarrow B ; \quad  
f' = \begin{cases} f^{n-1} & \text{on $X^{n-1}\cup Z$} \\
                     f        & \text{ on $Y$}
                     \end{cases}
= \begin{cases} f & \text{on $X^{n-1}\cup Y$}\\
                r_1f & \text{on $Z$}
                \end{cases}
\]
and
\[
g': X\longrightarrow B ; \quad  
g' = \begin{cases} g^{n-1} & \text{on $X^{n-1}\cup Z$} \\
                     g        & \text{ on $Y$}
                     \end{cases}
= \begin{cases} g & \text{on $X^{n-1}\cup Y$}\\
                r_1g & \text{on $Z$.}
                \end{cases}
\]
The homotopies $E$ and $F$ can be extended to
$\epsilon/3$-homotopies $E'\co f\simeq f'$ rel $X^{n-1}\cup Y$ and
$F'\co g\simeq g'$ rel $X^{n-1}\cup Y$.
By the inductive assumption, there exists a $\mu$-homotopy
$$H^{n-1}\co f^{n-1}\simeq g^{n-1} \text{ rel } X_{(f=g)}.$$
The choice of $\mu$ implies that $H^{n-1}$ can be extended to an
$\epsilon/3$-homotopy $H\co f'\simeq g'$. Clearly, $H$ is rel
$X_{(f=g)}$. Finally, we concatenate the three $\epsilon/3$-homotopies $E'$, $H$,
and $F'$ to get an $\epsilon$-homotopy $f\simeq f'\simeq g' \simeq
g$ rel $X_{(f=g)}$.\qed
\end{proof}

\begin{corollary}
\label{cor:FIT metric ANR} Suppose $(B,d)$ is a triangulated, metric
space of finite isometry type. For every $\epsilon > 0$ there exists
$\delta
>0$ such that: if $X$ is a space and $f, g\co X\to B$ are
$\delta$-close maps, then $f$ is $\epsilon$-homotopic to $g$ rel
$X_{(f=g)}$.
\end{corollary}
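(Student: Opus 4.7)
My plan is to deduce the corollary as an immediate special case of Proposition~\ref{prop:metric ANR}, applied with $A = B$. To do this I just need to verify that the hypotheses of the proposition are met when the ambient triangulated metric space $(B,d)$ has finite isometry type in the sense of Definition~\ref{def:FIT}.

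First I would note that the standing hypothesis of Proposition~\ref{prop:metric ANR} requires $B$ to be triangulated by a locally finite, finite-dimensional simplicial complex. Local finiteness is part (1) of Definition~\ref{def:FIT}. Finite dimensionality is a consequence of part (4): finiteness of $\Shapes^\boxtimes(B)$ forces a uniform bound on the dimensions of closed stars at vertices, hence a uniform bound on simplex dimensions. Next, I would check the three numbered conditions with $A = B$. Condition (1) of the proposition is condition (2) of Definition~\ref{def:FIT}. Condition (2) of the proposition is condition (3) of Definition~\ref{def:FIT} (the sub-polyhedron there is all of $B$, so its $n$-skeleton is the $n$-skeleton of $B$). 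Condition (3) of the proposition asks that $\Shapes^\triangle(B)$ be finite, and this follows from finiteness of $\Shapes^\boxtimes(B)$ by Remark~\ref{rem:finite}.

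With $A = B$, the hypothesis "$f = g$ over $B \setminus A$" in Proposition~\ref{prop:metric ANR} is vacuous, because $B \setminus A = \varnothing$. Therefore, given $\epsilon > 0$, the $\delta > 0$ produced by the proposition has the property that any two $\delta$-close maps $f, g \co X \to B$ are $\epsilon$-homotopic rel $X_{(f=g)}$, which is the conclusion sought.

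I do not expect any real obstacle here; the content of the corollary is entirely contained in Proposition~\ref{prop:metric ANR}, and the only step requiring any thought is unpacking Definition~\ref{def:FIT} to see that finite isometry type is strictly stronger than the hypotheses of the proposition taken with $A = B$ (the one mild point being the passage from $\Shapes^\boxtimes$ to $\Shapes^\triangle$, which is precisely Remark~\ref{rem:finite}).
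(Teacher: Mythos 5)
Your proposal is correct and follows exactly the paper's route: specialize Proposition~\ref{prop:metric ANR} to $A = B$, check the hypotheses against Definition~\ref{def:FIT}, and use Remark~\ref{rem:finite} to pass from $\Shapes^\boxtimes$ to $\Shapes^\triangle$. The paper's proof is just a one-line citation of these three items; your write-up merely makes the verification explicit.
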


\begin{proof}
Apply Proposition~\ref{prop:metric ANR}, Definition~\ref{def:FIT},
and Remark~\ref{rem:finite}.\qed
\end{proof}


\subsection{Metric version of Chapman's MAF sucking theorem}\label{subsec:Metric version of Chapman's MAF
sucking theorem}

A fundamental result concerning approximate fibrations defined on
manifolds is the following theorem of Chapman \cite{Chapman}.

\begin{theorem}[Chapman's MAF Sucking Theorem]
\label{Chapman's MAF Sucking Theorem} Suppose $B$ is a locally
compact, separable, metrizable, locally polyhedral space. For each
integer $m > 4$ and each open cover $\alpha$ of $B$, there exists an
open cover $\beta$ of $B$ such that: if $M$ is an $m$-dimensional
manifold and $p\co M\to B$ is a proper $\beta$-fibration, then $p$
is $\alpha$-close to a proper approximate fibration $p'\co M\to B$.
\end{theorem}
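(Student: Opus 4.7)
The plan is to follow Chapman's original strategy, which combines a local-global reduction with the torus trick and a handle induction. First, since the hypotheses on $p \co M \to B$ are local and $B$ is locally polyhedral, I would invoke Chapman's local-global principle \cite[Prop.~2.2]{ChapmanTAMS} (already used above in the proof of Proposition~\ref{prop:rel_sucking}) to reduce to the case where $B$ is a finite polyhedron. A PL immersion of a neighborhood of each top-dimensional simplex into Euclidean space then reduces the problem further to the model case $B = \br^n$, or equivalently the open cube $\ob{}_r^n$.

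Next, apply the torus trick to the model $\br^n$: after restricting attention to a large closed cube $\mathrm{B}_r^n$, use an immersion of this cube into the torus $T^n = \br^n/\bz^n$ to convert a proper $\beta$-fibration into a $\beta'$-fibration over the closed manifold $T^n$. Equivalently, one passes to the $\bz^n$-equivariant problem on the universal cover. The advantage of torus-compactifying is that $T^n$ admits a finite handlebody decomposition, so one can straighten the fibration handle-by-handle: over each handle $D^k \times D^{n-k}$, improve the $\beta$-fibration to an honest approximate fibration while keeping the approximation already arranged over the attaching region.

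The main obstacle is this inductive straightening step. It requires controlled $h$-cobordism and $\alpha$-approximation arguments of Chapman--Ferry \cite{ChapmanFerry} type, and is exactly where the dimension hypothesis $m > 4$ enters, since the underlying controlled surgery and engulfing machinery require $\dim M \geq 5$. One must simultaneously track how $\beta$ shrinks as a function of $\alpha$ and $m$ through each handle of $T^n$; this bookkeeping is the technical heart of Chapman's argument. Once the straightening over $T^n$ is complete, unwrapping back to $\br^n$ and patching the local approximations via the local-global principle produce the desired MAF $p' \co M \to B$ with $p'$ being $\alpha$-close to $p$.
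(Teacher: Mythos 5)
The paper does not re-prove this theorem at all: it is quoted as Chapman's result \cite{Chapman}, and what the paper actually develops afterwards (Lemmas~\ref{lem:chapman handle 1}, \ref{lem:chapman handle 2}, Lemma~\ref{lem:limit}, Proposition~\ref{prop:Chapman main}, Theorem~\ref{thm:metric sucking}, Corollary~\ref{cor:FIT-nonrel}) is a metric variant, explicitly described as ``a careful repackaging of that part of Chapman's proof that comes after his Handle Lemmas.'' Measured against that description of Chapman's argument, your sketch puts the ingredients in the wrong places. The torus trick, engulfing, and the high-dimensional constraint $m>4$ all live \emph{inside} the proofs of the two Handle Lemmas (straightening over a single local model $\br^k$ or $\oc(X)\times\br^k$); they are not the global skeleton of the argument. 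The global induction is \emph{not} a handle induction over a torus but an induction over the dual-cone/barycentric structure of the polyhedron $B$ itself, working from top-dimensional simplices down, with Handle Lemma~\ref{lem:chapman handle 2} providing the crucial relative control near the already-straightened higher strata (this is exactly Proposition~\ref{prop:Chapman main}).

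There are two more substantive gaps. First, your plan to ``reduce to $B=\br^n$ via the local-global principle'' and then ``patch the local approximations'' won't work as stated: Chapman's \cite[Prop.~2.2]{ChapmanTAMS} propagates $\epsilon$-fibration \emph{estimates} from local to global; it does not glue together approximate fibrations produced independently over overlapping charts, and indeed there is no reason two such local improvements should agree on overlaps. Second, you omit the limit argument entirely. After iterating the Handle Lemmas one does not yet have an approximate fibration, only a sequence $q_i$ that is a $\delta_i$-fibration over expanding regions with $\delta_i\to 0$; Lemma~\ref{lem:limit} is what converts the uniform limit $p'=\lim q_i$ into an honest approximate fibration, and without it the conclusion never closes. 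Finally, the machinery is misattributed: Chapman's argument for the Handle Lemmas is engulfing-based, not a controlled $h$-cobordism or Chapman--Ferry $\alpha$-approximation argument; the paper invokes Chapman--Ferry \cite{ChapmanFerry} only for the Estimated Homotopy Extension Theorem, which plays a different role elsewhere.
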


It is referred to as ``sucking'' because it says that a map that is
nearly an approximate fibration can be deformed, or sucked, into the
space of approximate fibrations. Chapman had earlier proved a
Hilbert cube manifold sucking theorem \cite{ChapmanTAMS}.

The purpose of this section is to establish a metric version of
Chapman's result in which $B$ is given a fixed metric and the open
covers $\alpha$ and $\beta$ of $B$ are replaced by numbers $\epsilon
>0$ and $\delta >0$, respectively. See
Corollary~\ref{cor:FIT-nonrel} below. In fact, we establish a
relative result in Corollary~\ref{cor:FIT}, A special case of
Corollary~\ref{cor:FIT}, namely Corollary~\ref{cor:chapman}, is the
key result that will be applied in Lemma~\ref{lem:equivariant
epsilon,MAF} in the course of proving an equivariant version of
sucking in Section~\ref{section:sucking}.

Of course, the numbers $\epsilon$ and $\delta$ correspond to open
covers of the metric space $B$ by balls of radius $\epsilon$ and
$\delta$, respectively. Thus, the metric result applies to fewer
situations (because not all open covers consist of balls of fixed
radius), but has a stronger conclusion than Chapman's
Theorem~\ref{Chapman's MAF Sucking Theorem}. Such a variation is not
true in general without further restrictions on the metric space
$B$. 
For example, let $B=\bigsqcup_{i=1}^\infty S_i^1$ be the disjoint union of circles metrized
so that each circle is a subspace of $B$ and $\lim_{i\to\infty} \delta_i = 0$, 
where $\delta_i = \diam (S_i^1)$.
If $M=S^k$ is a single $k$-sphere, where $k > 1$, then there exists no approximate fibration
$M\to B$.
On the other hand, if for each $i=1,2,\dots$, $p_i\co M\to B$ is a map such that
$p_i(M)\subseteq S_i^1$, then $p_i$ is a proper $\delta_i$-fibration.

That there are metric versions of Chapman's theorem is not a
new observation. It was pointed out by Hughes \cite[Remark
7.4]{Hughes1985Q} that such a metric version holds for $B=\br^n$
with the standard Euclidean metric. Hughes--Prassidis
\cite[Footnote, p.~10]{HughesPrassidis} assert the metric result for
``non-compact manifolds with sufficiently homogeneous metrics.''
Hughes--Ranicki \cite[Thm.~16.9]{HughesRanicki} assert and use
Corollary~\ref{cor:euclidean_delta_sucking} in the case $n=1$. In
each of these three references it is claimed that these variations
can be proved by closely examining Chapman's proof. This is indeed
the case; however, detailed explanations  have not heretofore
appeared in the literature. Since we require a yet more general
result, we provide a detailed outline of proof.

The proof of our metric result follows Chapman's papers
\cite{ChapmanTAMS} \cite{Chapman} as well as Hughes \cite{Hughes1981}. The
heart of Chapman's proof consists of his Handle Lemmas (quoted below
as Lemmas~\ref{lem:chapman handle 1} and \ref{lem:chapman handle
2}), which we can use without change. Chapman proves those lemmas by
engulfing and torus geometry (also known as torus tricks). His
methods require high dimensions. What we have written here is just a
careful repackaging of that part of Chapman's proof that comes after
his Handle Lemmas.

We begin by discussing a limit result (Lemma~\ref{lem:limit})
implicit in Chapman's work \cite{ChapmanTAMS} \cite{Chapman}. The proof
requires the following result, the proof of which is based on Coram
and Duvall \cite[Proposition 1.1]{CoramDuvall}.

\begin{lemma}
\label{lem:CD-like} Suppose $(B,d)$ is a metric ANR and $U$ is an
open subset of $B$. For every $\mu > 0$,  for every compact metric
space $Z$, and for every homotopy $F\co Z\times I\to U$, there
exists $\nu >0$ such that the following holds: if $\epsilon > 0$,
$E$ is an ANR, $p\co E\to B$ is a proper $\epsilon$-fibration over
$U$, and $f\co Z\to E$ is a map with $pf$ $\nu$-close to $F_0$, then
there is a homotopy $\widetilde{F}\co Z\times I\to E$ such that
$\widetilde{F}_0 = f$ and $p\widetilde{F}$ is $(\epsilon
+\mu)$-close to $F$.
\end{lemma}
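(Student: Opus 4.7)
The plan is to reduce the statement to the $\epsilon$-homotopy lifting property of $p$ over $U$ by first connecting $pf$ to $F_0$ through a short homotopy inside $U$, then concatenating with $F$ and lifting the result. My three steps are: (i) use the metric ANR property of $U$ (which is itself an ANR since it is open in $B$) to produce a short homotopy $H\co Z\times I\to U$ from $pf$ to $F_0$; (ii) concatenate $H$ with $F$ to form a single homotopy $G\co Z\times I\to U$ from $pf$ to $F_1$; (iii) apply the $\epsilon$-fibration property of $p$ over $U$ to lift $G$ starting at $f$, and declare that lift to be $\widetilde{F}$.

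For the selection of $\nu$, since $F(Z\times I)$ is compact in the open set $U$, I fix an open neighborhood $V$ of $F(Z\times I)$ with $\overline{V}$ compact in $U$. Proposition~\ref{prop:classic ANR} applied to the open cover of $U$ by $(\mu/3)$-balls of $B$, combined with a Lebesgue-number argument on the compactum $\overline{V}$, supplies $\nu>0$ such that any two $\nu$-close maps into $V$ are joined by a $(\mu/3)$-homotopy whose image lies in $U$. Simultaneously, uniform continuity of $F$ on the compact $Z\times I$ provides $t_1\in(0,1)$ small enough that $d(F(z,s),F(z,t))<\mu/3$ whenever $|s-t|\leq t_1/(1-t_1)$; in particular $d(F(z,t),F_0(z))<\mu/3$ for $t\in[0,t_1]$. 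By shrinking $\nu$ further I may also assume $pf(Z)\subseteq V$.

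Given $f$ with $pf$ being $\nu$-close to $F_0$, let $H\co Z\times I\to U$ be the promised $(\mu/3)$-homotopy from $pf$ to $F_0$, and define
\[
G(z,s) := \begin{cases} H(z,s/t_1), & s\in[0,t_1], \\ F(z,(s-t_1)/(1-t_1)), & s\in[t_1,1]. \end{cases}
\]
Then $G\co Z\times I\to U$ satisfies $G_0=pf$, so the commutative square with top edge $f$ and bottom edge $G$ admits an $\epsilon$-solution $\widetilde{F}\co Z\times I\to E$ with $\widetilde{F}_0=f$ and $p\widetilde{F}$ being $\epsilon$-close to $G$ by the $\epsilon$-fibration property of $p$ over $U$.

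It remains to verify the pointwise estimate $d(G(z,t),F(z,t))\leq 2\mu/3$. On $[0,t_1]$, the value $H(z,t/t_1)$ is $(\mu/3)$-close to $F_0(z)$ by the track-diameter bound on $H$, while $F_0(z)$ is $(\mu/3)$-close to $F(z,t)$ by the choice of $t_1$; on $[t_1,1]$, uniform continuity of $F$ together with the bound $|(t-t_1)/(1-t_1)-t|\leq t_1/(1-t_1)$ gives $d(G(z,t),F(z,t))<\mu/3$. The triangle inequality then yields the desired $(\epsilon+\mu)$-closeness of $p\widetilde{F}$ to $F$. The main obstacle is arranging that the connecting homotopy $H$ remains inside $U$: Proposition~\ref{prop:classic ANR} controls homotopies only in terms of a specified open cover of $B$, so the selection of $\nu$ must be tied to a precompact neighborhood of $F(Z\times I)$ inside $U$ rather than to an ambient ball radius in $B$.
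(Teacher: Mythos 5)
Your proof is correct and follows essentially the same route as the paper's: both proofs produce a short connecting homotopy inside $U$ from $pf$ to $F_0$ via Proposition~\ref{prop:classic ANR} applied to a precompact neighborhood, concatenate with $F$, lift using the $\epsilon$-fibration property, and absorb the concatenation error into $\mu$ by a reparametrization controlled through uniform continuity of $F$ on the compact domain. The only cosmetic difference is that you reparametrize the concatenated homotopy $G$ before lifting, whereas the paper lifts the concatenation on $Z\times[-1,1]$ first and then reparametrizes the lift; both yield the same estimate.
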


\begin{proof}
Given $\mu >0$, a compact metric space $Z$, and a homotopy $F\co
Z\times I\to U$, choose a compact neighborhood $K$ of $F(Z\times\{
0\})$ with $K\subseteq U$. Choose $d_0 >0$ such that the
$d_0$-neighborhood of $F(Z\times\{ 0\})$ is contained in $K$. Let
$\delta=\min\{ d_0,\mu/2\}$. Choose $\nu>0$ such that any two
$\nu$-close maps into $K$ are $\delta$-homotopic in $U$ (see
Proposition~\ref{prop:classic ANR}).

Now suppose given $\epsilon>0$, an ANR $E$, a map $p\co E\to B$ that
is a proper $\epsilon$-fibration over $U$, and a map $f\co Z\to E$
such that $pf$ is $\nu$-close to $F_0$. Let $\co Z\times [-1, 0]\to
U$ be a $\delta$-homotopy such that $J_{-1}=f$ and $J_0=F_0$. Thus,
the image of $J$ is contained in $K\subseteq U$. Define
\[
\Phi:
Z\times [-1,1] \longrightarrow U; \quad
(z,t) \longmapsto \begin{cases} J(z,t) & \text{if $-1\leq t\leq 0$}\\
                          F(z,t) & \text{if $0\leq t\leq 1$}.
                          \end{cases}
\]
It follows that there exists a lift $\widetilde{\Phi}\co Z\times
[-1,1]\to E$ such that $\widetilde{\Phi}_0 = f$ and
$p\widetilde{\Phi}$ is $\epsilon$-close to $\Phi$. 
Since $Z$ is compact, there exists $q\in (0,1)$ such that
$F(\{ z\}\times [0,q)])$ has diameter
less than $\mu/2$ for every $z\in Z$. Finally, define
\[
\widetilde{F}: Z\times I \longrightarrow E ;\quad
(z,t) \longmapsto \begin{cases}
                                                        \widetilde{\Phi}(z, 2t/q -1) & \text{ if $ 0 \leq t \leq q/2 $}\\
                                                        \widetilde{\Phi}(z, 2t-q)   & \text{ if $ q/2 \leq t \leq q $}\\
                                                        \widetilde{\Phi}(z,t)          & \text{ if $ q  \leq t \leq 1 $}
                                                \end{cases}
\]
One may check that $\widetilde{F}_0 = f$ and that $p\widetilde{F}$
is $(\epsilon +\mu)$-close to $f$.\qed
\end{proof}

\begin{lemma}[Limit Lemma]
\label{lem:limit} Suppose $E$ and $B$ are ANRs, $\mathcal U$ is a
collection of open subsets of $B$, $\{\epsilon_i\}_{=1}^\infty$ is a
sequence of positive numbers with $\lim_{i\to\infty}\epsilon_i=0$,
and there are proper maps $q,q_i\co E\to B$ for $i=1,2,3,\dots$ with
$\lim_{i\to\infty}q_i=q$ (uniformly). If $q_i$ is an
$\epsilon_i$-fibration over $U$ for each $U\in\mathcal U$, then $q$
is an approximate fibration over $\cup\mathcal U$.
\end{lemma}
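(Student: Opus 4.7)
The plan is to reduce to compact parameter spaces, decompose the given homotopy lifting problem into finitely many pieces each lying in a single $U\in\mathcal U$, and then iteratively apply Lemma~\ref{lem:CD-like} to the maps $q_i$ for $i$ large. By the Coram--Duvall-type remark following the definition of $\epsilon$-fibration, it suffices to treat homotopy lifting problems $(f\co Z\to E,\ F\co Z\times I\to\bigcup\mathcal U)$ with $Z$ a compact metric space. Given such a problem and $\epsilon>0$, the compact image $F(Z\times I)$ is covered by finitely many $U_1,\dots,U_n\in\mathcal U$. Using uniform continuity of $F$ and a Lebesgue number for the open cover $\{F^{-1}(U_j)\}$ of $Z\times I$, I would produce a subdivision $0=t_0<t_1<\cdots<t_N=1$ of $I$ together with a finite open cover $V_1,\dots,V_M$ of $Z$ such that, for every pair $(m,k)$, one has $F(V_m\times[t_{k-1},t_k])\subseteq U_{j(m,k)}$ for some index $j(m,k)$.

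Next I would fix $\mu>0$ with $3N\mu<\epsilon$ and, using $q_i\to q$ uniformly together with $\epsilon_i\to 0$, pick $i$ large enough that $q_i$ is $\mu$-close to $q$ on $E$ and $\epsilon_i<\mu$. The goal is to construct inductively lifts $\widetilde F_k\co Z\times[0,t_k]\to E$ with $\widetilde F_0=f$ and with $q\widetilde F_k$ pointwise $3k\mu$-close to $F|_{Z\times[0,t_k]}$. For the passage from $k-1$ to $k$, I would extend $\widetilde F_{k-1}|_{Z\times\{t_{k-1}\}}$ across $Z\times[t_{k-1},t_k]$ by a second induction on $m=1,\dots,M$: at sub-stage $m$, Lemma~\ref{lem:CD-like} applied to the $\epsilon_i$-fibration $q_i$ over $U_{j(m,k)}$, with input the already-constructed extension restricted to $V_m$, produces a local lift whose $q_i$-image is $(\epsilon_i+\mu)$-close to $F$ on $V_m\times[t_{k-1},t_k]$, hence whose $q$-image is within $3\mu$ of $F$ there. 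Taking $k=N$ then yields an $\epsilon$-solution, showing that $q$ is an $\epsilon$-fibration over $\bigcup\mathcal U$; since $\epsilon$ was arbitrary, $q$ is an approximate fibration over $\bigcup\mathcal U$.

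The main obstacle is the inner induction on $m$: the local lifts produced on distinct members $V_m$ need not agree on overlaps, and must be glued into one continuous extension without inflating the error past the permitted $3\mu$. I would handle this via the Estimated Homotopy Extension Theorem of Chapman--Ferry (as already used in the proof of Lemma~\ref{lem:compact ANR}), applied relative to the portion of the lift already constructed at the preceding sub-stage; the ANR structure of $E$ supplies the required extensions, and the a priori smallness of $\mu$ keeps the error estimate uniform in $z\in Z$.
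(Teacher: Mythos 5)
Your proposal takes a genuinely different route from the paper, and it has a real gap. The paper's proof is short precisely because it sidesteps the entire decompose-and-glue problem: it shows that for each \emph{single} $U\in\mathcal U$, the restriction $q|\co q^{-1}(U)\to U$ is an approximate fibration for compact metric parameter spaces (one application of Lemma~\ref{lem:CD-like} suffices), and then invokes Coram--Duvall's uniformization principle, which says that a proper map between ANRs that is an approximate fibration over each member of an open cover is an approximate fibration over the union. You instead try to lift a homotopy that wanders through several members of $\mathcal U$ by subdividing $Z\times I$ and iterating Lemma~\ref{lem:CD-like} over the pieces, in effect re-proving a version of uniformization from scratch.

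Two things go wrong in that direct approach. First, there is a quantifier mismatch in the error bookkeeping. Lemma~\ref{lem:CD-like} gives, for a chosen $\mu$ and a chosen homotopy, a tolerance $\nu=\nu(U,\mu,Z,F)$ on the initial lift. At stage $k$, the input to the lemma is $\widetilde F_{k-1}|_{Z\times\{t_{k-1}\}}$, whose $q_i$-image you only control to within roughly $3(k-1)\mu + \mu$ of $F(-,t_{k-1})$. You need that accumulated error to be below the relevant $\nu$, yet each $\nu$ is itself determined by $\mu$ (and shrinks as $\mu$ does). Nothing in your setup ensures $(3N-2)\mu<\nu$ can be arranged simultaneously with $3N\mu<\epsilon$; this requires a further argument about the rate at which $\nu$ depends on $\mu$, which you do not supply. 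Second, the inner induction on $m$ --- where you glue the local lifts on overlapping $V_m$'s --- is exactly the content of uniformization and is nontrivial: the estimated homotopy extension theorem modifies a lift off a closed set, but you need the modified lift to remain close over the part of the domain you have already handled, and to remain within the $\epsilon_i$-fibration's domain $U_{j(m,k)}$; you wave at this rather than proving it. In short, you have correctly identified the hard part, but you have not solved it, whereas the paper avoids it entirely by reducing to a single $U$ before lifting anything.
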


\begin{proof}

It suffices to show that given $U\in\mathcal U$, $q|\co q^{-1}(U)\to
U$ is an approximate fibration for the class of compact metric
spaces. For then results of Coram and Duvall \cite[Theorem
2.6]{CoramDuvall2}, \cite[Uniformization, page 43]{Coram} imply that
$q|\co q^{-1}(\cup\mathcal U)\to\cup\mathcal U$ is an approximate
fibration. Thus, suppose given $\epsilon > 0$, a compact metric
space $Z$, and a homotopy lifting problem
$$
\begin{diagram}
\node{Z} \arrow{s,tb}{\times 0}{} \arrow{e,t}{f} \node{q^{-1}(U)} \arrow{s,b}{q|} \\
\node{Z\times I}\arrow{ne,t,..}{\tilde F} \arrow{e,t}{F} \node{U}
\end{diagram}
$$
For $\mu=\epsilon/3$, let $\nu=\nu(U,\mu,Z,F)$ be given by
Lemma~\ref{lem:CD-like}. Now choose $i\in\bn$ such that $q_i$ is
$\nu$-close to $q$ and $q_i$ is an $\epsilon/3$-fibration over $U$.
It follows that $q_i f$ is $\nu$-close to $qf=F_0$. Thus,
Lemma~\ref{lem:CD-like} implies there exists $\widetilde{F}\co
Z\times I\to E$ such that $q_i\widetilde{F}$ is $2\epsilon/3$-close
to ${F}$. It follows that $q\widetilde{F}$ is $\epsilon$-close to
$F$.\qed
\end{proof}

We next quote the two handle lemmas of Chapman \cite[Lemma 5.1,
Theorem 5.2]{Chapman}.

\begin{lemma}[Chapman's First Handle Lemma]
\label{lem:chapman handle 1} Suppose $k$ is a positive integer and
$\br^k \hookrightarrow B$ is an open embedding, where $B$ is an ANR.
For every $m > 4$ and $\epsilon >0$ there is exists a $\delta >0$
such that: if $\mu>0$, $M$ is an $m$-manifold, and $p\co M\to B$ is
a proper map that is a $\delta$-fibration over $\mathrm{B}_3^k$,
then there is a proper map $p'\co M\to B$ such that
\begin{enumerate}
    \item $p'$ is a $\mu$-fibration over $\mathrm{B}_1^k$,
    \item $p'$ is $\epsilon$-close to $p$,
    \item $p=p'$ on $M\setminus p^{-1}(\ob{}_3^k)$. \qed
\end{enumerate}
\end{lemma}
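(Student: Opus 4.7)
The plan is to adapt Chapman's torus-trick approach: reduce the lemma, posed on the non-compact chart $\br^k \subseteq B$, to a sucking statement on the compact torus $T^k := \br^k/\bz^k$, where high-dimensional engulfing is available. First I would choose a scale so that the open cube $\ob{}_3^k$ is contained in a fundamental domain for some lattice $\Lambda \leq \br^k$, and so that $\mathrm{B}_1^k$ descends homeomorphically to a chart in $T^k$. Restricting $p$ to $p^{-1}(\ob{}_3^k)$ and then wrapping up, I would produce a proper map $q\co N \to T^k$ from some $m$-manifold $N$ whose fibration quality inherits the $\delta$-fibration property of $p$ with only a controlled distortion depending on $k$.

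Second, I would apply Chapman's MAF Sucking Theorem (Theorem~\ref{Chapman's MAF Sucking Theorem}) on the compact base $T^k$. Since $T^k$ is a compact polyhedral manifold and $m > 4$, for every open cover $\alpha$ of $T^k$ there exists a cover $\beta$ such that any proper $\beta$-fibration is $\alpha$-close to a manifold approximate fibration. Choosing $\alpha$ small enough in terms of $\epsilon$ and the wrapping distortion fixes the required $\delta$. Let $q'\co N \to T^k$ be the resulting approximate fibration. Unwrapping $q'$ back to $\br^k$ and splicing with the original $p$ outside $p^{-1}(\ob{}_3^k)$ yields $p'$. The splice in the annulus between $\mathrm{B}_1^k$ and $\mathrm{B}_3^k$ should be executed via the Estimated Homotopy Extension Theorem of Chapman--Ferry, tapering the change to the identity near $\partial \mathrm{B}_3^k$; the taper region is chosen to lie outside $\mathrm{B}_1^k$, so the approximate-fibration nature of $q'$ is preserved on $\mathrm{B}_1^k$. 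Hence $p'$ is actually a manifold approximate fibration over $\mathrm{B}_1^k$, which in particular is a $\mu$-fibration for every $\mu > 0$, explaining why $\delta$ may be chosen independently of $\mu$.

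The main obstacle is the compact-torus sucking step: the engulfing procedure required to prove sucking on $T^k$ with controlled estimates is the delicate technical core of Chapman's paper \cite{Chapman}, and it relies essentially on $m > 4$ for handle manipulation in the presence of small trails. A secondary issue is the bookkeeping in the splice: one must arrange the torus scale and the wrapping so that the splicing annulus corresponds to a controllable region on the torus, and so that the tapering does not interact with the region above $\mathrm{B}_1^k$. Both of these are solved by the standard torus-trick conventions, in which scaling and a bump function supported on $[1,3]$ segregate the modification region from $\mathrm{B}_1^k$.
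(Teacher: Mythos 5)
This lemma is not proved in the paper at all: it is quoted directly from Chapman~\cite[Lemma~5.1]{Chapman}, with the remark immediately afterward explaining that the version stated here (maps to an ANR $B$ containing an embedded $\br^k$ rather than maps directly to $\br^k$) is a formal, immediate consequence of Chapman's. The \qed in the statement signals that the authors are citing a black-box result, not supplying an argument. So your task was really to reconcile the stated form with Chapman's form, not to reprove Chapman's engulfing machinery.

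More importantly, your proposed proof is circular. You invoke Chapman's MAF Sucking Theorem (Theorem~\ref{Chapman's MAF Sucking Theorem}) on the compact base $T^k$ to get an approximate fibration, then unwrap. But in Chapman's paper the sucking theorem is \emph{proved from} the handle lemmas (precisely as the present paper reproduces it in Proposition~\ref{prop:Chapman main}, Corollary~\ref{cor:Chapman main}, and Theorem~\ref{thm:metric sucking}, all of which consume Lemmas~\ref{lem:chapman handle 1} and~\ref{lem:chapman handle 2} as input). There is no independent, prior instance of the sucking theorem for $T^k$ that you could invoke without first having the handle lemmas in hand. What the torus trick actually uses at this stage is not the finished sucking theorem but a much more primitive engulfing statement on the torus, together with handle straightening; that is the ``delicate technical core'' you allude to, but you cannot black-box it by citing the theorem it is en route to proving. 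A secondary (smaller) issue: the wrapping step from $p|p^{-1}(\ob{}_3^k)$ to a proper map $q\co N\to T^k$ that retains an estimated lifting property is itself nontrivial and is not a literal quotient; your one-sentence description does not address how the $\delta$-fibration property survives the construction of $N$.
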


\begin{lemma}[Chapman's Second Handle Lemma]
\label{lem:chapman handle 2} Suppose $k$ is a nonnegative integer
and $\oc(X)\times\br^k \hookrightarrow B$ is an open embedding,
where $B$ is an ANR and $X$ is a compact ANR. For every $m > 4$ and
$\epsilon
>0$ there is exists a $\delta >0$ such that: if $\mu>0$ there exists
$\nu>0$ so that the following statement is true:
\newline
if  $M$ is a $m$-manifold and $p\co M\to B$ is a proper map that is
a $\delta$-fibration over $c_3(X)\times\mathrm{B}_3^k$ and a
$\nu$-fibration over
$[c_3(X)\setminus\oc_{1/3}(X)]\times\mathrm{B}_3^k$, then there is a
proper map $p'\co M\to B$ such that
\begin{enumerate}
    \item $p'$ is a $\mu$-fibration over $c_1(X)\times\mathrm{B}_1^k$,
    \item $p'$ is $\epsilon$-close to $p$,
    \item $p=p'$ on $M\setminus p^{-1}(\oc_{2/3}(X)\times\ob{}_3^k)$. \qed
\end{enumerate}
\end{lemma}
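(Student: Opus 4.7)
The plan is to follow Chapman's engulfing-and-torus-trick strategy, using the First Handle Lemma (Lemma~\ref{lem:chapman handle 1}) as a black box in the $\br^k$-factor after a product structure has been forced in the cone direction. I would first choose $\delta = \delta(\epsilon)$ small enough that the radial deformation of $c_3(X)$ toward its apex---fixing the apex and sliding the outer face $X \times \{3\}$ inward to $X \times \{1\}$, trivially in the $\br^k$-coordinate---can be lifted through $p$ to a homotopy of total track-diameter at most $\epsilon/4$. The $\delta$-fibration hypothesis on $c_3(X) \times \mathrm{B}_3^k$ supplies such a lift, and standard support-shrinking arrangements ensure that it is supported inside $p^{-1}(\oc_{2/3}(X) \times \ob_3^k)$. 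After this preliminary engulfing move, $p$ is replaced (rel complement, up to $\epsilon/4$) by a map $p_0$ which on the annular zone $[c_1(X) \setminus \oc_{1/3}(X)] \times \mathrm{B}_3^k$ approximately factors through the obvious collapse $X \times [1/3,1] \times \mathrm{B}_3^k \twoheadrightarrow X \times \mathrm{B}_3^k$. This is the step that converts cone data into product data, and it uses only the coarse $\delta$-hypothesis.

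Next, given the desired error $\mu$, I would pick $\nu = \nu(\mu,\epsilon)$ and apply Chapman's First Handle Lemma to the $\br^k$-parameter. The torus trick enters here: wrap $\mathrm{B}_3^k \subset \br^k$ up to the $k$-torus via the standard covering construction, so that Lemma~\ref{lem:chapman handle 1} applies on a compact parameter space. The product structure from the previous step, combined with the $\nu$-fibration hypothesis over the \emph{manifold} region $[c_3(X) \setminus \oc_{1/3}(X)] \times \mathrm{B}_3^k$, supplies the input to that lemma and improves $p_0$ to a map $p_1$ that is a $\mu$-fibration over $c_1(X) \times \mathrm{B}_1^k$, with modification again supported in $\oc_{2/3}(X) \times \ob_3^k$ and of total displacement at most $\epsilon/4$ from $p_0$. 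Setting $p' := p_1$ inside the support and $p' := p$ outside, a bump-function splice through the collar together with an appeal to the Chapman--Ferry Estimated Homotopy Extension Theorem \cite{ChapmanFerry} yields the required proper $p'$ satisfying (1)--(3).

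The main obstacle is the cone apex itself. When $X$ is not a sphere, $\oc(X)$ is not a manifold there, so neither the torus trick nor the high-dimensional engulfing at the heart of Lemma~\ref{lem:chapman handle 1} is applicable at the apex directly. The hypothesis structure of the lemma is forced by this obstruction: the coarse $\delta$-fibration control is available on all of $c_3(X) \times \mathrm{B}_3^k$---including the apex neighborhood where only soft radial deformations make sense---and is used solely to install the product structure, while the sharper $\nu$-fibration control is demanded on the manifold complement of $\oc_{1/3}(X)$, where the actual fibration improvement happens. This split is exactly the content of the quantifier ordering $\delta = \delta(\epsilon)$ followed by $\nu = \nu(\mu,\epsilon)$, and its careful execution---keeping the engulfed product structure honest enough for the subsequent handle move while respecting the support condition---will be the genuinely delicate part of the argument.
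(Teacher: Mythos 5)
The paper does not prove this lemma. Note the \verb|\qed| appears directly after item~(3) of the statement, and the subsequent remark says explicitly: ``These two handle lemmas are not an exact quote of Chapman [Lemma 5.1, Theorem 5.2], but the difference is insignificant\ldots\ The lemmas above are formal, immediate consequences of Chapman's lemmas.'' So the paper's own treatment is simply a citation of \cite[Theorem~5.2]{Chapman}, together with the observation that passing from Chapman's target $\oc(X)\times\br^k$ to an ANR $B$ containing it as an open subset is routine. You are attempting to reprove a result the authors deliberately use as a black box; that is a legitimate thing to want to do, but it is a different task from what the paper requires, and it is a substantial undertaking (Chapman's proof is a multi-page engulfing and torus-geometry argument, and is where the entire $\dim > 4$ restriction enters).

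As for the sketch itself, there is a concrete gap even at the level of the outline. You propose to ``radially deform $c_3(X)$ toward its apex, sliding $X\times\{3\}$ inward to $X\times\{1\}$,'' lift this with ``total track-diameter at most $\epsilon/4$,'' and have the lift ``supported inside $p^{-1}(\oc_{2/3}(X)\times\ob_3^k)$.'' These three demands are mutually inconsistent. First, a homotopy that moves $X\times\{3\}$ to $X\times\{1\}$ has tracks of diameter roughly $2$ in the cone coordinate; the $\delta$-fibration hypothesis controls only the \emph{error} of the approximate lift, not its track lengths, so you cannot shrink the displacement to $\epsilon/4$ by choosing $\delta$ small. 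Second, that deformation is supported in the annular region $[c_3(X)\setminus\oc_1(X)]\times\mathrm{B}_3^k$, which is \emph{disjoint} from $\oc_{2/3}(X)\times\ob_3^k$; conclusion~(3) requires the modification of $p$ to be supported in the latter (a neighborhood of the cone-point preimage), not the former. So the support claim is backwards: the engulfing move you describe would change $p$ exactly where the lemma forbids changing it. The actual mechanism in Chapman's argument is subtler and does not reduce the Second Handle Lemma to the First; if you want to work through it, the right starting point is \cite[\S5]{Chapman} itself, together with the Hilbert-cube prototype in \cite{ChapmanTAMS}.
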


\begin{remark} These two handle lemmas are not an exact quote of Chapman \cite[Lemma 5.1, Theorem 5.2]{Chapman}, but the difference is
insignificant. Chapman considers maps directly to $\br^k$ and
$\oc(X)\times\br^k$, rather than to manifolds in which these spaces
are embedded. The lemmas above are formal, immediate consequences of
Chapman's lemmas.
\end{remark}

\begin{remark}
It is important to note that both of these Handle Lemmas are
independent of the metric on $B$. That is, the various constants
$\delta$ and $\nu$ depend on the metric, but their existence is
independent of the metric. This is because they depend only on the
metric on a compact portion of $B$.
\end{remark}

\begin{hypothesis}\label{hyp:finite}
The following list of technical hypotheses are used later in this section.
\begin{enumerate}
    \item Suppose $B$ be a locally finite polyhedron and let $A$ be a closed sub-polyhedron of dimension $n$.
    \item Fix a locally finite  triangulation of $B$ with respect to which $A$ is triangulated as a closed subcomplex.
    We will abuse notation and make no distinction between a
    simplicial complex and its underlying polyhedron.
    \item Suppose $d$ is a  metric for $B$ compatible with the topology on $B$.
    \item Let $\mathcal B$ be the set of barycenters of simplices in $A$. For each $0\leq k\leq n$, let ${\mathcal B}_k =\{ b\in{\mathcal B} ~|~ b
    \text{ is the barycenter of a simplex of $A$ of dimension
    $k$}\}$. For each $b\in\mathcal B$, let $\sigma_b$ denote
    the simplex of $A$ of which $b$ is the barycenter.
    \item For each $b\in\mathcal B$, fix an open neighborhood $V_b$ of $b$ in $B$, a compact polyhedron $X_b$ and a homeomorphism
    $\phi_b\co\oc(X_b)\times\br^k\to V_b$, where $b\in\mathcal
    B_k$. If $k=n$, then $X_b=\varnothing$ and $\oc(X_b)$ is a
    single point.
  \item Assume that $V_{b_1}\cap V_{b_2}=\varnothing$ whenever $0\leq k\leq n$ and $b_1, b_2\in\mathcal B_k$.
  \item For each $b\in\mathcal B$, let $C_b$ denote the closed star neighborhood of $b$ in the second barycentric subdivision of $B$.
  Thus, $A\subseteq \bigcup\{ C_b ~|~ b\in\mathcal B \}$.
  \item For each $0\leq k\leq n$ and $b\in\mathcal B_k$, assume $C_b = \phi_b\left(c_1(X_b)\times\mathrm{B}_1^k\right)$ and that
  $\phi_b\left(\{ v\}\times\br^k\right)$ is a neighborhood of
  $b$ in $\sigma_b$, where $v$ is the cone point of $\oc(X_b)$.
  \item Let $V=\bigcup\{ V_b ~|~ b\in {\mathcal B}\}$.
  \item The metric $d$ restricts to a complete metric on the closure of $V$.
  \item For each $0\leq k\leq n$ and $b\in\mathcal B_k$, let $W_b = \phi_b\left(c_{1.1}(X_b)\times\mathrm{B}_{1.1}^k\right)$.
  \item Choose numbers $1.2 < r_0 < r_1< \cdots < r_n =1.3$ and assume that for each $0\leq k< n$ and $b\in\mathcal B_k$, we have
\begin{enumerate}
    \item $\phi_b\left(\left[c_3(X_b)\setminus\oc_{1/3}(X_b)\right]\times B_3^k\right) \subseteq
    \bigcup\{ \phi_a\left(c_{r_{k+1}}(X_a)\times
    B_{r_{k+1}}^\ell\right) ~|~ k+1\leq\ell\leq n ,
    a\in\mathcal B_\ell\},$
    \item $\phi_b\left(c_{2/3}(X_b)\times B_3^k\right)$ misses
     $\bigcup\{ \phi_a\left(c_{r_{k}}(X_a)\times
     B_{r_{k}}^\ell\right) ~|~ k+1\leq\ell\leq n ,
     a\in\mathcal B_\ell\}.$
\end{enumerate}
  \item {\bf (Finiteness)} For each $0\leq k\leq n$ and $b\in\mathcal B_k$, let $d_b=\phi_b^\ast d$ be the metric on
  $\oc(X_b)\times\br^k$ obtained by pulling back $d$ along
  $\phi_b$. For each $0\leq k\leq n$, assume that $\{ d_b ~|~
  b\in\mathcal B_k\}$ is finite.
    \end{enumerate}
\end{hypothesis}

\begin{remark} The Finiteness condition above says in the first place that the set $\{ X_b ~|~ b\in \mathcal B\}$ of non-isomorphic polyhedra that are links of barycenters   is finite. In the second place, it says that for any given polyhedron $X$ occurring as a link and any $0\leq k\leq n$, even though there might be infinitely many different open embeddings given of $\oc(X)\times\br^k$ into $B$, there are only finitely many different induced metrics on $\oc(X)\times\br^k$.
\end{remark}

\begin{remark} Note  that given condition (1) in Hypothesis~\ref{hyp:finite}, conditions (2) through (12) may always be achieved. They are listed to fix notation. Thus, condition (13) is the only extra assumption.
\end{remark}

The proof of the next result is based on Hughes \cite[Lemma
10.1]{Hughes1981}, which in turn is based on Chapman \cite[Section
6]{ChapmanTAMS}.

\begin{proposition}
\label{prop:Chapman main} Assume Hypothesis~\ref{hyp:finite}. For
each integer $m > 4$ and each $\epsilon >0$ there exists a $\delta
>0$ such that for every $\mu >0$ if $M$ is an $m$-dimensional
manifold and $p\co M\to B$ is a proper $\delta$-fibration over $V_b$
for each $b\in\mathcal B$, then $p$ is $\epsilon$-close to a proper
map $p'\co M\to B$ such that $p=p'$ on $M\setminus p^{-1}(V)$ and
$p'$ is a $\mu$-fibration over $W_b$ for each $b\in \mathcal B$.
\end{proposition}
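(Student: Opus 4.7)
The plan is to proceed by reverse induction on the stratum dimension $k$, running from $k = n$ down to $k = 0$, applying Chapman's handle lemmas inside the charts $\phi_b$. At the top level $k = n$ the link $X_b$ is empty, so each $\phi_b$ is an open embedding of $\br^n$ and Lemma~\ref{lem:chapman handle 1} applies. At each lower level $0 \leq k < n$, the chart $\phi_b\co\oc(X_b)\times\br^k \hookrightarrow B$ puts us in the setting of Lemma~\ref{lem:chapman handle 2}. I will construct a sequence $p = p_{n+1}, p_n, \ldots, p_0 =: p'$, where $p_k$ differs from $p_{k+1}$ only on the pairwise disjoint union $\bigsqcup_{b\in\mathcal{B}_k} \phi_b(\oc_{2/3}(X_b)\times\ob_3^k)$; the disjointness comes from condition (6), so the handle modifications at level $k$ can be performed simultaneously and independently across $\mathcal{B}_k$. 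The inductive invariant I will maintain is: after stage $k$, the map $p_k$ is a $\mu$-fibration over $\phi_a(\cone_{r_k}(X_a)\times\mathrm{B}_{r_k}^\ell)$ for every $\ell\geq k$ and every $a\in\mathcal{B}_\ell$; since $r_k > 1.2 > 1.1$, this contains $W_a$, as the proposition demands at $k=0$.

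At each inductive stage I will need to verify the hypotheses of the handle lemma being applied. The $\delta_k$-fibration property of $p_{k+1}$ over $V_b$ should follow from the standing assumption on $p$ together with the fact that the cumulative modifications made at higher levels move the map by at most $\sum_{j>k}\epsilon_j$; provided $\delta$ is chosen small enough relative to $\delta_k$ (minus this drift), the input hypothesis of Lemma~\ref{lem:chapman handle 1} or Lemma~\ref{lem:chapman handle 2} is met. The boundary-cylinder $\nu_k$-fibration hypothesis required by Lemma~\ref{lem:chapman handle 2} at level $k<n$ will be supplied by condition (12)(a): the cylinder $\phi_b([\cone_3(X_b)\setminus\oc_{1/3}(X_b)]\times\mathrm{B}_3^k)$ is covered by the enlarged stars $\phi_a(\cone_{r_{k+1}}(X_a)\times\mathrm{B}_{r_{k+1}}^\ell)$ for $\ell>k$, where the inductive invariant already gives $\mu$-fibration control (I will choose the target qualities so that $\mu\leq\nu_k$). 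Finally, condition (12)(b) will ensure that the modification region $\phi_b(\cone_{2/3}(X_b)\times\mathrm{B}_3^k)$ misses $\phi_a(\cone_{r_k}(X_a)\times\mathrm{B}_{r_k}^\ell)$ for $\ell>k$, so the control from higher levels survives at the slightly smaller scale $r_k<r_{k+1}$, and the inductive invariant persists.

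The constant $\delta$ will be assembled by a two-sided bookkeeping. Split $\epsilon=\sum_{k=0}^n\epsilon_k$; at each level $k$, the handle lemma with parameter $\epsilon_k$ produces a $\delta_k>0$ that is uniform across $\mathcal{B}_k$ because the Finiteness condition (13) guarantees only finitely many isometry types among $\{\phi_b^*d\}_{b\in\mathcal{B}_k}$. Then $\delta$ is set to any positive number less than $\min_k(\delta_k - 2\sum_{j>k}\epsilon_j)$. Given the desired $\mu$, the target fibration qualities $\mu_k$ are chosen from the bottom upward: $\mu_0:=\mu$ and $\mu_{k+1}\leq\nu_k$, where $\nu_k$ is the boundary-input constant produced by Lemma~\ref{lem:chapman handle 2} at level $k$ for the pair $(\epsilon_k,\mu_k)$---again uniform across $\mathcal{B}_k$ by (13).

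The main technical obstacle will be precisely this bookkeeping, and more specifically the preservation of the $\delta_k$-fibration hypothesis for the intermediate maps $p_{k+1}$ after up to $n-k$ prior rounds of small perturbations. Without the Finiteness condition (13), the constants $\delta_k$ and $\nu_k$ could vary wildly across infinitely many barycenters of each fixed dimension, and no uniform single $\delta$ would exist; this is precisely why the passage from Chapman's open-cover version to the metric version requires a hypothesis of this kind. Once these constants are pinned down uniformly, the geometric ingredients (6), (12)(a), and (12)(b) together with disjointness of the handle sites should make the reverse induction go through.
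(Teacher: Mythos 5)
Your proposal follows essentially the same approach as the paper's proof: reverse induction on the stratum dimension $k$ from $n$ down to $0$, applying Chapman's First Handle Lemma at the top level and the Second Handle Lemma below, with Hypothesis~\ref{hyp:finite}(6) giving disjointness within each level, (12)(a) feeding the boundary-cylinder hypothesis, (12)(b) protecting the already-controlled regions at the shrunken scale $r_k < r_{k+1}$, the Finiteness condition (13) making the constants uniform across each $\mathcal{B}_k$, and a bottom-up chain $\mu_0 = \mu$, $\mu_{k+1} \leq \nu(\mu_k)$ of target fibration qualities. This matches the paper's construction of the sequence $p = p^{n+1}, \ldots, p^0 = p'$ and the auxiliary constants $\nu_{-1} = \mu$, $\nu_k < \nu(\nu_{k-1})$.

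One small imprecision worth flagging: your formula $\delta < \min_k(\delta_k - 2\sum_{j>k}\epsilon_j)$ is not guaranteed to be positive if you fix the $\epsilon_j$ first (e.g.\ $\epsilon_j = \epsilon/(n+1)$) and only then derive the $\delta_k$ from the handle lemmas, since the derived $\delta_k$ may be much smaller than the drift accumulated from higher levels. The paper circumvents this by choosing the parameters in an interleaved order $\epsilon_0, \delta_0, \epsilon_1, \delta_1, \ldots, \epsilon_n, \delta_n$, with each $\epsilon_k$ (for $k \geq 1$) chosen after $\delta_{k-1}$ and constrained so that any map $\epsilon_k$-close to a $(\delta_{k-1}/2)$-fibration over the chart is itself a $\delta_{k-1}$-fibration there. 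With this adaptive choice one can simply take $\delta = \delta_n$ and propagate the hypothesis down all $n+1$ stages. Your hedge (``provided $\delta$ is chosen small enough relative to $\delta_k$ minus this drift'') shows you see the issue, but the explicit interleaving is what actually closes it.
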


\begin{proof}
Let $\epsilon > 0$ be given. Inductively define small positive
numbers
$$\epsilon_0, \delta_0, \epsilon_1, \delta_1,\dots, \epsilon_n, \delta_n$$
with the following properties:
\begin{enumerate}
    \item $0 < \epsilon_0 < \epsilon/(n+1)$,
    \item $\delta_k <\delta(\epsilon_k)$, where $\delta(\epsilon_k)$ is given by the Handle Lemma~\ref{lem:chapman handle 1} (if $k=n$) or
    \ref{lem:chapman handle 2} (if $k<n$) for the open
    embeddings $\phi_b\co\br^n\hookrightarrow B$ or
    $\phi_b\co\oc(X_b)\times\br^k\hookrightarrow B$ for each
    $b\in\mathcal B_k$ (The handle lemmas are applied
    independently for each   $b\in\mathcal B_k$. Since $\mathcal
    B_k$ may be infinite, the Finiteness condition of
    Hypothesis~\ref{hyp:finite} is crucial at this step.),
    \item $\delta_k < \delta_{k-1}/2$,
    \item $\epsilon_k <\epsilon/(n+1)$,
    \item For each $b\in\mathcal B_k$, any map to $B$ that is $\epsilon_k$-close to a $(\delta_{k-1}/2)$-fibration over $\phi_b\left(\br^n\right)$
    or $\phi_b\left(c_3(X_b)\times\mathrm{B}_3^k\right)$ is
    itself a $\delta_{k-1}$-fibration over
    $\phi_b\left(\br^n\right)$ or
    $\phi_b\left(c_3(X_b)\times\mathrm{B}_3^k\right)$,
    respectively. (The Finiteness condition is again being used
    here.)
\end{enumerate}
Set $\delta=\delta_m$ and let $\mu>0$ be given. let  $p\co M\to B$
be given as in the hypothesis. We will produce a map $p'\co M\to B$
that is a $\mu$-fibration over each $W_b$. It suffices to construct
a sequence of maps $p=p^{n+1}, p^n,\dots,p^1,p^0=p'$ such that $p^k$
is $\epsilon_k$-close to $p^{k+1}$ and $p^k$ is a $\mu$-fibration
over $ \phi_b\left(c_{r_{k}}(X_b)\times B_{r_{k}}^\ell\right)$ for
$k\leq\ell\leq n$ and $b\in\mathcal B_\ell\}.$ First, inductively
define small positive numbers $\nu_{-1}, \nu_0,\dots, \nu_n$ be
setting $\nu_{-1}=\mu$ and for $k=0,\dots, n-1$, choosing $\nu_k <
\mu$ such that $\nu_k < \nu(\nu_{k-1})$, where $\nu(\nu_{k-1})$ is
given by the Handle Lemma~\ref{lem:chapman handle 2} for the open
embeddings $\phi_b: \oc(X_b)\times\br^k\hookrightarrow B$ (The
Finiteness condition of Hypothesis~\ref{hyp:finite} is used here to
apply the handle lemma independently for each  $b\in\mathcal B_k$.)

Using the appropriate Handle Lemma, we inductively produce the maps
$p^k$ (starting with $k=n$) so that
\begin{enumerate}
    \item $p^k$ is a $\nu_{k-1}$-fibration over
    $\phi_b\left(c_{r_{k}}(X_b)\times B_{r_{k}}^\ell\right)$ for
    each $b\in\mathcal B_{\ell}$,
    \item $p^k$ is $\epsilon_k$-close to $p^{k+1}$,
    \item $p^k=p^{k+1}$ over $B\setminus\bigcup_{b\in\mathcal B_k}\left[\oc_{2/3}(X_b)\times\ob{}_3^k\right]$.
\end{enumerate}
In order to apply the Handle Lemma inductively simply observe that
$p^k$ is a $\delta_{k-1}$-fibration over
$\phi_b\left(c_3(X_b)\times\mathrm{B}_3^k\right)$. Also observe that
$p^k$ is a $\mu$-fibration over $\phi_b\left( c_{r_{k}}(X_b)\times
B_{r_{k}}^\ell\right)$ for each $k\leq\ell\leq n$ and $b\in\mathcal
B_\ell$.\qed
\end{proof}

The next corollary is essentially a  renaming of some of the sets in
Proposition~\ref{prop:Chapman main}. As such, it can be viewed as a
corollary to the proof of Proposition~\ref{prop:Chapman main}.
However, a more formal derivation is also given. We begin by
introducing some more notation.

\begin{notation}
Assume Hypothesis~\ref{hyp:finite}. For each $R>1$, $0\leq k\leq n$,
and $b\in\mathcal B_k$, define
\[
V_b^R := \phi_b\left(\oc_R(X_b)\times\ob{}_R^k\right)\subseteq B. 
\]
\end{notation}

\begin{corollary}
\label{cor:Chapman main} Assume Hypothesis~\ref{hyp:finite}. For
each integer $m > 4$, each $1<R_2<R_1$, and each $\epsilon >0$ there
exists a $\delta=\delta(m,\epsilon, R_1, R_2) >0$ such that for
every $\mu >0$ if $M$ is an $m$-dimensional manifold and $p\co M\to
B$ is a proper $\delta$-fibration over $V_b^{R_1}$ for each
$b\in\mathcal B$, then $p$ is $\epsilon$-close to a proper map
$p'\co M\to B$ such that $p=p'$ on $M\setminus \bigcup_{b\in\mathcal
B}p^{-1}(V_b^{R_1})$ and $p'$ is a $\mu$-fibration over $V_b^{R_2}$
for each $b\in \mathcal B$.
\end{corollary}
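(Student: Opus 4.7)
The plan is to derive the corollary from Proposition~\ref{prop:Chapman main} by rescaling the coordinate charts, making rigorous the remark that the corollary is ``essentially a renaming.'' Fix $1 < R_2 < R_1$ and $\epsilon > 0$. For a dilation factor $\lambda > 0$ to be chosen, I would replace each chart $\phi_b \co \oc(X_b) \times \br^k \to V_b$ by the rescaled chart $\phi_b^\lambda := \phi_b \circ s_\lambda$, where $s_\lambda$ is the self-map of $\oc(X_b) \times \br^k$ sending $([x,t], y) \mapsto ([x, \lambda t], \lambda y)$. Then $\phi_b^\lambda(c_r(X_b) \times B_r^k) = \phi_b(c_{\lambda r}(X_b) \times B_{\lambda r}^k)$ for every $r > 0$, so the absolute scale at which the handle-lemma regions sit inside $B$ is a free parameter.

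The first step is to verify that the rescaled charts $\{\phi_b^\lambda\}$, together with intermediate radii $r_0 < \cdots < r_n$ adjusted accordingly, satisfy Hypothesis~\ref{hyp:finite}. Conditions (1)--(12) persist by construction. The only nontrivial point is the Finiteness condition (13): since all charts with $b \in \mathcal B_k$ are rescaled by the same factor $\lambda$, the pulled-back metrics $\phi_b^{\lambda *}(d) = s_\lambda^\ast(d_b)$ form a uniform $\lambda$-rescaling of the original finite family $\{d_b\}_{b \in \mathcal B_k}$, so they remain finite in number.

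The second step is to apply Proposition~\ref{prop:Chapman main} to the rescaled setup. Inspection of its proof shows that the $\delta$-fibration hypothesis on $V_b$ is used only through the handle lemmas, which require control on $\phi_b^\lambda(c_3(X_b) \times B_3^k) = \phi_b(c_{3\lambda}(X_b) \times B_{3\lambda}^k)$; choosing $\lambda \leq R_1/3$ places this inside $V_b^{R_1}$, so the corollary's hypothesis suffices. The conclusion is a $\mu$-fibration over the rescaled $W_b^\lambda = \phi_b(c_{1.1\lambda}(X_b) \times B_{1.1\lambda}^k)$; choosing $\lambda \geq R_2/1.1$ makes this absorb $V_b^{R_2}$. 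Both constraints are met by any $\lambda \in [R_2/1.1, R_1/3]$, nonempty whenever $R_1/R_2 \geq 3/1.1$. For smaller ratios one iterates along a chain $R_1 = T_0 > T_1 > \cdots > T_N = R_2$ whose successive ratios clear this threshold, or equivalently one verifies that Chapman's handle-lemma proof accommodates more flexible input/output ratios at the cost of constants depending on the ratio. The ``rel complement'' clause $p = p'$ off $\bigcup_b p^{-1}(V_b^{R_1})$ is preserved because item (3) of each handle lemma confines each modification to the relevant chart region.

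The main obstacle is the bookkeeping of constants through the rescaling and any iteration: one chooses $\delta$ by working backwards from the target $\mu$ through the handle-lemma constants, carefully tracking how each $\epsilon_i$ and each intermediate $\mu_i$ is determined, so that the total $\epsilon$-displacement stays bounded by $\epsilon$ and the final quantity has the advertised dependence $\delta = \delta(m, \epsilon, R_1, R_2)$.
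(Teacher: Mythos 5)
Your core idea---reduce to Proposition~\ref{prop:Chapman main} by reparametrizing the chart coordinates---is the same as the paper's, but your specific choice of a \emph{linear} dilation $s_\lambda$ introduces the ratio constraint $R_1/R_2 \geq 3/1.1$ that the corollary is supposed to hold without. The paper avoids this entirely with a \emph{nonlinear} reparametrization: it fixes a homeomorphism $h\co [0,R_1)\to[0,\infty)$ with $h|_{[0,1]}=\mathrm{id}$ and $h(R_2)=1.1$, defines $h_b$ by applying $h$ coordinatewise in the cone and cube factors, and applies Proposition~\ref{prop:Chapman main} to the new open embeddings $\phi_b\circ h_b^{-1}$. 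This single change does three things at once that your $s_\lambda$ does not. First, since $h$ carries $[0,R_1)$ onto all of $[0,\infty)$, the new chart $\phi_b\circ h_b^{-1}$ has image precisely $V_b^{R_1}$, so the proposition's hypothesis ``$\delta$-fibration over $V_b^{\text{new}}$'' and its rel clause ``$p=p'$ off $p^{-1}(V^{\text{new}})$'' match the corollary verbatim---whereas $s_\lambda$ is a self-bijection of $\oc(X_b)\times\br^k$, so $V_b^{\text{new}}=V_b$ and you must peek inside the proof (handle-lemma item (3)) to sharpen the rel conclusion, as you note. Second, since $h(R_2)=1.1$, the new $W_b$ is $\phi_b(\cone_{R_2}(X_b)\times\mathrm{B}_{R_2}^k)\supseteq V_b^{R_2}$, for \emph{any} $1<R_2<R_1$. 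Third, since $h$ is the identity on $[0,1]$, condition (8) of Hypothesis~\ref{hyp:finite} is preserved with the same $C_b$, whereas the global dilation $s_\lambda$ would move $C_b$ and force a re-derivation of conditions (7)--(12) from scratch.

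Your proposed fallback (iterating through intermediate radii $R_1=T_0>\cdots>T_N=R_2$) is not wrong in spirit, but as written it leaves a genuine gap: each stage requires re-verifying the interlocking inclusions of condition (12) for the rescaled charts, the $\epsilon$-budget and the ``for every $\mu$ there exists $\nu$''-style quantifier order must be threaded coherently across stages, and it is not checked that the rel complement accumulates correctly through compositions $p\to p'\to p''\to\cdots$. The one-shot nonlinear $h$ makes all of this unnecessary.
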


\begin{proof}
Fix a homeomorphism $h\co [0,R_1)\to [0,\infty)$ such that $h(t) =
t$ for all $0\leq t \leq 1$ and $h(R_2) = 1.1$. For each $0\leq
k\leq n$ and $b\in\mathcal B_k$, define a homeomorphism $h_b\co
\oc_{R_1}(X_b)\times\ob{}_{R_1}^k\to \oc(X_b)\times\br^k$ by
$h_b([x,t],y) = ([x,h(t)], (h(y_1), h(y_2),\dots, h(y_k))$ for all
$x\in X_b$, $t\in[0,R_1)$, and $y=(y_1,y_2,\dots,
y_k)\in\ob{}_{R_1}^k$. Apply Proposition~\ref{prop:Chapman main} to
the open embeddings $\phi_b\circ h_b^{-1}$.\qed
\end{proof}

Note that in the statement of Corollary~\ref{cor:Chapman main},
$\delta=\delta(m,\epsilon, R_1, R_2)$ also depends on the set-up in
Hypothesis~\ref{hyp:finite}, but we suppress that dependence in the
notation.

\begin{theorem}
\label{thm:metric sucking} Assume Hypothesis~\ref{hyp:finite}. For
each integer $m > 4$ and $\epsilon >0$ there exists $\delta >0$ such
that: if $M$ is an $m$-dimensional manifold and $p\co M\to B$ is a
proper $\delta$-fibration over $V$, then $p$ is $\epsilon$-close to
a proper map $p'\co M\to B$ such that $p=p'$ on $M\setminus
p^{-1}(V)$ and $p'$ is an approximate fibration over $A$.
\end{theorem}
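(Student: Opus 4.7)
The plan is to iteratively apply Corollary~\ref{cor:Chapman main} with a sequence of nested radii, improving the fibration constant over slightly smaller sets at each stage, and then to extract a uniform limit $p'$ that the Limit Lemma (Lemma~\ref{lem:limit}) identifies as an approximate fibration over $A$.

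Fix radii $R_0 > R_1 > R_2 > \cdots > R_\infty > 1$ with $R_i \to R_\infty$, for instance $R_i := \tfrac{3}{2} + 2^{-i-1}$ and $R_\infty := \tfrac{3}{2}$. Given $\epsilon > 0$, pick positive reals $\epsilon_i$ with $\sum_{i \geq 1} \epsilon_i < \epsilon$. For each $i \geq 1$, set $\delta_i := \delta(m, \epsilon_i, R_{i-1}, R_i)$ as furnished by Corollary~\ref{cor:Chapman main}, then choose $\mu_i \in (0, \delta_{i+1})$ with $\mu_i \to 0$. Finally set $\delta := \delta_1$.

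Now, given a proper $\delta$-fibration $p\co M \to B$ over $V$, put $p_0 := p$ and inductively produce $p_i$ by applying Corollary~\ref{cor:Chapman main} to $p_{i-1}$ with input parameters $(R_{i-1}, R_i, \epsilon_i, \mu_i)$. The hypothesis of the corollary holds at step $i = 1$ since $\delta = \delta_1$ and $V_b^{R_0} \subseteq V_b \subseteq V$, and for $i \geq 2$ since $p_{i-1}$ is a $\mu_{i-1}$-fibration (hence a $\delta_i$-fibration, as $\mu_{i-1} \leq \delta_i$) over each $V_b^{R_{i-1}}$. Each $p_i$ is proper, is $\epsilon_i$-close to $p_{i-1}$, is a $\mu_i$-fibration over each $V_b^{R_i}$, and agrees with $p_{i-1}$ off $\bigcup_b p_{i-1}^{-1}(V_b^{R_{i-1}})$. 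A short induction shows $p_i(x) = p(x)$ for every $x \notin p^{-1}(V)$ and every $i$, so all perturbations are supported over $V$. By completeness of $d$ on $\overline{V}$ (Hypothesis~\ref{hyp:finite}(10)) and $\sum \epsilon_i < \epsilon$, the sequence $\{p_i\}$ converges uniformly to a continuous map $p'\co M \to B$ that is $\epsilon$-close to $p$, agrees with $p$ off $p^{-1}(V)$, and is proper.

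For the final claim, observe that for each $R$ with $1 < R < R_\infty$ and each $i$, the inclusion $V_b^R \subseteq V_b^{R_i}$ (since $R < R_\infty \leq R_i$) shows that $p_i$ is a $\mu_i$-fibration over $V_b^R$. As $\mu_i \to 0$, Lemma~\ref{lem:limit} applied to the collection $\mathcal U := \{V_b^R : b \in \mathcal B,\ 1 < R < R_\infty\}$ identifies $p'$ as an approximate fibration over $\bigcup \mathcal U = \bigcup_b V_b^{R_\infty}$, which contains $A$. The main technical point to be verified is that $\delta_i$ can be chosen uniformly across the (possibly infinite) index set $\mathcal B_k$ at each rank $k$; this is precisely the role of the Finiteness clause (13) of Hypothesis~\ref{hyp:finite}, which is what lets Corollary~\ref{cor:Chapman main} produce a single $\delta_i$ valid for every $b \in \mathcal B$ simultaneously.
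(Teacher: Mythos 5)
Your proof is correct and takes essentially the same route as the paper's: iterate Corollary~\ref{cor:Chapman main} over a strictly decreasing sequence of radii $R_i$ tending to a limit $R_\infty > 1$, extract a uniform limit $p'$ using completeness of $d$ on $\overline{V}$, and then invoke the Limit Lemma~\ref{lem:limit} to identify $p'$ as an approximate fibration over the union of the $V_b^{R}$, which contains $A$ because $A \subseteq \bigcup_b C_b$ and each $C_b$ lies in $V_b^{R}$ for every $R > 1$. The only cosmetic differences from the paper are your choice of $R_i = \tfrac{3}{2}+2^{-i-1}$ versus the paper's $R_i = 2 + 1/i$, and your introduction of an auxiliary sequence $\mu_i$ where the paper simply takes $\mu = \delta_{i+1}$ at each stage (after additionally shrinking $\delta_i$ to ensure $\delta_i \to 0$); both devices serve the same purpose of making the fibration constants vanish so the Limit Lemma applies.
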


\begin{proof}
For each $i=1,2,3,\dots$, let $R_i= 2+\frac{1}{i}$. Let $m > 4$ and
$\epsilon >0$ be given. For each $i=1,2,3,\dots$ let
$\delta_i=\delta(m,\epsilon/2^i, R_i, R_{i+1}) > 0$ be given by
Corollary~\ref{cor:Chapman main}. We may also assume that $\delta_i
< 1/i$ so that $\lim_{i\to\infty}\delta_i = 0$. Let
$\delta=\delta_1$ and suppose $M$ is an $m$-dimensional manifold and
$p\co M\to B$ is a proper $\delta$-fibration over $V$. Use
Corollary~\ref{cor:Chapman main} to define inductively a sequence of
proper maps $q_i\co M\to B$, $i=1,2,3,\dots$, such that:
\begin{enumerate}
    \item $q_1=p$,
    \item $q_i$ is $(\epsilon/2^i)$-close to $q_{i+1}$,
    \item $q_i$ is a $\delta_i$-fibration over $V_b^{R_i}\supseteq V_b^2$ for each $b\in\mathcal B$,
    \item $q_i=q_{i+1}$ on $M\setminus\bigcup_{b\in\mathcal B}p^{-1}\left(V_b^{R_i}\right)$.
\end{enumerate}
The completeness of the metric on the closure of $V$ implies that
the uniform limit $p'=\lim_{i\to\infty}q_i$ exists. Clearly, $p'$ is
$\epsilon$-close to $p$ and $p=p'$ on
$M\setminus\bigcup_{b\in\mathcal B}p^{-1}\left(V_b^{R_1}\right)
\supseteq M\setminus p^{-1}\left(V\right)$. By Lemma~\ref{lem:limit},
$p'$ is proper and an approximate fibration over
$\bigcup_{b\in\mathcal B}V_b^2\subseteq A$.\qed
\end{proof}

\begin{corollary}
\label{cor:FIT} Suppose $(B,d)$ is a  metric space triangulated by a
locally finite simplicial complex, $A$ is a closed sub-polyhedron of
$B$, $\Shapes^\boxtimes(B, A)$ is finite, $U$ is an open subset of
$B$ containing $A$, and the metric $d$ is complete on the closure of
$U$. For every integer $m > 4$ and $\epsilon > 0$, there exists a
$\delta >0$ such that: if $M$ is an $m$-dimensional manifold and
$p\co M\to B$ is a proper $\delta$-fibration over $U$, then $p$ is
$\epsilon$-close to a proper map $p'\co M\to B$ such that $p=p'$ on
$M\setminus p^{-1}(U)$ and $p'$ is an approximate fibration over
$A$.
\end{corollary}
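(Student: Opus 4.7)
The plan is to reduce Corollary \ref{cor:FIT} to Theorem \ref{thm:metric sucking} by exhibiting a setup that satisfies Hypothesis \ref{hyp:finite}. First I would replace the given triangulation of $(B,d)$ by its second barycentric subdivision; by Remark \ref{rem:finite}, finiteness of $\Shapes^\boxtimes(B,A)$ is preserved, and $A$ remains triangulated as a closed subcomplex.

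For each simplex $\sigma$ of $A$ with barycenter $b$ and $k=\dim\sigma$, the dual cell structure coming from the barycentric subdivision yields a canonical product neighborhood of $b$: a homeomorphism $\phi_b\co\oc(X_b)\times\br^k\to V_b$ in which the $\br^k$-factor parametrizes an open neighborhood of $b$ inside $\sigma$ (after rescaling from a bounded $k$-cell to all of $\br^k$), while the cone factor $\oc(X_b)$ records the transverse directions in $B$, with $X_b$ a compact polyhedron obtained from the dual link of $\sigma$. Since $A$ is closed in $B$ and $U\supseteq A$ is open, shrinking each $V_b$ if necessary allows us to arrange $V_b\subseteq U$ together with the disjointness, nesting, and star-neighborhood relations of Hypothesis \ref{hyp:finite}(6)--(12); completeness of $d$ on $\overline{U}$ then supplies (10).

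The main technical point is verifying the Finiteness condition (13), namely that $\{d_b \mid b\in\mathcal{B}_k\}$ is a finite set of metrics on $\oc(X_b)\times\br^k$ for each $k$. This will follow from the hypothesis that $\Shapes^\boxtimes(B,A)$ is finite: each $d_b=\phi_b^*d$ on the bounded region $\cone_{R_1}(X_b)\times\mathrm{B}_{R_1}^k$ is, after the natural rescaling, determined by the simplicial isometry class of the closed star of $b$ in $B$, of which there are only finitely many with barycenter in $A$. The delicate part will be choosing the $\phi_b$ naturally enough---depending only on this isometry class---so that simplicially isometric closed stars genuinely produce identical pulled-back cone-product metrics; I expect this canonical choice of product charts to be the main obstacle in the argument.

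With Hypothesis \ref{hyp:finite} in hand, an application of Theorem \ref{thm:metric sucking} yields the desired $\delta=\delta(m,\epsilon)$. Given a proper $\delta$-fibration $p\co M\to B$ over $U$, it is in particular a $\delta$-fibration over $V:=\bigcup_{b\in\mathcal{B}}V_b\subseteq U$, so the theorem produces $p'\co M\to B$ that is $\epsilon$-close to $p$, satisfies $p=p'$ on $M\setminus p^{-1}(V)$, and is an approximate fibration over $A$. Since $V\subseteq U$ implies $M\setminus p^{-1}(U)\subseteq M\setminus p^{-1}(V)$, we conclude $p=p'$ on $M\setminus p^{-1}(U)$, completing the proof.
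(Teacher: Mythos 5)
Your approach is exactly the route the paper intends; its own proof is the single sentence ``This follows directly from Theorem~\ref{thm:metric sucking},'' and you have correctly unpacked it: the real content is checking Hypothesis~\ref{hyp:finite}, and the only non-automatic item (as the paper itself remarks after Hypothesis~\ref{hyp:finite}) is the Finiteness condition (13). The obstacle you flag at the end --- choosing the charts $\phi_b$ so that simplicially isometric closed stars give literally the same pulled-back metric --- is the right thing to worry about, but it is resolvable by a transport argument rather than being a genuine gap: pick one representative vertex $v$ in each simplicial-isometry class of $\Shapes^\boxtimes(B,A)$ and fix a single chart $\phi_v\co \oc(X_v)\times\br^k\to V_v$ with image inside the closed star of $v$; for any other $w$ in the same class, the defining isometry $h\co\phi(K_v)\to\phi(K_w)$ is a $d$-isometry covering a simplicial isomorphism, so setting $\phi_w:=h\circ\phi_v$ gives $\phi_w^\ast d=\phi_v^\ast(h^\ast d)=\phi_v^\ast d$, and $h$ carries the second-barycentric star $C_v$ to $C_w$ and respects the frame $\sigma_v\mapsto\sigma_w$, so conditions (6)--(12) transport as well. (Shrink the representatives' $V_v$ into $U$ first, so that all $V_b\subseteq U$; conditions (6)--(12) are stable under that shrinking.) Two small bookkeeping points: condition (7) already builds in a second barycentric subdivision, so you need not pre-subdivide twice --- one subdivision to turn barycenters of simplices of $A$ into vertices suffices --- and you should note, as you implicitly do via Remark~\ref{rem:finite}, that finiteness of $\Shapes^\boxtimes(B,A)$ persists under barycentric subdivision.
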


\begin{proof}
This follows directly from Theorem~\ref{thm:metric sucking}.\qed
\end{proof}

The following corollary follows immediately from
Corollary~\ref{cor:FIT} by taking $A=B$. It is the metric version of
Chapman's MAF Sucking Theorem~\ref{Chapman's MAF Sucking Theorem}.

\begin{corollary}[Metric MAF Sucking]
\label{cor:FIT-nonrel} Suppose $(B,d)$ is a  complete metric space
triangulated by a locally finite simplicial complex such that
$\Shapes^\boxtimes(B)$ is finite. For every integer $m > 4$ and
$\epsilon
> 0$, there exists a $\delta >0$ such that: if $M$ is an
$m$-dimensional manifold and $p\co M\to B$ is a proper
$\delta$-fibration, then $p$ is $\epsilon$-close to a manifold
approximate fibration $p'\co M\to B$. \qed
\end{corollary}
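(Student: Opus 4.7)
The plan is to apply Corollary~\ref{cor:FIT} directly with the choices $A = B$ and $U = B$. Since all of the substantive work has already been carried out in Theorem~\ref{thm:metric sucking} and Corollary~\ref{cor:FIT}, this last statement should reduce to a verification that the hypotheses of Corollary~\ref{cor:FIT} are satisfied under these specializations.

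First I would check each hypothesis in turn. The triangulation of $B$ by a locally finite simplicial complex is given. Taking $A = B$, the set $A$ is trivially a closed sub-polyhedron of $B$, and $\Shapes^\boxtimes(B, A)$ coincides with $\Shapes^\boxtimes(B)$, which is finite by assumption. Taking $U = B$, we have that $U$ is an open subset containing $A$, and the metric $d$ is complete on $\overline{U} = B$ by the completeness hypothesis on $(B,d)$.

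Next I would unpack the conclusion. Given $m > 4$ and $\epsilon > 0$, Corollary~\ref{cor:FIT} furnishes a $\delta > 0$ such that any proper $\delta$-fibration $p \co M \to B$ — which is automatically a proper $\delta$-fibration over $U = B$ — is $\epsilon$-close to a proper map $p' \co M \to B$ with $p = p'$ on $M \setminus p^{-1}(B) = \varnothing$ (a vacuous condition) and with $p'$ an approximate fibration over $A = B$. Thus $p'$ is a proper approximate fibration from the $m$-manifold $M$ to $B$, which is the desired manifold approximate fibration.

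There is no difficult step here: all of the geometric content is absorbed into Corollary~\ref{cor:FIT}, which in turn rests on Theorem~\ref{thm:metric sucking} and Chapman's two Handle Lemmas. The only thing to be careful about is the bookkeeping that $\Shapes^\boxtimes(B,B) = \Shapes^\boxtimes(B)$ and that the ``agreement outside $p^{-1}(U)$'' clause becomes trivial when $U = B$.
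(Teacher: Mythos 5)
Your proposal is correct and matches the paper's own argument, which states that the corollary ``follows immediately from Corollary~\ref{cor:FIT} by taking $A=B$'' (with $U=B$ implicitly). Your spelled-out verification of the hypotheses and the observation that the ``agreement off $p^{-1}(U)$'' clause becomes vacuous is exactly the intended, routine specialization.
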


\begin{corollary}
\label{cor:new FIT-nonrel} Suppose $(B,d)$ is a triangulated metric
space of finite isometry type.
 For every integer $m > 4$ and
$\epsilon > 0$, there exists a $\delta >0$ such that: if $M$ is an
$m$-dimensional manifold and $p\co M\to B$ is a proper
$\delta$-fibration, then $p$ is $\epsilon$-homotopic to a manifold
approximate fibration $p'\co M\to B$.
\end{corollary}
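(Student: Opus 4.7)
The plan is to combine the two metric results already established in this section under the finite isometry type hypothesis. Specifically, Corollary \ref{cor:FIT-nonrel} (Metric MAF Sucking) upgrades a proper $\delta$-fibration to a nearby manifold approximate fibration, while Corollary \ref{cor:FIT metric ANR} converts metric closeness of maps into $B$ into a small homotopy. Chaining these together turns the ``$\epsilon$-close'' conclusion of sucking into the desired ``$\epsilon$-homotopic'' conclusion.

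Given $\epsilon > 0$ and $m > 4$, I would first apply Corollary \ref{cor:FIT metric ANR} to the triangulated metric space $(B,d)$ of finite isometry type, producing some $\eta > 0$ such that any two $\eta$-close maps from an arbitrary space $X$ into $B$ are $\epsilon$-homotopic. Next, apply Corollary \ref{cor:FIT-nonrel} with this $\eta$ in place of its $\epsilon$ parameter to obtain $\delta > 0$ such that every proper $\delta$-fibration $p\co M \to B$ from an $m$-dimensional manifold $M$ is $\eta$-close to a manifold approximate fibration $p'\co M \to B$. Then $p$ and $p'$ are $\eta$-close maps with the same codomain, so by the first choice they are $\epsilon$-homotopic, as required.

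There is no real obstacle here: both of the required results have already been proved in this section, and the finite isometry type hypothesis is precisely what makes both of them simultaneously applicable. The only subtlety worth noting is the order of quantifiers (choose $\eta$ from the ANR result first, then choose $\delta$ from sucking to match $\eta$), which is a routine $\epsilon$-$\delta$ bookkeeping step.
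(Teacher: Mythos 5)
Your proof is correct and matches the paper's (very terse) argument: the paper simply cites Corollary~\ref{cor:FIT-nonrel}, Corollary~\ref{cor:FIT metric ANR}, and Definition~\ref{def:FIT}, which is exactly the chain you describe. The order of quantifiers you spell out (choose $\eta$ from the ANR corollary first, then choose $\delta$ from sucking to match $\eta$) is precisely the intended reading, with Definition~\ref{def:FIT} implicitly supplying the check that finite isometry type furnishes the hypotheses of both corollaries.
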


\begin{proof}
Apply Corollary~\ref{cor:FIT-nonrel}, Corollary~\ref{cor:FIT metric
ANR}, and  Definition~\ref{def:FIT}.\qed
\end{proof}

The following corollary is the exact form of metric MAF sucking that
we will use in Lemma~\ref{lem:equivariant epsilon,MAF} below in the
course of proving an equivariant version of MAF sucking.

\begin{corollary}
\label{cor:chapman} Suppose  $Y$ is a compact metric space and $d$
is a complete metric for the open cone $\oc(Y)$.   Restrict the
metric to the subset $B := Y\times (0,\infty)$. Select
$0<r_1<r_2<r_3$ and an integer $m > 4$. Suppose $B$ is triangulated
by a locally finite simplicial complex such that there is a closed
polyhedral neighborhood $A$ of $Y\times [r_3,\infty)$ contained in
$Y\times (r_2,\infty)$ with $\Shapes^\boxtimes(B,A)$ finite. For
every $\epsilon >0$ there exists $\delta >0$ such that: if $P$ is an
$m$-dimensional manifold and $p\co P\to B$ is a proper
$\delta$-fibration over $Y\times (r_1,\infty)$, then $p$ is
$\epsilon$-close to a map $p'$ that is a MAF over
$Y\times(r_3,\infty)$ such that $p=p'$ on $p^{-1}(Y\times (0,r_2])$.
\end{corollary}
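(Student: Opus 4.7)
The plan is to derive Corollary~\ref{cor:chapman} directly from the relative metric MAF sucking result, Corollary~\ref{cor:FIT}. The key step will be to produce an open set $U \subseteq B$ that (i) contains $A$, (ii) is disjoint from $Y \times (0, r_2]$, (iii) lies inside $Y \times (r_1, \infty)$ where the $\delta$-fibration hypothesis applies, and (iv) has complete restricted metric on its closure.

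First, I will take $U := Y \times (r_2, \infty) \subseteq B$. This $U$ is open in $B$ and contains $A$ by the standing hypothesis $A \subseteq Y \times (r_2, \infty)$; also $U \subseteq Y \times (r_1, \infty)$ because $r_1 < r_2$. Its closure in $B$ is $Y \times [r_2, \infty)$, which is a closed subspace of the complete metric space $\oc(Y)$ --- the slab is bounded away from the cone point since $r_2 > 0$ --- so $d$ restricts to a complete metric on $\overline{U}$.

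Next, I will feed $(B,d)$, the closed sub-polyhedron $A$ (with $\Shapes^\boxtimes(B,A)$ finite by assumption), this $U$, and the prescribed $m > 4$ and $\epsilon > 0$ into Corollary~\ref{cor:FIT}. The output is a $\delta > 0$ with the property that any proper $\delta$-fibration $p \co P \to B$ over $U$, from an $m$-manifold $P$, is $\epsilon$-close to a proper map $p' \co P \to B$ such that $p = p'$ on $P \setminus p^{-1}(U)$ and $p'$ is an approximate fibration over $A$. Since the hypothesis provides $p$ as a $\delta$-fibration over $Y \times (r_1, \infty) \supseteq U$, it is a fortiori a $\delta$-fibration over $U$; moreover $P \setminus p^{-1}(U) = p^{-1}(Y \times (0, r_2])$, so the rel condition becomes exactly the one demanded in the conclusion.

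Finally, I will observe that since $A$ is a (closed polyhedral) neighborhood of $Y \times [r_3, \infty)$, one has $Y \times (r_3, \infty) \subseteq \intr(A) \subseteq A$. Therefore the approximate fibration property of $p'$ over $A$ restricts to give the approximate fibration property over $Y \times (r_3, \infty)$ (any $G$-free homotopy lifting problem into the open subset is a lifting problem into $A$), and combined with $P$ being an $m$-manifold and $p'$ being proper, this yields a MAF over $Y \times (r_3, \infty)$. The only substantive verification in the whole argument is the completeness of $d$ on $\overline{U}$, which is precisely what the cone hypothesis ``$d$ is a complete metric for $\oc(Y)$'' together with $r_2 > 0$ was built to supply; there is no serious obstacle beyond packaging.
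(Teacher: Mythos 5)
Your proposal is correct and follows exactly the paper's one-line derivation: apply Corollary~\ref{cor:FIT} with $U = Y \times (r_2,\infty)$, using that $A \subseteq U \subseteq Y \times (r_1,\infty)$ and that $\overline{U} = Y \times [r_2,\infty)$ is closed in the complete space $\oc(Y)$ (hence complete). The only blemish is the stray reference to a ``$G$-free homotopy lifting problem'' at the end --- there is no group $G$ in this non-equivariant corollary --- but that is clearly just a slip and does not affect the argument.
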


\begin{proof} Apply Corollary~\ref{cor:FIT} with $B=Y\times (0,\infty)$ and $U=Y\times (r_2,\infty)$.\qed
\end{proof}

As mentioned above, the following result has been used in the
literature (e.g., \cite[Theorem 16.9]{HughesRanicki}) , but a
detailed derivation has heretofore not appeared.

\begin{corollary}\label{cor:euclidean_delta_sucking}
Suppose $\br^n$ is given its standard metric. For every $m > 4$ and
every $\epsilon > 0$ there exists $\delta=\delta(n,m,\epsilon) > 0$
such that: if $M$ is an $m$-manifold and $p\co M\to\br^n$ is a
proper $\delta$-fibration, then $p$ is $\epsilon$-homotopic to a
MAF. Consequently, any proper bounded fibration $p: M \to \br^n$ is
boundedly homotopic to a MAF.
\end{corollary}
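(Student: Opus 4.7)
The plan is to deduce both assertions from Corollary~\ref{cor:new FIT-nonrel} after first verifying that $(\br^n,e)$ with its standard metric has finite isometry type. This last fact is exactly the $G=\{1\}$ case of Proposition~\ref{prop:HenriquesLeary}; alternatively, the standard linear triangulation $\cT$ of $\br^n$ with vertex set $\bz^n$ constructed inside that proof is invariant under $\bz^n$-translations and under coordinate permutations, so $\Shapes^\boxtimes(\br^n)$ is finite, and the remaining conditions of Definition~\ref{def:FIT} are immediate. The first (quantified) assertion then falls out at once from Corollary~\ref{cor:new FIT-nonrel} applied to $(B,d)=(\br^n,e)$, producing the desired $\delta=\delta(n,m,\epsilon)>0$.

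For the ``consequently'' clause I would exploit the scaling symmetry of $\br^n$. Suppose $p\co M\to\br^n$ is a proper $\delta_0$-fibration for some $\delta_0>0$. For each $\lambda>0$ the rescaled map $\lambda p\co M\to\br^n$, $x\mapsto\lambda\cdot p(x)$, is proper, and a direct check of the definition shows it is a $(\lambda\delta_0)$-fibration: given a lifting problem $F\co Z\times I\to\br^n$ with $(\lambda p)\circ f = F_0$, I lift $\lambda^{-1}F$ against $p$ (using $p\circ f = \lambda^{-1}F_0$) to get $\widetilde F$ with $p\widetilde F$ being $\delta_0$-close to $\lambda^{-1}F$, hence $\lambda p\widetilde F$ is $(\lambda\delta_0)$-close to $F$. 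The same check shows this rescaling preserves the MAF property. I would now apply the first assertion with, say, $\epsilon=1$ to obtain $\delta=\delta(n,m,1)>0$, choose $\lambda>0$ with $\lambda\delta_0<\delta$, and produce a $1$-homotopy from $\lambda p$ to some MAF $q\co M\to\br^n$. Dividing that homotopy (and $q$) by $\lambda$ yields a $(1/\lambda)$-homotopy from $p$ to the MAF $\lambda^{-1}q$, which is the required bounded homotopy.

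The only nontrivial ingredient is the finite isometry type of $(\br^n,e)$, which is handled by Proposition~\ref{prop:HenriquesLeary}; the rest is formal scaling bookkeeping, so I do not anticipate any serious obstacle.
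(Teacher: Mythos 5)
Your proof is correct and follows essentially the same route as the paper. The paper's own two-sentence proof cites Corollary~\ref{cor:FIT} with $A=U=B=\br^n$ and the standard triangulation for the first statement, and defers the second statement to the "standard scaling procedure" seen in Corollary~\ref{cor:equivariant_bounded_sucking}; your citation of Corollary~\ref{cor:new FIT-nonrel} is arguably cleaner, since Corollary~\ref{cor:FIT} only yields $\epsilon$-\emph{closeness} and the promotion to an $\epsilon$-\emph{homotopy} requires the additional input of Corollary~\ref{cor:FIT metric ANR}, which Corollary~\ref{cor:new FIT-nonrel} already bundles in. Your verification of finite isometry type (via the $G=\{1\}$ case of Proposition~\ref{prop:HenriquesLeary} or the translation- and permutation-invariant linear triangulation of $\br^n$) and your scaling bookkeeping are both exactly what the paper intends.
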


\begin{proof}
For the first statement, apply Corollary~\ref{cor:FIT} with $\br^n=
A=U=B$ and triangulate $\br^n$ so that
$\Shapes^\boxtimes(\br^n)$ is finite. For the second statement, apply a
standard scaling procedure; see Hughes--Ranicki \cite[Corollary
16.10]{HughesRanicki} and the proof of
Corollary~\ref{cor:equivariant_bounded_sucking} below.\qed
\end{proof}

\section{Orthogonal actions}\label{section:orthogonal}

In this section we establish various lifting properties for maps
associated to actions of certain finite subgroups $G$ of the
orthogonal group $O(n)$. For our Main Theorem~\ref{intro thm:main},
we only need the case of $C_2$ acting by reflection on $\br$.
However, for the proof of the more widely applicable
Theorem~\ref{intro thm: ortho sucking} (Orthogonal Sucking),  we
need to work in a more general context.

We shall assume the following notation throughout the remainder of
the paper.

\begin{notation} Suppose $G$ is a finite subgroup of the
orthogonal group $O(n)$. We assume that $\br^n$ has the Euclidean
metric. Write $X := S^{n-1}/G$. Since $G$ acts on $\br^n$ by
isometries, we may endow the open cone $\oc(X) = \br^n/G$ with the
quotient metric. Then note that the quotient map
$q_{\br^n}\co\br^n\to\oc(X)$ is distance non-increasing.
\end{notation}

At certain specified times, we shall assume a freeness hypothesis,
as follows.

\begin{hypothesis}\label{hypothesis:orthogonal_free}
Suppose $G$ is a finite subgroup of $O(n)$. Furthermore:
\begin{enumerate}
\item
Assume that $G$ acts freely on $S^{n-1}$. Then $X$ is a closed,
smooth manifold of dimension $n-1$. Suppose $M$ is a $G$-space.
The quotient map is denoted by $q_M\co M\to N := M/G$.

\item
Consider $N$ to be a stratified space with exactly one stratum
(itself).  Consider $\oc(X)$ to be a stratified space with
exactly two strata: the cone point $\{ v\}$ and the complement
$\oc(X)\setminus\{ v\} = X\times (0,\infty)$.
\end{enumerate}
\end{hypothesis}

\emph{For example, we assume
Hypothesis~\ref{hypothesis:orthogonal_free} for the remainder of
Section~\ref{section:orthogonal}.}

In the following lemma, we consider $\br^n$ to be a stratified space
with exactly one stratum. However, the lemma holds equally
well---with no change in the proof---if $\br^n$ has exactly two
strata: the origin $\{0\}$ and $\br^n\setminus\{ 0\}$. The lemma is
a special case of a more general theorem of A.~Beshears
\cite[Thm.~4.6]{Beshears}, but it is quite elementary in the case at
hand; therefore, we include a proof.

\begin{lemma}
\label{lemma:basic quotient} The quotient map $q_{\br^n}\co\br^n\to
\oc(X)$ is a stratified fibration.
\end{lemma}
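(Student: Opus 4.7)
The plan is to exploit the two-stratum structure of $\oc(X)$ to split any stratified homotopy lifting problem into a trivial piece over the cone point and a covering-space lifting problem over the open dense stratum. Suppose we are given a stratified homotopy lifting problem with $f\co Z\to\br^n$ and a stratum-preserving homotopy $F\co Z\times I\to\oc(X)$ satisfying $q_{\br^n}\circ f=F_0$. Since $q_{\br^n}^{-1}(v)=\{0\}$ and the two strata of $\oc(X)$ are disjoint, stratum-preservation forces $F(z,t)=v$ for all $t\in I$ exactly when $f(z)=0$. Thus the closed subset $Z_0 := f^{-1}(0)$ satisfies $F(Z_0\times I)=\{v\}$, and its open complement $Z_1 := Z\setminus Z_0$ satisfies $F(Z_1\times I)\subseteq X\times(0,\infty)$.

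On $Z_0\times I$, I would define $\tilde F$ to be the constant map at $0\in\br^n$; this trivially lifts $F$ and satisfies $\tilde F(-,0)=f|_{Z_0}$. On $Z_1$, I would invoke Hypothesis~\ref{hypothesis:orthogonal_free}: since $G$ acts freely on $S^{n-1}$, radial extension gives a free $G$-action on $\br^n\setminus\{0\}$, so the restriction
\[
q_{\br^n}|\co\br^n\setminus\{0\}\longrightarrow(\br^n\setminus\{0\})/G=X\times(0,\infty)
\]
is a regular covering map. Covering maps are Hurewicz fibrations with the unique path-lifting property, so there is a unique continuous lift $\tilde F\co Z_1\times I\to\br^n\setminus\{0\}$ of $F|_{Z_1\times I}$ starting from $f|_{Z_1}$.

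It remains to check that the two pieces glue to a continuous map $\tilde F\co Z\times I\to\br^n$; this is the only substantive step. The key observation is that $q_{\br^n}$ is distance-preserving from the cone point: since $G$ acts by isometries on $\br^n$ and $v=q_{\br^n}(0)$, for every $y\in\br^n$ we have
\[
\|y\|=d_{\br^n}(y,0)=d_{\oc(X)}(q_{\br^n}(y),v).
\]
Hence whenever $(z_\alpha,t_\alpha)\to(z_0,t_0)$ in $Z\times I$ with $z_0\in Z_0$, continuity of $F$ gives $F(z_\alpha,t_\alpha)\to v$, and therefore
\[
\|\tilde F(z_\alpha,t_\alpha)\|=d_{\oc(X)}(F(z_\alpha,t_\alpha),v)\longrightarrow 0,
\]
forcing $\tilde F(z_\alpha,t_\alpha)\to 0=\tilde F(z_0,t_0)$. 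Continuity on the open set $Z_1\times I$ is immediate from the covering lift. The resulting $\tilde F$ is then a stratum-preserving exact solution, establishing that $q_{\br^n}$ is a stratified fibration.
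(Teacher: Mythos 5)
Your proof is correct and takes essentially the same route as the paper: split $Z$ along $Z_0=f^{-1}(0)$, define the lift to be constantly $0$ on $Z_0\times I$, lift $F$ over the open dense stratum via the covering map $q_{\br^n}|\co\br^n\setminus\{0\}\to X\times(0,\infty)$, and then verify continuity at the seam using the fact that $q_{\br^n}$ preserves distance to the cone point. The paper phrases that last step as $\tilde F^{-1}(B)=F^{-1}(q_{\br^n}B)$ for balls $B$ about the origin (using $q_{\br^n}^{-1}q_{\br^n}B=B$), which is the same observation you make via nets and the identity $\|y\|=d_{\oc(X)}(q_{\br^n}(y),v)$.
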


\begin{proof}
Consider a stratified homotopy lifting problem:
$$
\begin{diagram}
\node{Z} \arrow{s,l}{\times 0} \arrow{e,t}{f} \node{\br^n} \arrow{s,r}{q_{\br^n}} \\
\node{Z\times I}\arrow{ne,t,..}{\tilde F} \arrow{e,t}{F} \node{\oc(X)}
\end{diagram}
$$
The stratified condition is equivalent to saying: if $(z,t)\in
Z\times I$, then $F(z,t)= v$ if and only if $F(z,0)=v$ if and only
if $f(z) = 0$. Let $Z_0= f^{-1}(0)$. Then the stratified homotopy
lifting problem above restricts to the following homotopy lifting
problem:
$$\begin{CD}
Z\setminus Z_0 @>{f|}>> \br^n\setminus\{ 0\} \\
@V{\times 0}VV   @VV{q_{\br^n}|}V \\
(Z\setminus Z_0)\times I @>{F|}>> \oc(X)\setminus \{ v\}\\
\end{CD}
$$
Since $q_{\br^n}|\co \br^n\setminus\{ 0\} \to \oc(X)\setminus \{
v\}$ is a fibration (in fact, a covering map), this later problem
has a solution. That is, there is a map $\hat{F}\co (Z\setminus
Z_0)\times I\to \br^n\setminus\{ 0\}$ such that $\hat{F}(z,0) =
f(z)$ and $q_{\br^n}\hat{F}(z,t)=F(z,t)$ for all $(z,t)\in
(Z\setminus Z_0)\times I$. Define
\[
\tilde{F}\co Z\times I\longrightarrow \br^n; \quad
(z,t) \longmapsto \begin{cases} \hat{F}(z,t) & \text{if $z\in Z\setminus Z_0$}\\
                                0 & \text{if $z\in Z_0$}.
                                \end{cases}
\]
Clearly, $\tilde{F}$ is the required stratified solution if
$\tilde{F}$ is continuous. To verify continuity it suffices to
consider a ball $B$ centered at the origin in $\br^n$ and observe
that $\tilde{F}^{-1}(B)$ is open because $q_{\br^n}B$ is open in
$\oc(X)$ and $\tilde{F}^{-1}(B) = F^{-1}(q_{\br^n}B)$ (using the
fact that $q_{\br^n}^{-1}q_{\br^n}B = B$).\qed
\end{proof}

\begin{proposition}
\label{prop:equivariant implies stratified} For every $\delta >0$
and every proper $G$-$\delta$-fibration $p\co M\to\br^n$, the induced map
$p/G\co N\to\oc(X)$ is a proper stratified $\delta$-fibration.
\end{proposition}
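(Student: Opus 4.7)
The plan is to handle properness and the stratified $\delta$-fibration property separately, the latter by reducing to the $G$-equivariant hypothesis on $p$ via a pullback construction. Properness of $p/G$ is routine: for compact $K\subseteq\oc(X)$, the preimage $q_{\br^n}^{-1}(K)$ is compact since $q_{\br^n}$ is the quotient of a finite group action, then $p^{-1}(q_{\br^n}^{-1}(K))$ is compact by properness of $p$, and $(p/G)^{-1}(K)=q_M(p^{-1}(q_{\br^n}^{-1}(K)))$ is therefore compact.

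For the stratified $\delta$-fibration property, suppose given a stratified homotopy lifting problem consisting of $f\co Z\to N$ and a stratum-preserving $F\co Z\times I\to\oc(X)$ with $(p/G)f=F(-,0)$. I would form the fibered product $\tilde{Z}:=Z\times_N M=\{(z,m)\in Z\times M \mid f(z)=q_M(m)\}$, equipped with the projection $\pi_Z\co\tilde{Z}\to Z$, the $G$-map $\tilde{f}\co\tilde{Z}\to M$, $(z,m)\mapsto m$, and the $G$-action $g\cdot(z,m)=(z,gm)$. Since $q_M$ is an open quotient map by a finite group, $\pi_Z\times\id_I\co\tilde{Z}\times I\to Z\times I$ is also the quotient by the induced $G$-action. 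The composite $F\circ(\pi_Z\times\id_I)$ is stratum-preserving and satisfies $q_{\br^n}\circ p\circ\tilde{f}=F\circ(\pi_Z\times\id_I)(-,0)$, so by Lemma~\ref{lemma:basic quotient} applied to $q_{\br^n}$ one obtains a stratified solution $\tilde{F}_{\br^n}\co\tilde{Z}\times I\to\br^n$ with $\tilde{F}_{\br^n}(-,0)=p\tilde{f}$ and $q_{\br^n}\tilde{F}_{\br^n}=F\circ(\pi_Z\times\id_I)$. I would then verify that $\tilde{F}_{\br^n}$ is $G$-equivariant: on the open stratum $\oc(X)\setminus\{v\}$ the map $q_{\br^n}$ is a covering (since $G$ acts freely on $\br^n\setminus\{0\}$), so unique path-lifting forces equivariance, while on the closed stratum the lift is identically $0$, a $G$-fixed point.

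Now apply the $G$-$\delta$-fibration hypothesis on $p$ to the $G$-homotopy lifting problem $(\tilde{f},\tilde{F}_{\br^n})$, producing a $G$-homotopy $\tilde{F}_M\co\tilde{Z}\times I\to M$ with $\tilde{F}_M(-,0)=\tilde{f}$ and $p\tilde{F}_M$ that is $\delta$-close to $\tilde{F}_{\br^n}$. By $G$-equivariance of $\tilde{F}_M$, the composition $q_M\tilde{F}_M$ is $G$-invariant and descends along the quotient $\pi_Z\times\id_I$ to a continuous map $\tilde{F}\co Z\times I\to N$ satisfying $\tilde{F}(-,0)=f$; stratum-preservation is automatic because $N$ has a single stratum. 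Finally, $\delta$-closeness of $(p/G)\tilde{F}$ and $F$ follows from that of $p\tilde{F}_M$ and $\tilde{F}_{\br^n}$ together with the fact that $q_{\br^n}$ is distance non-increasing in the quotient metric. The principal technical point to pin down is the $G$-equivariance of the intermediate lift $\tilde{F}_{\br^n}$ from Lemma~\ref{lemma:basic quotient}, since the statement there does not assert equivariance; I expect to settle this by tracking the lemma's proof and invoking the covering property of $q_{\br^n}$ away from the cone point, with a parallel verification that $\pi_Z$ really is a topological quotient by the finite $G$-action (which follows from openness of $q_M$).
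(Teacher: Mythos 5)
Your proposal is correct and takes essentially the same route as the paper: form the pullback $\hat{Z} := Z\times_N M$ with its natural $G$-action, lift the stratum-preserving homotopy across $q_{\br^n}$ via Lemma~\ref{lemma:basic quotient}, verify that this lift is a $G$-homotopy using unique path-lifting over $\oc(X)\setminus\{v\}$ and constancy at the origin, feed the resulting $G$-homotopy lifting problem into the $G$-$\delta$-fibration hypothesis on $p$, descend the solution to $Z\times I\to N$, and close the estimate via the distance-nonincreasing property of $q_{\br^n}$. You additionally spell out the properness of $p/G$ and the fact that $\pi_Z$ is a quotient map, which the paper leaves implicit, but the underlying argument is identical.
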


\begin{proof}
For reference, we note that the following diagram commutes:
$$\begin{CD}
M @>{q_M}>> N \\
@V{p}VV  @VV{p/G}V \\
\br^n @>{q_{\br^n}}>> \oc(X)
\end{CD}$$
Consider a stratified homotopy lifting problem:
$$
\begin{diagram}
\node{Z} \arrow{s,l}{\times 0} \arrow{e,t}{f} \node{N} \arrow{s,r}{p/G} \\
\node{Z\times I}\arrow{ne,t,..}{\tilde F} \arrow{e,t}{F} \node{\oc(X)}
\end{diagram}
$$
Form the pull-back diagram
$$\begin{CD}
\hat Z @>{\hat f}>> M \\
@V{\hat q}VV @VV{q_M}V \\
Z @>{f}>> N \\
\end{CD} $$
There is a natural $G$-action on $\hat Z$ ($g(z,x) = (z,gx)$) and
$\hat f$ is a $G$-map. Note that there is a stratified homotopy
lifting problem:
$$
\begin{diagram}
\node{\hat{Z}} \arrow{s,l}{\times 0} \arrow[2]{e,t}{p\hat{f}} \node[2]{\br^n} \arrow{s,r}{q_{\br^n}} \\
\node{Z\times I}\arrow{ene,t,..}{H} \arrow[2]{e,t}{F(\hat{q}\times\id_I)} \node[2]{\oc(X)}
\end{diagram}
$$
Lemma~\ref{lemma:basic quotient} implies that there is a stratified
solution $H\co \hat Z\times I\to \br^n$. Now we claim that $H$ is a
$G$-homotopy. This is essentially true because the action of $G$ on
$\br^n$ is free away from the origin and constant on the origin
(thus, we are using the fact that the action of $G$ on $S^{n-1}$ is free). In
more detail, first note that $F(\hat{q}\times\id_I)(\hat{z},t)=
F(\hat{q}\times\id_I)(g\hat{z},t)$ for all $(\hat{z}, t)\in
\hat{Z}\times I$ and $g\in G$. Then let $\hat{Z}_0=
(p\hat{f})^{-1}(0)$. Since $q_\br^n|\co \br^n\setminus\{ 0\}\to
\oc(X)\setminus\{ v\}$ is a covering map and $H_0$ is a $G$-map, it
follows that $H|\co(\hat{Z}\setminus\hat{Z}_0)\times
I\to\br^n\setminus\{ 0\}$ is a $G$-map. Finally, note that
$H(\hat{Z}_0\times I)= \{ 0\}$ and $\hat{Z}_0$ is $G$-invariant. Together
these observations imply that $H$ is a $G$-homotopy.

It follows that $H$ fits into a $G$-homotopy lifting problem:
$$
\begin{diagram}
\node{\hat{Z}} \arrow{s,l}{\times 0} \arrow{e,t}{\hat{f}} \node{M} \arrow{s,r}{p} \\
\node{\hat{Z}\times I}\arrow{ne,t,..}{\tilde H} \arrow{e,t}{H} \node{\br^n}
\end{diagram}
$$
By hypothesis, there is a $G$-homotopy $\tilde H\co \hat Z\times I
\to M$ such that $\tilde H_0= \hat f$ and $p \tilde H$ is
$\delta$-close to $H$. It follows that there is a unique map $\tilde
F\co Z\times I\to N$ making the following diagram commute:
$$\begin{CD}
\hat Z\times I @>{\tilde H}>> M \\
@V{\hat q\times\id_I}VV @VV{q_M}V \\
Z\times I @>{\tilde F}>> N
\end{CD}$$
It follows that $\tilde F$ is a  $\delta$-solution of the original
stratified problem. For this we use the assumption that
$q_{\br^n}\co\br^n\to\oc(X)$ is distance non-increasing together
with the following diagram
$$\begin{CD}
Z\times I @<{\hat{q}\times\id_I}<< \hat{Z}\times I @>{=}>>  \hat{Z}\times I @>{\hat{q}\times\id_I}>> Z\times I \\
@V{F}VV   @V{H}VV     @VV{p\tilde{H}}V @VV{(p/G)\circ\tilde{F}}V \\
\oc(X) @<{q_{\br^n}}<< \br^n @>{=}>> \br^n @>{q_{\br^n}}>> \oc(X)
\end{CD}$$
where the outer two squares are commutative and the middle square
commutes up to $\delta$.\qed
\end{proof}

We set up two basic lemmas on lifting homotopies across $q_{\br^n}:
\br^n \to \oc(X)$, which will be used in
Section~\ref{section:sucking}. Recall that $G$ is a finite subgroup
of $O(n)$.

\begin{lemma}\label{lem:G-homotopy}
Let $Z$ be a $G$-space, and denote the quotient map $p: Z \to Z/G$.
Suppose $f: Z \to \br^n$ is a $G$-map and $F: Z/G \times I \to
\oc(X)$ is a stratum-preserving homotopy such that $q_{\br^n}\circ f
= F(-,0) \circ p$. Then there is a unique $G$-homotopy $\tilde{F}: Z
\times I \to \br^n$ such that $\tilde{F}(-,0)=f$ and $q_{\br^n}
\circ \tilde{F} = F \circ (p \times \id_I)$.
\end{lemma}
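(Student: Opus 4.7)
The plan is to model the proof on Lemma \ref{lemma:basic quotient}, partitioning $Z$ into the set $Z_0$ where $f$ vanishes and its complement, constructing $\tilde{F}$ separately on the two pieces using elementary covering space theory, and then verifying continuity and equivariance by gluing arguments.

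First I would set $Z_0 := f^{-1}(0)$ and observe that $Z_0$ is $G$-invariant (since $f$ is a $G$-map and $0$ is the unique $G$-fixed point of $\br^n$). The hypothesis $q_{\br^n}\circ f = F(-,0)\circ p$ together with $q_{\br^n}^{-1}(v) = \{0\}$ gives $p(Z_0) = F(-,0)^{-1}(v)$, and the stratum-preserving condition on $F$ upgrades this to $F(p(z),t) = v$ for all $(z,t)\in Z_0\times I$, while $F(p(z),t) \in \oc(X)\setminus\{v\}$ for all $(z,t)\in (Z\setminus Z_0)\times I$. On $Z_0\times I$ the only lift of the constant map at $v$ is the constant map at $0$, so I am forced to define $\tilde{F}(z,t) := 0$ there, establishing uniqueness on this piece.

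On $(Z\setminus Z_0)\times I$ I use that the restriction $q_{\br^n}|\co \br^n\setminus\{0\}\to \oc(X)\setminus\{v\}$ is a regular covering map with deck group $G$ (by Hypothesis~\ref{hypothesis:orthogonal_free}, $G$ acts freely on $\br^n\setminus\{0\}$). The initial lift $f|_{Z\setminus Z_0}$ of $F(-,0)\circ p|$ through this covering determines a unique lift $\hat F\co (Z\setminus Z_0)\times I\to \br^n\setminus\{0\}$ of $F\circ(p\times\id_I)|$ by the homotopy lifting property of covering maps, and I set $\tilde F := \hat F$ on this piece. Uniqueness of the total $\tilde F$ thus follows from uniqueness on each piece.

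The step I expect to be the main obstacle is continuity of $\tilde F$ at boundary points $(z_0,t_0)$ with $z_0\in Z_0$; here the trick of Lemma~\ref{lemma:basic quotient} applies verbatim. For any open ball $B_r(0)\subseteq\br^n$ the set $B_r(0)$ is $G$-invariant (since $G\leq O(n)$), so $q_{\br^n}^{-1}(q_{\br^n}(B_r(0))) = B_r(0)$, and $q_{\br^n}(B_r(0))$ is an open neighborhood of $v$ in $\oc(X)$ because $q_{\br^n}$ is an open map. Since such balls form a neighborhood basis of $0$, it suffices to note
\[
\tilde F^{-1}(B_r(0)) = (F\circ(p\times\id_I))^{-1}(q_{\br^n}(B_r(0))),
\]
which is open by continuity of $F$ and $p\times\id_I$. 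Finally, to check $G$-equivariance, for each $g\in G$ the two maps $(z,t)\mapsto \tilde F(gz,t)$ and $(z,t)\mapsto g\tilde F(z,t)$ are both lifts of $F\circ(p\times\id_I)$ through $q_{\br^n}$ (using $p(gz)=p(z)$ and $q_{\br^n}\circ g = q_{\br^n}$), and they agree at $t=0$ because $f$ is a $G$-map; uniqueness on $Z_0\times I$ is automatic and uniqueness on $(Z\setminus Z_0)\times I$ follows from uniqueness of path lifts for covering maps, so the two agree throughout $Z\times I$.
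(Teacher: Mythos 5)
Your proposal is correct and takes essentially the same approach as the paper: partition $Z$ by $f^{-1}(0)$, lift across the covering map $q_{\br^n}|\co\br^n\setminus\{0\}\to\oc(X)\setminus\{v\}$ on the complement, verify continuity via the $G$-invariant-ball trick, and obtain $G$-equivariance from uniqueness of path lifts. The only difference is presentational: the paper simply invokes Lemma~\ref{lemma:basic quotient} for existence and continuity of $\tilde F$ and then carries out the path-lifting argument for equivariance, whereas you re-derive the content of Lemma~\ref{lemma:basic quotient} inline before running the same equivariance argument.
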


\begin{proof}
Since $F \circ (p \times \id_I): Z \times I \to \oc(X)$ is a
stratum-preserving homotopy, by Lemma~\ref{lemma:basic quotient},
there exists a homotopy $\tilde{F}: Z \times I \to \br^n$ such that
$\tilde{F}(-,0)=f$ and $q_{\br^n} \circ \tilde{F} = F \circ (p
\times \id_I)$. Observe, since $\tilde{F}$ is stratum-preserving and
the restriction $q_{\br^n}|: \br^n \setminus \{0\} \to \oc(X)
\setminus \{v\}$ is a covering map, that $\tilde{F}$ is uniquely
determined.

Define $Z_0 := Z \setminus f^{-1}\{0\}$. Note $\tilde{F}$ restricts
to a homotopy $\hat{F}: Z_0 \times I \to \br^n \setminus \{0\}$. Let
$g \in G$ and $z \in Z_0$. Consider the paths
\[
\alpha := \hat{F}(g \cdot z,-),~ \beta := g \cdot
\hat{F}(z,-) : I \longrightarrow \br^n \setminus\{0\}.
\]
Note, since $f$ is a $G$-equivariant, that
\[
\alpha(0) = f(g \cdot z) = g \cdot f(z) = \beta(0).
\]
Note, since $p$ and $q_{\br^n}$ are $G$-invariant, that
\[
q_{\br^n} \circ \alpha = F(p(g\cdot z),-) = F(p(z),-) = q_{\br^n} \circ \hat{F}(z,-) = q_{\br^n}\circ \beta.
\]
Therefore, by the Path Lifting Property, we obtain that $\alpha =
\beta$. Thus $\hat{F}: Z_0 \times I \to \br^n\setminus\{0\}$ is a
$G$-homotopy. Hence $\tilde{F}: Z \times I \to \br^n$ is a
$G$-homotopy.\qed
\end{proof}

Recall that the open cone $\oc(X) = \br^n/G$ has the quotient
metric.

\begin{lemma}\label{lem:Lebesgue_control}
Let $Z$ be a topological space, and let $r>0$. There exists
$\epsilon_0>0$ such that: if $F: Z \times I \to \br^n \setminus
\mathrm{B}_r^n$ is a homotopy and $q_{\br^n} \circ F: Z \times I \to
\oc(X) \setminus \cone_r(X)$ is an $\epsilon_0$-homotopy, then $F$
is an $\epsilon_0$-homotopy.
\end{lemma}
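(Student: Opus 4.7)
The plan is to exploit a Lebesgue-number-style estimate for the covering map $q_{\br^n}|\co \br^n\setminus\{0\}\to\oc(X)\setminus\{v\}$ restricted to the region bounded away from the origin by $r$. Since $G$ acts freely on the compact sphere $S^{n-1}$ by orthogonal (hence isometric) transformations, the quantity
\[
\delta_0 := \min\{\, |g\cdot u - u| : g \in G \setminus \{e\},\ u \in S^{n-1}\,\}
\]
is strictly positive. For any $x \in \br^n$ with $|x| \geq r$ and any $g \neq e$ in $G$, orthogonality of the action gives
\[
|g x - x| = |x|\cdot |g \hat x - \hat x| \geq r\,\delta_0, \qquad \hat x := x/|x|.
\]
I would then declare $\epsilon_0 := r\,\delta_0 / 2$.

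To verify that $F$ is an $\epsilon_0$-homotopy, fix $z \in Z$ and write $\gamma(t) := F(z,t)$, a continuous path in $\br^n \setminus \mathrm{B}_r^n$. It suffices to show that, for every fixed $s \in I$, the continuous function $\phi(t) := |\gamma(s) - \gamma(t)|$ satisfies $\phi(t) < \epsilon_0$ on all of $I$; compactness of $I\times I$ together with continuity of $(s,t)\mapsto|\gamma(s)-\gamma(t)|$ then upgrades this pointwise bound to $\mathrm{diam}(\gamma(I)) < \epsilon_0$. Consider the open set $T := \{\, t\in I : \phi(t) < \epsilon_0 \,\}$. Since $s\in T$, it is nonempty. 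If $T$ were a proper subset of $I$, it would have a boundary point $t^* \in I$, and continuity of $\phi$ together with $t^*\notin T$ would force $\phi(t^*) = \epsilon_0$.

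The hypothesis that $q_{\br^n}\circ F$ is an $\epsilon_0$-homotopy gives $d_{\oc(X)}(q_{\br^n}\gamma(s), q_{\br^n}\gamma(t^*)) < \epsilon_0$, so by definition of the quotient metric there exists $g \in G$ with $|\gamma(s) - g\gamma(t^*)| < \epsilon_0$. If $g = e$, this directly contradicts $\phi(t^*) = \epsilon_0$. If $g \neq e$, the triangle inequality yields
\[
|\gamma(t^*) - g\gamma(t^*)| \leq |\gamma(t^*) - \gamma(s)| + |\gamma(s) - g\gamma(t^*)| < \epsilon_0 + \epsilon_0 = r\,\delta_0,
\]
contradicting the displacement estimate for nontrivial group elements. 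Hence $T = I$ and the claim follows. The main obstacle is the bookkeeping in this contradiction: the choice $\epsilon_0 = r\,\delta_0/2$ is exactly what makes the two $\epsilon_0$'s sum to the forbidden threshold $r\,\delta_0$, and working with a \emph{connected} lift $\gamma$ (via the boundary point $t^*$) is what rules out the scenario in which the quotient-distance hypothesis is satisfied by some nontrivial $g$ realizing the distance ``through a different sheet'' of the covering.
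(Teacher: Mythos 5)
Your proof is correct, and it takes a genuinely different route from the paper's. The paper covers $X = S^{n-1}/G$ by finitely many evenly-covered open sets, takes a Lebesgue number $\epsilon_0$ for this cover restricted to the level set $X\times\{r\}$ (which persists on the whole frustum because the cone metric scales up with the radial coordinate), fits the projected track into one such $\oc(U)$, and then lifts to the single sheet containing $F(z,0)$; on that sheet $q_{\br^n}$ is an isometry, so the upstairs diameter equals the downstairs diameter. Your proof instead extracts an explicit constant $\epsilon_0 = r\delta_0/2$ from the minimum displacement $\delta_0>0$ of the free orthogonal $G$-action on $S^{n-1}$ (scaled by the radial lower bound $r$), uses the identity $d_{\oc(X)}\bigl(q_{\br^n}(a),q_{\br^n}(b)\bigr) = \min_{g\in G}|a-gb|$ for the quotient metric, and runs an open-and-closed connectedness argument on the parameter interval to rule out the track ``jumping sheets.'' Both proofs exploit connectedness of the track to defeat the ambiguity of the covering $q_{\br^n}|$---the paper through the existence of a connected lift in a single evenly-covered neighborhood, you through a boundary-point argument on $T=\{t:\phi(t)<\epsilon_0\}$. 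The paper's version is shorter once the covering-space machinery is in place; yours is more elementary and has the mild advantage of producing an explicit, computable $\epsilon_0$ in terms of the geometry of the $G$-action, at the cost of checking the quotient-metric formula and the displacement scaling directly. One small remark: the displacement estimate gives $|gx-x|>r\delta_0$ strictly for points with $|x|>r$, which is what the track actually satisfies (points outside the cube $\mathrm B_r^n$ have Euclidean norm strictly greater than $r$), so the strict inequality at the punchline is safe.
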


\begin{proof}
Since $X$ is compact, there exists a finite cover $\cU$ by non-empty
open subsets $U \subseteq X$ such that the covering map $q: S^{n-1}
\to X$ evenly covers each $U \in \cU$.  Then the induced cover on
the metric subspace $X \times \{r\} \subset \oc(X)$ has a Lebesgue
number $\epsilon_0>0$. So the restriction of the induced cover
$\oc(\cU) := \{ \oc(U) ~|~ U \in \cU \}$ to the frustum $\oc(X)
\setminus \cone_r(X)$ has the same Lebesgue number $\epsilon_0 > 0$.

Let $z \in Z$. Then, since the track $q_{\br^n} F(\{z\}\times I)
\subset \oc(X) \setminus \cone_r(X)$ has diameter $< \epsilon_0$,
there exists $U \in \cU$ such that $q_{\br^n} F(\{z\}\times I)
\subset U \times (r,\infty)$. Let $V \subset q^{-1}(U)$ be the path
component of the point $F(z,0)$. Then, since $q|_V: V \to U$ is an
isometry, the track $F(\{z\}\times I) \subset V$ has diameter $<
\epsilon_0$. Thus $F$ is an $\epsilon_0$-homotopy.\qed
\end{proof}


\section{Piecing together bounded fibrations}\label{section:piecing}

\emph{Throughout Section~\ref{section:piecing}, we assume that $G$
is a finite subgroup of $O(n)$.}

Our goal here is to adapt \cite[Proposition~2.6]{Hughes1985Q} and
give a detailed proof. This result will be used in
Section~\ref{section:sucking}.

\begin{theorem}\label{thm:piecing}
Let $K \subset V \subset C \subset U$ be $G$-subsets of $\br^n$ such
that:
\begin{enumerate}
\item
$U,V$ are open subsets of $\br^n$,

\item
$C,K$ are closed subsets of $\br^n$, and

\item
$U$ (resp.~$V$) contains a metric neighborhood of $C$
(resp.~$K$).
\end{enumerate}
For every $\epsilon>0$ there exists $\delta>0$ such that for every
$\mu>0$ there exists $\nu>0$ satisfying: If $p: E \to \br^n$ is a
$G$-$\delta$-fibration over $U$ and a $G$-$\nu$-fibration over $V$,
then $p$ is a $G$-$(\epsilon,\mu)$-fibration over $(C,K)$ for the
class of compact, metric $G$-spaces.
\end{theorem}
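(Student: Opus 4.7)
The plan is to piece together two lifts of $F$: a coarse lift with $\epsilon$-closeness everywhere from the $G$-$\delta$-fibration over $U$, and a refinement near $F^{-1}(K)$ with $\mu$-closeness from the $G$-$\nu$-fibration over $V$, splicing them with a $G$-invariant cutoff $\psi$ supported in a neighborhood of $K$ inside $V$. For the constants, let $\eta_U,\eta_V>0$ be metric cushions---furnished by hypothesis (3)---such that the closed $\eta_U$-neighborhood of $C$ lies in $U$ and the closed $\eta_V$-neighborhood of $K$ lies in $V$. Set $\delta := \frac{1}{3}\min\{\epsilon,\eta_U,\eta_V\}$, and given $\mu>0$ set $\nu := \frac{1}{3}\min\{\mu,\eta_V\}$.

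Now consider any compact metric $G$-space $Z$ and any $G$-homotopy lifting problem $f\co Z\to E$, $F\co Z\times I\to C$, $pf=F_0$. First, apply the $G$-$\delta$-fibration property of $p$ over $U$ to $F$ (valid since $C\subseteq U$) to obtain a $G$-equivariant $\delta$-solution $\tilde F_1\co Z\times I\to E$ with $\tilde F_1(-,0)=f$ and $p\tilde F_1$ uniformly $\delta$-close to $F$. Fix a $G$-invariant continuous cutoff $\psi\co\br^n\to I$ with $\psi|_K\equiv 1$ and $\operatorname{supp}\psi$ contained in the $\eta_V/2$-neighborhood of $K$ (construct via Urysohn and average over the finite group $G$). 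Let $W:=\{(z,t)\in Z\times I : \psi(F(z,t))>0\}$, a $G$-invariant open subset. On $W\times I$ define the straight-line correction homotopy
\[
H(z,t,r) := \bigl(1-r\psi(F(z,t))\bigr)\, p\tilde F_1(z,t) + r\psi(F(z,t))\, F(z,t),
\]
whose image lies in $V$ by the cushion estimates. Apply the $G$-$\nu$-fibration property of $p$ over $V$ to $H$ in the $r$-variable, with initial data $\tilde F_1|_W$, to obtain a $G$-equivariant lift $\tilde H\co W\times I\to E$ with $\tilde H(-,0)=\tilde F_1|_W$ and $p\tilde H$ uniformly $\nu$-close to $H$. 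Finally, define
\[
\tilde F(z,t) := \begin{cases}\tilde H\bigl((z,t),\,\psi(F(z,t))\bigr) & (z,t)\in W,\\ \tilde F_1(z,t) & (z,t)\notin W,\end{cases}
\]
which is continuous because $\psi(F)=0$ and $\tilde H(\cdot,0)=\tilde F_1|_W$ on $\partial W$. A triangle-inequality check confirms that $p\tilde F$ is $(\delta+\nu)$-close to $F$ everywhere (hence $\epsilon$-close) and $\nu$-close to $F$ on $F^{-1}(K)$ (hence $\mu$-close).

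The main obstacle is preserving the initial condition $\tilde F(-,0)=f$. At any $(z_0,0)\in W$ with $F(z_0,0)\in K$, the base homotopy $H((z_0,0),r)$ is constant in $r$---since $p\tilde F_1(z_0,0)=pf(z_0)=F(z_0,0)$---yet the lift $\tilde H((z_0,0),\cdot)$ produced by the $G$-$\nu$-fibration need not be stationary in the fiber. I would handle this by a collaring argument: extend $H$ to $W\times[-1,1]$ as the constant $p\tilde F_1|_W$ on the added collar $W\times[-1,0]$, lift the extended homotopy starting from $\tilde F_1|_W$ at $r=-1$, and reparametrize so that the lift at $r=0$ agrees with $\tilde F_1|_W$. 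Equivalently, one derives a ``rel stationary $G$-subspace'' variant of the $G$-$\nu$-fibration property at the cost of at most doubling the error constant, which is absorbed into the factor-of-three slack in the choice of $\nu$. This bookkeeping of the initial-condition subtlety is precisely what forces the quantifier order $\forall\epsilon\,\exists\delta\,\forall\mu\,\exists\nu$ in the statement.
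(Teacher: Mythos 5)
Your splicing plan---a coarse $G$-lift over $U$, a refined $G$-lift over $V$ near $K$, patched by a cutoff $\psi$---captures the same basic idea the paper uses, but the patching itself contains a genuine gap. The function
\[
\tilde F(z,t) := \begin{cases}\tilde H\bigl((z,t),\,\psi(F(z,t))\bigr) & \text{if } (z,t)\in W,\\ \tilde F_1(z,t) & \text{if } (z,t)\notin W,\end{cases}
\]
is not obviously continuous at $\partial W$. Your justification (``$\psi(F)=0$ and $\tilde H(-,0)=\tilde F_1|_W$ on $\partial W$'') does not control how $\tilde H$ behaves on $W\times(0,1]$ as one approaches $\partial W$: the map $\tilde H$ is defined only on the open set $W\times I$, and the $G$-$\nu$-fibration hypothesis constrains $p\tilde H$ in the base, not $\tilde H$ in $E$. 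A perfectly legitimate $\tilde H$ could oscillate arbitrarily badly in the fiber direction near $\partial W$---the error in $p\tilde H$ is bounded by $\nu$, but there is no bound at all on $\tilde H$ itself---so that $\tilde H\bigl((z_n,t_n),\psi(F(z_n,t_n))\bigr)$ fails to converge to $\tilde F_1(z_0,t_0)$ even though $\psi(F(z_n,t_n))\to 0$. You cannot glue two independently-produced lifts along an open boundary; you need a single global lift. Likewise, your collar fix for the initial condition $\tilde F(-,0)=f$ fails for the same reason: a lift of a constant homotopy need not be stationary in the fiber, so ``the lift at $r=0$'' will not actually equal $\tilde F_1|_W$. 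What you need is the stationary lifting property (a homotopy that is constant along certain tracks admits a lift that is also constant along those tracks), which is Lemma~\ref{lem:HomotopyLifting}, proved by reparametrizing the base homotopy by a Urysohn function vanishing on the stationary set, lifting, and reparametrizing back; the collar argument is not a substitute.

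The paper's Lemma~\ref{lem:BlendingBounds} resolves both obstacles simultaneously: it first lifts over $V$ only on the closed subset $Z_1'\subset Z$ of those $z$ whose entire track $F(\{z\}\times I)$ lies near $K$ (inside $\intr\,K'$); then it uses the Estimated Homotopy Extension Property (Corollary~\ref{cor:EHEP}) to extend the resulting base map $p\tilde F_0$ to a $G$-homotopy $F'$ on all of $Z\times I$ that is $\nu$-close to $F$; and then---the essential move---it lifts $F'$ over $U$ by applying stationary lifting (Lemma~\ref{lem:HomotopyLifting}) not to $F'$ itself but to $F'\circ R$, where $R$ is the strong deformation retraction of $Z\times I$ onto $Z\times\{0\}\cup Z_1'\times I$ from Lemma~\ref{lem:HomotopyExtension}. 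Because $R(-,1)=\mathrm{id}$ and the lift is stationary on $Z\times\{0\}\cup Z_1\times I$, its time-$1$ slice is a single continuous lift that equals $f$ at $t=0$ and equals $\tilde F_0$ on $Z_1$, giving $\mu$-closeness over $K$. Since Lemma~\ref{lem:BlendingBounds} only handles homotopies whose tracks each lie in one of two prescribed sets, the proof of Theorem~\ref{thm:piecing} then uses compactness of $Z$ to partition $I$ finely and builds the lift inductively stage by stage rather than in one shot. (Also a small slip: with $\delta\leq\epsilon/3$ and $\nu\leq\tfrac13\min\{\mu,\eta_V\}$, the bound $\delta+\nu\leq\epsilon$ can fail when $\eta_V>2\epsilon$; since $\nu$ may depend on $\epsilon$, take $\nu\leq\tfrac13\min\{\mu,\eta_V,\epsilon\}$.)
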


The proof of the theorem is located at the end of this section. The
following corollary is required in Proposition~\ref{prop:equivariant
epsilon,nu} below.

\begin{corollary}\label{cor:piecing}
Let $0<r_1<r_2$ be given. For every $\epsilon
>0$ there exists $\delta >0$ such that for every $\mu > 0$ there
exists $\nu >0$ satisfying: if $p\co E\to\br^n$ is a
$G$-$\delta$-fibration over $\br^n$ and a $G$-$\nu$-fibration over
$\br^n\setminus\mathrm{B}^n_{r_1}$, then $p$ is a
$G$-$(\epsilon,\mu)$-fibration over $(\br^n,
\br^n\setminus\mathrm{B}_{r_2}^n)$ for the class of compact, metric
$G$-spaces.
\end{corollary}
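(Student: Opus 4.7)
The plan is a direct application of Theorem~\ref{thm:piecing} after inserting a single intermediate radius. Choose $r_m$ with $r_1 < r_m < r_2$, let $\ob{}^n_{r_m}$ denote the open Euclidean ball of radius $r_m$ centered at the origin, and set
\[
U := \br^n, \qquad V := \br^n \setminus \mathrm{B}^n_{r_1}, \qquad C := \br^n, \qquad K := \br^n \setminus \ob{}^n_{r_m}.
\]
All four subsets are $G$-invariant, since $G \leq O(n)$ acts on $\br^n$ by isometries fixing the origin and so every origin-centered Euclidean ball is $G$-stable.

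Next I would verify the hypotheses of Theorem~\ref{thm:piecing}. The nesting $K \subset V \subset C \subset U$ follows from the strict inclusion $\mathrm{B}^n_{r_1} \subsetneq \ob{}^n_{r_m}$. Openness of $U$ and $V$, and closedness of $C$ and $K$ in $\br^n$, are immediate. The identity $U = C$ trivially provides a metric neighborhood of $C$; moreover, since $\br^n \setminus V = \mathrm{B}^n_{r_1}$ has Euclidean distance exactly $r_m - r_1 > 0$ from $K$, the set $V$ contains a metric neighborhood of $K$.

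Theorem~\ref{thm:piecing} then delivers, for each $\epsilon > 0$, a $\delta > 0$ such that for each $\mu > 0$ there is $\nu > 0$ for which any map $p$ that is a $G$-$\delta$-fibration over $\br^n = U$ and a $G$-$\nu$-fibration over $\br^n \setminus \mathrm{B}^n_{r_1} = V$ is automatically a $G$-$(\epsilon,\mu)$-fibration over $(C,K) = (\br^n, \br^n \setminus \ob{}^n_{r_m})$ for the class of compact, metric $G$-spaces. Because $r_m < r_2$ gives $\br^n \setminus \mathrm{B}^n_{r_2} \subseteq \br^n \setminus \ob{}^n_{r_m}$, and the notion of $(\epsilon,\mu)$-closeness with respect to a set $A$ restricts monotonically along subset inclusions of $A$, the $(\epsilon,\mu)$-fibration property transfers from $(\br^n, \br^n \setminus \ob{}^n_{r_m})$ to $(\br^n, \br^n \setminus \mathrm{B}^n_{r_2})$, which is what the corollary asserts.

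There is no substantive obstacle; the only point needing attention is the open/closed-ball distinction, which is precisely why the intermediate radius $r_m$ is inserted to produce the positive separation $r_m - r_1$ between $K$ and the complement of $V$.
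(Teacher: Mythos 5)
Your proof is correct and takes essentially the same approach as the paper: apply Theorem~\ref{thm:piecing} with $U = C = \br^n$ and $V = \br^n\setminus\mathrm{B}^n_{r_1}$. The only difference is your choice of $K$. The paper plugs in $K = \br^n\setminus\mathrm{B}^n_{r_2}$ directly, but this set is the complement of the closed cube $[-r_2,r_2]^n$ and hence is \emph{open}, not closed, so strictly speaking it violates the hypothesis of Theorem~\ref{thm:piecing}. You instead take $K = \br^n\setminus\ob{}^n_{r_m}$ (genuinely closed) and then use the monotonicity of the $(\epsilon,\mu)$-fibration condition under shrinking the control set $A$; this is a small but real repair of a glitch in the published argument. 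One remark: the intermediate radius $r_m$ is not actually needed---taking $K = \br^n\setminus\ob{}^n_{r_2}$ is already closed, sits inside $V$ with positive separation $r_2-r_1$, and contains $\br^n\setminus\mathrm{B}^n_{r_2}$, so the same restriction argument applies. Also note that $\ob{}^n_r$ and $\mathrm{B}^n_r$ in this paper denote cubes $(-r,r)^n$ and $[-r,r]^n$, not Euclidean balls; the substance of your argument is unaffected, but the phrase ``open Euclidean ball'' should be corrected.
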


\begin{proof}
This follows immediately from Theorem~\ref{thm:piecing} by taking
the $G$-subsets
\[
K = \br^n\setminus\mathrm{B}^n_{r_2} \subset  V = \br^n \setminus \mathrm{B}^n_{r_1} \subset C = U = \br^n.\quad\qed
\]
\end{proof}

\subsection{Homotopy extension}

First, we construct a certain strong deformation retraction. A
$G$-space $Z$ is \emph{normal} if $Z/G$ is normal and the quotient
map $Z \to Z/G$ is a closed map.

\begin{lemma}\label{lem:HomotopyExtension}
Let $B$ be an open $G$-subset of a normal $G$-space $Z$. Let $A
\subset B$ be a closed $G$-subset of $Z$. There is a $G$-homotopy
$R: (Z \times I) \times I \to Z \times I$ such that:
\begin{enumerate}
\item
$R(-,0)$ has image in $Z \times \{0\} \cup B \times I$, and

\item
$R(y,s)=y$ if $s=1$ or $y \in Z \times \{0\} \cup A \times I$.
\end{enumerate}
\end{lemma}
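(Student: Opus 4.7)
The plan is to construct $R$ by an explicit formula of the form $R((z,t),s) = (z, t\cdot\max(s,u(z)))$, where $u\colon Z \to I$ is a $G$-invariant Urysohn-type function that is $1$ on $A$ and $0$ off $B$. With such a $u$ in hand, the verification of all required properties is immediate: at $s=1$, the second coordinate becomes $t\cdot 1 = t$, so $R(-,1)=\id$; at $s=0$, the second coordinate is $tu(z)$, which is $0$ when $z\in Z\setminus B$ (landing in $Z\times\{0\}$) and otherwise forces $u(z)>0$, hence $z\in B$ (landing in $B\times I$); on $A\times I$ we have $u(z)=1$, so $\max(s,u(z))=1$ and $R$ fixes $(z,t)$ for all $s$; and on $Z\times\{0\}$ the second coordinate stays at $0$. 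Continuity is clear from the formula, and $G$-equivariance follows because $G$ acts only on the $Z$-factor and $u$ is $G$-invariant.

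The main obstacle, and the only nontrivial step, is producing the $G$-invariant Urysohn function $u$. The idea is to descend to the orbit space $Z/G$, which is normal by hypothesis, and apply the classical Urysohn lemma there. First I would check that the images $q(A)$ and $q(Z\setminus B)$ in $Z/G$, where $q\colon Z\to Z/G$ is the quotient map, are closed and disjoint. Closedness of both uses the assumption that $q$ is a closed map (together with $A$ and $Z\setminus B$ being closed and $G$-invariant, so that $q^{-1}q(A)=A$ and similarly for $Z\setminus B$). Disjointness follows because $A\cap(Z\setminus B)=\varnothing$ and $A$ is $G$-invariant: if $q(a)=q(z)$ with $a\in A$ and $z\in Z\setminus B$, then $z=ga\in A\subseteq B$, a contradiction.

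Having verified these two points, the Urysohn lemma on $Z/G$ supplies a continuous map $\bar u\colon Z/G\to I$ with $\bar u|q(A)=1$ and $\bar u|q(Z\setminus B)=0$. Set $u:=\bar u\circ q$; then $u$ is automatically $G$-invariant, continuous, equals $1$ on $A$, and equals $0$ on $Z\setminus B$. Plugging this $u$ into the formula for $R$ completes the construction. I would close the proof with a one-line check of the claimed properties (1) and (2), as indicated above, and a remark that the $(Z\times I)\times I$ domain is automatically $G$-equivariant because we have endowed the two $I$-factors with the trivial $G$-action.
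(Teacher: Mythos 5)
Your proposal is correct and follows essentially the same approach as the paper: both construct a $G$-invariant Urysohn function by applying the classical Urysohn lemma on the normal orbit space $Z/G$ (using that the quotient map is closed) and then pulling back, and both then write down an explicit formula for $R$ using that function. The only cosmetic difference is the formula itself—you use $R((z,t),s)=(z,t\cdot\max(s,u(z)))$, while the paper uses the linear interpolation $R((z,t),s)=(z,st+(1-s)t\upsilon(z))$—but both satisfy the required properties by the same one-line checks.
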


\begin{proof}
We may assume $Z\setminus B$ and $A$ are non-empty. Since $Z/G$ is a
normal space, by the Urysohn lemma, there exists a $G$-map
$\upsilon: Z \to I$ such that $\upsilon(Z\setminus B)=\{0\}$ and
$\upsilon(A)=\{1\}$. Define a $G$-homotopy
\[
R: (Z \times I) \times I \longrightarrow Z \times I ;\quad ((z,t),s) \longmapsto (z,st+(1-s)t\upsilon(z)).
\]
This function satisfies the required properties.\qed
\end{proof}

We adapt the Estimated Homotopy Extension Property
\cite[Prop.~2.1]{ChapmanFerry}.

\begin{lemma}\label{lem:EHEP}
Let $Y_0$ be a closed $G$-subset of a finite-dimensional, locally
compact, metric, separable $G$-space $Y$. For every $\lambda>0$: if
$h: Y \to \br^n$ is a $G$-map and $H_0: Y_0 \times I \to \br^n$ is a
$G$-$\lambda$-homotopy such that $h|Y_0 = H_0(-,0)$, then there
exists a $G$-$\lambda$-homotopy $H: Y \times I \to \br^n$ extending
$H_0$ such that $h = H(-,0)$.
\end{lemma}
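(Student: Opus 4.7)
The plan is to separate equivariance from the metric bound: first extend $h$ and $H_0$ together to a $G$-map on all of $Y \times I$ without worrying about the $\lambda$-bound, and then dampen this extension by an equivariant cutoff to enforce the bound. Concretely, I would combine $h$ and $H_0$ into the single $G$-map
\[
\Phi\co (Y \times \{0\}) \cup (Y_0 \times I) \longrightarrow \br^n; \quad \Phi(y, 0) = h(y),\quad \Phi(y, t) = H_0(y, t) \text{ for } y \in Y_0,
\]
which is well-defined on $Y_0 \times \{0\}$ because $h|_{Y_0} = H_0(-,0)$. The domain is closed in the metric space $Y \times I$, so Dugundji's extension theorem produces a (non-equivariant) continuous extension $\widetilde{H}\co Y \times I \to \br^n$; averaging over the finite group gives
\[
\hat{H}(y, t) := \frac{1}{|G|} \sum_{g \in G} g^{-1}\, \widetilde{H}(g y, t),
\]
which is a $G$-map that still agrees with $\Phi$ on the original domain, since $\Phi$ is itself $G$-equivariant and $G$ acts linearly on $\br^n$.

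Next, consider the function $\Psi\co Y \to [0, \infty)$ defined by $\Psi(y) := \diam \hat{H}(\{y\} \times I)$. It is continuous (since $I$ is compact) and $G$-invariant (since $G \leq O(n)$ preserves the Euclidean norm on $\br^n$). Because $H_0$ is a $G$-$\lambda$-homotopy, $\Psi < \lambda$ on $Y_0$, so the open $G$-set $U := \Psi^{-1}([0, \lambda))$ contains $Y_0$. Since $Y$ is metric, so is $Y/G$, hence normal; applying Urysohn on $Y/G$ to the disjoint closed sets $Y_0/G$ and $(Y \setminus U)/G$ and pulling back gives a $G$-invariant map $\tau\co Y \to I$ with $\tau|_{Y_0} \equiv 1$ and $\tau \equiv 0$ outside $U$. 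Define
\[
H\co Y \times I \longrightarrow \br^n; \quad (y, t) \longmapsto h(y) + \tau(y) \bigl(\hat{H}(y, t) - h(y)\bigr).
\]
This $H$ is a $G$-map (using that $\br^n$ is a linear $G$-representation, so scalar multiplication and addition commute with $G$), it satisfies $H(-, 0) = h$, and it restricts to $H_0$ on $Y_0 \times I$, where $\tau \equiv 1$. Finally, the track diameter is
\[
\diam H(\{y\} \times I) = \tau(y) \cdot \Psi(y),
\]
which is strictly less than $\lambda$: either $y \in U$, in which case $\tau(y) \leq 1$ and $\Psi(y) < \lambda$, or $y \notin U$, in which case $\tau(y) = 0$.

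The main technical point is decoupling the two demands---equivariance and the $\lambda$-bound. Neither obvious direct approach does both at once: a bare ANR extension loses all metric control, while a radial retraction of $\hat{H}$ onto the closed $\lambda$-ball in $\br^n$ yields only the weaker bound $2\lambda$ on track diameters. The $\tau$-cutoff succeeds because the diameter function $\Psi$ is $G$-invariant (so $\tau$ can be chosen $G$-equivariantly via Urysohn on $Y/G$), and because $\br^n$ is a linear $G$-representation (so multiplication of the deviation $\hat{H}(y,t) - h(y)$ by the scalar $\tau(y)$ preserves $G$-equivariance).
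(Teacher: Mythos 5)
Your proof is correct, and it takes a genuinely different route from the paper's. The paper invokes Jaworowski's equivariant extension theorem to get a $G$-map $H_U$ defined only on an open neighborhood $U_1$ of $Y\times\{0\}\cup Y_0\times I$ (which is why the hypotheses finite-dimensional, locally compact, separable appear), and then achieves the $\lambda$-bound by \emph{reparametrizing time}: shrinking $U_1$ to a product neighborhood $U_0\times I$ over which tracks of $H_U$ have diameter $<\lambda$, choosing a Urysohn $G$-map $\upsilon$ with $\upsilon\equiv 1$ on $Y_0$ and $\upsilon\equiv 0$ off $U_0$, and setting $H(y,s):=H_U(y,s\upsilon(y))$, so the homotopy is simply run only partway outside $U_0$. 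You instead obtain a global extension at once from Dugundji (a non-equivariant device) and equivariantize by averaging over the finite group, then achieve the bound by \emph{interpolating in the target}: damping the deviation $\hat H(y,t)-h(y)$ by a $G$-invariant scalar cutoff. Both are valid; yours leans harder on the fact that $\br^n$ is a linear $G$-representation (needed both for Dugundji and for the averaging and the affine interpolation), but in exchange avoids Jaworowski's theorem entirely and so does not actually use the finite-dimensional/locally-compact/separable hypotheses. The paper's time-reparametrization trick is more portable to nonlinear targets; it is also the template reused in Lemma~\ref{lem:HomotopyLifting}, which is presumably why the authors chose it.
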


\begin{proof}
We may assume $Y_0$ is non-empty. 
Note that 
there are finitely many fixed-point sets $(\br^n)^H$,
each of which is a vector subspace of $\br^n$, hence 
each $(\br^n)^H$ is an ANR.
Also note that 
$Y_1 := Y \times \{0\} \cup Y_0 \times I$ is a closed
$G$-subset of $Y \times I$.
Since $Y$ is a finite dimensional, locally compact, metric, separable
$G$-space, 
by a theorem of J.~Jaworowski
\cite[Thm.~2.2]{Jaworowski}, there exists an open $G$-neighborhood
$U_1$ of $Y_1$ in $Y$ and a $G$-map $H_U: U_1 \to \br^n$ extending
the $G$-map $h \cup H_0: Y_1 \to \br^n$.
By the tube lemma and
uniting open $G$-sets, there exists an open $G$-neighborhood $U_0'$
of $Y_0$ in $Y$ such that $U_0' \times I \subseteq U_1$. Since the
tracks of $H_U|Y_0 \times I=H_0$ have diameter $<\lambda$, by
continuity, there exists an open $G$-neighborhood $U_0 \subseteq
U_0'$ of $Y_0$ in $Y$ such that the tracks of $H_U|U_0 \times I$
have diameter $<\lambda$.

We may assume $Y \setminus U_0$ is non-empty. Since $Y/G$ is a
normal space, by the Urysohn lemma, there exists a $G$-map
$\upsilon: Y \to I$ such that $\upsilon(Y \setminus U_0) = \{0\}$
and $\upsilon(Y_0) = \{1\}$. Then, since $Y_1 \subseteq Y \times
\{0\} \cup U_0 \times I \subseteq U_1$, we can define a $G$-homotopy
\[
H: Y \times I \longrightarrow \br^n ;\quad (y,s) \longmapsto H_U(y,s \upsilon(y)).
\]
Note that $H$ extends $h \cup H_0$. Also note $\diam\, H(\{y\}
\times I) \leq \diam\, H_U(\{y\} \times I)< \lambda$ for all $y \in
Y$. This completes the proof.\qed
\end{proof}

We need an often-used corollary for close homotopies.

\begin{corollary}\label{cor:EHEP}
Let $X_0$ be a $G$-subset of a finite-dimensional, locally compact,
metric, separable $G$-space $X$. For every $\lambda>0$: if $f: X \to
\br^n$ is a $G$-map and $F: X \times I \to \br^n$ is a $G$-homotopy,
and if $F_0': X_0 \times I \to \br^n$ is a $G$-homotopy such that
$f|X_0 = F_0'(-,0)$ and $F_0'$ is $\lambda$-close to $F|X_0 \times
I$, then there exists a $G$-homotopy $F': X \times I \to \br^n$
extending $F_0'$ such that $f = F'(-,0)$ and $F'$ is $\lambda$-close
to $F$.
\end{corollary}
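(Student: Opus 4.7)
The plan is to reduce the corollary to Lemma~\ref{lem:EHEP} by packaging the data $(f, F_0')$ as a single $G$-homotopy on a closed $G$-subset of $X \times I$ and then applying the estimated homotopy extension property to the ambient $G$-space $X \times I$. I shall use the natural (and implicit) convention that $F$ is a homotopy of $f$, i.e.~$F(-,0) = f$; without some such compatibility between $f$ and $F(-,0)$, the requirements $F'(-,0) = f$ and ``$F'$ is $\lambda$-close to $F$'' cannot simultaneously hold off $X_0$.

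Set $Y := X \times I$, which inherits from $X$ the hypotheses required in Lemma~\ref{lem:EHEP} (finite-dimensional, locally compact, separable, metric $G$-space), and let $Y_0 := X \times \{0\} \cup X_0 \times I$, a closed $G$-subset of $Y$. Put $h := F \co Y \to \br^n$, and define a $G$-homotopy $H_0 \co Y_0 \times I \to \br^n$ by linear interpolation,
\[
H_0((x,t), s) := (1-s)\, F(x,t) + s\, \widetilde F(x,t),
\]
where $\widetilde F$ is $F_0'$ on $X_0 \times I$ and is $f$ (viewed as constant in $t$) on $X \times \{0\}$. The two pieces agree on the overlap $X_0 \times \{0\}$ because $F_0'(x,0) = f(x)$, so $H_0$ is well defined, continuous, and $G$-equivariant, with $H_0(-,0) = h|Y_0$. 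Each track $H_0(\{(x,t)\} \times I)$ is a line segment of diameter $d(F(x,t), \widetilde F(x,t))$, which is less than $\lambda$ on $X_0 \times I$ by hypothesis and equals $0$ on $X \times \{0\}$ (using $F(-,0) = f$); hence $H_0$ is a $G$-$\lambda$-homotopy.

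Apply Lemma~\ref{lem:EHEP} to obtain a $G$-$\lambda$-homotopy $H \co Y \times I \to \br^n$ that extends $H_0$ and satisfies $H(-,0) = F$, and define $F'(x,t) := H((x,t),1)$. Then $F'$ is $G$-equivariant; on $X_0 \times I$ we have $F'(x,t) = H_0((x,t),1) = F_0'(x,t)$, so $F'$ extends $F_0'$; at $t=0$ we have $F'(x,0) = H_0((x,0),1) = f(x)$; and for every $(x,t) \in X \times I$, the points $F(x,t) = H((x,t),0)$ and $F'(x,t) = H((x,t),1)$ lie in the same $H$-track, whose diameter is less than $\lambda$, so $F'$ is $\lambda$-close to $F$.

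The argument is essentially bookkeeping; no serious obstacle arises. The only conceptual point is the choice to enlarge $Y_0$ from $X_0 \times I$ to $X \times \{0\} \cup X_0 \times I$, which is what forces the endpoint $H(-,1)$ to take the prescribed value $f$ at time $t=0$ on all of $X$, not merely on $X_0$.
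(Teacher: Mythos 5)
Your proof is correct and is essentially identical to the paper's argument: same choice of $Y = X \times I$, same enlargement to $Y_0 = X \times \{0\} \cup X_0 \times I$, same straight-line homotopy $H_0$, same invocation of Lemma~\ref{lem:EHEP}, and same definition $F' := H(-,1)$. Your preliminary remark about the implicit hypothesis $F(-,0) = f$ is a fair catch; the paper's statement omits it but its proof (like yours) tacitly uses it so that the straight-line homotopy on $X \times \{0\}$ has zero-diameter tracks, and the corollary is applied in Lemma~\ref{lem:BlendingBounds} precisely in a situation where $F(-,0) = pf$ holds.
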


\begin{proof}
Define $Y_0 := X \times \{0\} \cup X_0 \times I$ and $Y := X \times
I$ and $h := F: Y \to \br^n$. Observe that the straight-line
homotopy
\[
H_0: Y_0 \times I \longrightarrow \br^n ;\quad ((x,t),s) \longmapsto
\begin{cases}
(1-s) F(x,0) + s f(x) & \text{if } t=0\\
(1-s) F(x,t) + s F_0'(x,t) & \text{if } x \in X_0
\end{cases}
\]
is a $G$-$\lambda$-homotopy such that $h|Y_0 = H_0(-,0)$. Then, by
Lemma~\ref{lem:EHEP}, there exists a $G$-$\lambda$-homotopy $H: Y
\times I \to \br^n$ extending $H_0$ such that $h = H(-,0)$. Define a
$G$-homotopy
\[
F' := H(-,1): X \times I \longrightarrow \br^n.
\]
Note $F'|X_0 \times I = F_0'$ and $F'(-,0) = H((-,0),1) = f$. Also
note $\| F'(y) - F(y) \| = \| H(y,1) - H(y,0) \| < \lambda$ for all
$y \in X \times I$. This completes the proof.\qed
\end{proof}

\subsection{Homotopy lifting}

We adapt the Stationary Lifting Property
\cite[Thm.~6.2]{Hughes1981}.

\begin{lemma}\label{lem:HomotopyLifting}
Let $\delta>0$. Let $A \subseteq \br^n$ be a $G$-subset. Let $Y$ be
a normal $G$-space. Suppose $p: E \to \br^n$ is a
$G$-$\delta$-fibration over $A$. If $H: Y \times I \to A$ is a
$G$-homotopy and $h: Y \to E$ is a $G$-map such that $ph = H(-,0)$,
then there exists a $G$-homotopy $\tilde{H}: Y \times I \to E$ such
that:
\begin{enumerate}
\item
$h = \tilde{H}(-,0)$,

\item
$p\tilde{H}$ is $\delta$-close to $H$, and

\item
$\tilde{H}(\{y\} \times I)=\tilde{H}(\{y\} \times \{0\})$ if
$H(\{y\} \times I)=H(\{y\}\times \{0\})$.
\end{enumerate}
\end{lemma}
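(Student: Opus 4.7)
My plan is to build $\tilde{H}$ by first producing an unconstrained $G$-$\delta$-lift of $H$ and then deforming it to become constant over the ``stationary locus''
\[
Y_0 := \{\, y \in Y : H(\{y\} \times I) = \{H(y,0)\} \,\}.
\]
The first thing to check is that $Y_0$ is a closed $G$-invariant subset of $Y$: closedness follows from writing $Y_0 = \bigcap_{t \in I} \{y : H(y,t) = H(y,0)\}$ as an intersection of closed sets (using continuity of $H$ and the fact that the diagonal of $\br^n \times \br^n$ is closed), and $G$-invariance is a direct consequence of the $G$-equivariance of $H$. Note that $H|Y_0 \times I$ already equals $ph$ pulled back along the projection to $Y_0$.

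Next, I apply the $G$-$\delta$-fibration hypothesis to the $G$-homotopy lifting problem with $Z=Y$, initial map $f = h$, and homotopy $F = H$, obtaining a $G$-homotopy $\tilde{H}_1 : Y \times I \to E$ with $\tilde{H}_1(-,0) = h$ and $p \tilde{H}_1$ $\delta$-close to $H$. This $\tilde{H}_1$ realises (1) and (2) but need not satisfy (3). To correct this, I construct a $G$-invariant continuous function $u : Y \to I$ with $u|Y_0 \equiv 0$ by applying Urysohn's lemma to the quotient $Y/G$ (which is normal, since $Y$ is a normal $G$-space) at the closed image of $Y_0$, and then pulling back along $Y \to Y/G$. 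I then define the final lift by a reparametrization of time of the form
\[
\tilde{H}(y,t) := \tilde{H}_1\bigl(y, \rho(y,t)\bigr),
\]
where $\rho : Y \times I \to I$ is a $G$-invariant continuous map that collapses to $0$ on $Y_0$ (via $u$) and is as close to the identity in $t$ as possible away from $Y_0$. Conditions (1) and (3) are then automatic: $\tilde{H}(y,0) = \tilde{H}_1(y,0) = h(y)$, and $\rho(y,t) = 0$ for $y \in Y_0$ forces $\tilde{H}(y,t) = h(y)$.

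The main obstacle is verifying (2), namely that $p \tilde{H}$ remains $\delta$-close to $H$ after the reparametrization. We have $p \tilde{H}(y,t)$ $\delta$-close to $H(y, \rho(y,t))$, whereas we want $\delta$-closeness to $H(y,t)$. For $y \in Y_0$ this is immediate because $H$ is stationary, and away from a neighborhood of $Y_0$ (where $\rho(y,t) = t$) this is also immediate. The delicate zone is a neighborhood of $Y_0$: here $\rho(y,t)$ differs from $t$, so one must arrange that $H$ varies only slightly in $t$ on this neighborhood. This is the place where one exploits that $H$ is \emph{exactly} constant on $Y_0$ and continuous, together with the normality of $Y$, to produce a $G$-invariant neighborhood of $Y_0$ on which $H$ is approximately stationary, and to choose $\rho$ (equivalently $u$) supported within this neighborhood so that the extra error introduced by the reparametrization is absorbed into the strict inequality in the $\delta$-closeness estimate. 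Once this continuity-and-control argument is arranged $G$-equivariantly, the three required properties fall out simultaneously.
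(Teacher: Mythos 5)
Your framework---identify the stationary locus, produce a $G$-invariant Urysohn function vanishing there, and reparametrize the $I$-direction---is the right one, but the order of operations is fatal. You lift $H$ first to get $\tilde H_1$ with $\|p\tilde H_1 - H\| < \delta$, and then reparametrize the \emph{lift}: $\tilde H(y,t) := \tilde H_1(y,\rho(y,t))$. This gives $\|p\tilde H(y,t) - H(y,\rho(y,t))\| < \delta$, whereas you need $\|p\tilde H(y,t) - H(y,t)\| < \delta$, so you must bound $\|H(y,\rho(y,t)) - H(y,t)\|$ by the leftover pointwise margin $\delta - \|p\tilde H_1(y,\rho(y,t)) - H(y,\rho(y,t))\|$. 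That margin is strictly positive at each point but has no uniform lower bound; it can tend to $0$ precisely at points in the transition zone near $Y_0$, which is where your reparametrization error is positive. ``Absorbing into the strict inequality'' does not resolve this, because a pointwise strict inequality is not a uniform one, and the fibration hypothesis gives you no way to regain a uniform margin (a $\delta$-fibration need not be a $\delta'$-fibration for any $\delta' < \delta$).

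The paper sidesteps this entirely by reparametrizing \emph{before} lifting. Since $C := \{ y : H(\{y\}\times I) = \{H(y,0)\}\}$ is the zero set of the $G$-invariant continuous map $y \mapsto \diam\, H(\{y\}\times I)$, it is a closed $G_\delta$ $G$-subset, so the strong Urysohn lemma on the normal quotient $Y/G$ gives a $G$-map $\upsilon: Y \to I$ with $\upsilon^{-1}\{0\} = C$ exactly (your weaker $u|Y_0 \equiv 0$ is not enough). Define the sped-up homotopy $H^*(y,s) := H(y, s/\upsilon(y))$ for $0 \le s < \upsilon(y)$ and $H^*(y,s) := H(y,1)$ for $\upsilon(y) \le s \le 1$; the equality $\upsilon^{-1}\{0\}=C$ ensures $H^*(-,0)=ph$. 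Lift $H^*$ to $\tilde H^*$ using the $\delta$-fibration property, and \emph{then} slow the lift down: $\tilde H(y,s) := \tilde H^*(y, s\,\upsilon(y))$. The crucial point is the exact identity $H^*(y, s\,\upsilon(y)) = H(y,s)$, valid for both $\upsilon(y)>0$ and $\upsilon(y)=0$, which gives $\|p\tilde H(y,s) - H(y,s)\| = \|p\tilde H^*(y, s\,\upsilon(y)) - H^*(y, s\,\upsilon(y))\| < \delta$ with zero extra error. Property (3) then holds because $s\,\upsilon(y) \equiv 0$ when $y\in C$, so $\tilde H(y,s) = \tilde H^*(y,0) = h(y)$.
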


\begin{proof}
Define a $G$-subset
\[
C := \{ y \in Y ~|~ H(\{y\} \times I) = H(\{y\} \times \{0\}) \}.
\]
Note that $C$ is the inverse image of $\{0\}$ under the $G$-map $(y
\mapsto \diam\,H(\{y\} \times I))$. Hence $C$ is a \emph{closed
$G$-$G_\delta$-subset of $Y$;} that is, $C$ is a closed $G$-subset
of $Y$ and $C$ is a countable intersection of open $G$-subsets of
$Y$. Then, since $Y/G$ is normal, by the strong Urysohn lemma, there
exists a $G$-map $\upsilon: Y \to I$ such that $C =
\upsilon^{-1}\{0\}$. Define a $G$-homotopy
\[
H^*: Y \times I \longrightarrow A ;\quad (y,s) \longmapsto
\begin{cases}
H(y,s/\upsilon(y)) & \text{if } 0 \leq s < \upsilon(y)\\
H(y,1) & \text{if } \upsilon(y) \leq s \leq 1.
\end{cases}
\]

Note that $\upsilon^{-1}\{0\} \subseteq C$ implies $H^*(-,0) =
H(-,0) = ph$. Since $p$ is a $G$-$\delta$-fibration over $A$, there
exists a $G$-homotopy $\tilde{H}^*: Y \times I \to E$ such that $h =
\tilde{H}^*(-,0)$ and $p \tilde{H}^*$ is $\delta$-close to $H^*$.
Now define $G$-homotopy
\[
\tilde{H}: Y \times I \longrightarrow E ;\quad (y,s) \longmapsto \tilde{H}^*(y,s \upsilon(y)).
\]
Note $\tilde{H}(-,0) = \tilde{H}^*(-,0) = h$ and $\| p
\tilde{H}(y,s) - H(y,s) \| = \| p \tilde{H}^*(y,s \upsilon(y)) -
H^*(y,s \upsilon(y)) \| < \delta$ for all $(y,s) \in Y \times I$.
Furthermore, if $y \in C \subseteq \upsilon^{-1}\{0\}$, then note
$\tilde{H}(y,s) = \tilde{H}^*(y,0) =  \tilde{H}(y,0)$. This
completes the proof.\qed
\end{proof}

\subsection{Blending bounds}

We adapt \cite[Lemma 4.7]{Hughes1981}. This result finds a jointly
close solution to the homotopy lifting problem for a prototypical
kind of homotopy. We say that a neighborhood $N$ of a subset $A$ of
a metric space $(X,d)$ is \emph{metric} if $N$ equals the
\emph{$\alpha$-neighborhood} $\{x \in X ~|~ d(x,A) < \alpha \}$ of
$A$ in $X$ for some $\alpha>0$.

\begin{lemma}\label{lem:BlendingBounds}
Let $K \subset \intr\,K' \subset V \subset C \subset U$ be
$G$-subsets of $\br^n$ such that:
\begin{enumerate}
\item
$U,V$ are open subsets of $\br^n$,

\item
$K', K$ are closed subsets of $\br^n$, and

\item
$U$ contains a metric neighborhood of $C$.
\end{enumerate}
Let $Z$ be a finite-dimensional, locally compact, metric, separable
$G$-space. For every $\epsilon>0$ there exists $\delta>0$ such that
for every $\mu>0$ there exists $\nu>0$ satisfying: if $p: E \to B$
is a $G$-$\delta$-fibration over $U$ and a $G$-$\nu$-fibration over
$V$, and if $F: Z \times I \to C$ is a $G$-homotopy with tracks in
$\{\intr\,K', C\setminus K \}$, and if $f: Z \to E$ is a $G$-map
such that $pf = F(-,0)$, then there exists a $G$-homotopy
$\tilde{F}: Z \times I \to E$ such that $f=\tilde{F}(-,0)$ and $p
\tilde{F}$ is $(\epsilon,\mu)$-close to $F$ with respect to $K$.
\end{lemma}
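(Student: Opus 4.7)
The plan is as follows. Given $\epsilon>0$, choose $\alpha>0$ so that the $\alpha$-neighborhood of $C$ lies in $U$, and set $\delta := \min\{\alpha/2, \epsilon/3\}$. Given $\mu>0$, set $\nu := \min\{\mu/3, \delta\}$. The strategy is to isolate the part of $Z$ whose tracks can be lifted with $\nu$-control via $V$, and to do a coarse $\delta$-lift elsewhere via $U$, splicing them together.

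First I would decompose. Let $\pi\co Z\times I\to Z$ denote projection, and define $Z_V := \pi(F^{-1}(K))$. Since $K$ is closed in $\br^n$ and $I$ is compact, $\pi$ is closed, so $Z_V$ is a closed $G$-subspace of $Z$. Any $z\in Z_V$ has a track meeting $K$, so by the hypothesis that each track lies in $\intr\,K'$ or in $C\setminus K$, that track must lie in $\intr\,K'\subseteq V$; in particular $F^{-1}(K)\subseteq Z_V\times I$. The set $\{z\in Z ~|~ F(\{z\}\times I)\subseteq V\}$ is open (using compactness of $I$ and openness of $V$) and contains $Z_V$, so by normality of the metric $G$-space $Z$ I can choose a closed $G$-neighborhood $W$ of $Z_V$ with $F(W\times I)\subseteq V$.

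Next, apply the $G$-$\nu$-fibration property over $V$ to $F|W\times I$ with initial value $f|W$, producing a $G$-homotopy $\tilde F_W\co W\times I\to E$ such that $\tilde F_W(-,0)=f|W$ and $p\tilde F_W$ is $\nu$-close to $F|W\times I$. Then extend the base homotopy by applying Corollary~\ref{cor:EHEP} with $X_0=W$, $X=Z$, base $F$, partial $F_0'=p\tilde F_W$, and initial $F(-,0)$, at parameter $\lambda=\nu$. This yields a $G$-homotopy $F^*\co Z\times I\to \br^n$ that agrees with $p\tilde F_W$ on $W\times I$, starts at $F(-,0)$, and is $\nu$-close to $F$. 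Since $\nu<\alpha$, the image of $F^*$ lies in $U$.

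The delicate step is to produce a global $G$-lift $\tilde F\co Z\times I\to E$ of $F^*$ starting at $f$, that is $\delta$-close via $p$ and moreover agrees with $\tilde F_W$ on $Z_V\times I$ (so that the fine $\nu$-control is inherited on $F^{-1}(K)$). I would pick a $G$-invariant Urysohn function $\rho\co Z\to I$ with $\rho(Z_V)=\{0\}$ and $\rho(Z\setminus\intr W)=\{1\}$, and use it to reparametrize $F^*$ in the time variable so that the portion coinciding with $p\tilde F_W$ on $W$ happens in one subinterval while the complementary $Z\setminus W$ portion is held stationary during that subinterval, and vice versa on the second subinterval. Applying the $G$-$\delta$-fibration over $U$ together with Lemma~\ref{lem:HomotopyLifting} (stationary lifting) to this reparametrized homotopy threads the existing fine lift $\tilde F_W$ through as part of the resulting global lift, after which reparametrizing back produces the desired $\tilde F$. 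The error budget then reads: on $F^{-1}(K)\subseteq Z_V\times I$, $p\tilde F=p\tilde F_W$ is $\nu\leq\mu$-close to $F$; everywhere, $p\tilde F$ is $\delta$-close to $F^*$, which is $\nu$-close to $F$, yielding $\delta+\nu\leq\epsilon$.

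The main obstacle is precisely this matching/blending step. The naive approach of taking an absolute $\delta$-lift of $F^*$ over $U$ produces a lift that is merely $\delta$-close through $p$ but need not coincide with $\tilde F_W$ on $W\times I$, so the $\nu$-level control at points of $F^{-1}(K)$ is lost. Getting the reparametrization to work correctly—ensuring continuity at the transition region between $W$ and $Z\setminus W$, and ensuring that Lemma~\ref{lem:HomotopyLifting} genuinely preserves $\tilde F_W$ on $Z_V\times I$—is the technical heart of the argument.
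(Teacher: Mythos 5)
Your decomposition and first two steps match the paper: your $Z_V$ is the paper's $Z_1$, your closed neighborhood $W$ plays the role of the paper's open set $Z_1'=\{z\mid F(\{z\}\times I)\subset\intr\,K'\}$, and your $F^*$ is the paper's $F'$, built the same way from a $\nu$-lift over $V$ followed by Corollary~\ref{cor:EHEP}. But the blending step you rightly call the technical heart is left genuinely unresolved. Reparametrizing $F^*$ in the $t$-variable and then invoking the $G$-$\delta$-fibration over $U$ with Lemma~\ref{lem:HomotopyLifting} does not ``thread'' $\tilde F_W$ through the global lift: a $\delta$-fibration only produces \emph{some} $\delta$-close lift, and Lemma~\ref{lem:HomotopyLifting}(3) only forces \emph{stationary} tracks to lift stationary. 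It gives no mechanism for forcing the new lift to agree with the previously constructed $\tilde F_W$ along tracks over $W$ that actually move, which is precisely what you need.

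The paper's resolution is to lift in a \emph{fresh} parameter rather than in $t$. Lemma~\ref{lem:HomotopyExtension} produces a $G$-homotopy $R\colon (Z\times I)\times I\to Z\times I$ with $R(-,0)$ having image in $Z\times\{0\}\cup Z_1'\times I$, with $R(-,1)=\id$, and with $R$ stationary on $Z\times\{0\}\cup Z_1\times I$. One then views $F'\circ R\colon (Z\times I)\times I\to\br^n$ as a homotopy in the new variable $s$ whose domain is the space $Z\times I$, and takes $(f\cup\tilde F_0)\circ R(-,0)$ as the initial lift (this makes sense because $R(-,0)$ lands where $f\cup\tilde F_0$ is defined). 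Applying Lemma~\ref{lem:HomotopyLifting} over $U$ in the $s$-variable yields $\tilde F_I$ that is \emph{constant in $s$} on $Z\times\{0\}\cup Z_1\times I$, because $R$ is stationary there; hence $\tilde F:=\tilde F_I(-,1)$ equals $f$ on $Z\times\{0\}$, equals $\tilde F_0$ on $Z_1\times I$ (preserving the $\nu$-control exactly where $F$ meets $K$), and is $\delta$-close through $p$ to $F'\circ R(-,1)=F'$. This second-parameter move is the key idea missing from your sketch; without it the fine lift cannot be retained.
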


\begin{proof}
Let $\epsilon>0$. Select $0 < \delta < \epsilon$. Let $\mu>0$.
Select $0 < \nu \leq \min\{\epsilon-\delta,\mu\}$ such that the
$\nu$-neighborhood of $C$ is contained in $U$. The $G$-homotopy
$\tilde{F}: Z \times I \to E$ is constructed in two steps.

The first step is to consider the $G$-subsets
\begin{eqnarray*}
Z_1 &:=& \{ z \in Z ~|~ F(\{z\} \times I) \cap K \neq \varnothing \}\\
Z_1' &:=& \{ z \in Z ~|~ F(\{z\} \times I) \subset \intr\,K' \}.
\end{eqnarray*}
Since $K \subset \intr\,K'$, by hypothesis on $F$, we have $Z_1
\subseteq Z_1'$. Since $K$ and $K'$ are closed in $\br^n$, we have
that $Z_1$ is closed and $Z_1'$ is open in $Z$. Since $p$ is a
$\nu$-fibration over $V \supset K'$, there exists a $G$-homotopy
$\tilde{F}_0: Z_1' \times I \to E$ such that $f|Z_1' =
\tilde{F}_0(-,0)$ and $p \tilde{F}_0$ is $\nu$-close to $F|Z_1'
\times I$. Then, by Corollary~\ref{cor:EHEP}, there exists a
$G$-homotopy $F': Z \times I \to \br^n$ extending $p \tilde{F}_0$
such that $pf = F'(-,0)$ and $F'$ is $\nu$-close to $F$. Hence
$\tilde{F}_0$ is a partial lift of $F'$ and the image of $F'$ is
contained in $U$.

The second step is to use Lemma~\ref{lem:HomotopyExtension}.  Since
$Z$ is a normal $G$-space, there exists a $G$-homotopy $R: (Z \times
I) \times I \to Z \times I$ such that $R(-,0)$ has image in $Z
\times \{0\} \cup Z_1' \times I$ and that $R(y,s)=y$ if $s=1$ or $y
\in Z \times \{0\} \cup Z_1 \times I$. Consider the $G$-homotopy
$F'R: (Z \times I) \times I \to \br^n$ with initial $G$-lift $(f
\cup \tilde{F}_0)R(-,0): Z \times I \to E$. Since $p$ is a
$\delta$-fibration over $U$, by Lemma~\ref{lem:HomotopyLifting},
there exists a $G$-homotopy $\tilde{F}_I: (Z \times I) \times I \to
E$ such that:
\begin{itemize}
\item
$(f \cup \tilde{F}_0) R(-,0) = \tilde{F}_I(-,0)$,

\item
$p \tilde{F}_I$ is $\delta$-close to $F'R$, and

\item
$\tilde{F}_I(y,s) = (f \cup \tilde{F}_0)(y)$ if $y \in Z \times
\{0\} \cup Z_1 \times I$.
\end{itemize}

Now define a $G$-homotopy
\[
\tilde{F} := \tilde{F}_I(-,1): Z \times I \longrightarrow E.
\]
Note $\tilde{F}(-,0) = \tilde{F}_I((-,0),1) = f$. Also note $\| p
\tilde{F}(y) - F(y) \| \leq \| p \tilde{F}_I(y,1) - F'R(y,1) \| + \|
F'(y) - F(y) \| < \delta + \nu \leq \epsilon$ for all $y \in Z
\times I$. Furthermore, if $F(z,t) \in K$, then $z \in Z_1$, so note
$\| p \tilde{F}(y) - F(y) \| = \| p \tilde{F}_0(y) - F(y) \| < \nu
\leq \mu$. This completes the proof.\qed
\end{proof}

Finally, we adapt \cite[Theorem~4.8]{Hughes1981}. This result finds
a jointly close solution to the homotopy lifting problem for an
arbitrary homotopy.

\begin{proof}[Proof of Theorem~\ref{thm:piecing}]
To begin, we shall set up additional parameters. Select closed
$G$-subsets $K_1,K_2,K_3,C_1$ of $\br^n$ such that
\[
K \subset \intr\,K_1 \subset \intr\,K_2 \subset K_3 \subset V
\subset
C \subset \intr\,C_1 \subset C_1 \subset U
\]
and $U,K_1$ (resp.~$C\setminus K_1, \intr\,K_3$) contains a metric
neighborhood of $C_1,K$ (resp.~$C_1\setminus K, \intr\,K_2$). Let
$\epsilon>0$. Select $0 < \delta' \leq \epsilon/3$ such that $C_1
\setminus K$ (resp.~$\intr\,K_3$) contains the
$2\delta'$-neighborhood of $C \setminus K_1$ (resp.~$\intr\,K_2$).
Select $0 < \delta < \delta'$. Let $\mu>0$. Select $0 < \nu' \leq
\mu/3$. Select $0 < \nu \leq \min\{\delta'-\delta,\nu'\}$ such that
$U$ contains the $\nu$-neighborhood of $C_1$.

Next, let $Z$ be a compact, metric $G$-space. Let $F: Z \times I \to
C$ be a $G$-homotopy. Let $f: Z \to E$ be a $G$-map such that $pf =
F(-,0)$. Since $F$ is continuous, each $z \in Z$ has a neighborhood
$W^z$ in $Z$ and a finite partition $\cP^z$ of $I$:
\[
\cP^z = \{ 0 = t_0^z < \cdots < t_i^z < \cdots < t_{n^z}^z = 1 \}
\]
such that the partial-track $F(\{z\} \times [t_i^z,t_{i+1}^z])$ lies
in either $C\setminus K_1$ or $\intr\,K_2$. Since $Z$ is compact,
the open cover $\{ W^z | z \in Z\}$ admits a finite subcover, and
the common refinement $\cP$ of the associated partitions is finite:
\[
\cP = \{ 0 = t_0 < \cdots < t_i < \cdots < t_{n} = 1 \}.
\]
Thus, for each $z \in Z$ and $0 \leq i < n$, the partial-track
$F(\{z\} \times [t_i,t_{i+1}])$ lies in either $C\setminus K_1$ or
$\intr\,K_2$, depending on $z$.

Lastly, for each $0 \leq i \leq n$, we shall inductively define maps
$\tilde{F}_i: Z \times [0,t_i] \to E$ such that:
\begin{itemize}
\item
$\tilde{F}_0 = f$ and $\tilde{F}_i$ extends $\tilde{F}_{i-1}$ if
$i>0$,

\item
$p \tilde{F}_i$ is $(\epsilon,\mu)$-close to $F| Z \times
[0,t_i]$ with respect to $K$, and

\item
$p \tilde{F}_i| Z \times \{t_i\}$ is $(\delta',\nu')$-close to
$F| Z \times \{t_i\}$ with respect to $K$ if $i<n$.
\end{itemize}
Hence $\tilde{F} := \tilde{F}_n: Z \times I \to E$ shall be the
desired homotopy.

Since $pf = F(-,0)$, note $\tilde{F}_0 := f$ satisfies the above
properties. Assume, for some $0 \leq i < n$, that there exists
$\tilde{F}_i$ satisfying the three properties. Since $F$ is
continuous and $X$ is compact, by the tube lemma, there exists $t_i
< s_{i+1} < t_{i+1}$ such that $\diam\,F(\{z\} \times [t_i,s])<
\delta'$ (resp.~$< \nu'$) for all $z \in Z$ (resp.~if $F(z,t_i) \in
K$). Select $t_i < s_i < s_{i+1}$. Define a $G$-homotopy $H$ from
$p\tilde{F}_i|Z \times \{t_i\}$ to $F|Z \times \{t_{i+1}\}$ by
\[
H: Z \times [t_i,t_{i+1}] \longrightarrow \br^n ;\quad (z,t) \longmapsto
\begin{cases}
\displaystyle \frac{s_i-t}{s_i-t_i} p\tilde{F}_i(z,t_i) + \frac{t - t_i}{s_i-t_i} F(z,t_i) & \text{if } t \in [t_i,s_i]\\[2ex]
\displaystyle F\left(z, t_i + \frac{s_{i+1}-t_i}{s_{i+1}-s_i} (t-s_i) \right) & \text{if } t \in [s_i,s_{i+1}] \\[2ex]
\displaystyle F(z,t) & \text{if } t \in [s_{i+1},t_{i+1}].
\end{cases}
\]
Note, for all $(z,t) \in Z \times [t_i,t_{i+1}]$, that
\[
\| H(z,t) - F(z,t) \| \leq
\begin{cases}
\displaystyle \frac{s_i-t}{s_i-t_i} \| p\tilde{F}_i(z,t_i) - F(z,t_i) \| + \| F(z,t_i) - F(z,t) \| & \text{if } t \in [t_i,s_i]\\[2ex]
\displaystyle \diam\,F(\{z\} \times [t_i,s_{i+1}]) & \text{if } t \in [s_i,s_{i+1}]\\[2ex]
\displaystyle 0 & \text{if } t \in [s_{i+1},t_{i+1}].
\end{cases}
\]
Then observe that:
\begin{itemize}
\item
$H$ is $(2\delta',2\nu')$-close to $F|Z \times [t_i,t_{i+1}]$
with respect to $K$,

\item
$H$ has image in $C_1$, and

\item
$H$ has tracks in $\{\intr\,K_3, C_1 \setminus K\}$.
\end{itemize}
Since $p$ is a $\delta$-fibration over $U$ and a $\nu$-fibration
over $V$, by Lemma~\ref{lem:BlendingBounds} and the epsilonics in
its proof, there exists a $G$-homotopy $\tilde{H}: Z \times
[t_i,t_{i+1}] \to E$ such that $\tilde{F}_i(-,t_i) =
\tilde{H}(-,t_i)$ and $p \tilde{H}$ is $(\delta',\nu')$-close to $H$
with respect to $K$. Now define
\[
\tilde{F}_{i+1} := \tilde{F}_i \cup \tilde{H}: Z \times [0,t_{i+1}] \longrightarrow E.
\]
Note, for all $(z,t) \in Z \times [t_i,t_{i+1}]$, that
\[
\| p\tilde{F}_{i+1}(z,t) - F(z,t) \| \leq \| p\tilde{H}(z,t) - H(z,t) \| + \| H(z,t) - F(z,t) \|.
\]
Hence $p\tilde{F}_{i+1}$ is $(\epsilon,\mu)$-close to $F$ on $Z
\times [t_i,t_{i+1}]$ with respect to $K$. Furthermore
$p\tilde{F}_{i+1}|$ is $(\delta',\nu')$-close to $H|=F|$ on $Z
\times \{t_{i+1}\}$ with respect to $K$. This concludes the
inductive construction of the desired homotopy $\tilde{F}$.\qed
\end{proof}

\section{Equivariant sucking over Euclidean space}\label{section:sucking}

\emph{Throughout Section~\ref{section:sucking}, we assume that $G$
is a finite subgroup of $O(n)$, that $G$ acts freely on $S^{n-1}$
and on $M$, and that $M$ is a manifold of dimension $m > 4$.}

The following theorem is the main result herein; the proof is
located at the end of the section. It is an equivariant version of
the first part of Corollary~\ref{cor:euclidean_delta_sucking} and
appears in the Introduction as Theorem~\ref{intro thm: ortho
sucking}.

\begin{theorem}
\label{thm:equivariant_epsilon_delta_sucking} For every $\epsilon
> 0$ there exists $\delta > 0$ such that: if $p\co M\to\br^n$ is a
proper $G$-$\delta$-fibration, then $p$ is $G$-$\epsilon$-homotopic
to a $G$-MAF.
\end{theorem}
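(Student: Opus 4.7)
The plan is to reduce equivariant sucking on $\br^n$ to non-equivariant metric sucking on the orbit space $\oc(X)=\br^n/G$ away from the cone point, and to carry this out iteratively on an exhaustion of $\br^n\setminus\{0\}$ by annular regions, taking a uniform limit at the end.

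\emph{Setup.} Given $\epsilon>0$, fix decreasing sequences $r_0>r_1>\cdots\to 0$ of radii and tolerances $\epsilon_k>0$ with $\sum_k\epsilon_k<\epsilon$, together with auxiliary $\mu_k\to 0$ to be chosen. By Proposition~\ref{prop:HenriquesLeary}, $\oc(X)$ admits a triangulation of finite isometry type; in particular each frustum $X\times(r,\infty)$ satisfies the finiteness hypothesis required to invoke the metric MAF sucking Corollary~\ref{cor:chapman}.

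\emph{Inductive improvement.} Starting from $p_0:=p$, suppose we have a proper $G$-$\delta_k$-fibration $p_k$ that is a $G$-MAF over $\br^n\setminus\ob{}_{r_{k-1}}^n$ (vacuous for $k=0$). Choose $\delta_k$ small enough that Corollary~\ref{cor:piecing} makes $p_k$ a $G$-$(\eta_k,\mu_k)$-fibration over $(\br^n,\br^n\setminus\ob{}_{r_k}^n)$ for $\mu_k$ as small as we wish. Passing to orbits, Proposition~\ref{prop:equivariant implies stratified} yields a proper stratified $\mu_k$-fibration $p_k/G\co N\to\oc(X)$ whose restriction over a frustum $X\times(r_k',\infty)$, for an intermediate $r_k'\in(r_k,r_{k-1})$, is a map between manifolds. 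Apply Corollary~\ref{cor:chapman} to deform this restriction to a MAF over $X\times(r_{k+1},\infty)$, with the deformation supported away from the cone point. Lift the resulting homotopy uniquely back through $q_{\br^n}$ via Lemma~\ref{lem:G-homotopy}, producing $p_{k+1}$; provided $\mu_k$ was chosen below the Lebesgue constant of Lemma~\ref{lem:Lebesgue_control} at radius $r_{k+1}$, the lifted homotopy has tracks of diameter $<\epsilon_k$. With standard bookkeeping on $\delta_{k+1}$, the result $p_{k+1}$ is a proper $G$-$\delta_{k+1}$-fibration and a $G$-MAF over $\br^n\setminus\ob{}_{r_k}^n$, closing the induction.

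\emph{Limit and conclusion.} Since the $k$-th homotopy is supported in $\br^n\setminus\ob{}_{r_{k-1}}^n$ and $\sum_k\epsilon_k<\epsilon$, the sequence $\{p_k\}$ converges uniformly to a proper $G$-map $p'$ that is $G$-$\epsilon$-homotopic to $p$. An equivariant version of the Limit Lemma~\ref{lem:limit} (whose proof goes through verbatim using $G$-homotopy lifting for compact metric $G$-spaces) shows $p'$ is a $G$-approximate fibration over $\br^n\setminus\{0\}$; combining this with one last application of Corollary~\ref{cor:piecing} to absorb the (essentially unchanged) small neighborhood of the origin upgrades $p'$ to a $G$-MAF on all of $\br^n$. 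The main obstacle is the multi-scale parameter balancing: because the Lebesgue constant of Lemma~\ref{lem:Lebesgue_control} deteriorates as $r_{k+1}\to 0$, $\mu_k$ must shrink faster than the radii, and the nested $(\delta,\nu)$-structure of Corollary~\ref{cor:piecing} must be threaded so that the output of stage $k$ cleanly satisfies the hypotheses of stage $k+1$ without accumulating error near the cone point.
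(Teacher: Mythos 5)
Your plan takes a genuinely different route from the paper, and unfortunately it contains a gap that the paper's actual argument is carefully engineered to avoid.

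The paper does \emph{not} iterate the orbit-space metric sucking over a sequence of radii $r_k\to 0$. It performs a single pass: Lemma~\ref{lem:equivariant epsilon,MAF} produces, at scale $\epsilon$, a map $p'$ that is a $G$-MAF over $\br^n\setminus\mathrm{B}^n_\epsilon$ and $G$-$\epsilon$-homotopic to $p$; Corollary~\ref{cor:piecing} is then used (via Lemma~\ref{lem:cor_piecing} and Proposition~\ref{prop:equivariant epsilon,nu}) only to conclude that $p'$ is a $G$-$(\epsilon,\nu)$-fibration over the pair $(\br^n,\br^n\setminus\mathrm{B}^n_\epsilon)$ for every $\nu>0$. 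The decisive step that you are missing is the radial crush $\rho=\rho_{2\epsilon/3}$: Lemma~\ref{lem:crush_prop} shows $\rho$ itself is a $G$-MAF and distance non-increasing, and Lemma~\ref{lem:crush MAF} shows that composing a $G$-$(\epsilon,\nu)$-for-all-$\nu$-fibration with $\rho$ yields a genuine $G$-MAF. The cell-like crush collapses the problematic ball to the origin, so the persistent $\epsilon$-imprecision of $p'$ near $0$ becomes invisible after composition. There is no analogue of this move in your proposal, and it is exactly this move that makes a single-scale argument suffice.

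The iterative scheme has two concrete obstructions. First, to deform $p_k/G$ over the frustum $X\times(r_k',\infty)$ into a MAF over $X\times(r_{k+1},\infty)$ using Corollary~\ref{cor:chapman}, you need $p_k/G$ to be a $\delta$-fibration over that frustum for a $\delta$ that depends on $r_{k+1}$ and \emph{shrinks to $0$} as $r_{k+1}\to 0$ (both because of the Lebesgue constant in Lemma~\ref{lem:Lebesgue_control} and because the triangulation of $\br^n/G$ from Proposition~\ref{prop:HenriquesLeary} has a fixed minimal simplex size $d_0$, so for $r_{k+1}\ll d_0$ the required polyhedral neighborhood $A\subset Y\times(r_2,\infty)$ containing $Y\times[r_3,\infty)$ cannot even be cut out of the triangulation without re-triangulating and losing the constant). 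But the homotopies you construct are supported away from the origin, so near the origin $p_k$ is still only a $G$-$\delta$-fibration for the original $\delta$ (degraded slightly by the accumulated homotopies). There is nothing in the induction that makes $p_k$ better near $0$, so the needed input precision is never achieved once $r_k$ is comparable to $\delta$.

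Second, the step where you ``choose $\delta_k$ small enough that Corollary~\ref{cor:piecing} makes $p_k$ a $G$-$(\eta_k,\mu_k)$-fibration ... Passing to orbits, Proposition~\ref{prop:equivariant implies stratified} yields a proper stratified $\mu_k$-fibration $p_k/G$'' is a type error. Proposition~\ref{prop:equivariant implies stratified} converts the $\delta$-fibration property, not the $(\epsilon,\mu)$-property, and being a $G$-$(\eta_k,\mu_k)$-fibration over the pair $(\br^n,\br^n\setminus\ob{}_{r_k}^n)$ is not the same as being a $\mu_k$-fibration over the outer region: the former controls lifts of homotopies whose image lies anywhere in $\br^n$, with improved closeness only over the preimage of the outer set, and does not give the hypothesis that Corollary~\ref{cor:chapman} requires. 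Similarly, at the very end, ``one last application of Corollary~\ref{cor:piecing} to ... upgrade $p'$ to a $G$-MAF on all of $\br^n$'' cannot work: the conclusion of Corollary~\ref{cor:piecing} is a $G$-$(\epsilon,\mu)$-fibration with a \emph{fixed} $\epsilon$, which is a bounded fibration, not an approximate fibration. Closing that gap is precisely what the radial crush is for.

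So the correct fix is to abandon the iteration after the first scale, establish the $G$-$(\epsilon,\nu)$-fibration property for all $\nu$ via the piecing result, and then compose with the radial crush as in Lemmas~\ref{lem:crush_prop} and~\ref{lem:crush MAF}.
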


The proof of Theorem~\ref{thm:equivariant_epsilon_delta_sucking}
shows that $\delta$ is independent of $M$ but is dependent on
$\dim(M)$.

The following corollary is an equivariant version of the second part
of Corollary~\ref{cor:euclidean_delta_sucking}. The scaling trick in
the proof is due to Chapman \cite{Chapman} in the non-equivariant
case.

\begin{corollary}
\label{cor:equivariant_bounded_sucking}
If $p\co M\to \br^n$ is a proper $G$-bounded
fibration, then $p$ is $G$-boundedly homotopic to a $G$-MAF.
\end{corollary}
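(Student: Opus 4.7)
The plan is to reduce to Theorem~\ref{thm:equivariant_epsilon_delta_sucking} via the Chapman scaling trick, leveraging the fact that scalar multiplication on $\br^n$ commutes with the orthogonal action of $G$.

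First, since $p$ is a $G$-bounded fibration, there exists $\beta>0$ such that $p$ is a proper $G$-$\beta$-fibration. Fix any $\epsilon_0>0$ (for concreteness, take $\epsilon_0=1$), and let $\delta=\delta(\epsilon_0)>0$ be the number furnished by Theorem~\ref{thm:equivariant_epsilon_delta_sucking} (applied to $M$ of dimension $m>4$). Choose a small scaling factor $t>0$ with $t\beta<\delta$.

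Next, I would exploit the $G$-homeomorphism $\mu_t\co\br^n\to\br^n$ defined by $\mu_t(x):=tx$. Since $G\leq O(n)$ commutes with scalar multiplication, $\mu_t$ is $G$-equivariant; and since $\mu_t$ scales the Euclidean metric by the factor $t$, it takes $G$-$\beta$-fibrations to $G$-$(t\beta)$-fibrations. Explicitly, given a $G$-homotopy lifting problem $(f,F)$ for $\mu_t\circ p$, pre-composing with $\mu_{1/t}$ yields a $G$-lifting problem for $p$ itself; apply the $G$-$\beta$-fibration property of $p$ to get a $G$-solution $\tilde F$ with $p\tilde F$ being $\beta$-close to $\mu_{1/t}F$, and then observe $(\mu_t\circ p)\tilde F$ is $(t\beta)$-close to $F$. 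Thus $\mu_t\circ p\co M\to\br^n$ is a proper $G$-$(t\beta)$-fibration, hence a proper $G$-$\delta$-fibration.

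Then I would apply Theorem~\ref{thm:equivariant_epsilon_delta_sucking}: there exist a $G$-MAF $q\co M\to\br^n$ and a $G$-$\epsilon_0$-homotopy $H\co \mu_t\circ p\simeq q$. Post-composing with the $G$-homeomorphism $\mu_{1/t}$ yields a $G$-MAF $p':=\mu_{1/t}\circ q$ and a $G$-homotopy $\mu_{1/t}\circ H\co p\simeq p'$, whose tracks have diameter at most $\epsilon_0/t$. Since $\epsilon_0/t$ is a fixed constant independent of $M$, this exhibits $p$ as $G$-boundedly homotopic to a $G$-MAF, completing the proof.

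There is no real obstacle here; the only point to verify carefully is the naturality of the $G$-$\epsilon$-fibration property under the $G$-homeomorphism $\mu_t$, which is purely a matter of tracking the scaling factor through the definition. The key structural fact being used is that the $O(n)$-action on $\br^n$ commutes with radial dilations, so scaling preserves $G$-equivariance; this same scaling trick is what was referenced in the proof of Corollary~\ref{cor:euclidean_delta_sucking}.
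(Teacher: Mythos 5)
Your proposal is correct and is essentially the paper's own argument: the paper also applies Theorem~\ref{thm:equivariant_epsilon_delta_sucking} with $\epsilon=1$, rescales $p$ by a small factor $\frac{1}{L}$ (your $t$) to convert the given $G$-$\lambda$-fibration into a $G$-$\delta$-fibration using $G\leq O(n)$, and then scales the resulting $G$-$1$-homotopy and $G$-MAF back up by $L$ to obtain the bounded $G$-homotopy. The two proofs differ only in notation.
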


\begin{proof} Obtain $\delta>0$ from Theorem~\ref{thm:equivariant_epsilon_delta_sucking} with $\epsilon=1$.
Let $p\co M\to\br^n$ be a proper $G$-bounded fibration. That is,
there exists $\lambda > 0$ such that $p$ is a proper
$G$-$\lambda$-fibration. There exists $L>0$ such that $\lambda/L
<\delta$. Then, since $G\subseteq O(n)$, note that $\frac{1}{L}p$ is
a proper $G$-$\delta$-fibration. So, by
Theorem~\ref{thm:equivariant_epsilon_delta_sucking}, there exists a
$G$-$1$-homotopy $H: M\times I\to\br^n$ such that
$H(-,0)=\frac{1}{L}p$ and $p_1 := H(-,1)$ is a $G$-MAF. Therefore
the scaled map $L\cdot {H}\co M\times I \longrightarrow \br^n$ is a
bounded $G$-homotopy from $p$ to a $G$-MAF $L p_1$.\qed
\end{proof}

The rest of this section is devoted to the proof of Theorem
\ref{thm:equivariant_epsilon_delta_sucking}.

\begin{lemma}
\label{lem:cor_piecing} Let $0<r_1<r_2$ be given. For every
$\epsilon >0$ there exists $\delta >0$ satisfying: if $p\co
M\to\br^n$ is a proper $G$-$\delta$-fibration and a $G$-MAF over
$\br^n\setminus\mathrm{B}^n_{r_1}$, then, for every $\nu >0$, the
map $p$ is a $G$-$(\epsilon,\nu)$-fibration over $(\br^n,
\br^n\setminus\mathrm{B}_{r_2}^n)$ for the class of compact, metric
$G$-spaces.
\end{lemma}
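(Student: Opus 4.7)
The plan is to deduce this lemma as an immediate consequence of Corollary~\ref{cor:piecing}. The only substantive observation needed is that the hypothesis "$p$ is a $G$-MAF over $\br^n\setminus\mathrm{B}^n_{r_1}$" automatically supplies, for every $\nu'>0$, the $G$-$\nu'$-fibration property over $\br^n\setminus\mathrm{B}^n_{r_1}$ that Corollary~\ref{cor:piecing} demands as a hypothesis.

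First, given $\epsilon>0$, I would apply Corollary~\ref{cor:piecing} to the data $(r_1,r_2,\epsilon)$ to extract a number $\delta>0$ together with the accompanying function $\mu\mapsto\nu(\mu)$ produced by that corollary. This $\delta$ will serve as the $\delta$ asserted by the present lemma. Note that freeness of the $G$-action on $M$ and $\dim M>4$ are irrelevant at this stage; Corollary~\ref{cor:piecing} (and hence Theorem~\ref{thm:piecing}) imposes no such conditions.

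Next, to verify the conclusion, suppose $p\co M\to\br^n$ is a proper $G$-$\delta$-fibration which is additionally a $G$-MAF over $\br^n\setminus\mathrm{B}^n_{r_1}$, and let $\nu>0$ be given. Invoking Corollary~\ref{cor:piecing} once more with $\mu:=\nu$, I obtain the associated $\nu'>0$. By the definition of $G$-MAF (a proper $G$-$\eta$-fibration for \emph{every} $\eta>0$), the map $p$ is in particular a $G$-$\nu'$-fibration over $\br^n\setminus\mathrm{B}^n_{r_1}$. Both hypotheses of Corollary~\ref{cor:piecing} are now met, so its conclusion yields that $p$ is a $G$-$(\epsilon,\nu)$-fibration over $(\br^n,\br^n\setminus\mathrm{B}_{r_2}^n)$ for the class of compact metric $G$-spaces, as required.

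Since the argument is purely a bookkeeping exercise in nested quantifiers, there is no real technical obstacle here; all of the geometric content (homotopy extension, stationary lifting, blending bounds, the finite partition of $I$) has already been absorbed into the proof of Theorem~\ref{thm:piecing} in Section~\ref{section:piecing}. The only conceptual point to highlight in the write-up is the asymmetric role of the two fibration hypotheses on $p$: the small global bound $\delta$ is fixed once $\epsilon$ is chosen, while the local bound $\nu'$ near infinity must shrink as the target joint-closeness bound $\nu$ shrinks, which is exactly what the MAF hypothesis delivers for free.
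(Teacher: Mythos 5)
Your proposal is correct and takes exactly the same route as the paper, which simply records that the lemma is an immediate consequence of Corollary~\ref{cor:piecing}. The quantifier bookkeeping you spell out (fixing $\delta$ from $\epsilon$, applying the corollary with $\mu := \nu$ to get $\nu'$, and then using the MAF hypothesis to supply the $G$-$\nu'$-fibration condition over $\br^n\setminus\mathrm{B}^n_{r_1}$) is precisely the implicit content of that one-line proof.
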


\begin{proof}
This is an immediate consequence of Corollary~\ref{cor:piecing}.\qed
\end{proof}

\begin{lemma}
\label{lem:orbit_to_cover} Let $r>0$ and in the following
commutative diagram $$\begin{CD}
M @>{q_M}>> N \\
@V{p}VV  @VV{p/G}V \\
\br^n @>{q_{\br^n}}>> \oc(X)
\end{CD}$$
suppose that $p$ is a proper $G$-map and $p/G$ is a MAF over
$\oc(X)\setminus \cone_r(X)$. Then $p$ is a $G$-MAF over
$\br^n\setminus\mathrm{B}_r^n$.
\end{lemma}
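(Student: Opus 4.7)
The plan is to reduce the $G$-equivariant lifting problem for $p$ on the cover to a non-equivariant lifting problem for $p/G$ on the quotient, where the MAF hypothesis applies, and then lift the resulting solution back $G$-equivariantly. The freeness of $G$ on $S^{n-1}$ and on $M$ both enter essentially: the first gives that $q_{\br^n}$ restricts to a covering map $\br^n\setminus\{0\}\to\oc(X)\setminus\{v\}$ whose $G$-injectivity radius is bounded below linearly in the distance from $0$, and the second makes $q_M\co M\to N$ itself a regular covering map.

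Given $\epsilon>0$, set $c:=\min_{g\in G\setminus\{e\}}\min_{x\in S^{n-1}}\|x-gx\|$, which is positive by freeness of $G$ on $S^{n-1}$, and choose $\epsilon_0<\min\{\epsilon,cr/4\}$. Given a $G$-homotopy lifting problem $(f,F)$ for $p$ with $F(Z\times I)\subseteq\br^n\setminus\mathrm{B}_r^n$, let $p_Z\co Z\to Z/G$ be the quotient map and let $\hat f\co Z/G\to N$ and $\hat F\co(Z/G)\times I\to\oc(X)$ be the factorizations of the $G$-invariant maps $q_M\circ f$ and $q_{\br^n}\circ F$; the image of $\hat F$ lies in $\oc(X)\setminus\cone_r(X)$ since the cube $\mathrm{B}_r^n$ contains the closed Euclidean ball of radius $r$. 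The MAF hypothesis, applied as an $\epsilon_0$-fibration over $\oc(X)\setminus\cone_r(X)$, yields a homotopy $\tilde G\co(Z/G)\times I\to N$ with $\tilde G(-,0)=\hat f$ and $(p/G)\tilde G$ pointwise $\epsilon_0$-close to $\hat F$. Since $G$ acts freely on $M$, the map $q_M$ is a covering projection, which has the homotopy lifting property for arbitrary spaces; unique path lifting with initial lift $f$ then produces a continuous $\tilde F\co Z\times I\to M$ covering $\tilde G\circ(p_Z\times\id_I)$. The $G$-equivariance of $\tilde F$ follows from uniqueness, since $(z,s)\mapsto g\cdot\tilde F(g^{-1}z,s)$ is a second lift starting at the $G$-equivariant $f$.

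The main obstacle is to verify that $p\tilde F$ is $\epsilon$-close to $F$ in the Euclidean metric on $\br^n$, given only that $q_{\br^n}\circ p\tilde F=(p/G)\tilde G\circ(p_Z\times\id_I)$ is cone-metric $\epsilon_0$-close to $q_{\br^n}\circ F$. Fix $z\in Z$ and consider the continuous paths $\alpha:=p\tilde F(z,-)$ and $\beta:=F(z,-)$ in $\br^n$, starting at the common point $pf(z)=F(z,0)$. By construction $\|\beta(s)\|>r$ for all $s$, and because $q_{\br^n}$ is distance non-increasing, $\|\alpha(s)\|>r-\epsilon_0>r/2$. At any point of Euclidean norm $>r/2$, the $G$-injectivity radius $\tfrac12\min_{g\neq e}\|y-gy\|$ is at least $cr/4$, using $\|y-gy\|\geq c\|y\|$ for $g\neq e$. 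The set $S:=\{s\in I\mid\|\alpha(s)-\beta(s)\|<cr/4\}$ is open and contains $0$; if $S\neq I$, let $s^*:=\inf(I\setminus S)$, so $s^*>0$ and $\|\alpha(s^*)-\beta(s^*)\|=cr/4$ by continuity. Pick the unique $G$-translate $\tilde\beta$ of $\beta(s^*)$ at distance $<\epsilon_0$ from $\alpha(s^*)$ (existence from cone-metric closeness of the projections; uniqueness since any two distinct $G$-translates of $\beta(s^*)$ are at pairwise distance $\geq cr>2\epsilon_0$), and write $\beta(s^*)=g^*\tilde\beta$. If $g^*\neq e$, then
\[
\|\alpha(s^*)-\beta(s^*)\|\geq\|\alpha(s^*)-g^*\alpha(s^*)\|-\|g^*\alpha(s^*)-g^*\tilde\beta\|\geq cr/2-\epsilon_0>cr/4,
\]
while if $g^*=e$, then $\|\alpha(s^*)-\beta(s^*)\|<\epsilon_0<cr/4$; both contradict $\|\alpha(s^*)-\beta(s^*)\|=cr/4$. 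Hence $S=I$, and the same local argument applied at every $s$ refines this to $\|\alpha(s)-\beta(s)\|<\epsilon_0<\epsilon$ for all $s$. Thus $p\tilde F$ is $\epsilon$-close to $F$; the comparison is a close cousin of the one used in Lemma~\ref{lem:Lebesgue_control}.
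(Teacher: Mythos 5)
Your proof is correct and follows the same overall route as the paper's: factor the $G$-equivariant problem through the quotient, solve it there using the MAF hypothesis on $p/G$, lift the solution back via the covering map $q_M$, and use unique path lifting to get both continuity and $G$-equivariance of the lift $\tilde F$. Where you go beyond the paper's printed argument is the final closeness estimate, and you are right to do so: the paper asserts
\[
d(p\tilde F,F)\le d(q_{\br^n}p\tilde F,q_{\br^n}F)
\]
and cites ``$q_{\br^n}$ is distance non-increasing'' as the justification, but that property gives exactly the reverse inequality. The quotient map $q_{\br^n}$ restricted to $\br^n\setminus\{0\}$ is a covering map (a \emph{local} isometry), not an isometry, and cone-metric closeness downstairs does not immediately yield Euclidean closeness upstairs; in principle $p\tilde F$ could drift onto a different sheet over $q_{\br^n}\circ F$. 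Your injectivity-radius argument is precisely what plugs this hole: you first shrink the input tolerance to $\epsilon_0<\min\{\epsilon,cr/4\}$ (using freeness of $G$ on $S^{n-1}$ to get $c>0$, and the fact that $\mathrm{B}_r^n$ contains the Euclidean $r$-ball to keep everything at norm $>r/2$), and then the open-closed connectedness argument on the set $\{s:\|\alpha(s)-\beta(s)\|<cr/4\}$, together with starting at a common point and the $\ge cr/2$ separation between distinct $G$-translates, forces $\alpha$ to stay on the same sheet as $\beta$. The refinement from $cr/4$ down to $\epsilon_0$ by re-running the translate-identification pointwise is also correct. In short: same strategy, but you supply the missing step that makes the lift an $\epsilon$-solution, which the paper's proof glosses over with an inequality written in the wrong direction. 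This is the right fix, and it mirrors the mechanism already isolated in Lemma~\ref{lem:Lebesgue_control}.

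Two small remarks. First, you could have invoked Lemma~\ref{lem:Lebesgue_control} almost directly to shorten the endgame (applied to the concatenated or difference comparison of $\alpha$ and $\beta$), though your self-contained continuity argument is perfectly fine and arguably clearer about where freeness of $G$ on $S^{n-1}$ enters. Second, it is worth stating explicitly that proving the $\epsilon_0$-lifting property for $\epsilon_0<\min\{\epsilon,cr/4\}$ suffices for the $\epsilon$-lifting property, since every $\epsilon_0$-solution is an $\epsilon$-solution; you effectively use this but do not say it.
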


\begin{proof}
Let $Z$ be a $G$-space, and denote the quotient map $q_Z: Z \to
Z/G$. Let $\epsilon > 0$. Let $f: Z \to M$ and $F: Z \times I \to
\br^n$ be the data for an $\epsilon$-lifting problem: $F(z,t) =
pf(z) \in \br^n \setminus \mathrm{B}_r^n$. Consider the induced
$\epsilon$-lifting problem, consisting of the continuous maps
$\bar{f}: Z/G \to N$ and $\bar{F}: Z/G \times I \to \oc(X)$ of
quotient spaces such that $\bar{F}(w,t) = q_M \bar{f}(w) \in \oc(X)
\setminus \cone_r(X)$. Since $p/G$ is an $\epsilon$-fibration over
$\oc(X) \setminus \cone_r(X)$, there exists an $\epsilon$-solution:
$\bar{H}: Z/G \times I \to N$ such that $\bar{H}(w,0)=\ol{f}(w)$ and
$d(p/G \circ \bar{H},\bar{F}) < \epsilon$. Define a $G$-invariant
map
\[
H := \bar{H} (q_Z \times \id_I): Z \times I \longrightarrow N.
\]
 Note that
$H(z,0)=\ol{f}q_Z(z)=q_M f$ and that $p/G \circ H$ is
$\epsilon$-close to $p/G \circ \bar{F} = q_{\br^n} F$.

Now, since $G$ acts freely on $M$ implies that $q_M$ is a covering
map, by the Homotopy Lifting Property, there exists a unique map
$\tilde{F}: Z \times I \to M$ such that $\tilde{F}(z,0)=f(z)$ and
$q_M \tilde{F} = H$. Note, for any $z \in Z$ and $\gamma \in G$,
since $f$ is $G$-equivariant and $H$ is $G$-invariant, that both the
paths $\gamma\tilde{F}(z,-)$ and $\tilde{F}(\gamma z,-)$ have common
initial point $\gamma f(z) = f(\gamma z)$ and have common
$q_M$-composition $H(z,-)=H(\gamma z,-)$.  Thus, by the uniqueness
property of path lifts in a covering space, we must have $\gamma
\tilde{F}(z,t) = \tilde{F}(\gamma z, t)$ for all $\gamma \in G, z
\in Z, t \in I$. Therefore $\tilde{F}: Z \times I \to M$ is a
$G$-homotopy. Since $q_{\br^n}$ is distance non-increasing,
note
\[
d(p\tilde{F},F) \leq d(q_{\br^n}p\tilde{F},q_{\br^n}F) = d(p/G \circ q_M \tilde{F},q_{\br^n}F) = d(p/G \circ H,q_{\br^n}F) < \epsilon.
\]
Thus $\tilde{F}$ is an $\epsilon$-solution to the lifting problem
given by $f$ and $F$. Therefore $p$ is an $\epsilon$-fibration over
$\br^n \setminus \ol{B}_r^n$, for all $\epsilon>0$.\qed
\end{proof}

The proof of the following lemma follows immediately from the definition.

\begin{lemma}
\label{lem:restriction} Let $r>0$ and  in the following commutative
diagram $$\begin{CD}
M @>{q_M}>> N @<{\incl}<< N'= g^{-1}(X\times (0,\infty))\\
@V{p}VV  @VV{p/G}V  @VV{p/G|}V\\
\br^n @>{q_{\br^n}}>> \oc(X) @<{\incl}<< X\times (0,\infty)
\end{CD}$$
suppose that $p$ is a proper $G$-map. If $p/G|$ is a MAF over
$X\times (r,\infty)$, then $p/G$ is a MAF over $\oc(X)\setminus
\cone_r(X)$. \qed\end{lemma}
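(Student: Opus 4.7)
The plan is to observe that the statement is essentially a tautological reformulation once the relevant subsets are identified. First, I would note that the natural map $\oc(X) \setminus \{v\} \to X\times (0,\infty)$ sending $[x,t]$ with $t>0$ to $(x,t)$ is a homeomorphism, and it restricts to a homeomorphism $\oc(X) \setminus \cone_r(X) \cong X\times (r,\infty)$. Correspondingly, $(p/G)^{-1}(\oc(X)\setminus\cone_r(X)) \subseteq (p/G)^{-1}(X\times(0,\infty)) = N'$.

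Next, I would verify the MAF property directly. Let $\epsilon>0$ and consider a homotopy lifting problem given by a map $f\co Z\to N$ and a homotopy $F\co Z\times I\to \oc(X)$ satisfying $F(Z\times I) \subseteq \oc(X)\setminus\cone_r(X)$ and $(p/G)\circ f = F(-,0)$. Then $f(Z) \subseteq (p/G)^{-1}(\oc(X)\setminus\cone_r(X)) \subseteq N'$, so $f$ factors through the inclusion $N' \hookrightarrow N$, and $F$ factors through $X\times(r,\infty) \hookrightarrow \oc(X)$. The hypothesis that $p/G|$ is a MAF over $X\times(r,\infty)$ then produces an $\epsilon$-solution $\tilde F\co Z\times I\to N'$; post-composing with the inclusion $N' \hookrightarrow N$ yields an $\epsilon$-solution to the original problem.

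Finally, $p/G$ is proper by hypothesis, so it is a proper $\epsilon$-fibration over $\oc(X)\setminus \cone_r(X)$ for every $\epsilon>0$, hence a MAF over $\oc(X)\setminus \cone_r(X)$. There is no genuine obstacle here; the only thing to check carefully is that $f(Z)$ lands in $N'$ so that the restriction to $p/G|$ is legitimate, and this follows immediately from the constraint on the image of $F$ together with $(p/G)\circ f = F(-,0)$.
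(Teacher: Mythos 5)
Your argument is correct, and it is exactly the unpacking of definitions that the paper has in mind: the paper offers no written proof beyond the remark that the lemma ``follows immediately from the definition,'' and your reasoning---identifying $\oc(X)\setminus\cone_r(X)$ with $X\times(r,\infty)$, observing that any lifting problem over that set factors through $N'$ because $(p/G)\circ f = F(-,0)$, and post-composing the solution with the inclusion---is precisely the routine verification being elided.
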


\begin{lemma}
\label{lem:equivariant epsilon,MAF} For every $\epsilon
> 0$ there exists $\delta >0$ such that: if $p\co M\to\br^n$ is a
proper $G$-$\delta$-fibration, then $p$ is $G$-$\epsilon$-homotopic
to a map $p'$ that is a $G$-MAF over $\br^n\setminus\bar
B^n_\epsilon$.
\end{lemma}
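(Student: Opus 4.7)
The plan is to push the problem down to the orbit space $\oc(X)=\br^n/G$, where Proposition~\ref{prop:HenriquesLeary} furnishes a triangulation of finite isometry type, apply the non-equivariant metric MAF sucking theorem (Corollary~\ref{cor:chapman}) away from the cone point, and then lift the resulting deformation back across the quotient map $q_{\br^n}$.

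Given $\epsilon>0$, I would first choose constants: fix $0<r_1<r_2<r_3\leq\epsilon$, a triangulation of $\oc(X)$ of finite isometry type, and a closed polyhedral neighborhood of $X\times[r_3,\infty)$ contained in $X\times(r_2,\infty)$. Apply Lemma~\ref{lem:Lebesgue_control} with $r=r_2/2$ to obtain $\epsilon_0>0$ controlling the track size of $q_{\br^n}$-lifts into $\br^n\setminus\mathrm{B}_{r_2/2}^n$; shrink so that $\epsilon_0\leq\epsilon$. Set $\epsilon_1:=\min\{\epsilon_0,r_2/2\}$, and use Corollary~\ref{cor:FIT metric ANR} on $\oc(X)$ to obtain $\delta_1>0$ such that $\delta_1$-close maps into $\oc(X)$ are $\epsilon_1$-homotopic rel their coincidence set. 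Finally apply Corollary~\ref{cor:chapman} with $Y=X$, $B=X\times(0,\infty)$, the chosen $r_i$, and closeness target $\delta_1$ to obtain the desired $\delta>0$.

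Now suppose $p\co M\to\br^n$ is a proper $G$-$\delta$-fibration. By Proposition~\ref{prop:equivariant implies stratified}, $p/G\co N\to\oc(X)$ is a proper stratified $\delta$-fibration, hence a genuine $\delta$-fibration when restricted to the open stratum $X\times(0,\infty)$. Corollary~\ref{cor:chapman} then yields $q'\co (p/G)^{-1}(X\times(0,\infty))\to X\times(0,\infty)$ that is $\delta_1$-close to $p/G$, agrees with $p/G$ on $(p/G)^{-1}(X\times(0,r_2])$, and is a MAF over $X\times(r_3,\infty)$. Extending by $p/G$ on $(p/G)^{-1}\{v\}$ produces $q\co N\to\oc(X)$. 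Corollary~\ref{cor:FIT metric ANR} supplies an $\epsilon_1$-homotopy $H\co N\times I\to\oc(X)$ from $p/G$ to $q$ rel the coincidence set; since the latter contains $(p/G)^{-1}(\cone_{r_2}(X))$ and $\epsilon_1\leq r_2/2$ keeps the remaining tracks at distance at least $r_2/2$ from $v$, $H$ is stratum-preserving. Lemma~\ref{lem:G-homotopy} then produces a unique $G$-homotopy $\tilde H\co M\times I\to\br^n$ with $\tilde H(-,0)=p$ and $q_{\br^n}\tilde H=H\circ(q_M\times\id_I)$.

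To finish, on $p^{-1}(\mathrm{B}_{r_2}^n)$ the track of $\tilde H$ is a point (it lifts a constant track of $H$), while on $p^{-1}(\br^n\setminus\mathrm{B}_{r_2}^n)$ the track lies in $\br^n\setminus\mathrm{B}_{r_2/2}^n$; Lemma~\ref{lem:Lebesgue_control} thus certifies that $\tilde H$ is a $G$-$\epsilon_0$-homotopy, hence a $G$-$\epsilon$-homotopy. Setting $p':=\tilde H(-,1)$, one has $p'/G=q$, which is a MAF over $X\times(r_3,\infty)$; Lemma~\ref{lem:restriction} promotes $q$ to a MAF over $\oc(X)\setminus\cone_{r_3}(X)$, and Lemma~\ref{lem:orbit_to_cover} concludes that $p'$ is a $G$-MAF over $\br^n\setminus\mathrm{B}_{r_3}^n\supseteq\br^n\setminus\bar B^n_\epsilon$. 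The main obstacle I anticipate is coordinating $\epsilon_1$ so that $H$ simultaneously stays in the open stratum (so that Lemma~\ref{lem:G-homotopy} applies and path-lifting across $q_{\br^n}|$ is well defined) and lifts to a uniformly small homotopy in $\br^n$ (despite $q_{\br^n}$ being merely distance-non-increasing near the cone point); both demands are met by the single choice $\epsilon_1\leq r_2/2$ combined with Lemma~\ref{lem:Lebesgue_control} applied to the frustum strictly away from the cone point.
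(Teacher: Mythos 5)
Your proposal is correct and follows essentially the same strategy as the paper: descend to $\oc(X)=\br^n/G$ via Proposition~\ref{prop:equivariant implies stratified}, invoke Proposition~\ref{prop:HenriquesLeary} for finite isometry type, apply the non-equivariant metric sucking (Corollary~\ref{cor:chapman} together with the metric ANR result) to obtain a small homotopy rel a neighborhood of the cone point, lift it to a $G$-homotopy on $M$ via Lemma~\ref{lem:G-homotopy}, control its size via Lemma~\ref{lem:Lebesgue_control}, and conclude via Lemmas~\ref{lem:restriction} and~\ref{lem:orbit_to_cover}. The only cosmetic difference is that the paper produces the homotopy on $P:=N\setminus g^{-1}\{v\}\to X\times(0,\infty)$ (using Proposition~\ref{prop:metric ANR} with a sub-polyhedron) and then extends uniquely over $g^{-1}\{v\}$ by the constant homotopy, whereas you build it directly on all of $N\to\oc(X)$ using Corollary~\ref{cor:FIT metric ANR} and then verify stratum-preservation; both routes are valid and the remaining estimates agree up to choice of constants.
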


\begin{proof}
Let $\epsilon > 0$ and $m > 4$ be given. By
Lemma~\ref{lem:Lebesgue_control}, there exists $0 < \epsilon_0 \leq
\epsilon/4$ such that: if $Z$ is a topological space and $F: Z
\times I \to \br^n\setminus \mathrm{B}_{\epsilon/2}^n$ is a homotopy
and $q_{\br^n} \circ F$ is an $\epsilon_0$-homotopy, then $F$ is an
$\epsilon_0$-homotopy. By Proposition~\ref{prop:HenriquesLeary}, the open cone $\oc(X) = \br^n/G$ with the induced metric has finite isometry type. 
Then, since $X = S^{n-1}/G$ is a closed smooth manifold, by
Proposition~\ref{prop:metric ANR} and Corollary~\ref{cor:chapman},
there exists $\delta
> 0$ such that: if $P$ is an $m$-manifold and $f: P \to X \times
(0,\infty)$ is a proper $\delta$-fibration over
$(\epsilon/2,\infty)$, then $f$ is $\epsilon_0$-homotopic rel
$f^{-1}(X\times(0,\epsilon - \epsilon_0])$ to a map $f'$ that is a
MAF over $X\times(\epsilon,\infty)$.

Let $p:M \to \br^n$ be a proper $G$-$\delta$-fibration. By
Proposition~\ref{prop:equivariant implies stratified}, the induced
map $g := p/G: N \to \oc(X)$ is a proper stratified
$\delta$-fibration. Consider the $m$-manifold $P := N \setminus
g^{-1}\{v\}$ and $f := g|P: P \to X \times (0,\infty)$. Since $f$ is
a proper $\delta$-fibration over $(\epsilon/2,\infty)$, there exists
an $\epsilon_0$-homotopy $H: P \times I \to X \times (0,\infty)$ rel
$f^{-1}(X \times (0,\epsilon - \epsilon_0])$ from $f$ to a MAF $f'$
over $X\times(\epsilon,\infty)$. This extends uniquely to an
$\epsilon_0$-homotopy $\bar{H}: N \times I \to \oc(X)$ rel
$g^{-1}(\cone_{\epsilon - \epsilon_0}(X))$ from $g$ to some $g' :=
\bar{H}(-,1)$. Note, by Lemma~\ref{lem:restriction}, that $g'$ is a
MAF over the frustum $\oc(X)\setminus \cone_\epsilon(X)$.

By Lemma~\ref{lem:G-homotopy}, there is a unique $G$-homotopy
$\tilde{H}: M \times I \to \br^n$ such that $\tilde{H}(-,0)=p$ and
$q_{\br^n} \circ \tilde{H} = \bar{H} \circ (q_M \times \id_I)$.
Observe that $\tilde{H}$ restricts to a constant homotopy on $M_0 :=
p^{-1}(\mathrm{B}_{\epsilon-\epsilon_0}^n)$. Furthermore, there
exists a restriction
\[
\bar{H}|: (N \setminus q_M(M_0)) \times I \longrightarrow \oc(X) \setminus c_{\epsilon-2\epsilon_0}(X) \subseteq \oc(X) \setminus c_{\epsilon/2}(X).
\]
Then the restriction $\tilde{H}|: (M \setminus M_0) \times I \to
\br^n \setminus \mathrm{B}_{\epsilon/2}^n$ exists and is an
$\epsilon_0$-homotopy. Hence $\tilde{H}: M \times I \to \br^n$ is an
$G$-$\epsilon$-homotopy. Finally, consider the map $p' :=
\tilde{H}(-,1): M \to \br^n$. Since $p'$ is a proper $G$-map
covering $g'$, by Lemma~\ref{lem:orbit_to_cover}, we conclude that
$p'$ is a $G$-MAF over $\br^n \setminus \mathrm{B}^n_\epsilon$.\qed
\end{proof}

\begin{proposition}
\label{prop:equivariant epsilon,nu} For every $\epsilon
> 0$ there exists $\delta >0$ satisfying: if $p\co M\to\br^n$ is a
proper $G$-$\delta$-fibration, then $p$ is $G$-$\epsilon$-homotopic
to a map $p'$ that is, for every $\nu > 0$, a proper
$G$-$(\epsilon,\nu)$-fibration over $(\br^n, \br^n\setminus
\mathrm{B}^n_\epsilon)$ for the class of compact, metric $G$-spaces.
\end{proposition}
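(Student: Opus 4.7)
The plan is to combine Lemma~\ref{lem:equivariant epsilon,MAF} (which, after a small $G$-homotopy, converts a $G$-$\delta$-fibration into a $G$-MAF off a small ball) with Lemma~\ref{lem:cor_piecing} (which, for a map that is already a $G$-MAF off a small ball and is globally a $G$-$\delta_1$-fibration, upgrades the global data to a $G$-$(\epsilon,\nu)$-fibration property relative to the complement of a slightly larger ball). The radii must be nested compatibly and the ``sucking'' parameter must be chosen small enough that the output map retains a strong enough global $\delta_1$-fibration property for Lemma~\ref{lem:cor_piecing} to apply.

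Given $\epsilon > 0$, I would first set $r_1 := \epsilon/3$ and $r_2 := \epsilon$ and invoke Lemma~\ref{lem:cor_piecing} to produce $\delta_1 > 0$ such that any proper $G$-$\delta_1$-fibration $p'\co M\to\br^n$ that is also a $G$-MAF over $\br^n\setminus\mathrm{B}^n_{r_1}$ is, for every $\nu > 0$, a $G$-$(\epsilon,\nu)$-fibration over $(\br^n, \br^n\setminus\mathrm{B}^n_\epsilon)$ on compact metric $G$-spaces. Next pick $\epsilon_0 := \min\{\epsilon,\, r_1/2,\, \delta_1/4\}$ and apply Lemma~\ref{lem:equivariant epsilon,MAF} with parameter $\epsilon_0$ in place of $\epsilon$ to obtain $\delta_2 > 0$. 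Finally set $\delta := \min\{\delta_2,\, \delta_1/2\}$.

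Given a proper $G$-$\delta$-fibration $p\co M\to\br^n$, Lemma~\ref{lem:equivariant epsilon,MAF} yields a $G$-$\epsilon_0$-homotopy from $p$ to some $p'$ that is a $G$-MAF over $\br^n\setminus\overline{\mathrm{B}}^n_{\epsilon_0}$. Because $\epsilon_0 < r_1$, the inclusion $\br^n\setminus\mathrm{B}^n_{r_1}\subseteq\br^n\setminus\overline{\mathrm{B}}^n_{\epsilon_0}$ forces $p'$ to be a $G$-MAF over $\br^n\setminus\mathrm{B}^n_{r_1}$; and because $\epsilon_0 \leq \epsilon$, the homotopy from $p$ to $p'$ is already a $G$-$\epsilon$-homotopy. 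What remains is to verify that $p'$ is itself a proper $G$-$\delta_1$-fibration, after which Lemma~\ref{lem:cor_piecing} delivers the desired conclusion directly.

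The key technical point, which I expect to be the main (but routine) obstacle, is the standard observation that a map uniformly $G$-$\alpha$-close to a proper $G$-$\beta$-fibration is itself a proper $G$-$(\beta + 2\alpha)$-fibration. Given a $G$-homotopy lifting problem $(f,F)$ for $p'$, one prepends a short straight-line $G$-homotopy from $pf$ to $p'f = F(-,0)$ of diameter at most $\alpha$, solves the enlarged lifting problem using the $G$-$\beta$-fibration property of $p$, and reparameterizes in the time variable to restore the initial condition $\tilde{F}(-,0) = f$. All constructions respect the $G$-action because $G$ acts on $\br^n$ by isometries, so linear combinations and reparameterizations are $G$-equivariant. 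Applying this estimate with $\alpha = \epsilon_0$ and $\beta = \delta$ and using $\delta + 2\epsilon_0 \leq \delta_1/2 + \delta_1/2 = \delta_1$ shows that $p'$ is a proper $G$-$\delta_1$-fibration, completing the proof via Lemma~\ref{lem:cor_piecing}.
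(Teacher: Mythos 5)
Your proposal is correct and follows essentially the same route as the paper: feed Lemma~\ref{lem:equivariant epsilon,MAF} into Lemma~\ref{lem:cor_piecing}, matching the radii $r_1 < r_2$ with a small enough sucking parameter so that (a) the new map is still a $G$-MAF over $\br^n\setminus\mathrm{B}^n_{r_1}$ and (b) the new map remains a proper $G$-$\delta_1$-fibration globally. (The paper takes $r_1=\epsilon/2$ and you take $r_1=\epsilon/3$; either works.) Your choice $\epsilon_0 \leq \delta_1/4$, $\delta \leq \delta_1/2$ in fact makes the degraded global constant $\delta + 2\epsilon_0 \leq \delta_1$ come out cleanly, which is slightly more careful than the paper's choice.

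One small correction on the auxiliary step you flagged: the way to show that a $G$-map $p'$ that is $\alpha$-close to a proper $G$-$\beta$-fibration $p$ is a proper $G$-$(\beta+2\alpha)$-fibration is \emph{not} to concatenate a short straight-line homotopy and then reparameterize in time; a time reparameterization of the lift cannot restore $\tilde F(-,0)=f$ without either losing the initial condition or introducing an error proportional to $\diam F(\{z\}\times I)$. The clean argument replaces the input homotopy $F$ by the $G$-equivariant ``linear push'' $\Phi(z,t) := F(z,t) + (1-t)\bigl(pf(z) - F(z,0)\bigr)$, which satisfies $\Phi(-,0)=pf$ and is pointwise $\alpha$-close to $F$ (and is $G$-equivariant because $G$ acts linearly on $\br^n$). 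Solving $(f,\Phi)$ with the $G$-$\beta$-fibration property of $p$ gives $\tilde F$ with $\tilde F(-,0)=f$ and $p'\tilde F$ within $2\alpha+\beta$ of $F$; no reparameterization is needed. With this fix your proof is complete and the constants work out exactly as you arranged.
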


\begin{proof}
Let $\epsilon >0$ be given. Let $r_1 = \epsilon/2$ and
$r_2=\epsilon$. Let $\delta_1 >0$ be given by
Lemma~\ref{lem:cor_piecing} for $\epsilon$, $r_1$, and $r_2$. Let
$\delta_2>0$ be given by Lemma~\ref{lem:equivariant epsilon,MAF} for
$\min\left(\epsilon/2, \delta_1/2\right)$.

Let $\delta=\min\left(\delta_2, \delta_1/2\right)$. If $p\co M\to
\br^n$ is a proper $G$-$\delta$-fibration, then
Lemma~\ref{lem:equivariant epsilon,MAF} implies that $p$ is
$G$-$\delta_1/2$-homotopic to a map $p'$, where $p'$ is a $G$-MAF
over $\br^n\setminus\mathrm{B}_{\epsilon/2}^n$. Now $p'$ is a
$G$-$\delta_1$-fibration. It follows from
Lemma~\ref{lem:cor_piecing} that, for every $\nu > 0$, the map $p'$
is a $G$-$(\epsilon, \nu)$-fibration over $(\br^n, \br^n\setminus
\mathrm{B}^n_\epsilon)$ for the class of compact, metric $G$-spaces.\qed
\end{proof}

\begin{definition}
Let $r > 0$. The \textbf{radial crush with parameter $r$} is the
$G$-map
\[
\rho_r\co \br^n \longrightarrow \br^n;\quad
x \longmapsto \begin{cases}
(1-\frac{r}{\norm{x}}) x & \text{if } r < \norm{x}\\
0 & \text{if } \norm{x} \leq r.
\end{cases}
\]
\end{definition}

\begin{lemma}
\label{lem:crush_prop} Let $r > 0$.
\begin{enumerate}
    \item There is a $G$-$r$-homotopy from $\id_{\br^n}$ to $\rho_r$.
    \item The map $\rho_r$ is distance non-increasing.
    \item The map $\rho_r$ is a $G$-MAF.
\end{enumerate}

\end{lemma}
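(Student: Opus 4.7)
Note first that $\rho_r$ is $G$-equivariant since every $g \in G \leq O(n)$ preserves the norm, and $\rho_r$ is proper since $\rho_r^{-1}(\mathrm{B}_R^n) \subseteq \mathrm{B}_{R+r}^n$ for all $R > 0$.

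For (1), I plan to use the straight-line $G$-homotopy $H(x,t) := (1-t) x + t \rho_r(x)$, which is $G$-equivariant by linearity of the action. Each track $H(\{x\} \times I)$ is a segment of length $\norm{x - \rho_r(x)} = \min(\norm{x}, r) \leq r$, yielding the desired $G$-$r$-homotopy.

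For (2), I would proceed by case analysis on whether each of $x, y$ lies in $\mathrm{B}_r^n$. When both do, both map to $0$ and the inequality is trivial. When $\norm{x} \leq r < \norm{y}$, the reverse triangle inequality gives $\norm{\rho_r(x) - \rho_r(y)} = \norm{y} - r \leq \norm{y} - \norm{x} \leq \norm{y - x}$. For the main case $\norm{x}, \norm{y} > r$, writing $\hat{x} := x/\norm{x}$ and $\hat{y} := y/\norm{y}$ and expanding directly yields
\[
\norm{x - y}^2 - \norm{\rho_r(x) - \rho_r(y)}^2 = 2r \bigl(\norm{x} + \norm{y} - r\bigr)\bigl(1 - \langle \hat{x}, \hat{y} \rangle\bigr) \geq 0.
\]

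For (3), $\rho_r$ is a proper $G$-map between manifolds; the substantive issue is the $G$-approximate fibration property. My main plan is to invoke the classical cell-like approximate fibration theorem for topological manifolds of the same dimension: since the only non-singleton point-inverse of $\rho_r$ is $\rho_r^{-1}(0) = \mathrm{B}_r^n$, which is convex and hence cell-like, $\rho_r$ is an approximate fibration. Equivalently, $\rho_r$ factors as the quotient $\br^n \to \br^n / \mathrm{B}_r^n$ collapsing the closed $r$-ball to a point, followed by the canonical homeomorphism $\br^n / \mathrm{B}_r^n \xrightarrow{\cong} \br^n$. For the $G$-equivariant upgrade, I would construct $G$-$\epsilon$-solutions directly, using the $G$-equivariant continuous ``approximate inverse''
\[
\sigma_{r, \mu}(y) := \Bigl(1 + \frac{r}{\max(\norm{y}, \mu)}\Bigr) y
\]
for small $0 < \mu < \epsilon/3$, which a short computation shows satisfies $\norm{\rho_r \sigma_{r, \mu}(y) - y} \leq r\mu/(r+\mu) < \mu$ for all $y$.

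The main obstacle is repairing the initial condition of the candidate lift. The $G$-homotopy $\tilde{F}^0 := \sigma_{r, \mu} \circ F$ has $\rho_r \tilde{F}^0$ within $\mu$ of $F$, but $\tilde{F}^0(\cdot, 0) = \sigma_{r, \mu}(\rho_r f)$ typically differs from $f$. I would fix this by blending $\tilde{F}^0$ with the $G$-equivariant straight-line homotopy from $f$ to $\tilde{F}^0(\cdot, 0)$, whose endpoints both lie within $O(\mu)$ of the fiber of $\rho_r$ over $F(\cdot, 0)$. Since that fiber is either a single point or the convex ball $\mathrm{B}_r^n$, the distance non-increasing property (2) guarantees that the $\rho_r$-image of each prepended segment stays within $O(\mu)$ of $F(z, 0)$. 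A time-reparametrization via a cutoff $\alpha(z, t)$, taken $G$-invariant in $z$, then yields the desired $G$-$\epsilon$-lift and completes the proof.
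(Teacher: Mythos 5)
Your proofs of parts (1) and (2) are exactly the paper's: the same straight-line homotopy and the same case analysis via the reverse triangle inequality and the Law of Cosines. For part (3) you take a genuinely different route. The paper does not mention cell-like maps at all; instead it uses the straight-line homotopy $H_t$ for $t<1$, notes that each such $H_t$ is a $G$-homeomorphism, verifies that $\id$ and $\rho_r H_T^{-1}$ (and $\id$ and $H_T^{-1}\rho_r$ measured via $\rho_r$) are $G$-$\epsilon$-homotopic whenever $(1-T)r<\epsilon$, and then invokes Prassidis's recognition theorem (\cite[Thm.~3.4]{Prassidis}, $(b)\Rightarrow(c)$: a proper $G$-$\epsilon$-homotopy equivalence for all $\epsilon$ is a $G$-approximate fibration). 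Your plan instead constructs $G$-$\epsilon$-solutions by hand via the explicit $G$-equivariant approximate right inverse $\sigma_{r,\mu}$, using the cell-like picture only as non-equivariant motivation. Your $\sigma_{r,\mu}$ is essentially a regularized version of the paper's $H_T^{-1}$, and your error bound $r\mu/(r+\mu)<\mu$ is correct. Both strategies are legitimate; the paper's is shorter because it offloads the lifting epsilonics to a theorem, while yours is more self-contained.

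There is, however, one step in your blending argument that is wrong as stated. You claim that both endpoints $f(z)$ and $\tilde F^0(z,0)=\sigma_{r,\mu}(F(z,0))$ lie within $O(\mu)$ of the fiber $\rho_r^{-1}(F(z,0))$, and then appeal to convexity of the fiber plus distance non-increasing. But when $0<\norm{F(z,0)}<\mu$, the fiber is the single point $\bigl(1+\tfrac{r}{\norm{F(z,0)}}\bigr)F(z,0)=f(z)$, and the distance from $\sigma_{r,\mu}(F(z,0))=\bigl(1+\tfrac{r}{\mu}\bigr)F(z,0)$ to that point is $r\bigl(1-\norm{F(z,0)}/\mu\bigr)$, which approaches $r$ (not $0$) as $\norm{F(z,0)}\to 0$. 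So the endpoints are only within $O(r)$ of the fiber, and the distance non-increasing argument gives no $\mu$-control. The desired conclusion --- that $\rho_r$ of the prepended segment stays within $\mu$ of $F(z,0)$ --- is still true, but for a different reason: when $F(z,0)\neq 0$ the segment from $f(z)$ to $\sigma_{r,\mu}(F(z,0))$ is radial through $F(z,0)$, and a direct computation shows its $\rho_r$-image consists of points of the form $cF(z,0)$ with $0\leq c\leq 1$, which lie within $\norm{F(z,0)}<\mu$ of $F(z,0)$; when $F(z,0)=0$ the segment lies in $\mathrm{B}^n_r$ by convexity and maps to $0$. You need to replace the ``endpoints near fiber'' step with this radial-segment computation for your proof of (3) to be complete.
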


\begin{proof}
Part (1) is given by the straight-line homotopy
\[
H: \br^n \times I \longrightarrow \br^n;\quad (x,t) \longmapsto
\begin{cases}
(1-\frac{tr}{\norm{x}}) x & \text{if } r < \norm{x}\\
(1-t)x & \text{if } \norm{x} \leq r.
\end{cases}
\]
This is easily checked to be a $G$-$r$-homotopy.

Part (2) follows from the calculation
\begin{eqnarray*}
\norm{\rho_r(x)-\rho_r(y)}^2
&=& \begin{cases}
0 & \text{if } \norm{x},\norm{y} \leq r\\
(\norm{y}-r)^2 & \text{if } \norm{x} \leq r \leq \norm{y}\\
\norm{x-y}^2 - 2r(1-\cos\theta)(\norm{x}+\norm{y}-r) & \text{if } r \leq \norm{x}, \norm{y}
\end{cases}\\
& \leq & \norm{x-y}^2~.
\end{eqnarray*}
The inequality in the second case follows from the Triangle
Inequality. The equality in the third case follows from the Law of
Cosines, where $\cos\theta = \frac{\gens{x}{y}}{\norm{x} \norm{y}}$.

For Part (3), consider the above straight-line homotopy $H = \{H_t:
\br^n \to \br^n\}_{t \in I}$. Observe, for each $t \in [0,1)$, that
$H_t$ is a $G$-homeomorphism. Moreover, for each $t \in [0,1)$,
note, for all $y \in \br^n$, that
\[
\norm{y - \rho_r H_t^{-1}(y)} = \begin{cases}
(1-t)r & \text{if } (1-t)r < \norm{y}\\
\norm{y} & \text{if } \norm{y} \leq (1-t)r
\end{cases}
\]
and hence, for all $x \in \br^n$, that
\[
\norm{\rho_r(x) - \rho_r H_t^{-1} \rho_r(x)} = \begin{cases}
(1-t)r & \text{if } r < \norm{x}\\
0 & \text{if } \norm{x} \leq r.
\end{cases}
\]
Let $\epsilon>0$. Select $T \in [0,1)$ such that $(1-T)r <
\epsilon$. Then, the straight-line homotopy from $\id_{\br^n}$ to
$\rho_r H_T^{-1}$ is a $G$-$\epsilon$-homotopy, and the
straight-line homotopy from $\id_{\br^n}$ to $H_T^{-1} \rho_r$ is a
$G$-$\epsilon$-homotopy, when measured in the target $\br^n$ using
$\rho_r$. Hence $\rho_r: \br^n \to \br^n$ is a
$G$-$\epsilon$-homotopy equivalence. Therefore, by $\text{(b)}
\Longrightarrow \text{(c)}$ in \cite[Theorem 3.4]{Prassidis}, we
conclude that $\rho_r$ is a $G$-approximate fibration.\qed
\end{proof}

\begin{lemma}
\label{lem:crush MAF} Let $a,\epsilon >0$. Suppose, for all $\nu >
0$, that $p\co M\to\br^n$ is a proper $G$-$(\epsilon,\nu)$-fibration
over $(\br^n, \br^n\setminus \mathrm{B}_a^n)$ for the class of
compact, metric $G$-spaces. If
$\rho=\rho_{a+\epsilon}\co\br^n\to\br^n$ is the radial crush with
parameter $a+\epsilon$, then the composite $\rho \circ p: M\to
\br^n$ is a $G$-MAF.
\end{lemma}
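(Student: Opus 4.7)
The plan is to verify that $\rho p$ is a proper $G$-$\delta$-fibration for every $\delta>0$, which is the definition of a $G$-MAF. Properness is immediate: the preimage under $\rho$ of a compact $K\subseteq\br^n$ is closed and contained in $\overline{\mathrm{B}}_{a+\epsilon+\diam(K)}^n$, hence compact, so $\rho$ is proper; since $p$ is also proper by hypothesis, so is the composite $\rho p$.

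For the $\delta$-lifting property, fix $\delta>0$ and choose positive numbers $\eta,\nu$ with $\eta+\nu<\delta$. The hypothesis supplies a choice of $\nu$ for which $p$ is a $G$-$(\epsilon,\nu)$-fibration over $(\br^n,\br^n\setminus\mathrm{B}_a^n)$ on compact metric $G$-spaces. Given such a $G$-homotopy lifting problem $(f\co Z\to M,\ F\co Z\times I\to\br^n)$ with $\rho p f=F(-,0)$, I produce the lift in two stages. First, since $\rho$ is a $G$-MAF (Lemma~\ref{lem:crush_prop}(3)), it is in particular a $G$-$\eta$-fibration; solving the resulting problem $(pf,F)$ through $\rho$ yields a $G$-homotopy $\hat{F}\co Z\times I\to\br^n$ with $\hat{F}(-,0)=pf$ and $\rho\hat{F}$ $\eta$-close to $F$. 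Second, apply the hypothesis to the pair $(f,\hat{F})$ to obtain a $G$-homotopy $\tilde{F}\co Z\times I\to M$ with $\tilde{F}(-,0)=f$ and $p\tilde{F}$ being $(\epsilon,\nu)$-close to $\hat{F}$ with respect to $\br^n\setminus\mathrm{B}_a^n$.

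To estimate $d(\rho p\tilde{F},F)$, use the triangle inequality through the intermediate $\rho\hat{F}$. The term $d(\rho\hat{F},F)$ is bounded by $\eta$. For the other term, I split pointwise on whether $\hat{F}(z,t)\in\mathrm{B}_a^n$. Where $\hat{F}(z,t)\notin\mathrm{B}_a^n$, the refined $\nu$-closeness gives $\|p\tilde{F}(z,t)-\hat{F}(z,t)\|<\nu$, and since $\rho$ is distance non-increasing (Lemma~\ref{lem:crush_prop}(2)) we obtain $\|\rho p\tilde{F}(z,t)-\rho\hat{F}(z,t)\|<\nu$. Where $\hat{F}(z,t)\in\mathrm{B}_a^n$, the point $\hat{F}(z,t)$ has norm $<a<a+\epsilon$, so $\rho\hat{F}(z,t)=0$; the global $\epsilon$-closeness then forces $\|p\tilde{F}(z,t)\|<a+\epsilon$, so $\rho p\tilde{F}(z,t)=0$ as well, and the pointwise contribution vanishes. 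Combining, $d(\rho p\tilde{F},F)<\eta+\nu<\delta$, giving the required $\delta$-solution.

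The essential mechanism is the dichotomy between the two regions: wherever only the global $\epsilon$-control from the $(\epsilon,\nu)$-solution is available, the radial crush $\rho$ eliminates the discrepancy by sending both relevant points to the origin, while wherever we exit the central region, the refined $\nu$-control dominates. There is no hidden obstacle — the main technical leverage comes from Lemma~\ref{lem:crush_prop} (providing both that $\rho$ is a $G$-MAF and that it is distance non-increasing) together with the precise definition of $(\epsilon,\nu)$-solution, which segregates the two scales of control exactly where they are needed.
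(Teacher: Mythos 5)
Your construction of the lift is essentially the paper's: first solve $(pf,F)$ through the $G$-MAF $\rho$ to produce $\hat F$, then solve $(f,\hat F)$ through $p$ using the $(\epsilon,\nu)$-hypothesis to produce $\tilde F$, and estimate $d(\rho p\tilde F, F)$ by the triangle inequality through $\rho\hat F$, splitting into the cases $\hat F(z,t)\in\mathrm{B}_a^n$ (where $\rho$ collapses both $\hat F(z,t)$ and $p\tilde F(z,t)$ to the origin) and $\hat F(z,t)\notin\mathrm{B}_a^n$ (where the refined $\nu$-closeness plus the distance-non-increasing property of $\rho$ carry the estimate). The paper uses $\eta=\nu=\mu/2$ where you use $\eta+\nu<\delta$; this is a cosmetic difference.

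There is one genuine gap. Your opening sentence asserts you will verify ``the definition of a $G$-MAF,'' but the hypothesis only supplies $(\epsilon,\nu)$-solutions for the class of \emph{compact metric} $G$-spaces, so your argument establishes that $\rho p$ is a $G$-$\delta$-fibration only for that restricted class. The definition of $G$-MAF in this paper requires the $G$-$\delta$-lifting property for \emph{all} $G$-spaces $Z$. The paper closes this gap with a final step you omit: by a result of Prassidis (cited as [Prop.~2.18]), a proper $G$-map between the relevant spaces that is a $G$-approximate fibration for compact metric $G$-spaces is automatically a $G$-approximate fibration for all $G$-spaces. Without that upgrade (or some equivariant analogue of the Coram--Duvall uniformization argument), the conclusion ``$\rho p$ is a $G$-MAF'' is not yet reached.
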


\begin{proof}
Let $Z$ be a compact, metric $G$-space. Let $\mu > 0$. Suppose the
following diagram is a $G$-homotopy lifting problem:
$$
\begin{diagram}
\node{Z} \arrow[2]{s,l}{\times 0} \arrow{e,t}{f} \node{M} \arrow{s,r}{p} \\
\node[2]{\br^n} \arrow{s,r}{\rho} \\
\node{Z\times I}\arrow{nne,t,..}{\tilde F} \arrow{e,t}{F} \node{\br^n}
\end{diagram}
$$
Then the following is also a $G$-homotopy lifting problem:
$$
\begin{diagram}
\node{Z} \arrow{s,l}{\times 0} \arrow{e,t}{pf} \node{\br^n} \arrow{s,r}{\rho} \\
\node{Z\times I}\arrow{ne,t,..}{\hat{F}} \arrow{e,t}{F} \node{\br^n}
\end{diagram}
$$
By Lemma~\ref{lem:crush_prop}(3) this latter problem has a
$G$-$\mu/2$-solution $\hat{F}\co Z\times I\to\br^n$. It follows that
$$
\begin{diagram}
\node{Z} \arrow{s,l}{\times 0} \arrow{e,t}{f} \node{M} \arrow{s,r}{p} \\
\node{Z\times I}\arrow{ne,t,..}{\tilde F} \arrow{e,t}{\hat{F}} \node{\br^n}
\end{diagram}
$$
is also a $G$-homotopy lifting problem, which by hypothesis has a
$G$-$(\epsilon, \mu/2)$-solution $\tilde{F}\co Z\times I\to M$ over
$(\br^n, \br^n\setminus \mathrm{B}_a^n)$. We show that $\rho p
\tilde{F}$ is $\mu$-close to $F$, as follows.

Let $(z,t)\in Z\times I$. There are two cases to consider. First
suppose  $\hat{F}(z,t)\in\br^n\setminus\mathrm{B}_a^n$. Then
$p\tilde{F}(z,t)$ is $\mu/2$-close to $\hat{F}(z,t)$ because
$\tilde{F}$ is a $(\epsilon, \mu/2)$-solution. Therefore, $\rho
p\tilde{F}(z,t)$ is $\mu/2$-close to $\rho\hat{F}(z,t)$ because
$\rho$ is distance non-increasing by Lemma~\ref{lem:crush_prop}(2).
Also $\rho\hat{F}(z,t)$ is $\mu/2$-close to ${F}(z,t)$ because
$\hat{F}$ is a $\mu/2$-solution. Thus, $\rho p\tilde{F}(z,t)$ is
${\mu}$-close to ${F}(z,t)$. Second suppose
$\hat{F}(z,t)\in\mathrm{B}_a^n$. In particular, $\rho \hat{F}(z,t) =
0$. Then $p\tilde{F}(z,t)$ is $\epsilon$-close to $\hat{F}(z,t)$
because $\tilde{F}$ is an $\epsilon$-solution. Therefore,
$p\tilde{F}(z,t)\in\mathrm{B}_{a+\epsilon}^n$ and so $\rho
p\tilde{F}(z,t)=0$. Also $F(z,t)$ is  $\mu/2$-close to
$\rho\hat{F}(z,t)=0$ because $\hat{F}$ is a $\mu/2$-solution. Thus,
$F(z,t)$ is $\mu/2$-close to $\rho p\tilde{F}(z,t)$. Hence $\rho
p\tilde{F}$ is $\mu$-close to $F$. In any case, we have shown 
$\rho p \tilde{F}$ is $\mu$-close to $F$.

Thus $\rho p: M \to \br^n$ is an approximate $G$-fibration for the
class of compact, metric $G$-spaces. Therefore, by a result of
S.~Prassidis \cite[Prop.~2.18]{Prassidis}, we conclude that $\rho p$
is an approximate $G$-fibration for the class of all $G$-spaces.\qed
\end{proof}

\begin{proof}[Proof of Theorem \ref{thm:equivariant_epsilon_delta_sucking}]
Given $\epsilon > 0$, let $\delta$ be given by
Proposition~\ref{prop:equivariant epsilon,nu} for $\epsilon/3$. It
follows that $p$ is $G$-$\epsilon/3$-homotopic to a map $p'\co
M\to\br^n$ that is, for all $\nu
>0$, a proper $G$-$(\epsilon/3,\nu)$-fibration
over $(\br^n, \br^n\setminus\mathrm{B}_{\epsilon/3}^n)$ for the
class of compact, metric $G$-spaces. Let
$\rho=\rho_{2\epsilon/3}\co\br^n\to\br^n$, the radial crush with
parameter $2\epsilon/3$. Lemma~\ref{lem:crush MAF} implies that
$\rho p'\co M\to\br^n$ is a $G$-MAF. Lemma~\ref{lem:crush_prop}(1)
implies that there is a $G$-$2\epsilon/3$-homotopy from
$\id_{\br^n}$ to $\rho$. Therefore, $p$ is $G$-$\epsilon$-homotopic
to $\rho p'$, a $G$-MAF.\qed
\end{proof}

\section{Bounded fibrations from discontinuous actions}\label{sec:bounded_fibrations}

First, we show certain $\Gamma$-spaces $W$ admit an equivariant map
to the real line.

\begin{proposition}\label{prop:existGmap}
Let $W$ be a connected topological manifold equipped with a free,
discontinuous $C_\infty$-action (resp. with a free, discontinuous
$D_\infty$-action). Then there exists a $C_\infty$-map (resp.
$D_\infty$-map) $\ol{f}: W \to \br$. Hence $\overline{f}$ is the
infinite cyclic cover of a map (resp. $C_2$-map) $f: M \to S^1$ of
topological manifolds.
\end{proposition}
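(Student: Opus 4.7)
The plan is to handle the $C_\infty$ and $D_\infty$ cases uniformly. Let $\Gamma$ denote either group, and endow $\br$ with the standard isometric $\Gamma$-action recalled in the introduction. It suffices to produce a $\Gamma$-equivariant map $\overline{f}\co W\to\br$: passing to the quotient by $C_\infty\subseteq\Gamma$ then yields a map, respectively a $C_2$-map, $f\co M=W/C_\infty\to\br/C_\infty=S^1$ whose infinite cyclic cover is (canonically isomorphic to) $\overline{f}$.

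To construct $\overline{f}$, I would form the associated fiber bundle
\[
E \;:=\; W\times_\Gamma\br \;\longrightarrow\; B \;:=\; W/\Gamma.
\]
Since $\Gamma$ acts freely and discontinuously, $W\to B$ is a regular $\Gamma$-covering and $B$ is a (metrizable) topological manifold, so local trivializations of $W\to B$ induce local trivializations of $E\to B$ with fiber $\br$. The universal property of the mixing construction gives a bijection between continuous $\Gamma$-equivariant maps $W\to\br$ and continuous sections of $E\to B$, so the problem reduces to producing one such section.

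The key observation is that the $\Gamma$-action on $\br$ is by affine maps (indeed, by isometries), so the transition cocycle of $E\to B$ takes values in the affine group of $\br$ and each fiber of $E$ inherits a well-defined affine structure. Since $B$ is paracompact Hausdorff, one can choose an open cover $\{U_i\}$ trivializing $E\to B$, a local section $s_i\co U_i\to E$ in each trivialization, and a subordinate partition of unity $\{\phi_i\}$. The pointwise affine combination
\[
s(b) \;:=\; \sum_i \phi_i(b)\, s_i(b)
\]
in the fiber over $b$ is then a well-defined continuous global section.

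The expected main obstacle is the production of this global section. A standard cell-by-cell argument would require $B$ to have the homotopy type of a CW complex, which is not guaranteed for an arbitrary topological manifold, so the affine-structure trick above is essential in bypassing any such hypothesis. Once $\overline{f}$ is in hand, $C_\infty$-equivariance descends immediately to $f\co M\to S^1$, and in the $D_\infty$ case the relation $\overline{f}(Rw)=-\overline{f}(w)$ descends to the statement that $f$ is equivariant for the $C_2$-action on $S^1=\br/\bz$ given by complex conjugation, completing the proof.
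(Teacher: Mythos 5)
Your proof is correct, but it takes a genuinely different route from the paper's. You reduce, as the paper does, to producing a $\Gamma$-map $\overline{f}\colon W\to\br$, but you then construct it by viewing $\Gamma$-maps $W\to\br$ as sections of the associated bundle $W\times_\Gamma\br\to W/\Gamma$ and observing that, because $\Gamma$ acts on $\br$ by affine isometries, each fiber carries a canonical affine-line structure, so local sections can be averaged with a subordinate partition of unity (the affine combinations are well defined independently of trivialization because affine maps commute with convex combinations, and local finiteness plus $\operatorname{supp}\phi_i\subseteq U_i$ make the sum continuous). This is an elementary, self-contained argument that uses only paracompactness of the quotient manifold. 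The paper instead invokes heavier machinery: it appeals to Kirby--Siebenmann to replace $N=W/\Gamma$ by a homotopy-equivalent locally finite simplicial complex $N'$, pulls back the covering to get a free $\Gamma$-CW-complex $W'$ that is $\Gamma$-homotopy equivalent to $W$, applies tom Dieck's theorem to obtain a $\Gamma$-map $W'\to E_{\mathrm{fin}}\Gamma=\br$, and composes with a $\Gamma$-homotopy inverse $W\to W'$. Your approach is more direct and avoids classifying spaces and the CW-replacement step entirely, at the cost of using the specific affine geometry of the target $\br$; the paper's approach is more formal and would generalize to other models of $E_{\mathrm{fin}}\Gamma$, but here $\br$ is what is needed, so nothing is lost. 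Both proofs descend to $f\colon M\to S^1$ in the same way.
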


\begin{proof}
Consider the infinite group $\Gamma := C_\infty$ (resp. $\Gamma :=
D_\infty$). Since the $\Gamma$-action on $W$ is free and
discontinuous, the quotient map $q: W \to N := W/\Gamma$ is a
regular covering map. Since $N$ is a connected topological manifold,
by \cite[Essay~III, Theorem~4.1.3]{KirbySiebenmann}, there exists a
(locally finite) simplicial complex $N'$ and a homotopy equivalence
$h: N' \to N$. Then the pullback $W' := h^*(W)$ has a canonical map
$q': W' \to N'$. Since $q$ is a covering map, we have that $q'$ is a
covering map. Then $W'$ has an induced simplicial structure and
free, simplicial $\Gamma$-action. By replacing the simplicial
structure on $W$ with the first barycentric subdivision, we obtain
that $W'$ is a free $\Gamma$-CW-complex. Furthermore, by covering
space theory, the canonical map $g: W' \to W$ (satisfying $q \circ g
= h \circ q'$) is a $\Gamma$-homotopy equivalence. Then there exists
a $\Gamma$-map $g': W \to W'$ that is a $\Gamma$-homotopy inverse to
$g$. Since $W'$ is a $\Gamma$-CW-complex with isotropy in the family
$fin$ of finite subgroups of $\Gamma$, by
\cite[Theorem~I.6.6]{tomDieck}, there exists a $\Gamma$-map $c: W'
\to E_{fin}\Gamma = \br$. Now define the desired $\Gamma$-map by
$\overline{f} := c \circ g': W \longrightarrow \br$.\qed
\end{proof}

The main theorem herein shows that $C_2$-bands unwrap to
$C_2$-bounded fibrations.

\begin{definition}
A \textbf{manifold band} $(M,f)$ consists of a closed, connected
topological manifold $M$ and a continuous map $f: M \to S^1$ such
that the induced infinite cyclic cover $\ol{M} := f^*(\br)$ is
connected and finitely dominated (cf.~\cite[Defn.
15.3]{HughesRanicki}). Let the circle $S^1$ have the $C_2$-action
generated by complex conjugation $R: z \mapsto \ol{z}$. A
\textbf{$C_2$-manifold band} $(M,f,R)$ consists of a manifold band
$(M,f)$ and a $C_2$-action on $M$, generated by a homeomorphism $R:
M \to M$, such that the continuous map $f: M \to S^1$ is
$C_2$-equivariant.
\end{definition}

We do not assume that the $C_2$-action on $M$ is free.

\begin{definition}
Let $(M,f,R)$ be a $C_2$-manifold band. The \textbf{infinite cyclic
cover} $(\ol{M},\ol{f},R)$ is defined by the pullback diagram
\[\begin{CD}
\ol{M} @>{\ol{f}}>>  & \br\\
@V{p}VV & @V{\exp}VV\\
M @>{f}>> & S^1
\end{CD}
\]
and the diagonal $C_2$-action
\[
R: \ol{M} \longrightarrow \ol{M}; \quad (x,t) \longmapsto (Rx,-t).
\]
The covering translation is defined by
\[
T: \ol{M} \longrightarrow \ol{M}; \quad (x,t) \longmapsto (x,t+1)
\]
and satisfies the dihedral relation $RT = T^{-1} R$. Note that all
the maps in the diagram are proper and $C_2$-equivariant, and
moreover that $\ol{f}$ is $D_\infty$-equivariant. Observe that
$(M,f)$ is a connected manifold band if and only if $\ol{M}$ is a
connected, finitely dominated manifold.
\end{definition}

\begin{example}
Let $n > 0$. An example of a free $C_2$-manifold band is
\[
(S^n\times S^1, \proj_{S^1}, (x,z) \longmapsto (-x, \ol{z})) ~.
\]
The following infinite cyclic cover possesses a free
$D_\infty$-action:
\[
(S^n \times \br, \proj_{\br}, (x,t) \longmapsto (-x,-t) ) ~.
\]
\end{example}

The following statements are equivariant versions of \cite[Props.
17.13, 17.14]{HughesRanicki}.

\begin{lemma}\label{Lem_SlidingDomination}
Let $(M,f,R)$ be a $C_2$-manifold band.  Then there exists a
$C_2$-homotopy
\[
\{K_s: \ol{M} \times \br \to \ol{M} \times \br\}_{s \in I},
\]
called a \textbf{$C_2$-sliding domination}, and a constant $N > 1$
such that:
\begin{enumerate}
\item
$K_0 = \id_{\ol{M} \times \br}$,

\item
$\proj_{\br} \circ K_s = \proj_{\br}$ for all $s \in I$,

\item
$K_s| \Graph(\ol{f}: \ol{M} \to \br)$ is the inclusion for all
$s \in I$, and

\item
$K_1(\ol{M} \times \br) \subseteq \{(x,t) ~|~ | \ol{f}(x) -t |
\leq N\}$.

\end{enumerate}
\end{lemma}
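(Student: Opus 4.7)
The plan is to adapt the non-equivariant sliding domination of \cite[Prop.~17.13]{HughesRanicki} by carrying out the construction in a $C_2$-equivariant manner. The non-equivariant strategy exploits finite dominatedness of $\ol{M}$ and the cocompact $C_\infty$-action to produce a fiberwise homotopy over $\br$ that, at time $s=1$, maps each vertical slice into the preimage under $\ol{f}$ of a bounded neighborhood of $t$. The essential ingredient is the covering translation $T$ (which shifts $\ol{f}$-values by $+1$) together with a partition of unity on $\br$ to interpolate continuously among translates of a fixed compact dominating subset.

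My plan for the $C_2$-version proceeds in three steps. First, I would choose an $R$-invariant compact subset $C\subseteq\ol{M}$ such as $C:=\ol{f}^{-1}([-N/2,N/2])$ for some $N\geq 1$; this set is $R$-invariant because $\ol{f}(Rx)=-\ol{f}(x)$, and compact because $M$ is closed. Second, I would choose a $C_2$-equivariant partition of unity $\{\phi_n\}_{n\in\bz}$ on $\br$ subordinate to $\{(n-1,n+1)\}$ satisfying $\phi_{-n}(-t)=\phi_n(t)$, obtained by symmetrizing any partition subordinate to this cover. Third, using the dihedral relation $RT^n=T^{-n}R$, I would verify that the interpolation of the translated dominations $d_n:=T^n\circ d$ assembles into a $C_2$-equivariant homotopy whose image at $s=1$ lies in the bounded set $\{(y,t):|\ol{f}(y)-t|\leq N\}$. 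Conditions (1) and (2) are automatic from a construction that starts at the identity and preserves the $\br$-coordinate; condition (3) is enforced by reparameterizing the homotopy via a cutoff function of $|\ol{f}(x)-t|$ that vanishes on the graph of $\ol{f}$.

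The main obstacle will be arranging, in an equivariant fashion, a continuous ``sliding'' that takes an arbitrary $x\in\ol{M}$ to $T^{n(x,t)}(x)$ for a suitable integer $n(x,t)\approx t-\ol{f}(x)$. The issue is that the integer-valued $n(x,t)$ is inherently discontinuous, so the homotopy must bridge adjacent integer shifts by a continuous family of partial shifts. In the non-equivariant setting this is handled by a finite domination that supplies a specific homotopy between $\id_{\ol{M}}$ and a map into a single fundamental-domain region, which when translated by $T^n$ produces canonical paths from $x$ to $T^n$-translates. In our setting, this domination must be chosen $C_2$-equivariantly, which is possible by invoking Illman's equivariant triangulation theorem (cited elsewhere, e.g., in Proposition~\ref{prop:triangulation}) applied to the compact $C_2$-manifold $M$: this yields a $C_2$-CW structure on $M$ that pulls back to a $D_\infty$-CW structure on $\ol{M}$, and from this one extracts a compatible equivariant domination with which to build $K_s$ satisfying all four conditions simultaneously.
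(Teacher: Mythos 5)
Your proposal attacks the problem head-on by trying to redo the entire Hughes--Ranicki construction equivariantly, whereas the paper's proof is a one-step \emph{folding trick} that avoids this entirely. The paper simply takes the non-equivariant sliding domination $K'$ guaranteed by \cite[Prop.~17.13]{HughesRanicki} and defines
\[
K_s(x,t) := \begin{cases} K'_s(x,t) & \text{if } \ol{f}(x) \leq t, \\ R\, K'_s\, R(x,t) & \text{if } \ol{f}(x) \geq t. \end{cases}
\]
Continuity at the seam $\ol{f}(x)=t$ (which is exactly $\Graph(\ol{f})$) is free, because property (3) of $K'$ says $K'_s$ is the inclusion there, so both branches agree. $C_2$-equivariance is then automatic because $R$ exchanges the two half-spaces $\{\ol{f}(x)\leq t\}$ and $\{\ol{f}(x)\geq t\}$, and properties (1), (2), (4) are inherited from $K'$ together with $R$-equivariance of $\ol{f}$ and $\proj_\br$. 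Nothing about the \emph{construction} of $K'$ needs to be revisited.

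Your route, in contrast, requires choosing $R$-invariant compacta, a $C_2$-equivariant partition of unity, and --- the genuinely hard part, as you correctly identify --- a $C_2$-equivariant version of the ``continuous family of partial shifts'' that interpolates between integer powers of $T$. The gap is that you assert this last step follows from Illman's equivariant triangulation theorem, but that theorem produces a $C_2$-triangulation of $M$ (hence a $D_\infty$-CW structure on $\ol{M}$), not an equivariant finite domination in the form actually needed: a homotopy, compatible with both the $C_\infty$-translates and the $C_2$-action, from $\id_{\ol{M}}$ to a map with controlled image over each slice. Passing from a $D_\infty$-CW structure to such a homotopy is exactly the content of the non-equivariant Hughes--Ranicki argument, and you would have to re-execute each of its steps under the additional constraint of $C_2$-equivariance, checking at every stage that the choices (cutoff functions, reparameterizations, track estimates) can be symmetrized without breaking conditions (3) and (4). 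This is likely doable, but it is far heavier than the paper's observation that one can symmetrize the \emph{output} $K'$ directly by reflecting across the graph of $\ol{f}$, precisely because property (3) makes the graph a fixed seam. I'd encourage you to look for that kind of ``fold after the fact'' maneuver whenever the non-equivariant result comes with a built-in fixed set.
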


We perform a folding trick to construct $K$ indirectly.

\begin{proof}
By \cite[Proposition 17.13]{HughesRanicki}, there exists a homotopy
\[
\{K'_s: \ol{M} \times \br \to \ol{M} \times \br\}_{s \in I},
\]
called a \textbf{sliding domination}, and a constant $N>1$
satisfying Properties (1)--(4).  Then, by Property (3) for $K'$,
there is a well-defined, continuous $C_2$-map
\[
K: I \times \ol{M} \times \br \longrightarrow \ol{M} \times \br; \quad (s,x,t) \longmapsto
\begin{cases}K'_s(x,t) & \text{if } \ol{f}(x) \leq t \\ R K'_s(x,t) & \text{if } \ol{f}(x) \geq t. \end{cases}
\]
One checks easily that $K$ and $N$ satisfy Properties~(1)--(4).\qed
\end{proof}

\begin{theorem}\label{thm:unwrapping}
Let $(M,f,R)$ be a $C_2$-manifold band.  Then the proper map
$\ol{f}: \ol{M} \to \br$ is a $C_2$-bounded fibration.
\end{theorem}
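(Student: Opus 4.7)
The plan is to apply Lemma~\ref{Lem_SlidingDomination} directly to build bounded $C_2$-homotopy lifts. Let $\{K_s\}_{s \in I}$ and $N > 1$ be supplied by that lemma, and consider any $C_2$-homotopy lifting problem for $\ol f$: a $C_2$-space $Z$, a $C_2$-map $h\co Z \to \ol M$, and a $C_2$-homotopy $F\co Z \times I \to \br$ with $\ol f \circ h = F(-,0)$. I will produce an $N$-bounded $C_2$-lift.

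First, I observe that the pairing $(h, F)\co Z \times I \to \ol M \times \br$ is $C_2$-equivariant: for each $(z,t) \in Z \times I$,
\[
(h(Rz), F(Rz, t)) = (Rh(z), -F(z,t)) = R \cdot (h(z), F(z,t)),
\]
using equivariance of $h$ and $F$ together with the convention $R(t) = -t$ on $\br$, which matches the diagonal $C_2$-action on $\ol M \times \br$. Then define
\[
\tilde F \co Z \times I \longrightarrow \ol M ;\quad (z,t) \longmapsto \proj_{\ol M}\, K_1(h(z), F(z,t)).
\]
Since $K_1$ and $\proj_{\ol M}$ are $C_2$-equivariant, so is $\tilde F$.

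To conclude, I verify the lifting conditions. At $t=0$, the point $(h(z), F(z,0)) = (h(z), \ol f(h(z)))$ lies in $\Graph(\ol f)$, so Property~(3) of Lemma~\ref{Lem_SlidingDomination} forces $K_1(h(z), F(z,0)) = (h(z), F(z,0))$, whence $\tilde F(-,0) = h$. For the bound, Property~(2) gives $K_1(h(z), F(z,t)) = (\tilde F(z,t), F(z,t))$, and then Property~(4) yields $|\ol f(\tilde F(z,t)) - F(z,t)| \leq N$. Thus $\ol f \circ \tilde F$ is $N$-close to $F$, so $\ol f$ is a $C_2$-$N$-fibration, and in particular a $C_2$-bounded fibration. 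The real content of the theorem is already packaged into the equivariant sliding domination; the only delicate point is confirming that the sign convention $R(t)=-t$ on the base $\br$ of the infinite cyclic cover is precisely what makes $(h,F)$ diagonally $C_2$-equivariant, and this is immediate from the definition of the cover.
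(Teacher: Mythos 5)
Your proof is correct and takes essentially the same route as the paper: both apply the $C_2$-sliding domination of Lemma~\ref{Lem_SlidingDomination} to the pair $(h,F)$ and project onto $\ol{M}$, using Property~(2) to identify the $\br$-coordinate and Property~(4) for the bound; your added checks of equivariance and of the initial condition via Property~(3) are implicit in the paper. The only nit is that Property~(4) gives $\leq N$ while the paper's notion of $\epsilon$-close is strict, so strictly speaking $\ol{f}$ is a $C_2$-$(N+\epsilon)$-fibration for every $\epsilon>0$ rather than a $C_2$-$N$-fibration, but this does not affect the conclusion that it is a $C_2$-bounded fibration.
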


\begin{proof}
We show that $\ol{f}$ is an $(C_2,N+\epsilon)$-fibration for any
$\epsilon > 0$.  Consider the $C_2$-sliding domination $K$ with
constant $N>1$ of Lemma \ref{Lem_SlidingDomination}. Let $g: Z \to
\ol{M}$ be a $C_2$-map and $G: Z \times I \to \br$ be a
$C_2$-homotopy such that $G(z,0) = \ol{f} g(z)$.  First, define a
$C_2$-homotopy
\[
\hat{G}: Z \times I \longrightarrow \ol{M} \times \br; \quad (z,t) \longmapsto
(g(z), G(z,t)).
\]
Next, define a $C_2$-homotopy
\[
\tilde{G}:  Z \times I \longrightarrow \ol{M}; \quad \tilde{G} :=
\proj_{\ol{M}} \circ K_1 \circ \hat{G}.
\]
Note Lemma \ref{Lem_SlidingDomination}(2) implies $K_1\hat{G} =
(\tilde{G}, G)$. Then Lemma \ref{Lem_SlidingDomination}(4) implies
$d(\ol{f} \tilde{G}, G) \leq N$. Therefore $\ol{f}: \ol{M} \to \br$
is a $C_2$-bounded fibration.\qed
\end{proof}

\section{Dihedral wrapping up over the real line}\label{sec:wrappingup}

The main theorem herein constructs a dihedral covering translation.
The non-$C_2$-version is given in \cite[Theorem
17.1]{HughesRanicki}, and we modify some of its techniques.

\begin{theorem}\label{thm:WrappingUp}
Let $W$ is a topological manifold of dimension $>4$ and equipped
with a free $C_2$-action generated by $R\co W\to W$. Suppose $p:
W\to\br$ is a $C_2$-MAF.
\begin{enumerate}
\item
There exists a cocompact, free, discontinuous $D_\infty$-action
on $W$ extending the $C_2$-action.

\item
The $C_2$-MAF $p: W\to\br$ is properly $C_2$-homotopic to a
$D_\infty$-MAF $\tilde{p}: W \to \br$.
\end{enumerate}
\end{theorem}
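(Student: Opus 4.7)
The plan is to construct a homeomorphism $T\co W\to W$ that approximately covers the unit translation $g_1\co x \mapsto x+1$ on $\br$ and satisfies the dihedral relation $RTR^{-1} = T^{-1}$. Together with the given involution $R$, such a $T$ generates the desired free, cocompact, discontinuous $D_\infty$-action on $W$, and then a small correction converts $p$ into a genuinely $D_\infty$-equivariant MAF $\tilde{p}$. The template is Chapman's non-equivariant wrapping-up \cite[Thm.~17.1]{HughesRanicki}, with the $C_2$-symmetry threaded through each step.

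First I would apply the Approximate Isotopy Covering Corollary~\ref{cor:AIC} to the translation isotopy $g_s \co x \mapsto x + s$ on $\br$ to produce an isotopy $G_s \co W \to W$ with $G_0 = \id_W$ and $pG_s$ arbitrarily close to $g_s p$. Setting $T_0 := G_1$ yields a homeomorphism with $pT_0$ close to $g_1 p$, but $T_0$ need not interact correctly with $R$. The key observation for the next step is that $g_s$ is $R$-anti-equivariant: $R g_s R = g_{-s}$. Hence $T_1 := R T_0^{-1} R$ is a second homeomorphism with $pT_1$ equally close to $g_1 p$. I would then either (a) prove a $C_2$-equivariant variant of Corollary~\ref{cor:AIC} that covers the anti-equivariant isotopy $g$ by an isotopy $G$ satisfying $G_s R = R G_{-s}$, in which case $T := G_1$ automatically yields the dihedral relation, or (b) run an involution fixed-point argument: use the Approximation Theorem~\ref{thm:Approximation} to connect $T_0$ and $T_1$ by a small isotopy in the space of approximate lifts, and locate a fixed point $T$ of $\Phi \co S \mapsto RS^{-1}R$ nearby. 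I would push approach (a) since it fits the paper's equivariant sucking machinery from Section~\ref{section:sucking}, specialized to $G = C_2 \leq O(1)$.

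With $T$ in hand, the $D_\infty$-action $\langle R,T\rangle$ on $W$ is cocompact because $W/\langle T\rangle$ is the compact classical wrap-up and $W/D_\infty$ is its further $C_2$-quotient; it is discontinuous by properness of $p$ combined with discreteness of $D_\infty$ as isometries of $\br$; and it is free because $\langle R\rangle$ is free by hypothesis, while the other involutions $T^k R$ can have true fixed points only near $p^{-1}\{k/2\}$, where the freeness of the nearby $C_2$-action precludes them provided the approximation parameter is small. To promote $p$ to a genuine $D_\infty$-MAF $\tilde{p}$, set $\phi := pT - g_1 p$, a bounded $C_2$-anti-equivariant function $W\to\br$, and solve the cohomological equation $\psi(Tw) - \psi(w) = -\phi(w)$ with $\psi(Rw) = -\psi(w)$ by a standard averaging argument over the compact fundamental domain $[0,\tfrac{1}{2}]$ of $D_\infty$ acting on $\br$. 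Then $\tilde{p} := p + \psi$ is $D_\infty$-equivariant and properly $C_2$-homotopic to $p$ via the straight-line homotopy, and remains a MAF because it stays uniformly close to $p$.

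The main obstacle is Step 2: producing $T$ with the exact dihedral relation. It requires either an equivariant version of Hughes's Approximation Theorem for anti-equivariant isotopies in dimension $m>4$, or a carefully controlled involution fixed-point construction — both resting on Theorem~\ref{thm:Approximation} and both demanding careful bookkeeping of approximation constants so that $C_2$-compatibility does not erode the closeness estimates needed downstream. Once $T$ is built, the remaining steps are fairly routine adaptations of the classical wrap-up.
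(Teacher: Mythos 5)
Your outline correctly identifies the two central problems — producing a covering translation satisfying the exact dihedral relation, and promoting $p$ to a genuinely $D_\infty$-equivariant MAF — but leaves the first unresolved and proposes a second step that does not work.

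For Part (1), you flag the construction of $T$ with $RTR = T^{-1}$ as "the main obstacle" and offer two speculative routes: (a) an anti-equivariant version of Approximate Isotopy Covering, or (b) an involution fixed-point argument in the space of approximate lifts. Neither is developed, and neither is available off the shelf: the paper has no equivariant version of Theorem~\ref{thm:Approximation}, and the equivariant sucking of Section~\ref{section:sucking} produces $G$-MAFs, not $G$-covers of isotopies of the base, so (a) would require genuinely new theory. The paper's actual construction is elementary by comparison: it decomposes $W$ along $p^{-1}\{\pm\mu\}$ into $C_-\cup B\cup C_+$, applies ordinary (non-equivariant) AIC to get $G_s$, then builds $J_1$ by a cut-fill-paste procedure — define $H_s := RG_s^{-1}R|_K \cup G_s|_{C_+}$, extend across the middle piece $B$ by the Edwards--Kirby Isotopy Extension Theorem \cite[Cor.~1.2]{EdwardsKirby} to obtain $J_s^+$, set $J_s^- := R(J_s^+)^{-1}R|_{C_-}$, and paste. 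The dihedral relation is then forced by construction rather than sought by deformation or fixed-point arguments. Your freeness argument for the involutions $T^kR$ ("can have true fixed points only near $p^{-1}\{k/2\}$, where the freeness of the nearby $C_2$-action precludes them") is also not a proof; proximity to a free involution does not preclude fixed points of a different involution.

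For Part (2), the cohomological-equation approach is genuinely broken. Setting $\phi := pT - g_1 p$ and seeking bounded $\psi$ with $\psi(Tw) - \psi(w) = -\phi(w)$ runs into the usual obstruction: the Birkhoff sums $\sum_{k=0}^{n-1}\phi(T^kw) = p(T^nw) - p(w) - n$ are only bounded by $n\epsilon$, so a bounded continuous $\psi$ need not exist, and without boundedness $\tilde{p}=p+\psi$ is not close to $p$. Moreover, even if $\tilde{p}$ were close to $p$, "stays uniformly close to a MAF" only yields a bounded fibration, not an approximate fibration; upgrading requires sucking, which is precisely what the paper invokes. The paper's route is different in kind: (i) Urysohn-interpolate between $p$ and $g_1 p J_1^{-1}$ to get a $2\mu$-fibration $p'$, (ii) apply relative sucking (Proposition~\ref{prop:rel_sucking}) to get an approximate fibration $p''$, (iii) post-compose with a cell-like crush $\kappa$ to force $C_2$-equivariance about $\tfrac12$, (iv) copy-paste across the fundamental domain to force $C_\infty$-equivariance, and (v) invoke Lemma~\ref{lem:SharpenMAF} to upgrade the resulting $D_\infty$-equivariant approximate fibration to a $D_\infty$-MAF. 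That last step is itself nontrivial and has no analogue in your proposal.
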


We shall construct a covering translation $J_1: W\to W$ satisfying
the dihedral relation $RJ_1 = J_1^{-1}R$.  The basic technique is to
cut, fill in, and paste embeddings.

\begin{proof}[Proof of Theorem \ref{thm:WrappingUp}(1)]
First, we decompose $W$ and obtain a certain isotopy $G$. Consider a
decomposition $W = C_- \cup_{\partial_-B} B \cup_{\partial_+B} C_+$
into closed subsets:
\begin{gather*}
B := p^{-1}[-\mu,\mu], \quad \partial_- B := p^{-1}\{-\mu\}, \quad \partial_+ B := p^{-1}\{\mu\},\\
C_- := p^{-1}(-\infty,-\mu],\quad C_+ := p^{-1}[\mu,\infty).
\end{gather*}
Also consider the auxiliary compact subsets
\[
K := p^{-1}[-1,-\mu], \quad \partial^- K := p^{-1}\{-1\}.
\]
Define an isotopy $g: I \times \br \to \br$ from the identity:
\[
\left\{ g_s: \br \longrightarrow \br ;\quad
t \longmapsto s+t \right\}_{s \in I}.
\]
Select $0 < \mu < \frac{3}{7}$. Then, by Corollary~\ref{cor:AIC},
there exists an isotopy $G: I \times W \to W$ of homeomorphisms such
that $G_0=\id_W$ and $p G_s$ is $\mu/3$-close to $g_s p$ for all $s
\in I$.

Second, we construct the homeomorphism $J_1: W \to W$ in four steps,
as follows.  In fact, we shall construct an isotopy $J: I \times W
\to W$ of homeomorphisms from $J_0=\id_W$ to the desired $J_1$ such
that $pJ_s$ is $\mu$-close to $g_s p$.

\begin{enumerate}
\item Define an isotopy $H$ of embeddings from the inclusion:
\[
\left\{ H_s := RG_s^{-1}R|K \cup G_s|C_+: K \cup C_+ \longrightarrow W \right\}_{s \in I}.
\]
Indeed each $H_s$ is injective, since $RG_s^{-1}R(K) \cap
G_s(C_+) = \varnothing$.

\item
Since $K \cup \partial_+ B$ is compact and $H|K \cup \partial_+
B$ is the restriction of a proper isotopy of a neighborhood of
$K \cup
\partial_+ B$ in $W$, by the Isotopy Extension Theorem of Edwards--Kirby
\cite[Cor.~1.2]{EdwardsKirby}, we obtain that $H|K \cup
\partial_+ B$ extends to an isotopy $\bar{H}$ from the identity:
\[
\left\{ \bar{H}_s: W \longrightarrow W \right\}_{s \in I}.
\]
Then $H| K \cup C_+$ extends to an isotopy of embeddings from
the inclusion:
\[
\left\{ J^+_s := H_s|K \cup \bar{H}_s| B \cup H_s|C_+ : K \cup B \cup C_+ \longrightarrow W \right\}_{s \in I}.
\]
Since each $\bar{H}_s(B)$ is the unique compact subset of $W$
with frontier $H_s(\partial_- B \cup \partial_+ B)$, we obtain
$\bar{H}_s(\intr\,B) \cap H_s(K \cup C_+) = \varnothing$. So
$J^+_s$ is indeed injective.

\item
Since $H_s(\partial_-K) \subset B$, we obtain $C_+ \subset
J^+_s(K \cup B \cup C_+)$. Then we can define an isotopy $J^-$
of embeddings from the inclusion:
\[
\left\{ J^-_s := R (J^+_s)^{-1}R|C_-: C_- \longrightarrow W \right\}_{s \in I}.
\]

\item
Note $J^-_s|K = R G_s^{-1}R|K = J^+_s|K$. Therefore, we can
define an isotopy $J$ of homeomorphisms from $\id_W$ as a union:
\[
\left\{ J_s := J^-_s|C_- \cup J^+_s|(B \cup C_+): W \longrightarrow W \right\}_{s \in I}.
\]
\end{enumerate}

Third, we verify that $pJ_s$ is $\mu$-close to $g_s p$ for all $s
\in I$. Note, for all $x \in C_-$, writing $y := (J^+_s)^{-1}(Rx)$,
that $|p J^-_s(x)-g_sp(x)| = |pR(J^+_s)^{-1}R(x) - R g_s^{-1} R
p(x)| = |g_s p(y) - p J^+_s(y)|$. Hence it suffices to verify that
$pJ^+_s$ is $\mu$-close to $g_s p$. If $x \in C_+$ then $|pJ^+_s(x)
- g_sp(x)| = |pG_s(x)-g_sp(x)| < \mu/3$. If $x \in K$, writing $y :=
G_s^{-1}(Rx)$, then note $|pJ^+_s(x) - g_sp(x)| = |pRG_s^{-1}R(x) -
Rg_s^{-1}Rp(x)| = |g_s p(y) - pG_s(y)| < \mu/3$. Otherwise, suppose
$x \in B$. Since $\bar{H}$ is an isotopy, we have $s-2\mu/3 < \inf
pRG_s^{-1}R(\partial_-B) \leq p \bar{H}_s(x) \leq \sup
pG_s(\partial_+ B) < s+2\mu/3$. Then $|pJ^+_s(x) - g_sp(x)| =
|p\bar{H}_s(x)-g_sp(x)| \leq |g_s^{-1}p\bar{H}_s(x)| + |p(x)| <
2\mu/3 + \mu/3=\mu$. Therefore $pJ_s$ is $\mu$-close to $g_s p$ for
all $s \in I$.

Fourth, we verify that $J_1$ satisfies the dihedral relation. On the
one hand, recall $C_+ \subset J^+_s(K \cup B \cup C_+)$ and $J^-_s|K
= J^+_s|K$. On the other hand, observe $\mu/3<\frac{1}{7}$ implies
$J^+_1(\partial_- B) \subset \intr\,C_+$ hence $C_- \cup B \subset
J_1(C_-)$. Then
\begin{gather*}
J_1^{-1}R|C_- = (J^+_1)^{-1}R|C_- = R J^-_1 |C_- = RJ_1|C_-\\
J_1^{-1}R|(B \cup C_+) = (J^-_1)^{-1}R|(B \cup C_+) = R J^+_1|(B
\cup C_+) = RJ_1|(B \cup C_+).
\end{gather*}
Therefore the dihedral relation $J_1^{-1}R=RJ_1$ holds. (In fact,
the relation $J_s^{-1}R=RJ_s$ holds for all $s \in [7\mu/3, 1]$.)
Hence $\{R,J_1\}$ generates a $D_\infty$-action on $W$.

Finally, we verify that the $D_\infty$-action is cocompact. Let $x
\in W$. Define a neighborhood $U := p^{-1}(p(x)-\mu,p(x)+\mu)$ of
$x$ in $W$. Since $pJ_1$ is $\mu$-close to $g_1p$ and
$\mu<\frac{1}{2}$, we have $pJ_1(U) \cap p(U) = \varnothing$. Thus
the $C_\infty$-action generated by $J_1$ is free and discontinuous.
That is, the quotient map $W \to W/J_1$ is a covering map.
Furthermore, define a $J_1 R$-invariant subset
\[
V := p^{-1}[0,\infty) \cap J_1 p^{-1}(-\infty,0]
\]
with frontier $\partial V := p^{-1}\{0\} \sqcup J_1 p^{-1}\{0\}$ in
$W$. Since $pJ_1$ is $\mu$-close to $g_1 p$, it follows that $V$ is
compact. Note $W  = \bigcup_{n \in \bz} (J_1)^n(V)$ and $(J_1)^n(V)
\cap (J_1)^m(V) \subset \partial V$ for all $m,n \in \bz$. Therefore
$V$ is a fundamental domain for $J_1$ and $W/J_1$ is compact.\qed
\end{proof}

We shall construct a $D_\infty$-MAF $\tilde{p}: W \to \br$ according
to the following outline: (1) use Urysohn functions to interpolate
crudely between $p$ and $g_1 p J_1^{-1}$, (2) use relative sucking
to sharpen the bounded fibration to an approximate fibration, (3)
use a crushing map to force $C_2$-equivariance about $\frac{1}{2}$,
and (4) use the fundamental domain $V$ to copy-and-paste across $W$
to force $C_\infty$-equivariance.

\begin{proof}[Proof of Theorem \ref{thm:WrappingUp}(2)]
It suffices to construct the desired $D_\infty$-MAF $\tilde{p}: W
\to \br$, since the straight-line homotopy is a proper
$C_2$-homotopy from $p$ to $\tilde{p}$:
\[
H: I \times W \longrightarrow \br; \quad (s,x) \longmapsto (1-s) p(x) + s \tilde{p}(x).
\]

First, by the relative sucking principle
(Prop.~\ref{prop:rel_sucking}), we may re-select $0 < \mu <
\frac{1}{48}$ so that: if $f: W \to \br$ is a proper
$2\mu$-fibration which is an approximate fibration over
$(-\infty,\frac{7}{24}) \cup (\frac{17}{24},\infty)$, then $f$ is
homotopic rel $f^{-1}((-\infty,\frac{1}{4}) \cup
(\frac{3}{4},\infty))$ to an approximate fibration $f': W \to \br$.
Then, since $\mu<\frac{1}{3}$, by the Urysohn lemma, there exists a
map $u: W \to I$ such that $p^{-1}(-\infty,\frac{1}{3}] \subseteq
u^{-1}\{0\}$ and $J_1 p^{-1}[-\frac{1}{3},\infty) \subseteq
u^{-1}\{1\}$. Define a proper map
\[
p': W \longrightarrow \br;\quad x \longmapsto u(x) + p J_{u(x)}^{-1}(x).
\]

Second, note $|p'(x) - p(x)| = |g_s p(y) - p J_s(y)| < \mu$ for all
$x \in W$, where we abbreviate $s := u(x)$ and $y := J_s^{-1}(x)$.
Since $p$ is a $\mu$-fibration, we conclude that $p'$ is a
$2\mu$-fibration. Furthermore, since $\mu < \frac{1}{24}$ implies
$(p')^{-1}(-\infty,\frac{7}{24}) \subset
p^{-1}(-\infty,\frac{1}{3}]$, we obtain that $p'$ restricts to the
approximate fibration $p$ over $(-\infty,\frac{7}{24})$. Moreover,
since $\mu < \frac{1}{48}$ implies $(p')^{-1}(\frac{17}{24},\infty)
\subset J_1 p^{-1}[-\frac{1}{3},\infty)$, we obtain that $p'$
restricts to the approximate fibration $g_1 p J_1^{-1}$ over
$(\frac{17}{24},\infty)$. Therefore $p'$ is homotopic rel
$(p')^{-1}((-\infty,\frac{1}{4}) \cup (\frac{3}{4},\infty))$ to an
approximate fibration $p'': W \to \br$.

Third, consider a proper cell-like map
\[
\kappa: \br \longrightarrow \br; \quad
t \longmapsto \begin{cases}
2t & \text{if } t \in [0,\frac{1}{4}]\\
\frac{1}{2} & \text{if } t \in [\frac{1}{4},\frac{3}{4}]\\
2t+1 & \text{if } t \in [\frac{3}{4},1]\\
t & \text{if } t \in (-\infty,0] \cup [1,\infty).
\end{cases}
\]
By the composition principle \cite[p.~38]{Coram}, we obtain that
$p''' := \kappa \circ p'': W \to \br$ is also an approximate
fibration. Furthermore, since $p R = R p$ and $J_1 R = R J_1^{-1}$,
note $p'''\circ J_1R = g_1 R \circ p'''$; roughly speaking, $p'''$
is $C_2$-equivariant about $\frac{1}{2}$.

Fourth, recall that $V = p^{-1}[0,\infty) \cap J_1 p^{-1}(-\infty,0]$.
It follows that $p'''|p^{-1}\{0\}=p|$ and $p'''|J_1p^{-1}\{0\} = g_1 p
J_1^{-1}|$. Then we can define a proper map
\[
\tilde{p}: W \longrightarrow \br; \quad x \longmapsto g_1^m p''' J_1^{-m}(x) \text{ where } x \in J_1^m(V).
\]
Since $\tilde{p} \circ J_1 = g_1 \circ \tilde{p}$ and $\tilde{p}
\circ J_1 R = g_1 R \circ \tilde{p}$, we obtain that $\tilde{p}$ is
a proper $D_\infty$-map. Furthermore, by the uniformization
principle \cite[p.~43]{Coram}, we obtain that $\tilde{p}$ is an
approximate fibration. Therefore, by Lemma~\ref{lem:SharpenMAF} (see
below), we conclude that $\tilde{p}$ is in fact a $D_\infty$-MAF.\qed
\end{proof}

\begin{lemma}\label{lem:SharpenMAF}
Let $W$ be a topological manifold equipped with a free,
discontinuous $D_\infty$-action. Suppose $\tilde{p}: W \to \br$ is a
proper $D_\infty$-map and an approximate fibration. Then $\tilde{p}$
is a $D_\infty$-MAF.
\end{lemma}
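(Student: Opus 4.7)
The plan is to pass to the infinite cyclic quotient, verify a $C_2$-MAF property there, and lift back via covering space theory.

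First, by Prassidis \cite[Prop.~2.18]{Prassidis} (invoked in Lemma~\ref{lem:crush MAF}), it suffices to verify the $D_\infty$-$\epsilon$-fibration property of $\tilde p$ for compact metric $D_\infty$-spaces. Introduce the $C_\infty$-coverings $q_W\co W\to M := W/C_\infty$ and $q_\br\co\br\to S^1 := \br/C_\infty$ and the induced proper map $p\co M\to S^1$. Since $C_\infty$ has index two in $D_\infty$, it acts freely, cocompactly, and discontinuously on $W$, so $M$ is a closed topological manifold; the freeness of the $D_\infty$-action on $W$ forces the residual $C_2 = D_\infty/C_\infty$-action on $M$ to be free, and $p$ is $C_2$-equivariant for the complex-conjugation action on $S^1$ (whose fixed points are $\pm 1$). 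A $D_\infty$-equivariant lifting problem $(Z,f,F)$ for $\tilde p$ with $Z$ compact then descends, using the $C_\infty$-equivariance of $f$ and $F$, to a $C_2$-equivariant lifting problem $(\bar Z,\bar f,\bar F)$ for $p$ with $\bar Z := Z/C_\infty$ compact.

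The heart of the proof is to show that $p\co M\to S^1$ is a $C_2$-MAF. That $p$ is a proper approximate fibration follows from the Coram--Duvall uniformization principle \cite[p.~43]{Coram}: the approximate fibration property is local on the base, and locally $p$ is identified, via the covering $q_\br$, with the restriction of $\tilde p$ over a bounded interval in $\br$. To upgrade to $C_2$-equivariance, use the freeness of the $C_2$-action on $M$ and the finiteness of $C_2$. Given a non-equivariant $\epsilon$-solution $\bar G$ of a $C_2$-lifting problem for $p$, its $C_2$-conjugate $R\bar G R$ is also an $\epsilon$-solution. Away from the two fixed points $\pm 1$ of $C_2$ on $S^1$, the $C_2$-action on the base is free and a two-element symmetrization of $\bar G$ produces a $C_2$-equivariant solution; near each fixed point, the $C_2 \subset O(1)$ case of the Orthogonal Sucking Theorem~\ref{intro thm: ortho sucking}, applied to the restriction of $p$ over a small reflection-invariant interval $U \cong \br$, produces a $C_2$-MAF locally. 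Relative sucking (Proposition~\ref{prop:rel_sucking}) glues the local and global pieces into a single $C_2$-$\epsilon$-solution.

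Finally, a $C_2$-$\epsilon$-solution $\bar G\co\bar Z\times I\to M$ to the descended problem lifts through the covering $q_W$ by the Path Lifting Property to a unique continuous $G\co Z\times I\to W$ with $G(-,0)=f$. The uniqueness of path lifts in the covering $q_W$, combined with the $C_2$-equivariance of $\bar G$ and the $C_\infty$-equivariance of $f$, forces $G$ to be $D_\infty$-equivariant; and since $q_\br$ is a local isometry and $\epsilon$ may be taken smaller than $1/2$, the $\epsilon$-closeness of $pG$ to $\bar F$ downstairs yields $\epsilon$-closeness of $\tilde p G$ to $F$ upstairs (via the common initial condition $\tilde p G(-,0) = F_0$ and uniqueness of path lifts in $\br \to S^1$). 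The main obstacle is the middle step—upgrading the approximate fibration $p$ to a $C_2$-approximate fibration in the presence of fixed points of the $C_2$-action on the base $S^1$—which requires combining a two-element symmetrization over the free-action locus with an equivariant sucking argument near each of the two fixed points.
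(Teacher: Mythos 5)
Your high-level strategy---descend to the infinite cyclic quotient, upgrade the induced map to a $C_2$-MAF on the circle, then lift back---matches the paper's, but the execution of the crucial middle step is genuinely different and, as written, does not work.

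The paper handles the middle step in one clean stroke using Prassidis's recognition principle \cite[Thm.~3.1]{Prassidis}: it suffices to check that $\tilde p/J_1\co W/J_1\to S^1$ is a (non-equivariant) approximate fibration (Coram--Duvall uniformization) \emph{and} that its restriction to each fixed set is an approximate fibration. Because the $C_2$-action on $W/J_1$ is free, the fixed set $(W/J_1)^R$ is empty and the second condition holds vacuously; once the spaces are identified as $C_2$-ANRs (via Jaworowski), the conclusion is immediate. The freeness of the $C_2$-action is exactly what collapses the potentially hard case---the fibers over the two fixed points $\pm1\in S^1$---into a non-issue.

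Your replacement for this step has two concrete problems. First, the ``two-element symmetrization'' of a non-equivariant $\epsilon$-solution $\bar G$ has no content as stated: there is no way to average $\bar G$ and $R\bar G(R\cdot,\cdot)$ into a single continuous equivariant map into the manifold $M$, and patching along a fundamental domain of $S^1\setminus\{\pm1\}$ raises a continuity problem at the seam that you do not address. Second, and more importantly, invoking Orthogonal Sucking (Theorem~\ref{intro thm: ortho sucking}) near the fixed points $\pm1$ is a category error: that theorem takes a $G$-$\delta$-fibration and \emph{replaces} it, up to $G$-$\epsilon$-homotopy, by a $G$-MAF. It does not certify that the \emph{given} map $p$ already solves $C_2$-lifting problems; here you must verify that $\tilde p/J_1$ itself (not some nearby map) is a $C_2$-approximate fibration, since any modification of $p$ would require re-lifting and would destroy the $D_\infty$-equivariance you have already arranged. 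Proposition~\ref{prop:rel_sucking} likewise modifies maps rather than gluing $\epsilon$-solutions. So the symmetrization-plus-sucking route is not merely a longer path; it answers the wrong question. Your initial reduction via \cite[Prop.~2.18]{Prassidis} and the final lift through the coverings (using $\epsilon<\tfrac12$ and uniqueness of path lifts) are both sound and close in spirit to what the paper does, though the paper's great-circle homotopy makes the closeness bookkeeping explicit.
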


\begin{proof}
First, we show that the induced $C_2$-map $\tilde{p}/J_1: W/J_1 \to
\br/\bz=S^1$ is indeed a $C_2$-MAF. By Coram--Duvall's
uniformization principle \cite[p.~43]{Coram}, we conclude that
$\tilde{p}/J_1: W/J_1 \to S^1$ is an approximate fibration. Since
$C_2$ acts freely on $W$, that the fixed-set restriction
$(\tilde{p}/J_1)^R: (W/J_1)^R = \varnothing \to (S^1)^R = \{1,-1\}$
is trivially an approximate fibration. By Jaworowski's recognition
principle \cite[Thm.~2.1]{Jaworowski}, both the finite-dimensional
spaces $W/J_1$ and $S^1$ are $C_2$-ENRs, hence they are $C_2$-ANRs
for the class of separable metric spaces. Therefore, by Prassidis's
recognition principle \cite[Thm.~3.1]{Prassidis}, we conclude that
$\tilde{p}/J_1$ is a $C_2$-approximate fibration.

Now, we show, by an elementary argument, that $\tilde{p}: W \to \br$
is a $D_\infty$-MAF. Let $Z$ be a $D_\infty$-space, and let
$0<\epsilon < \frac{1}{2}$. Let $F: Z \times I \to \br$ be a
$D_\infty$-homotopy. Let $f: Z \to W$ be a $D_\infty$-map such that
$\tilde{p} f = F(-,0)$. Since $\tilde{p}/J_1: W/J_1 \to S^1$ is a
$C_2$-$\epsilon$-fibration, there exists a $C_2$-homotopy
$\widetilde{F/J_1}: Z/J_1 \times I \to W/J_1$ such that $f/J_1 =
(\widetilde{F/J_1})(-,0)$ and $(\tilde{p}/J_1)(\widetilde{F/J_1})$
is $\epsilon$-close to $F/J_1$. Furthermore, consider the
normalization map
\[
\Theta: \br^2 \setminus \{0\} \longrightarrow S^1;
\quad y \longmapsto y/\|y\|.
\]
Since $\epsilon < \frac{1}{2}$, there is the unique great-circle
$C_2$-$\epsilon$-homotopy
\begin{gather*}
H: I \times (Z/J_1 \times I) \longrightarrow S^1 \subset \br^2 ;\\
(s,x) \longmapsto \Theta\left(
(1-s)(\tilde{p}/J_1)(\widetilde{F/J_1})(x) + s (F/J_1)(x) \right).
\end{gather*}
Since $W \to W/D_\infty$ is a covering map, there exists a unique
$D_\infty$-homotopy $\tilde{F}: Z \times I \to W$ such that $f =
\tilde{F}(-,0)$ and $\tilde{F}/J_1 = \widetilde{F/J_1}$. Then, since
$\br \to S^1$ is a covering map, there exists a unique
$C_\infty$-homotopy $\tilde{H}: I \times (Z \times I) \to \br$ such
that $\tilde{p}\tilde{F} = \tilde{H}(0,-)$ and $\tilde{p}f =
\tilde{H}(-,(-,0))$ and $F = \tilde{H}(1,-)$ and $\tilde{H}/J_1 =
H$. Furthermore, since $H$ is an $\epsilon$-homotopy and the
quotient map $\br \to S^1$ is a local isometry, we conclude that
$\tilde{H}$ is an $\epsilon$-homotopy. Hence $\tilde{p} \tilde{F}$
is $\epsilon$-close to $F$. Thus $\tilde{p}$ is a
$D_\infty$-$\epsilon$-fibration for all $0 < \epsilon< \frac{1}{2}$.
Therefore $\tilde{p}$ is a $D_\infty$-MAF.\qed
\end{proof}

\begin{example}
Finally, we illustrate why the proper $C_2$-homotopy in Part (2)
cannot be improved to a bounded $C_2$-homotopy from $p$ to
$\tilde{p}$. Let $m \neq 0,1$ be a real number. Define a
homeomorphism
\[
p: \br \longrightarrow \br; \quad x \longmapsto mx.
\]
Clearly $p$ is a $C_2$-MAF. Assume that $p$ is boundedly
$C_2$-homotopic to a $D_\infty$-MAF $\tilde{p}: \br \to \br$. In
particular, $p$ is $\epsilon$-close to a $C_\infty$-map $\tilde{p}$
for some $\epsilon>0$. Then an elementary argument shows that
$\tilde{p}$ is $\mu$-close to the $C_\infty$-map
\[
q: \br \longrightarrow \br; \quad x \longmapsto \tilde{p}(0) + x
\]
for any $\mu > \sup_{x \in [0,1]} |\tilde{p}(x) - q(x)|$. Hence $p$
is $(\epsilon+\mu)$-close to $q$, a contradiction.
\end{example}


\begin{acknowledgements}
The authors wish to thank Jim Davis and Shmuel Weinberger, for
drawing their attention to problems related to $C_2$-manifold approximate fibrations over
the circle. Furthermore, Andr\'e Henriques and Ian Leary are appreciated for Proposition~\ref{prop:HenriquesLeary}. The authors were supported in part by NSF Grants DMS--0504176 and DMS--0904276, respectively.
\end{acknowledgements}

\bibliographystyle{abbrv}
\bibliography{DihedralMAFsOverCircle}

\end{document}